\newtheorem{theorem}{Theorem}
\newtheorem{lemma}[theorem]{Lemma}
\newtheorem{proposition}[theorem]{Proposition}
\newtheorem{corollary}[theorem]{Corollary}
{\theorembodyfont{\rmfamily}%
  \newtheorem{example}[theorem]{Example}
   }
\newenvironment{proof}{\noindent\textit{Proof.}}
{\QED\vskip\theorempostskipamount} 
\newenvironment{proofof}[1]{\noindent\textit{Proof
    \protect{#1}.}}
                       {\QED\vskip\theorempostskipamount}
\def\petitcarre{\vrule height4pt width 4pt depth0pt}
\def\QED{\relax\ifmmode\eqno{\hbox{\petitcarre}}\else{%
  \unskip\nobreak\hfil\penalty50\hskip2em\hbox{}\nobreak\hfil
  \petitcarre
  \parfillskip=0pt \finalhyphendemerits=0\par\smallskip}
  \fi}
\newcommand\A{\mathcal{A}}
\newcommand\B{\mathcal{B}}
\newcommand\D{\mathcal{D}}
\def\H{\mathcal{H}}
\newcommand\LL{\mathcal{L}}
\newcommand\RR{\mathcal{R}}
\newcommand{\N}{\mathbb{N}}
\newcommand{\Z}{\mathbb{Z}}
\newcommand{\R}{\mathbb{R}}
\let\pars\delta 
\let\proba\pi
\def\u(#1){\underline{#1}\,}
\DeclareMathOperator{\Card}{Card}
\DeclareMathOperator{\rep}{rep}
\DeclareMathOperator{\Pal}{Pal}
\DeclareMathOperator{\End}{End}
\DeclareMathOperator{\Fact}{Fact}
\DeclareMathOperator{\rank}{rank}
\def\Im{\text{\upshape{Im}}}
\definecolor{ivoire}{rgb}{0.99,0.99,0.8}
\newcounter{hours}\newcounter{minutes}
\newcommand\computetime{\setcounter{hours}{\time/60}%
  \setcounter{minutes}{\time-\value{hours}*60}%
  \thehours\,h\,\theminutes}
\newcommand\dateandtime{\today\quad\computetime}
\numberwithin{theorem}{subsection}
\numberwithin{equation}{section}
\numberwithin{figure}{section}
\numberwithin{table}{section}
\title{Bifix codes and Sturmian words}
\author{Jean Berstel$^1$, Clelia De Felice$^2$, Dominique Perrin$^1$,
  \\
Christophe  Reutenauer$^3$,
Giuseppina Rindone$^1$\\\\
$^1$Universit\'e Paris Est, $^2$Universit\`a degli Studi di Salerno,\\
$^3$ Universit\'e du Qu\'ebec \`a Montr\'eal}
\date{\dateandtime}
\begin{document}
\makeatletter
\def\@listI{%
  \leftmargin\leftmargini
  \setlength{\parsep}{0pt plus 1pt minus 1pt}
  \setlength{\topsep}{2pt plus 1pt minus 1pt}
  \setlength{\itemsep}{0pt}
}
\let\@listi\@listI
\@listi
\def\@listii {%
  \leftmargin\leftmarginii
  \labelwidth\leftmarginii
  \advance\labelwidth-\labelsep
  \setlength{\topsep}{0pt plus 1pt minus 1pt}
}
\def\@listiii{%
  \leftmargin\leftmarginiii
  \labelwidth\leftmarginiii
  \advance\labelwidth-\labelsep
  \setlength{\topsep}{0pt plus 1pt minus 1pt}
  \setlength{\parsep}{0pt} 
  \setlength{\partopsep}{1pt plus 0pt minus 1pt}
}
\makeatother
\maketitle

\begin{abstract} We prove new results concerning the relation between
  bifix codes, episturmian words and subgroups of free groups.  We
  study bifix codes in factorial sets of words. We generalize most
  properties of ordinary maximal bifix codes to bifix codes maximal in
  a recurrent set $F$ of words ($F$-maximal bifix codes).  In the case
  of bifix codes contained in Sturmian sets of words, we obtain
  several new results. Let $F$ be a Sturmian set of words, defined as
  the set of factors of a strict episturmian word.  Our results
  express the fact that an $F$-maximal bifix code of degree $d$
  behaves just as the set of words of $F$ of length $d$. An
  $F$-maximal bifix code of degree $d$ in a Sturmian set of words on
  an alphabet with $k$ letters has $(k-1)d+1$ elements.  This
  generalizes the fact that a Sturmian set contains $(k-1)d+1$ words
  of length $d$. Moreover, given an infinite word $x$, if there is a
  finite maximal bifix code $X$ of degree $d$ such that $x$ has at
  most $d$ factors of length $d$ in $X$, then $x$ is ultimately
  periodic. Our main result states that any $F$-maximal bifix code of
  degree $d$ on the alphabet $A$ is the basis of a subgroup of index
  $d$ of the free group on~$A$.
\end{abstract}

\tableofcontents
\section{Introduction}

This paper studies a new relation between three objects previously
unrelated altogether: bifix codes, epiturmian words and subgroups of
free groups. 

We first give some elements on the background of the first two.  The
study of bifix codes goes back to founding papers by
Sch\"utzenberger~\cite{Schutzenberger1956} and by Gilbert and
Moore~\cite{GilbertMoore1959}. These papers already contain
significant results. The first systematic study is in the papers of
Sch\"utzenberger \cite{Schutzenberger1961c,Schutzenberger1961b}. The
general idea is that the submonoids generated by bifix codes are an
adequate generalization of the subgroups of a group. This is
illustrated by the striking fact that, under a mild restriction, the
average length of a maximal bifix code with respect to a Bernoulli
distribution on the alphabet is an integer. Thus, in some sense a
maximal bifix code behaves as the uniform code formed of all the words
of a given length. The theory of bifix codes was developed in a
considerable way by C\'esari. He proved that all the finite maximal
bifix codes may be obtained by internal transformations from uniform
codes \cite{Cesari1972}. He also defined the notion of derived code
which allows to build maximal bifix codes by increasing
degrees~\cite{Cesari1979}.

Sturmian words are infinite words over a binary alphabet that have
exactly $n+1$ factors of length $n$ for each $n\ge 0$.  Their origin
can be traced back to the astronomer J. Bernoulli III.  Their first
in-depth study is by Morse and Hedlund~\cite{MorseHedlund1940}. Many
combinatorial properties were described in the paper by Coven and
Hedlund~\cite{CovenHedlund1973}.  
Note that, although Sturmian words appear first in the work of
Morse and Hedlund, their finitary version, Christoffel and standard
words, appear much before in the work of
Christoffel~\cite{Christoffel1875} and, apparently independently, in
the work of Markoff~\cite{Markoff1879,Markoff1880}; the latter
constructed the famous Markoff numbers by using them. The Markoff
theory (which was designed to study minima's of quadratic forms) was
revisited often by mathematicians, notably by
Frobenius~\cite{Frobenius1913}, Dickson~\cite{Dickson1930},
H. Cohn~\cite{Cohn1972}, Cusick and Flahive~\cite{CusickFlahive1989} and
Bombieri~\cite{Bombieri2007}. There, the connection with the free
group on two generators was established. Other connection of
Christoffel words with the free group may be found in
Osborne and Zieschang~\cite{OsborneZieschang1981} and
Kassel and Reutenauer~\cite{KasselReutenauer2007}. Moreover, the Sturmian
morphisms (substitutions that preserve Sturmian words) are the
positive endomorphisms of the free group on two generators, see
Wen and Wen~\cite{WenWen1994}, Mignosi and S\'e\'ebold~\cite{MignosiSeebold1993}.  Thus
Sturmian words are closely related to the free group. This connection is
one of the main points of the present paper.

Sturmian words were generalized to arbitrary
alphabets.  Following an initial work by Arnoux and Rauzy
\cite{ArnouxRauzy1991} and developing ideas of De
Luca~\cite{deLuca1997}, Droubay, Justin and Pirillo introduced
in~\cite{DroubayJustinPirillo2001} the notion of episturmian words
which generalizes Sturmian words to arbitrary finite alphabets.

In this paper, we consider the extension of the results known for
bifix codes maximal in the free monoid to bifix codes maximal in more
restricted sets of words, and in particular the sets of factors of
episturmian words.

We extend most properties of ordinary maximal bifix codes to bifix
codes that are maximal in a recurrent set $F$ of words ($F$-maximal bifix
codes). We show in particular that the average length of a finite
$F$-maximal bifix code of degree $d$ in a recurrent set $F$ with
respect to an invariant probability distribution on $F$ is equal to
$d$ (Corollary~\ref{corollaryAveragelength}).

Our main objective is the case of the set of factors of an episturmian
word.  We actually work with the set of factors of a strict
episturmian word, called simply a Sturmian set. The number of factors
of length $d$ of a strict episturmian word over an alphabet of $k$
letters is known to be $(k-1)d+1$. Our main result is that a maximal
bifix code of degree $d$ in a Sturmian set over an alphabet of $k$
letters is always a basis of a subgroup of index $d$ of the free group
(Theorem~\ref{theoremGroups}).  In particular, it has $(k-1)d+1$
elements (Theorem~\ref{theoremBifixd+1}). Since the set of all words of
length $d$ is a maximal bifix code of degree $d$, this yields a strong
generalization of the previous property. In particular, every finite
maximal bifix code of degree $d$ over a two letter alphabet contains
exactly $d+1$ factors of any Sturmian word.

Finally, bifix codes $X$ contained
in restricted sets of words are used to study the groups in the
syntactic monoid of the submonoid $X^*$ (Theorem~\ref{newTheorem}).
This aspect was first considered by Sch\"utzenberger in
\cite{Schutzenberger1979}.  He has studied the conditions under which
parameters linked with the syntactic monoid $M$ of a finitely
generated submonoid $X^*$ of a free monoid $A^*$ can be bounded in
terms of $\Card(X)$ only. One of his results is that, apart from a
special case where the group is cyclic, the cardinality of a group
contained in $M$ is such a parameter. In~\cite{Schutzenberger1979},
Sch\"utzenberger conjectured a refinement of his result which was
subsequently proved by C\'esari. This study led to the Critical
Factorization Theorem that we will meet again here
(Theorem~\ref{theoremCriticalFactorization}).

The extension of the results concerning codes in free monoids to codes
in a restricted set of words has already been considered by several
authors. However, most of them have focused on general codes rather
than on the particular class of bifix codes.  In~\cite{Reutenauer1986}
the notion of codes of paths in a graph has been introduced. Such
paths can also be viewed as words in a restricted set. The notion of a
bifix code of paths has been studied in~\cite{DeFelice1988} where the
internal transformation is generalized.  In~\cite{Restivo1990}, the
notion of code in a factorial set of words was introduced. The
definition of a code $X$ in a factorial set $F$ requires that the set
$X^*$ of all concatenations of words in $X$ is included in $F$. This
approach was pushed further in~\cite{HongShin2009}.  A more general
notion was considered in~\cite{BealPerrin2005}. It only requires that
$X\subset F$ and that no word of $F$ has two distinct factorizations
but not necessarily that $X^*\subset F$. The connection with
unambiguous automata was considered later in~\cite{BealPerrin2009}.
Codes in Sturmian sets have been studied before
in~\cite{CarpideLuca2005}.  Finally, prefix codes $X$ contained in
restricted sets of words are used in~\cite{PerrinRindone2003} to study
the groups in the syntactic monoid of the submonoid $X^*$.

Our paper is organized as follows. 

In a first section (Section~\ref{sectionFactorialSets}), we recall some
definitions
concerning prefix-closed, factorial, recurrent and uniformly recurrent
sets, in relation with infinite words.
We also introduce probability distributions on these sets.

In Section~\ref{sectionPrefixCodes}, we introduce prefix codes in
factorial sets, especially maximal ones. We introduce some basic
notions on automata. We define the average
length with respect to a probability distribution on the factorial
set.

In Section~\ref{sectionBifixCodes}, we develop the theory of maximal
bifix codes in recurrent sets. We generalize most of the properties
known in the classical case. In particular, we show that the notion of
degree and that of derived code can be defined
(Theorem~\ref{thmDerived}). We show that, for a uniformly recurrent set
$F$, any $F$-thin bifix code contained in $F$ is finite
(Theorem~\ref{theoremCompletion}).  In the case of Sturmian
sets, we prove our main results.  First, a bifix code of degree $d$
maximal in a Sturmian set on a $k$-letter alphabet has $(k-1)d+1$
elements (Theorem~\ref{theoremBifixd+1}).  Next, given an infinite
word $x$, if there is a finite maximal bifix code $X$ of degree $d$
such that $x$ has at most $d$ factors of length $d$ in $X$, then $x$
is ultimately periodic (Corollary~\ref{corollaryPeriod}). The proof uses
the Critical Factorization Theorem
(see e.g.~\cite{Lothaire1983,CrochemorePerrin1991}).

Section~\ref{sectionBasis} presents our results concerning free
groups.  Our main result (Theorem~\ref{theoremGroups}) in this area
states that for a Sturmian set $F$, a bifix code $X\subset F$ is
a finite and $F$-maximal bifix code of $F$-degree $d$ if and only if
it is a basis of a subgroup of index $d$ of the free group on $A$.  We
finally present in Section~\ref{sectionSyntacticGroups} a consequence
of Theorem~\ref{theoremGroups} concerning syntactic groups.
We show that any
transitive
permutation group of degree $d$ which can be generated by $k$ elements
is a syntactic group of a bifix code with $(k-1)d+1$ elements
(Theorem~\ref{newTheorem}).

Many results of this paper are extensions or generalizations of
results contained in~\cite{BerstelPerrinReutenauer2009}. We always
give the reference of the corresponding result
in~\cite{BerstelPerrinReutenauer2009}. The proofs sometimes consist in
the verification that the proof of the book still holds in the more
general setting, and sometimes require new and more involved
developments. In order to make the paper self contained, and to avoid
repetitive references to the book, we have tried to always give
complete proofs.

\section{Factorial sets}\label{sectionFactorialSets}

In this section, we introduce the basic notions of prefix-closed,
factorial, recurrent and uniformly recurrent sets. These form a
descending hierarchy. These notions are closely related with the
analogous notions for infinite words which are defined in
Section~\ref{subsectionRecurrentWords}.  In
Section~\ref{subsectionProbas}, we introduce probability distributions
on factorial sets.

We use the standard terminology and notation on words, in particular
concerning prefixes, suffixes and factors (see~\cite{Lothaire1983} for
example). Let $A$ be a finite alphabet. All words considered below are
supposed to be on the alphabet $A$. We denote by $1$ the empty word.
We denote by $A^*$ the set of all words on $A$ and by $A^+$ the set of
nonempty words.

The \emph{reversal}\index{reversal}\index{word!reversal} of a word
$w=a_1a_2\cdots a_n$, where $a_1,a_2,\ldots,a_n$ are letters, is the
word $\widetilde w=a_n\cdots a_2a_1$. In particular, the reversal of
the empty word is the empty word.  A set $X$ of words is \emph{closed
  under reversal}\index{closed under reversal, set} if it contains the
reversals of its elements.

Given a set $X$ of words, we
define, for a word $u$, the set $u^{-1}X$ by
\begin{displaymath}
  u^{-1}X= \{y\in A^*\mid uy\in X\}\,.
\end{displaymath}
Next, we say that a word is a \emph{prefix of}\index{prefix of a set}
$X$ if it is a prefix of a word of $X$.

A nonempty set $F\subset A^*$ of words is said to be
\emph{prefix-closed}\index{prefix-closed set} if it contains the prefixes
of all its elements. Symmetrically, it is said to be
\emph{suffix-closed}\index{suffix-closed set} if it contains the suffixes
of all its elements.  It is said to be
\emph{factorial}\index{factorial set} if it contains the factors of all
its elements.

The \emph{right} (resp. \emph{left}) \emph{order}\index{left
  order}\index{right order}\index{order!left}\index{order!right} of a
word $w$ with respect to $F$ is the number of letters $a$ such that
$wa\in F$ (resp. $aw\in F$).

A set $F$ is said to be \emph{right essential}\index{right
  essential}\index{essential!right} if it is prefix-closed and if any
$w\in F$ has right order at least $1$. If $F$ is right essential, then
for any $u\in F$ and any integer $n\ge 1$, there is a word $v$ of
length $n$ such that $uv\in F$.  Symmetrically, a set $F$ is said to
be \emph{left essential}\index{left essential}\index{essential!left}
if it is suffix-closed and if any $w\in F$ has left order at
least~$1$.

\subsection{Recurrent sets}

A set $F$ of words is said to be \emph{recurrent}\index{recurrent set}
if it is factorial and if for every $u,w\in F$ there is a $v\in F$
such that $uvw\in F$. A recurrent set $F\ne\{1\}$ is right and left
essential.

\begin{example}\label{exampleFull}
  The set $F=A^*$ is recurrent.
\end{example}

\begin{example}\label{exampleGolden}
  Let $A=\{a,b\}$. Let $F$ be the set of words on $A$ without factor
  $bb$. Thus $F=A^*\setminus A^*bbA^*$. The set $F$ is
  recurrent. Indeed, if $u,w\in F$, then $uaw\in F$.
\end{example}

A set $F$ is said to be \emph{uniformly recurrent}\index{uniformly
  recurrent} if it is factorial and right essential and if, for any
word $u\in F$, there exists an integer $n\ge 1$ such that $u$ is a
factor of every word in $F\cap A^n$.

\begin{proposition}\label{uniformImpliesRecurrent}
  A uniformly recurrent set is recurrent.
\end{proposition}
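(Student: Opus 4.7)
The plan is to unpack both definitions and show that the condition in uniform recurrence directly produces the word $v$ required by recurrence. Since a uniformly recurrent set $F$ is factorial by definition, the only point to verify is that for every $u,w \in F$ one can find $v \in F$ with $uvw \in F$.

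Fix $u,w \in F$. First I would apply uniform recurrence to the word $w$: this gives an integer $n \ge 1$ such that $w$ is a factor of every word of $F \cap A^n$. Next, since $F$ is right essential (as noted just after the definition of right essential, this follows from being factorial and right essential, which is built into the definition of uniformly recurrent), I can extend $u$ on the right by some word $z$ of length $n$ so that $uz \in F$. Because $F$ is factorial, $z$ itself belongs to $F \cap A^n$, and therefore $w$ is a factor of $z$; write $z = v w v'$.

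Now $uvw$ is a prefix of $uz \in F$, so by factoriality $uvw \in F$; similarly, $v$ is a factor of $z \in F$, so $v \in F$. This is the required word, completing the proof. There is no real obstacle here: the argument is essentially a bookkeeping exercise once one notices that the integer $n$ provided by uniform recurrence applied to $w$ is exactly the length one should extend $u$ to using right-essentiality, and that factoriality then delivers both the membership $v \in F$ and the membership $uvw \in F$.
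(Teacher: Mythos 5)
Your proof is correct and follows essentially the same route as the paper's: apply uniform recurrence to $w$ to get the length $n$, use right-essentiality to extend $u$ by a word $z$ of length $n$, locate $w$ inside $z$, and conclude by factoriality. The only difference is that you spell out the factoriality steps (that $z\in F\cap A^n$ and that $uvw\in F$) which the paper leaves implicit.
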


\begin{proof}
  Let $u,w\in F$. Let $n$ be such that $w$ is a factor of any word in
  $F\cap A^n$. Since $F$ is right essential, there is a word $v$ of
  length $n$ such that $uv\in F$. Since $w$ is a factor of $v$, we
  have $v=rws$ for some words $r,s$. Thus $urw\in F$.
\end{proof}

The converse of Proposition~\ref{uniformImpliesRecurrent} is not true
as shown in the example below.

\begin{example}
  The set $F=A^*$ on $A=\{a,b\}$ is recurrent but not uniformly
  recurrent since $b\in F$ but $b$ is not a factor of $a^n\in F$ for
  any $n\ge 1$.
\end{example}


\subsection{Recurrent words}\label{subsectionRecurrentWords}

We denote by $F(x)$ the set of factors of an infinite word $x\in
A^\N$.  The set $F(x)$ is factorial and right essential.

An infinite word $x\in A^\N$ is said to be
\emph{recurrent}\index{recurrent word} if for any word $u\in F(x)$
there is a $v\in F(x)$ such that $uvu\in F(x)$. Equivalently, each factor of
a recurrent word $x$ has an infinite
number of occurrences in $x$.

\begin{proposition}\label{propExists}
  For any recurrent set $F$ there is an infinite word $x$ such that $F(x)=F$.
\end{proposition}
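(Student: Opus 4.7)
The plan is to build $x$ by a König/diagonal-style construction from a fixed enumeration of $F$. Since $A$ is finite, $A^*$ is countable, so I can list $F = \{u_1, u_2, u_3, \ldots\}$; I tacitly assume $F \ne \{1\}$, for otherwise no infinite word can satisfy $F(x) = F$.

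I then construct inductively a chain $w_0, w_1, w_2, \ldots$ of words in $F$ such that $w_n$ is a prefix of $w_{n+1}$ and $u_n$ is a factor of $w_n$. Set $w_0 = 1$. Given $w_n \in F$, apply the recurrence hypothesis to the pair $(w_n, u_{n+1}) \in F \times F$ to obtain $v \in F$ with $w_n v u_{n+1} \in F$, and put $w_{n+1} := w_n v u_{n+1}$. Then $w_{n+1} \in F$, $w_n$ is a prefix of $w_{n+1}$, and $u_{n+1}$ is a suffix, hence a factor, of $w_{n+1}$. Since $F \ne \{1\}$ it is infinite (apply recurrence repeatedly to a fixed nonempty word of $F$), so one can further ensure $|w_n| \to \infty$; the $w_n$ then determine a unique infinite word $x \in A^\N$ of which every $w_n$ is a prefix.

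The identity $F(x) = F$ splits cleanly. Every $u \in F$ occurs as some $u_n$ and is a factor of the prefix $w_n$ of $x$, giving $F \subseteq F(x)$; conversely any factor $y$ of $x$ lies inside some $w_n$ (choose $n$ so that $|w_n|$ exceeds the end position of $y$ in $x$), and since $F$ is factorial and $w_n \in F$, this yields $y \in F$. The only mildly delicate point is guaranteeing that the lengths $|w_n|$ tend to infinity, but this poses no real obstacle; the whole argument is essentially a compactness/König construction riding on the defining recurrence property.
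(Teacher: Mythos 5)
Your construction is exactly the paper's: enumerate $F=\{u_1,u_2,\ldots\}$ and iterate the recurrence property to obtain the nested prefixes $u_1v_1u_2v_2\cdots u_n$ whose limit is $x$, then check $F(x)=F$ in both directions using factoriality. The extra care you take (excluding the degenerate case $F=\{1\}$, for which the statement is indeed vacuously false, and noting that $|w_n|\to\infty$) is sound but does not change the argument, which is essentially identical to the one in the paper.
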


\begin{proof}
  Set $F=\{u_1,u_2,\ldots\}$. Since $F$ is recurrent and $u_1,u_2\in
  F$, there is a word $v_1$ such that $u_1v_1u_2\in F$. Further, since
  $u_1v_1u_2,u_3\in F$ there is a word $v_2$ such that
  $u_1v_1u_2v_2u_3\in F$. In this way, we obtain an infinite word
  $x=u_1v_1u_2v_2\cdots$ such that $F(x)=F$.
\end{proof}

\begin{proposition}
  For any infinite word $x$, the set $F(x)$ is recurrent if and only
  if $x$ is recurrent.
\end{proposition}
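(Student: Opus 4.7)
The plan is to split the proof along the biconditional and to exploit, for the harder direction, the equivalent formulation of recurrence stated just above the proposition: every factor of a recurrent infinite word $x$ occurs infinitely often in $x$.

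For the ``only if'' direction, I would assume $F(x)$ is recurrent and take any $u\in F(x)$. Applying the recurrence property of $F(x)$ to the pair $(u,u)\in F(x)\times F(x)$ yields $v\in F(x)$ with $uvu\in F(x)$, which is exactly the condition for $x$ to be a recurrent infinite word.

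For the ``if'' direction, I would first observe that $F(x)$ is factorial by definition. It remains to show that for any $u,w\in F(x)$ there exists $v\in F(x)$ with $uvw\in F(x)$. Fix an occurrence of $u$ in $x$, say $x=x_0\cdots x_{i-1}\, u\, x_{i+|u|}\cdots$, so that $u$ ends at position $i+|u|-1$. Since $w\in F(x)$ and $x$ is recurrent, $w$ occurs infinitely often in $x$; in particular there is an occurrence of $w$ starting at some position $j\ge i+|u|$. Setting $v$ to be the factor $x_{i+|u|}\cdots x_{j-1}$, we get $v\in F(x)$ and $uvw\in F(x)$, as required.

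The only step that requires any thought is the use of the equivalent definition ``each factor of a recurrent word has infinitely many occurrences'', which the excerpt states without proof. If one wants to be fully self-contained, one can verify it directly: if $w\in F(x)$ had only finitely many occurrences, its last occurrence would end at some position $N$, but applying the defining property of recurrence to $w$ yields $v\in F(x)$ with $wvw\in F(x)$, and the resulting second occurrence of $w$ can be chosen to start after $N$ by repeatedly iterating the construction, contradicting the choice of $N$. This is the only real obstacle, and it is minor; the rest is bookkeeping about positions of occurrences inside $x$.
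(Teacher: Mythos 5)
Your proof is correct and follows essentially the same route as the paper: the forward direction applies the recurrence of $F(x)$ to the pair $(u,u)$, and the converse uses the fact that a factor of a recurrent infinite word occurs in every tail of $x$ (the paper phrases this as ``$v$ is a factor of $y$'' where $x=puy$), which is exactly your ``infinitely many occurrences'' argument with explicit positions. Your optional verification of the equivalence between the two formulations of recurrence for infinite words is a sound addition that the paper leaves implicit.
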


\begin{proof}
  Set $F=F(x)$.  Suppose first that $F$ is recurrent. For any $u$ in
  $F$, there is a $v\in F$ such that $uvu\in F$. Thus $x$ is
  recurrent. Conversely, assume that $x$ is recurrent. Let $u,v$ be in
  $F$. Then there is a factorization $x=puy$ with $p\in F$ and $y\in
  A^\N$. Since $x$ is recurrent, the word $v$ is a factor of $y$. Set
  $y=qvz$ with $q\in F$ and $z\in A^\N$. Then $uqv$ is in $F$. Thus
  $F$ is recurrent.
\end{proof}

An infinite word $x\in A^\N$ is said to be \emph{uniformly
  recurrent}\index{uniformly recurrent word} if the set $F(x)$ is
uniformly recurrent. There exist recurrent infinite words which are
not uniformly recurrent, as shown in the following example.

\begin{example}
  Let $x$ be the infinite word obtained by concatenating all binary
  words in radix order: by increasing length, and for each length in
  lexicographic order. Thus, $x$ starts as follows.
  \begin{displaymath}
    x=ab\,aaabbabb\,aaaaababaabbbaababbbabbb\cdots
  \end{displaymath}
  The infinite word $x$ is recurrent since every factor occurs
  infinitely often.  However, $x$ is not uniformly recurrent since
  each $a^n$, for $n>1$, is a factor of $x$, thus two consecutive occurrences
  of say the letter $b$ may be arbitrarily far one from each
  other. The word $x$ is closely related to the Champernowne word
  \cite{Champernowne1933}.
\end{example}

We use indifferently the terms of \emph{morphism}\index{morphism} or
\emph{substitution}\index{substitution} for a monoid morphism from
$A^*$ into itself.  Let $f:A^*\to A^*$ be a morphism and assume there
is a letter $a\in A$ such that $f(a)\in aA^+$. The words $f^n(a)$ for
$n\ge 1$ are prefixes of one another. If $|f^n(a)|\to\infty$ with $n$, then
we denote by $f^\omega(a)$ the infinite word which has all $f^n(a)$ as
prefixes. It is called a \emph{fix-point}\index{fix-point of a morphism}
of $f$. 

\begin{example}\label{exampleMorse}
  Set $A=\{a,b\}$. The \emph{Thue--Morse
    morphism}\index{Thue--Morse!morphism} is the substitution
  $f:A^*\to A^*$ defined by $f(a)=ab$ and $f(b)=ba$.  The
  \emph{Thue--Morse word}\index{Thue--Morse!word} $x=abbabaab\cdots$
  is the fix-point $f^\omega(a)$ of $f$. It is uniformly recurrent
  (see~\cite{Lothaire2002} Example 1.5.10). We call \emph{Thue--Morse
    set}\index{Thue--Morse!set} the set of factors of the Thue--Morse
  word. 
\end{example}

An infinite word $x\in A^\N$
\emph{avoids}\index{avoid}\index{word!avoiding a set} a set $X$ of
words if $F(x)\cap X=\emptyset$. We denote by $S_X$ the set of
infinite words avoiding a set $X\subset A^*$.  A (one sided)
\emph{shift space}\index{shift space} is a set $S$ of infinite words of
the form $S_X$ for some $X\subset A^*$.

A shift space $S\subset A^\N$ is \emph{minimal}\index{shift
  space!minimal}\index{minimal shift space} if for any shift space
$T\subset S$, one has $T=\emptyset$ or $T=S$.

For any infinite word $x\in A^\N$, we denote by $S(x)$ the set of
infinite words $y\in A^\N$ such that $F(y)\subset F(x)$. The set
$S(x)$ is a shift space. Indeed, we have $y\in S(x)$ if and only if
$F(y)\subset F(x)$ or equivalently $F(y)\cap X=\emptyset$ for
$X=A^*\setminus F(x)$.

The following property is standard (see for
example~\cite{Lothaire2002} Theorem 1.5.9). 

\begin{proposition}\label{propMinimal}
  An infinite word $x\in A^\N$ is uniformly recurrent if and only if
  $S(x)$ is minimal.
\end{proposition}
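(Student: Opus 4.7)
The plan is to prove the two implications separately, relying throughout on the characterization of shift spaces as sets $S_X$ of infinite words avoiding some $X\subset A^*$.

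For the forward direction, suppose $x$ is uniformly recurrent. The key lemma is that $F(y)=F(x)$ for \emph{every} $y\in S(x)$. The inclusion $F(y)\subset F(x)$ holds by definition of $S(x)$; conversely, given $u\in F(x)$, uniform recurrence provides $n\geq 1$ such that $u$ is a factor of every word of $F(x)\cap A^n$. Since $y$ is infinite, it has at least one factor $w$ of length $n$, and $w\in F(y)\subset F(x)$; hence $u$ is a factor of $w$, so $u\in F(y)$. With this lemma, minimality is immediate: given a nonempty shift space $T=S_X\subset S(x)$, pick $y\in T$, so $F(y)\cap X=\emptyset$. For any $z\in S(x)$, one has $F(z)=F(x)=F(y)$, hence $F(z)\cap X=\emptyset$, i.e., $z\in T$. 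Thus $T=S(x)$.

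For the converse, I argue by contrapositive. Assume $x$ is not uniformly recurrent, so there exists $u\in F(x)$ such that for every $n\geq 1$ some word $w_n\in F(x)\cap A^n$ has no occurrence of $u$. The goal is to produce a nonempty proper shift subspace of $S(x)$; the natural candidate is
\begin{displaymath}
T=S(x)\cap S_{\{u\}}=S_Y,\qquad Y=(A^*\setminus F(x))\cup\{u\},
\end{displaymath}
which is already a shift space, and is proper in $S(x)$ because $x\in S(x)\setminus T$. So the real task, and the main obstacle, is to show $T\neq\emptyset$.

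To build an element of $T$, I apply a König-type argument to the finitely branching tree whose vertices are the words $w\in F(x)$ with $u\notin F(w)$ and whose edges are single-letter right extensions. By hypothesis this tree has arbitrarily long words (namely $w_n$, together with all its prefixes, which likewise lie in $F(x)$ and avoid $u$), and since $A$ is finite it must have an infinite branch. This branch defines $y\in A^{\mathbb N}$; every factor of $y$ is a factor of some prefix of $y$, hence lies in $F(x)$ and avoids $u$. Therefore $y\in S(x)$ and $u\notin F(y)$, so $y\in T$. This contradicts minimality of $S(x)$ and finishes the proof.
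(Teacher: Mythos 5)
Your proof is correct, and it is the standard argument: the paper itself does not prove this proposition but only cites it (Lothaire 2002, Theorem~1.5.9), and that reference's proof proceeds exactly as you do --- showing $F(y)=F(x)$ for all $y\in S(x)$ under uniform recurrence, and using a compactness (K\"onig) argument to extract an infinite word avoiding $u$ for the converse. Both directions check out, including the small points that prefixes of the $w_n$ stay in the tree and that the branch $y$ genuinely avoids $u$ because any occurrence would sit inside some prefix.
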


\subsection{Episturmian words}\label{subsection-episturmian}

A \emph{Sturmian word}\index{Sturmian!word} is an infinite word $x$ on
a binary alphabet $A$ such that the set $F(x)\cap A^n$ has $n+1$
elements for any $n\ge 0$.

\begin{example}\label{exampleFibonacci}
  Set $A=\{a,b\}$.  The \emph{Fibonacci
    morphism}\index{Fibonacci!morphism} is the substitution $f:A^*\to
  A^*$ defined by $f(a)=ab$ and $f(b)=a$.  The \emph{Fibonacci
    word}\index{Fibonacci!word}
  \begin{displaymath}
    x=abaab aba abaab abaababa abaababaabaab\cdots
  \end{displaymath}
  is the fix-point $f^\omega(a)$ of $f$.  It is a Sturmian word
  (see~\cite{Lothaire2002} Example~2.1.1). We call \emph{Fibonacci
    set}\index{Fibonacci!set} the set of factors of the Fibonacci word.
\end{example}

\emph{Episturmian words} are an extension of Sturmian words to
arbitrary finite alphabets. 

Recall that, given a set $F$ of words over an alphabet $A$, the right
(resp. left) order of a word $u$ in $F$ is the number of letters $a$ such
that $ua\in F$ (resp. $au\in F$). A word $u$ is
\emph{right-special}\index{right-special word}
(resp. \emph{left-special}\index{left-special word}) if its right
order (resp. left order) is at least $2$. A right-special (resp.
left-special) word is \emph{strict}\index{strict right-special
  word}\index{strict left-special word} if its right (resp. left)
order is equal to $\Card(A)$. In the case of a $2$-letter alphabet,
all special words are strict.

By definition, an infinite word $x$ is
\emph{episturmian}\index{episturmian word} if $F(x)$ is
closed under reversal and if $F(x)$ contains, for each $n\ge 1$, at
most one word  of length $n$ which is right-special. 

Since $F(x)$ is closed under reversal, the reversal of a right-special
factor of length $n$ is left-special, and it is the only left-special
factor of length $n$ of $x$. A suffix of a right-special factor is
again right-special. Symmetrically, a prefix of a left-special factor
is again left-special.

As a particular case, a \emph{strict}%
\index{strict episturmian word}\index{episturmian word!strict}
episturmian word is an episturmian word $x$ with the two following
properties: $x$ has exactly one right-special factor of each length
and moreover each right-special factor $u$ of $x$ is strict, that is
satisfies the inclusion $uA\subset F(x)$
(see~\cite{DroubayJustinPirillo2001}).

It is easy to see that for a strict episturmian word $x$ on an
alphabet $A$ with $k$ letters, the set $F(x)\cap A^n$ has $(k-1)n+1$
elements for each $n$. Thus, for a binary alphabet, the strict
episturmian words are just the Sturmian words, since a Sturmian word
has one right-special factor for each length and its set of
factors is closed under reversal.

An episturmian word $s$ is called \emph{standard}%
\index{standard episturmian word}\index{episturmian word!standard} if
all its left-special factors are prefixes of $s$. For any episturmian
word $s$, there is a standard one $t$ such that $F(s)=F(t)$. This is a
rephrasing of Theorem 5 in \cite{DroubayJustinPirillo2001}.

\begin{example}\label{exampleTribonacci}
  Consider the following generalization of the Fibonacci word to the
  ternary alphabet $A=\{a,b,c\}$. Consider the morphism
  $f:A^*\rightarrow A^*$ defined by $f(a)=ab$, $f(b)=ac$ and
  $f(c)=a$. The fix-point
  \begin{displaymath}
    f^\omega(a)=abac aba abac ab abac aba abac abac aba abac ab\cdots
  \end{displaymath}
  is the \emph{Tribonacci word}\index{Tribonacci word}. It is a strict
  standard episturmian word (see~\cite{JustinVuillon2000}).
\end{example}

The following is, in the case of Sturmian words, Proposition 2.1.25
in~\cite{Lothaire2002}. The general case results from Theorems~2 and~5
in \cite{DroubayJustinPirillo2001}.

\begin{proposition}\label{propositionSturmianMinimal}
  An episturmian word $x$ is uniformly recurrent and $S(x)$ is
  minimal.
\end{proposition}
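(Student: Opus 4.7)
The plan is to reduce the proposition to the standard case and then establish uniform recurrence through the iterated palindromic closure representation of standard episturmian words.

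First, by the remark preceding Example~\ref{exampleTribonacci}, every episturmian word $s$ satisfies $F(s)=F(t)$ for some standard episturmian word $t$. Both uniform recurrence of $x$ and the shift $S(x)$ depend only on $F(x)$, so it suffices to prove the result when $x$ itself is standard. Moreover, by Proposition~\ref{propMinimal}, uniform recurrence of $x$ is equivalent to the minimality of $S(x)$, so I only need to establish uniform recurrence.

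Assume then that $x$ is standard episturmian. I would invoke the characterization of~\cite{DroubayJustinPirillo2001}: there exist a directive sequence $(a_n)_{n\ge 1}\in A^{\mathbb{N}}$ in which every letter of $\mathrm{alph}(x)$ appears infinitely often, and a sequence of palindromic prefixes $p_0=1$, $p_{n+1}=(p_n a_{n+1})^{(+)}$ (where ${}^{(+)}$ denotes palindromic right-closure), such that $x=\lim_n p_n$. Two elementary consequences of the recursion are that every factor of $x$ is a factor of some $p_N$, and that $p_{n+1}$ both begins with $p_n$ and ends with $p_n$ (since $p_n$ is a palindrome and $p_{n+1}$ begins with $p_n a_{n+1}$, so its reverse ends with $a_{n+1} p_n$).

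Given a factor $u$ of $x$, I would choose $N$ with $u$ a factor of $p_N$; it then suffices to show that $p_N$ occurs in $x$ with bounded gaps. Pick $M>N$ large enough that every letter of $\mathrm{alph}(x)$ appears among $a_{N+1},\dots,a_M$, which is possible by the hypothesis on the directive sequence. A direct induction on $n\in\{N,\dots,M\}$, using only the definition of palindromic right-closure and the fact that $p_{n+1}$ starts and ends with $p_n$, shows that every factor of $p_M$ of length $|p_M|-|p_N|+1$ contains an occurrence of $p_N$. Since $p_{kM}$ is a prefix of $x$ for every $k\ge 1$ and exhibits the same recursive structure, the bound transfers to all of $x$, yielding the required bounded gaps.

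The main technical obstacle is the inductive bounded-gap estimate for the palindromic prefixes. Everything else is either a reduction using Proposition~\ref{propMinimal} and the standard-versus-arbitrary reduction recalled above, or a direct unpacking of the DJP construction. The estimate itself amounts to a combinatorial analysis of palindromic closures, ensuring that each new directive letter extends the previous palindromic prefix by duplicating a copy of it rather than destroying its earlier occurrences.
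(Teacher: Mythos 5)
The paper does not actually prove this proposition: it is quoted from the literature (Proposition~2.1.25 of \cite{Lothaire2002} for the Sturmian case, Theorems~2 and~5 of \cite{DroubayJustinPirillo2001} in general), so you are attempting more than the text does. Your two reductions are sound: replacing $x$ by a standard episturmian word with the same factor set is exactly the content of the cited Theorem~5, and Proposition~\ref{propMinimal} does let you trade minimality of $S(x)$ for uniform recurrence.

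There is, however, a genuine gap in the core argument. First, your premise that every letter of $\mathrm{alph}(x)$ appears infinitely often in the directive word is false for non-strict episturmian words: by the theorem recalled in Section~\ref{subsection-episturmian}, that property characterizes the \emph{strict} standard episturmian words, whereas the proposition covers all episturmian words. Example~\ref{exampleNonStrict} is a counterexample: $\Delta=c(ab)^\omega$ directs a word in which $c$ occurs at every other position, yet $c$ occurs exactly once in $\Delta$; for $N\ge 1$ no admissible $M$ exists and your choice of $M$ is impossible. Second, even where $M$ can be chosen, the key estimate (``every factor of $p_M$ of length $|p_M|-|p_N|+1$ contains an occurrence of $p_N$'') is asserted rather than proved, and the phrase ``the bound transfers to all of $x$'' is precisely where the uniformity must be established: your induction runs only over $n\in\{N,\dots,M\}$, but you need a length bound that works inside $p_n$ for \emph{every} $n\ge N$ (the expression $p_{kM}$ has no meaning in the construction). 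The good news is that the faulty hypothesis is unnecessary. Since $p_{n+1}=(p_na_{n+1})^{(+)}$ and the last letter of $p_na_{n+1}$ is a palindromic suffix, one has $|p_{n+1}|\le 2|p_n|+1$, and $p_n$ is both a prefix and a suffix of $p_{n+1}$; an induction on all $n\ge N$ then shows that consecutive occurrences of $p_N$ in $p_n$ start at distance at most $|p_N|+1$ (the only new gap is at the junction of the two overlapping copies of $p_n$, where it equals $|p_{n+1}|-2|p_n|+|p_N|\le |p_N|+1$). Hence every factor of $x$ of length $2|p_N|$ contains $p_N$, which gives uniform recurrence for all episturmian words with a clean, explicit bound.
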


The converse is false as shown by the following example.

\begin{example}
  The Thue--Morse word of Example~\ref{exampleMorse} is not Sturmian.
  Indeed, it has four factors of length $2$.
\end{example}

We recall now some notions and properties concerning episturmian
words. A detailed exposition with proofs is given
in~\cite{JustinVuillon2000,DroubayJustinPirillo2001,JustinPirillo2002,JustinPirillo2004}. See
also the survey paper~\cite{GlenJustin2009}.  For $a\in A$, denote by
$\psi_a$ the morphism  of $A^*$ into itself, called
\emph{elementary morphism}\index{elementary
  morphism}\index{morphism!elementary}, defined by
\begin{displaymath}
\psi_a(b)=\begin{cases}ab&\text{if $b\ne a$}\\
                       a&\text{otherwise}
         \end{cases}
\end{displaymath}
Let $\psi:A^*\rightarrow \End(A^*)$ be the morphism from $A^*$ into
the monoid of endomorphisms of $A^*$ which maps each $a\in A$ to
$\psi_a$. For $u\in A^*$, we denote by $\psi_u$ the image of $u$ by
the morphism $\psi$.  Thus, for three words $u,v,w$, we have
$\psi_{uv}(w)=\psi_u(\psi_v(w))$.

A \emph{palindrome}\index{palindrome word} is a word $w$ which is
equal to its reversal.  Given a word $w$, we denote by $w^{(+)}$ the
\emph{palindromic closure}\index{palindromic closure} of $w$. It is,
by definition, the shortest palindrome which has $w$ as a prefix.

The \emph{iterated palindromic closure}\index{iterated palindromic
  closure} of a word $w$ is the word $\Pal(w)$ defined recursively as
follows. One has $\Pal(1)=1$ and for $u\in A^*$ and $a\in A$, one has
$\Pal(ua)=(\Pal(u)a)^{(+)}$. Since $\Pal(u)$ is a proper prefix of
$\Pal(ua)$, it makes sense to define the iterated palindromic closure
of an infinite word $x$ as the infinite word which is the limit of the iterated palindromic closure
of the prefixes of $x$.

\emph{Justin's Formula}\index{Justin's Formula} is the following. For
every words $u$ and $v$, one has
\begin{displaymath}
  \Pal(uv)=\psi_u(\Pal(v))\Pal(u)\,.
\end{displaymath}
This formula extends to infinite words: if $u$ is a word and $v$ is an
infinite word, then
\begin{equation}\label{JustinInfini}
  \Pal(uv)=\psi_u(\Pal(v))\,.
\end{equation}
There is a precise combinatorial description of standard episturmian
words (see e.g.~\cite{JustinVuillon2000,GlenJustin2009}). 

\begin{theorem}
  An infinite word $s$ is a standard episturmian word if and only if
  there exists an infinite word $\Delta=a_0a_1\cdots$, where the $a_n$
  are letters, such that
  \begin{displaymath}
    s=\lim_{n\to\infty} u_n\,,
  \end{displaymath}
  where the sequence $(u_n)$ is defined by $u_n=\Pal(a_0a_1\cdots
  a_{n-1})$.  Moreover, the word $s$ is episturmian strict if and only
  if every letter appears infinitely often in~$\Delta$.
\end{theorem}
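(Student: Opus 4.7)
The plan is to prove the two directions of the equivalence separately, then address the strictness clause. For the ``if'' direction, suppose $s = \lim_{n\to\infty} u_n$ with $u_n = \Pal(a_0 a_1 \cdots a_{n-1})$. Each $u_n$ is a palindrome by construction, and $u_n$ is a proper prefix of $u_{n+1}$, so $s$ has infinitely many palindromic prefixes. From this I would deduce that $F(s)$ is closed under reversal: every factor of $s$ occurs inside a sufficiently long palindromic prefix $u_n$, and reversing inside a palindrome yields another factor of $u_n$, hence of $s$. To show that $s$ is standard episturmian, I would prove by induction on $n$, using Justin's formula $\Pal(uv) = \psi_u(\Pal(v))\Pal(u)$, that each left-special factor of length at most $|u_n|$ is a prefix of $u_n$ (hence of $s$), and that exactly one right-special factor of each length exists. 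The base case is trivial; in the inductive step, applying $\psi_{a_n}$ to shorter left-special factors produces the new left-special prefixes, while no new left-special factor can arise outside this family because of the palindromic structure of the $u_n$.

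For the ``only if'' direction, let $s$ be standard episturmian and enumerate its palindromic prefixes in increasing length as $p_0 = 1, p_1, p_2, \ldots$. The main obstacle is to show that (i) this sequence is infinite, and (ii) each $p_{n+1}$ equals $(p_n a_n)^{(+)}$ for a uniquely determined letter $a_n \in A$. For (i), since $s$ is uniformly recurrent by Proposition~\ref{propositionSturmianMinimal} and all its left-special factors are prefixes, each prefix of $s$ is a prefix of a longer left-special factor whose reversal appears in $F(s)$; combining this with closure under reversal gives arbitrarily long palindromic prefixes. For (ii), I would let $a_n$ be the first letter occurring in $s$ strictly after the occurrence of $p_n$ as a prefix; then $p_n a_n$ is a prefix of $s$, and its palindromic closure is a palindromic prefix of $s$ of length at most $|p_{n+1}|$. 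Minimality of palindromic closure and the listing of the $p_i$ by increasing length force equality with $p_{n+1}$. Setting $\Delta = a_0 a_1 \cdots$ then yields $u_n = p_n = \Pal(a_0 \cdots a_{n-1})$ by induction on $n$, so $s = \lim_n u_n$.

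For the strictness clause, I would rely on the infinite form of Justin's formula~(\ref{JustinInfini}): if $s = \Pal(\Delta)$ and every letter $b \in A$ appears infinitely often in $\Delta$, then for each $n$ one can write $\Delta = a_0 \cdots a_{n-1} v$ where $v$ still contains every letter infinitely often, and Justin's formula shows that each $b$ occurs as a right extension of $u_n$ inside $s$, so the unique right-special factor $u_n$ of length $|u_n|$ is strict. Conversely, if some letter $b$ occurs only finitely often in $\Delta$, then beyond some index $N$ the morphisms $\psi_{a_n}$ never introduce $b$ as a right extension of $u_n$, so sufficiently long right-special factors of $s$ cannot be followed by $b$, and $s$ fails to be strict. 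The most delicate step is the reverse direction, part (ii): verifying that no palindromic prefix is missed between consecutive $p_n$ and $p_{n+1}$, which relies on the rigidity of the palindromic closure operator together with the left-special-prefix property of standard episturmian words.
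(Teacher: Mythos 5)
First, a remark on the comparison itself: the paper does not prove this statement; it is quoted from the literature (Droubay--Justin--Pirillo, Justin--Vuillon, Glen--Justin), so there is no in-paper argument to match. Your outline does follow the standard strategy, but it has genuine gaps. The most serious is in the ``only if'' direction: the assertion that $(p_n a_n)^{(+)}$ is again a prefix of $s$ is precisely the key lemma of Droubay--Justin--Pirillo and carries essentially the whole weight of this direction, yet you state it as if it were immediate. It requires a real argument combining closure of $F(s)$ under reversal with the fact that all left-special factors are prefixes (one shows that the longest palindromic suffix of $p_n a_n$, being a reversed prefix, is left-special, and deduces from uniqueness of prefixes of each length that the closure sits inside $s$). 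Once that lemma is granted, your sandwiching argument identifying $(p_n a_n)^{(+)}$ with $p_{n+1}$ does work; the ``delicate step'' you flag is in fact the easy part.

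Second, two of your auxiliary claims fail for episturmian words that are not strict. Your argument for the infinitude of palindromic prefixes rests on ``each prefix of $s$ is a prefix of a longer left-special factor'', and in the ``if'' direction you propose to prove that ``exactly one right-special factor of each length exists''. Both are false for $s=\Pal(abc^\omega)=(abac)^\omega$, an example appearing in the paper: its only right-special (hence, by reversal, only left-special) factors are $1$ and $a$, so the prefix $abac$ extends to no longer left-special factor, and there is no right-special factor of length $2$. The infinitude of palindromic prefixes should instead be obtained from the key lemma itself, applied iteratively starting from $p_0=1$, and the right-special count in the ``if'' direction must be weakened to ``at most one''. The strictness clause is argued correctly in outline, using~\eqref{EquationMagique} and the fact that each $u_n$, being a palindromic prefix of the palindrome $u_m$, is a suffix of $u_m$ for all $m\ge n$.
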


\noindent The infinite word $\Delta$ is called the \emph{directive
  word}\index{directive word} of the standard word $s$. The
description of the infinite word $s$ can be rephrased by the equation
\begin{displaymath}
  s=\Pal(\Delta)\,.
\end{displaymath}
As a particular case of Justin's
Formula, one has
\begin{equation}\label{EquationMagique}
  u_{n+1}=\psi_{a_0\cdots a_{n-1}}(a_n)u_n\,.
\end{equation}
The words $u_n$ are the only prefixes of $s$ which are palindromes. 

\begin{example}
  The Fibonacci word $x$ of Example~\ref{exampleFibonacci} is a
  standard episturmian word. It has the
  directive word $(ab)^\omega$, that is $x=\Pal((ab)^\omega)$
  \cite{GlenJustin2009}.  The Tribonacci word of
  Example~\ref{exampleTribonacci} has the directive word
  $\Delta=(abc)^\omega$ \cite{JustinVuillon2000}.  The corresponding
  sequence $(u_n)$ starts with $u_1=a$, $u_2=aba$,
  $u_3=abacaba$. Observe that $\psi_{ab}(c)=abac$, so that indeed
  $u_3=abac u_2$, as claimed in~\eqref{EquationMagique}.
\end{example}

\begin{example}\label{exampleNonStrict}
  Let $A=\{a,b,c\}$ and $\Delta=c(ab)^\omega$. Then, we have $u_1=c$,
  $u_2=cac$, $u_3=cacbcac$, $u_4=cacbcacacbcac$. By Justin's
  Formula~\ref{JustinInfini}, the limit is the word $x=\psi_c(y)$,
  where $y=\Pal((ab)^\omega)$ is the Fibonacci word on $\{a,b\}$. This
  means that $x$ is obtained from $y$ by inserting a letter $c$ before
  every letter of $y$. The word $x$ is not strict. Indeed, the letters
  $a$ and $b$ are not right-special and the letter $c$ is not strict
  right special since $cc$ is not a factor.
\end{example}

\begin{example} 
   Let $A=\{a,b,c\}$ and $\Delta=abc^\omega$. It is easily checked
   that $\Pal(\Delta)$ is the periodic word $(abac)^\omega$. The only
   right-special factors of this word are $1$ and $a$ (\cite{GlenJustin2009}).
\end{example}

\subsection{Probability distributions}\label{subsectionProbas}

Let $F\subset A^*$ be a prefix-closed set of words. For $w\in F$,
denote by $S(w)$ the set
$S(w)=\{a\in A\mid wa\in F\}$.  A \emph{right probability
  distribution} \index{right probability distribution}%
\index{probability distribution!right} on $F$ is a map
$\proba:F\rightarrow [0,1]$ such that
\begin{enumerate}
 \item[\upshape{(i)}] $\proba(1)=1$, 
 \item[\upshape{(ii)}] $\sum_{a\in S(w)}\proba(wa)=\proba(w)$, for any $w\in F$.
\end{enumerate}
For a right probability distribution $\proba$ on $F$ and a set $X\subset
F$, we denote $\proba(X)=\sum_{x\in X}\proba(x)$.  See
\cite{BerstelPerrinReutenauer2009} for the elementary properties of
right probability distributions. Note in particular that for any $u\in
F$ and $n\ge 0$, one has, as a consequence of condition (ii),
\begin{equation}
  \proba(uA^n\cap F)=\proba(u).\label{eqProbas}
\end{equation}
In particular, if $\proba$ is a right probability distribution on $F$,
then $\proba(F\cap A^n)=1$ for all $n\ge 0$.

The distribution is said to be \emph{positive}\index{positive
  probability distribution}\index{probability distribution!right} on
$F$ if $\proba(x)>0$ for any $x\in F$.

Symmetrically, for a suffix-closed set $F$, a \emph{left probability
  distribution}\index{left probability distribution}%
\index{probability distribution!left} is a map $\proba:F\rightarrow
[0,1]$ satisfying condition (i) above and

\begin{enumerate}
\item[(iii)] $\sum_{a\in P(w)}\proba(aw)=\proba(w)$, for any $w\in F$,
\end{enumerate}
with  $P(w)=\{a\in A\mid aw\in F\}$.

When $F$ is factorial, an \emph{invariant probability
  distribution}\index{invariant probability distribution}%
\index{probability distribution!invariant} is both a left and a right
probability distribution.

\begin{proposition}
  For any right essential set $F$ of words, there exists a positive
  right probability distribution $\proba$ on $F$.
\end{proposition}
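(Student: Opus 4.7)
The plan is to construct $\pi$ explicitly by a top-down recursion on word length, distributing the mass $\pi(w)$ uniformly among the extensions $wa \in F$. Since $F$ is right essential, every $w \in F$ satisfies $|S(w)| \geq 1$, and since $A$ is finite, $|S(w)| \leq |A|$, so dividing by $|S(w)|$ makes sense and stays strictly positive.

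More precisely, I would set $\pi(1) = 1$, and then, for any $w \in F$ and any $a \in S(w)$, define
\begin{displaymath}
  \pi(wa) \;=\; \frac{\pi(w)}{|S(w)|}.
\end{displaymath}
Because $F$ is prefix-closed, every nonempty $w \in F$ has a unique factorization $w = w'a$ with $w' \in F$ and $a \in S(w')$, so this recursion unambiguously defines $\pi$ on all of $F$. Equivalently, writing the prefixes of $w$ as $1 = w_0, w_1, \ldots, w_n = w$ with $w_i = w_{i-1}a_i$, we have
\begin{displaymath}
  \pi(w) \;=\; \prod_{i=1}^{n} \frac{1}{|S(w_{i-1})|}.
\end{displaymath}

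It remains to verify the three required properties. Condition (i) holds by construction. For condition (ii), we compute directly
\begin{displaymath}
  \sum_{a \in S(w)} \pi(wa) \;=\; \sum_{a \in S(w)} \frac{\pi(w)}{|S(w)|} \;=\; \pi(w).
\end{displaymath}
For positivity, each factor $1/|S(w_{i-1})|$ in the product above is strictly positive (here we use right-essentiality in an essential way, to guarantee $S(w_{i-1}) \neq \emptyset$), and hence $\pi(w) > 0$ for every $w \in F$.

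There is no real obstacle in this argument; the only subtle point is noting that right-essentiality is used precisely to guarantee that the denominators $|S(w)|$ are nonzero, which in turn is what makes the uniform distribution legitimate and keeps $\pi$ strictly positive. If desired, the uniform weights $1/|S(w)|$ could be replaced by any positive weights summing to $1$ on $S(w)$, showing that such a distribution is far from unique.
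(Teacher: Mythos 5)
Your proof is correct and is essentially identical to the paper's: the paper also sets $\proba(w)=1/(d_0d_1\cdots d_{n-1})$ with $d_i=\Card(S(a_1\cdots a_i))$, which is exactly your uniform-splitting product, and verifies conditions (i), (ii) and positivity the same way. Your closing remark about replacing the uniform weights by arbitrary positive weights is a nice extra observation but not needed.
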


\begin{proof}
  Consider the map $\proba:F\to[0,1]$ defined for $w=a_1a_2\cdots a_n$ by
  \begin{displaymath}
    \proba(w)=\frac{1}{d_0d_1\cdots d_{n-1}}
  \end{displaymath}
  where $d_i=\Card(S(a_1\cdots a_i))$ for $0\le i< n$.  Since $F$ is
  right essential, $d_i\ne0$ for $0\le i< n$. By convention,
  $\proba(1)=1$.

  Let us verify that $\proba$ is a right probability distribution on $F$.
  Indeed, let $w=a_1a_2\cdots a_n$.  The set $S(w)$ is nonempty. Let
  $a\in S(w)$, we have $\proba(wa)=1/d_0d_1\cdots d_n$. Since
  $\Card(S(w))=d_n$, we obtain that $\proba$ satisfies condition (ii) and
  thus it is a right probability distribution.  It is clearly
  positive.
\end{proof}

We will now turn to the existence of positive invariant probability
distributions. 

A \emph{topological dynamical system}%
\index{topological dynamical system}%
\index{dynamical system!topological} is a pair $(S,\sigma)$ of a
compact metric space $S$ and a continuous map $\sigma$ from $S$ into
$S$. Any shift space $S$ becomes a topological dynamical system when
it is equipped with the \emph{shift} map defined by
$\sigma(x_0x_1\cdots)=x_1x_2\cdots$.  Indeed, we consider $A^\N$ as a
metric space for the distance defined for $x=x_0x_1\cdots$ and
$y=y_0y_1\cdots$ by $d(x,y)=0$ if $x=y$ and $d(x,y)=2^{-n}$ where $n$
is the least integer such that $x_n\ne y_n$ otherwise.

A subset $T$ of a topological dynamical system $(S,\sigma)$ is said to
be \emph{stable under $\sigma$}%
\index{stable dynamical system}\index{dynamical system!stable} or
\emph{stable} for short if $\sigma(T)\subset T$. A stable subset is
also called (topologically) \emph{invariant}.

The following property is well-known (although usually stated for two
sided-infinite words, see for example Proposition~1.5.1
in~\cite{Lothaire2002}).

\begin{proposition}\label{propShiftSpaces}
  The shift spaces are the stable and closed subsets of
  $(A^\N,\sigma)$.
\end{proposition}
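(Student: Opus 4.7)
The plan is to prove both implications, treating the forward direction (shift spaces are closed and stable) as the easier one and the reverse direction (a closed stable set is $S_X$ for some $X$) as the main content.

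For the forward direction, I fix a shift space $S = S_X$ and verify both properties directly. Stability is immediate: if $y \in S_X$, then $F(\sigma(y)) \subseteq F(y)$ is disjoint from $X$, hence $\sigma(y) \in S_X$. For closedness, I take a sequence $y^{(n)} \in S_X$ converging to some $y \in A^\N$; if $y$ had a factor $w \in X$, occurring at some position $k$, then for $n$ large enough, $d(y^{(n)}, y) < 2^{-(k+|w|)}$ would force $y^{(n)}$ to agree with $y$ on the first $k+|w|$ letters, putting $w$ in $F(y^{(n)})$ and contradicting $y^{(n)} \in S_X$.

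For the reverse direction, given a closed stable subset $S \subseteq A^\N$, the natural choice is to set
\begin{displaymath}
X = A^* \setminus \bigcup_{y \in S} F(y),
\end{displaymath}
i.e.\ the words that are not factors of any element of $S$, and then show $S = S_X$. The inclusion $S \subseteq S_X$ is by definition of $X$. The substantive inclusion is $S_X \subseteq S$: I take $y \in S_X$ and must exhibit $y$ as a limit of points of $S$, using that $S$ is closed. For each $n$, the prefix $y_0 y_1 \cdots y_{n-1}$ belongs to some $F(z^{(n)})$ with $z^{(n)} \in S$, so I can write $z^{(n)} = u_n\, y_0 \cdots y_{n-1}\, \cdots$ with $u_n \in A^*$; applying $\sigma^{|u_n|}$ and invoking stability produces a point $w^{(n)} \in S$ whose first $n$ letters coincide with those of $y$. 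Hence $d(w^{(n)}, y) \le 2^{-n}$, so $w^{(n)} \to y$, and closedness of $S$ yields $y \in S$.

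The only delicate point is making sure the definition of $X$ correctly captures $S$: it must hit every $y \notin S$ via one of its factors. This is precisely what the above argument establishes by contrapositive, using both hypotheses (stability to translate an arbitrary occurrence to the start, closedness to pass from approximations to the limit). I do not anticipate a real obstacle, since the two hypotheses on $S$ are exactly tailored to these two uses.
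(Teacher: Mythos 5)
Your proof is correct and follows essentially the same route as the paper: take $X$ to be the set of words that are not factors of any element of $S$, then use stability to shift each witnessing point so that the relevant prefix of $y$ sits at the start, and use closedness to conclude $y\in S$ from the resulting convergent sequence. The only difference is that you spell out the forward direction, which the paper dismisses as clear.
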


\begin{proof}
  It is clear that a shift space is both closed and
  stable. Conversely, let $S\subset A^\N$ be closed and stable under
  the shift. Let $X$ be the set of words which are not factors of
  words of $S$. Then $S=S_X$. Indeed, if $y\in S$, then $F(y)\cap
  X=\emptyset$ and thus $y\in S_X$. Conversely, let $y\in S_X$. Let
  $w_n$ be the prefix of length $n$ of $y$. Since $w_n\notin X$ there
  is an infinite word $y^{(n)}\in S$ such that $w_n\in
  F(y^{(n)})$. Since $S$ is stable under the shift, we may assume that
  $w_n$ is a prefix of $y^{(n)}$. The sequence $y^{(n)}$ converges to
  $y$. Since $S$ is closed, this forces $y\in S$.
\end{proof}

Let $S$ be a metric space.  The family of \emph{Borel
  subsets}\index{Borel subsets} of $S$ is the smallest family $\cal F$
of subsets of $S$ containing the open sets and closed under complement
and countable union. A function $\mu$ from $\cal F$ to $\R$ is said to
be \emph{countably additive}\index{countably additive!function} if
$\mu(\bigcup_{n\ge 0}X_n)=\sum_{n\ge 0}\mu(X_n)$ for any sequence $(X_n)$
of pairwise disjoint Borel subsets of $S$.  A \emph{Borel probability
  measure}\index{Borel probability measure} on $S$ is a function $\mu$
from $\cal F$ into $[0,1]$ which is
countably additive and such that $\mu(S)=1$.

Let $(S,\sigma)$ be a topological dynamical system.  A Borel
probability measure on $S$ is said to be \emph{invariant}%
\index{Borel probability measure!invariant}%
\index{invariant Borel probability measure} if
$\mu(\sigma^{-1}(B))=\mu(B)$ for any $B\in\cal F$. Note that since
$\sigma$ is continuous, $\sigma^{-1}(B)\in \cal F$ and thus
$\mu(\sigma^{-1}(B))$ is well defined.

The following result is from~\cite[Theorem 4.2]{Queffelec2010}.
\begin{theorem}\label{thKrylovBogolioubov}
  For any topological dynamical system, there exist invariant Borel
  probability measures.
\end{theorem}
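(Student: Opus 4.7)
The plan is to use the Krylov--Bogolioubov construction: start with any Borel probability measure on $S$, form Cesàro averages of its iterates under $\sigma$, and then extract a weak-$*$ limit. Concretely, pick any point $x_0\in S$ (which is nonempty because it is compact metric; if one worried about this, take $S=A^\N$ itself, which is nonempty) and let $\mu_0=\delta_{x_0}$ be the Dirac mass at $x_0$. For $n\ge 1$ define the averaged measures
\begin{equation*}
  \mu_n=\frac{1}{n}\sum_{k=0}^{n-1}\sigma_*^k\mu_0,
\end{equation*}
where $\sigma_*^k\mu_0(B)=\mu_0(\sigma^{-k}(B))$ for every Borel set $B$. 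Each $\mu_n$ is a Borel probability measure on $S$.

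Next I would invoke the weak-$*$ compactness of the space $\mathcal{M}(S)$ of Borel probability measures on the compact metric space $S$. Since $S$ is compact metric, the space $C(S)$ of real-valued continuous functions on $S$ is separable, and by the Riesz representation theorem $\mathcal{M}(S)$ embeds into the closed unit ball of $C(S)^*$ (viewing $\mu$ as the functional $f\mapsto\int f\,d\mu$). The Banach--Alaoglu theorem then makes this ball weak-$*$ compact and metrizable, so the sequence $(\mu_n)$ admits a subsequence $(\mu_{n_j})$ converging weakly-$*$ to some Borel probability measure $\mu$: that is, $\int f\,d\mu_{n_j}\to\int f\,d\mu$ for every $f\in C(S)$.

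It remains to show $\mu$ is invariant, i.e. $\int f\circ\sigma\,d\mu=\int f\,d\mu$ for every $f\in C(S)$ (which is equivalent to $\mu(\sigma^{-1}(B))=\mu(B)$ for every Borel $B$, by the usual monotone class / outer regularity argument). A direct telescoping computation gives
\begin{equation*}
  \int f\circ\sigma\,d\mu_n-\int f\,d\mu_n
  =\frac{1}{n}\Bigl(\int f\,d(\sigma_*^n\mu_0)-\int f\,d\mu_0\Bigr),
\end{equation*}
whose absolute value is at most $2\|f\|_\infty/n$ and hence tends to $0$. Passing to the limit along $(n_j)$ and using continuity of $\sigma$ (so that $f\circ\sigma\in C(S)$) yields $\int f\circ\sigma\,d\mu=\int f\,d\mu$, proving invariance.

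The main obstacle is the weak-$*$ compactness step, since it is the one genuinely non-elementary ingredient: it relies on Riesz representation and Banach--Alaoglu (equivalently, Helly's selection theorem on a separable $C(S)$). Once that compactness is granted, the rest is bookkeeping: the averaging makes any limit point invariant, exactly as in the classical Krylov--Bogolioubov argument, and no specific structure of $(S,\sigma)$ beyond compactness and continuity is used.
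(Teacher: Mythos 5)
Your argument is correct: it is the classical Krylov--Bogolioubov proof (Ces\`aro averages of push-forwards of a Dirac mass, weak-$*$ compactness of the probability measures on a compact metric space via Riesz representation and Banach--Alaoglu, and the telescoping estimate $\bigl|\int f\circ\sigma\,d\mu_n-\int f\,d\mu_n\bigr|\le 2\|f\|_\infty/n$ to get invariance of any limit point). The paper does not prove this statement itself --- it simply cites it as Theorem 4.2 of Queff\'elec's book --- and your proof is exactly the standard argument behind that citation, so there is nothing to add.
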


A dynamical system $(S,\sigma)$ is said to be \emph{minimal}%
\index{minimal dynamical system}\index{dynamical system!minimal} if
the only closed stable subsets of $S$ are $S$ and $\emptyset$.  Note
that, by Proposition~\ref{propShiftSpaces}, this definition is
consistent with the definition of a minimal shift space.  A Borel
probability measure $\mu$ on $S$ is \emph{positive}%
\index{Borel probability measure!positive}%
\index{positive Borel probability measure} if $\mu(U)>0$ for every
nonempty open set $U\subset S$.

\begin{proposition}\label{propMinimalisPositive}
  Any invariant Borel probability measure on a minimal topological
  dynamical system is positive.
\end{proposition}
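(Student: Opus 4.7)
The plan is to argue by contradiction: suppose $\mu(U)=0$ for some nonempty open set $U\subset S$, and construct a proper nonempty closed stable subset of $S$, contradicting minimality.

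First, I would use invariance to spread $U$ around the orbit. By invariance, $\mu(\sigma^{-n}(U))=\mu(U)=0$ for every $n\ge 0$, and since $\sigma$ is continuous, each $\sigma^{-n}(U)$ is open. Set
\begin{displaymath}
  V=\bigcup_{n\ge 0}\sigma^{-n}(U).
\end{displaymath}
Then $V$ is open, and by countable additivity (subadditivity applied to a null set suffices), $\mu(V)=0$. Since $\mu(S)=1$, the complement $T=S\setminus V$ satisfies $\mu(T)=1$, in particular $T\ne\emptyset$. Also $T$ is closed.

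Next I would verify that $T$ is stable under $\sigma$. If $x\in T$ but $\sigma(x)\in V$, then $\sigma(x)\in\sigma^{-n}(U)$ for some $n\ge 0$, whence $\sigma^{n+1}(x)\in U$ and therefore $x\in\sigma^{-(n+1)}(U)\subset V$, contradicting $x\in T$. Hence $\sigma(T)\subset T$.

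Finally, $T$ is a closed, stable, nonempty subset of $S$, so by minimality $T=S$, i.e.\ $V=\emptyset$. But $U\subset V$ and $U\ne\emptyset$, a contradiction. There is no real obstacle here; the only point requiring attention is the verification of stability of the complement of $V$, and it is important to build $V$ as the union of \emph{all} preimages $\sigma^{-n}(U)$ (not just $U$ itself) precisely so that this stability holds.
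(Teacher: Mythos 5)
Your proof is correct and follows essentially the same route as the paper: both form the open set $\bigcup_{n\ge 0}\sigma^{-n}(U)$, observe that its complement is closed and stable, and invoke minimality together with invariance of $\mu$ to reach a contradiction. The only difference is the order in which the two contradictions ($U\ne\emptyset$ versus $\mu(S)=1$) are deployed at the end, which is immaterial.
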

\begin{proof}
  Let $\mu$ be an invariant Borel probability measure on the topological
  dynamical system $(S,\sigma)$. Let $U\subset S$ be a nonempty open
  set.  Let $Y=\cup_{n\ge 0}\sigma^{-n}(U)$ and $Z=S\setminus
  Y$. Since $U$ is open and $\sigma$ is continuous, each
  $\sigma^{-n}(U)$ is open.  Thus $Y$ is open and $Z$ is closed. The
  set $Z$ is also stable. Indeed, if for $z\in Z$ we had
  $\sigma(z)\notin Z$, then there would be an integer  $n\ge 0$ such that
  $\sigma(z)\in\sigma^{-n}(U)$.  Thus $z\in\sigma^{-n-1}(U)\subset Y$,
  a contradiction. Thus $\sigma(Z)\subset Z$. Since $(S,\sigma)$ is
  minimal, this implies that $Z=\emptyset$ or $Z=S$. Since $U$ is
  nonempty, we have $Z=\emptyset$ and thus $Y=S$. Since $\mu$ is
  invariant, we have $\mu(\sigma^{-1}(U))=\mu(U)$ and thus
  $\mu(\sigma^{-n}(U))=\mu(U)$ for all $n\ge 0$. Hence we cannot have
  $\mu(U)=0$ since it would imply $\mu(S)\le \sum_{n\ge
    0}\mu(\sigma^{-n}(U))=0$, a contradiction since $\mu(S)=1$.
\end{proof}

\begin{corollary}
  For any recurrent set $F$ there exists an invariant probability
  distribution on $F$. When $F$ is uniformly recurrent, such a
  distribution is positive.
\end{corollary}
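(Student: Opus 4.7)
The plan is to transfer an invariant Borel probability measure on a suitable shift space into an invariant probability distribution on $F$ in the combinatorial sense of Section~\ref{subsectionProbas}.

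First, by Proposition~\ref{propExists}, choose an infinite word $x$ such that $F(x)=F$. Consider the associated shift space $S(x)\subset A^{\N}$, which is a closed shift-invariant subset of $(A^{\N},\sigma)$ and hence a topological dynamical system. By Theorem~\ref{thKrylovBogolioubov}, there is an invariant Borel probability measure $\mu$ on $S(x)$. For each word $w$, set
\begin{displaymath}
  C_w=\{y\in S(x)\mid w\text{ is a prefix of }y\},
\end{displaymath}
which is a clopen Borel subset of $S(x)$, and define $\proba(w)=\mu(C_w)$ for $w\in F$. Note that $C_w\neq\emptyset$ whenever $w\in F$, because writing $w$ as a factor of $x$ produces an infinite suffix of $x$ starting with $w$, and any such suffix lies in $S(x)$; conversely, if $u\notin F$ then $C_u=\emptyset$ since any $y\in S(x)$ starting with $u$ would force $u\in F(y)\subset F(x)=F$.

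Next, I verify that $\proba$ is an invariant probability distribution on $F$. Clearly $\proba(1)=\mu(S(x))=1$. For the right condition, the disjoint decomposition $C_w=\bigsqcup_{a\in A}C_{wa}$ together with $C_{wa}=\emptyset$ whenever $wa\notin F$ gives $\proba(w)=\sum_{a\in S(w)}\proba(wa)$. For the left condition, the key step — and the main technical point — is the identity
\begin{displaymath}
  \sigma^{-1}(C_w)=\bigsqcup_{a\in A}C_{aw},
\end{displaymath}
obtained directly from the definition of $\sigma$. Again $C_{aw}=\emptyset$ when $aw\notin F$, so only $a\in P(w)$ contribute. Applying the $\sigma$-invariance of $\mu$ yields
\begin{displaymath}
  \proba(w)=\mu(C_w)=\mu(\sigma^{-1}(C_w))=\sum_{a\in P(w)}\proba(aw),
\end{displaymath}
which is condition (iii). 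Thus $\proba$ is an invariant probability distribution on $F$.

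Finally, assume that $F$ is uniformly recurrent. Then $x$ is uniformly recurrent, so by Proposition~\ref{propMinimal} the shift space $S(x)$ is minimal as a dynamical system. Proposition~\ref{propMinimalisPositive} then implies that $\mu$ is positive, i.e.\ $\mu(U)>0$ for every nonempty open $U\subset S(x)$. Since for each $w\in F$ the set $C_w$ is open and nonempty, we conclude $\proba(w)=\mu(C_w)>0$, so $\proba$ is positive. The only delicate point in this scheme is the verification of the left invariance condition, where one must pass cleanly between the shift $\sigma$ on $S(x)$ and the combinatorial left-extension sum over $P(w)$; everything else is bookkeeping once the measure $\mu$ is in hand.
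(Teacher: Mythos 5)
Your proof is correct and follows essentially the same route as the paper: realize $F$ as $F(x)$ via Proposition~\ref{propExists}, pull back an invariant Borel measure on $S(x)$ (Theorem~\ref{thKrylovBogolioubov}) through the cylinders $wA^\N\cap S(x)$, verify the left condition via $\sigma^{-1}$ of a cylinder, and get positivity from minimality (Propositions~\ref{propMinimal} and~\ref{propMinimalisPositive}). Your explicit checks that $C_w\ne\emptyset$ for $w\in F$ and that $\sigma^{-1}(C_w)=\bigsqcup_{a}C_{aw}$ are just slightly more detailed versions of steps the paper leaves implicit.
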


\begin{proof}
  Let $F$ be a recurrent set. By Proposition~\ref{propExists} there is
  a recurrent infinite word $x$ such that $F(x)=F$, and if $F$ is
  uniformly recurrent, then $x$ is uniformly recurrent.

  By Theorem~\ref{thKrylovBogolioubov} there is an invariant Borel
  probability measure $\mu$ on $S=S(x)$.

  Let $\proba$ be the map from $F$ to $[0,1]$ defined by
  $\proba(w)=\mu(wA^\N\cap S)$.  Let us verify that $\proba$ is an invariant
  probability distribution.  Indeed, one has $\proba(1)=\mu(S)=1$. Next,
  for $w\in F$
  \begin{displaymath}
    \sum_{a\in S(w)}\proba(wa)=\sum_{a\in S(w)}\mu(waA^\N\cap
    S)=\mu(wA^\N\cap S)=\proba(w).
  \end{displaymath}
  In the same way
  \begin{displaymath}
    \sum_{a\in P(w)}\proba(aw)=\sum_{a\in P(w)}\mu(awA^\N\cap
    S)=\mu(\sigma^{-1}(wA^\N\cap S))=\mu(wA^\N\cap S)=\proba(w).
  \end{displaymath}
  If $x$ is uniformly recurrent, by Proposition~\ref{propMinimal}, the
  shift space $S=S(x)$ is minimal.  By
  Proposition~\ref{propMinimalisPositive}, the measure $\mu$ is
  positive. Since $wA^\N\cap S$ is a nonempty open set for any $w\in
  F$, we have $\proba(w)=\mu(wA^\N\cap S)>0$ and thus $\proba$ is positive.
\end{proof}

In some cases, there exists a unique invariant probability
distribution on the set $F$. A morphism $f:A^*\rightarrow A^*$ is
\emph{primitive}\index{primitive morphism}\index{morphism!primitive}
if there exists an integer $k$ such that, for all $a,b\in A$, the
letter $b$ appears in $f^k(a)$.  If $f$ is a primitive morphism and if
$f(a)$ starts with the letter $a$ for some $a\in A$, then
$x=f^\omega(a)$ is a fix-point of $f$ and there is a unique invariant
probability distribution $\proba_F$ on the set $F(x)$ (\cite[Theorem
5.6]{Queffelec2010}). Moreover, this distribution is positive.  We
illustrate this result by the following examples.

\begin{example}\label{exProba}
  Let $F$ be the Fibonacci set (see
  Example~\ref{exampleFibonacci}). Since the morphism $f$ defined by
  $f(a)=ab$ and $f(b)=a$ is primitive, there is a unique invariant
  probability distribution on $F$. Its values on the words of length
  at most $4$ are shown on Figure~\ref{figProbaFibo} with
  $\lambda=(\sqrt{5}-1)/2$.
\begin{figure}[hbt]
\centering
\gasset{Nadjust=wh}
\begin{picture}(100,50)
\node(1)(0,25){$1$}
\node(a)(15,35){$\lambda$}\node(b)(15,15){$1-\lambda$}
\node(aa)(30,45){$2\lambda-1$}\node(ab)(30,30){$1-\lambda$}
\node(ba)(30,10){$1-\lambda$}
\node(aab)(50,45){$2\lambda-1$}
\node(aba)(50,30){$1-\lambda$}
\node(baa)(50,15){$2\lambda-1$}\node(bab)(50,5){$2-3\lambda$}
\node(aaba)(70,45){$2\lambda-1$}
\node(abaa)(70,35){$2\lambda-1$}\node(abab)(70,25){$2-3\lambda$}
\node(baab)(70,15){$2\lambda-1$}
\node(baba)(70,5){$2-3\lambda$}
\drawedge(1,a){$a$}\drawedge[ELside=r](1,b){$b$}
\drawedge(a,aa){$a$}\drawedge[ELside=r](a,ab){$b$}
\drawedge(aa,aab){$b$}\drawedge(ab,aba){$a$}
\drawedge(aab,aaba){$a$}
\drawedge(aba,abaa){$a$}\drawedge[ELside=r](aba,abab){$b$}
\drawedge[ELside=r](b,ba){$a$}
\drawedge(ba,baa){$a$}\drawedge[ELside=r](ba,bab){$b$}
\drawedge(baa,baab){$b$}\drawedge(bab,baba){$a$}
\end{picture}
\caption{The invariant probability distribution on the 
  Fibonacci set.}\label{figProbaFibo}
\end{figure}
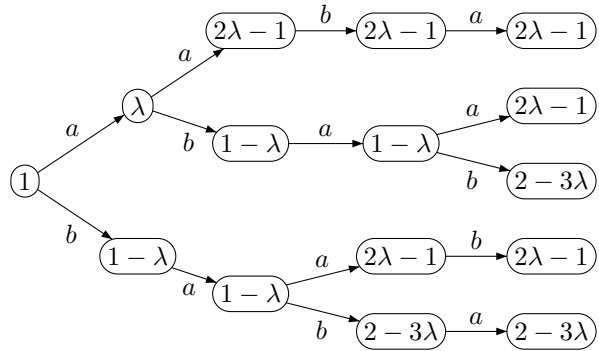
The values of $\proba_F$ can be obtained as follows
(see~\cite{Queffelec2010}). The vector
$v=\begin{bmatrix}\proba_F(a)&\proba_F(b)\end{bmatrix}$ is an eigenvector
for the eigenvalue $1/\lambda$ of the $A\times A$-matrix $M$ defined
by $M_{ab}=|f(a)|_b$. Here, we have
\begin{displaymath}
  M=\begin{bmatrix}1&1\\1&0\end{bmatrix}
\end{displaymath}
This implies $v=\begin{bmatrix}\lambda&1-\lambda\end{bmatrix}$ .  The
other values can be computed using conditions (ii) and (iii) of the
definition of an invariant probability distribution.
\end{example}

\begin{example}\label{exampleDistributionMorse}
  Let $F$ be the Thue--Morse set (see
  Example~\ref{exampleMorse}). Since the Thue-Morse morphism is
  primitive, there is a unique invariant probability distribution on
  $F$. Its values on the words of length at most $4$ are shown on
  Figure~\ref{figProbaMorse}.
\begin{figure}[hbt]
\centering
\gasset{Nadjust=wh}
\begin{picture}(100,63)
\node(1)(0,32){$1$}
\node(a)(15,50){$1/2$}\node(b)(15,15){$1/2$}
\node(aa)(30,60){$1/6$}\node(ab)(30,40){$1/3$}
\node(ba)(30,23){$1/3$}\node(bb)(30,4){$1/6$}
\node(aab)(45,60){$1/6$}
\node(aba)(45,45){$1/6$}\node(abb)(45,35){$1/6$}
\node(baa)(45,28){$1/6$}\node(bab)(45,18){$1/6$}
\node(bba)(45,4){$1/6$}
\node(aaba)(60,63){$1/12$}\node(aabb)(60,56){$1/12$}
\node(abaa)(60,49){$1/12$}\node(abab)(60,42){$1/12$}
\node(abba)(60,35){$1/6$}
\node(baab)(60,28){$1/6$}
\node(baba)(60,21){$1/12$}\node(babb)(60,14){$1/12$}
\node(bbaa)(60,7){$1/12$}\node(bbab)(60,0){$1/12$}
\drawedge(1,a){$a$}\drawedge(1,b){$b$}\drawedge(a,aa){$a$}\drawedge(a,ab){$b$}
\drawedge(aa,aab){$b$}
\drawedge(ab,aba){$a$}\drawedge(ab,abb){$b$}
\drawedge(aab,aaba){$a$}\drawedge(aab,aabb){$b$}
\drawedge(aba,abaa){$a$}\drawedge(aba,abab){$b$}
\drawedge(abb,abba){$a$}
\drawedge(b,ba){$a$}\drawedge(b,bb){$b$}
\drawedge(ba,baa){$a$}\drawedge(ba,bab){$b$}
\drawedge(baa,baab){$b$}
\drawedge(bab,baba){$a$}\drawedge(bab,babb){$b$}
\drawedge(b,bb){$b$}
\drawedge(bb,bba){$a$}
\drawedge(bba,bbaa){$a$}\drawedge(bba,bbab){$b$}
\end{picture}
\caption{The invariant probability distribution on the 
  Thue--Morse set.}\label{figProbaMorse}
\end{figure}
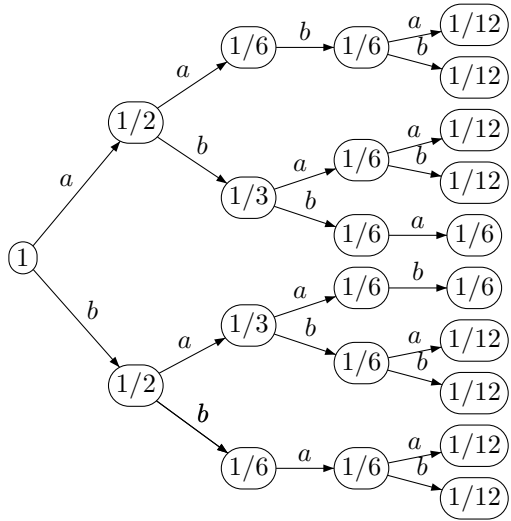
\end{example}

\section{Prefix codes in  factorial sets}\label{sectionPrefixCodes}

In this section, we study prefix codes in a factorial set.  We will
see that most properties known in the usual case are also true in this
more general situation. Some of them are even true in the more general
case of a prefix-closed set instead of a factorial set. In particular,
this holds for the link between prefix codes and probability
distributions (Proposition~\ref{propMaxPrefixCode}).

Recall that a set $X\subset A^+$ of nonempty words over an alphabet
$A$ is a \emph{code}\index{code} if the relation
\begin{displaymath}
  x_1\cdots x_n=y_1\cdots y_m
\end{displaymath}
with $n,m\ge1$ and $x_1,\ldots,x_n,y_1,\ldots,y_m\in X$ implies $n=m$
and $x_i=y_i$ for $i=1,\ldots, n$. For the general theory of codes,
see~\cite{BerstelPerrinReutenauer2009}.

\subsection{Prefix codes}
The \emph{prefix order}\index{prefix order}\index{order!prefix} is defined, for $u,v\in
A^*$, by $u\le v$ if $u$ is a prefix of $v$.  Two words $u,v$ are
\emph{prefix-comparable}\index{prefix-comparable
  words}\index{word!prefix-comparable} if one is a prefix of the
other. Thus $u$ and $v$ are prefix-comparable if and only if there are
words $x,y$ such that $ux=vy$ or, equivalently, if and only if
$uA^*\cap vA^*\ne\emptyset$. The \emph{suffix order}%
\index{order!suffix}, and the notion of suffix-comparable words, are
defined symmetrically.

A set $X\subset A^+$ of nonempty words is a \emph{prefix
  code}\index{prefix code}\index{code!prefix} if any two distinct
elements of $X$ are incomparable for the prefix order. A prefix code
is a code.

The dual notion of a \emph{suffix}\index{suffix
  code}\index{code!suffix} code is defined symmetrically with respect
to the suffix order.

The submonoid $M$ generated by a prefix code satisfies the following
property: if $u,uv\in M$ then $v\in M$. Such a submonoid of $A^*$
is said to be \emph{right unitary}\index{right unitary submonoid}.
One can show that conversely, any right unitary submonoid of $A^*$
is generated by a prefix code
(see~\cite{BerstelPerrinReutenauer2009}).
The symmetric notion of a \emph{left unitary} submonoid is defined
by the condition $v,uv\in M$ implies $u\in M$.

We denote by $\u(X)$ the
\emph{characteristic series}\index{characteristic series} of a set
$X\subset A^*$. By definition, for any $x\in A^*$,
\begin{displaymath}
(\u(X),x)=\begin{cases}1&\text{if $x\in X$}\\0&\text{otherwise}\end{cases}
\end{displaymath}

The following is 
Proposition 3.1.6 in~\cite{BerstelPerrinReutenauer2009}.

\begin{proposition}\label{propositionCharPrefix}
Let $X$ be a prefix code and let $U=A^*\setminus XA^*$. Then
\begin{equation}
\u(A^*)=\underline{X}^*\u(U)\quad
\text{and}\quad\underline{X}-1=\underline{U}(\underline{A}-1).
\label{eqX-1}
\end{equation}
\end{proposition}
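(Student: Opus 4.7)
The plan is to prove both identities by establishing a unique factorization of arbitrary words in $A^*$ through the prefix code $X$ and the set $U$ of words having no prefix in $X$.

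First, I would show that every word $w \in A^*$ admits a unique factorization $w = x_1 x_2 \cdots x_n u$ with $n \ge 0$, each $x_i \in X$, and $u \in U$. For existence, one argues by induction on $|w|$: either $w$ has no prefix in $X$, in which case $w \in U$ and we take $n = 0$; otherwise, since $X$ is a prefix code there is a \emph{unique} $x \in X$ which is a prefix of $w$, write $w = xw'$ and apply induction to $w'$. Uniqueness of $(x_1,\ldots,x_n,u)$ follows from the prefix property in the same way, because at each stage the first factor in $X$ (when it exists) is uniquely determined. Thus the concatenation map $X^* \times U \to A^*$ is a bijection, which is exactly the statement that the characteristic series satisfy
\begin{displaymath}
  \u(A^*) = \u(X)^*\,\u(U).
\end{displaymath}

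For the second identity, I would multiply this first equation on the right by $(\u(A) - 1)$ and use the basic relation $\u(A^*)(\u(A) - 1) = -1$, which comes from $\u(A^*) = 1 + \u(A^*)\u(A)$. This gives
\begin{displaymath}
  -1 = \u(X)^*\,\u(U)\,(\u(A) - 1).
\end{displaymath}
Since $X$ is a code (being prefix), the series $\u(X)$ has no constant term and the star $\u(X)^*$ is well-defined, satisfying $\u(X)^*(1 - \u(X)) = 1$. Multiplying the previous equality on the left by $(1 - \u(X))$ yields $\u(X) - 1 = \u(U)(\u(A) - 1)$, which is the second identity.

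I do not expect a real obstacle here: the only substantive point is the unique factorization through $U$, which is essentially the definition of a prefix code. The manipulation of series is formal and well-defined because $X \subset A^+$, so $\u(X)$ is proper and $\u(X)^*$ makes sense in $\mathbb{Z}\langle\!\langle A\rangle\!\rangle$. The rest is straightforward algebraic rearrangement of the first identity using the elementary relation $\u(A^*)(\u(A)-1) = -1$.
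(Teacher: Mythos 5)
Your proof is correct. The paper itself gives no proof of this statement (it is quoted verbatim as Proposition~3.1.6 of Berstel--Perrin--Reutenauer), and your argument is the standard one: the unique factorization $w=x_1\cdots x_n u$ with $x_i\in X$, $u\in U$ gives $\u(A^*)=\underline{X}^*\u(U)$, and the second identity follows by the formal manipulation using $\u(A^*)(\underline{A}-1)=-1$ and $(1-\underline{X})\underline{X}^*=1$, both of which are legitimate since $\underline{X}$ is proper.
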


\subsection{Automata}\label{sectionAutomata}
We  recall the basic results on
deterministic
automata and prefix codes
(see~\cite{BerstelPerrinReutenauer2009} 
for a more detailed exposition). 

We denote $\A=(Q,i,T)$ a deterministic automaton with $Q$ as set of
states,
$i\in Q$ as initial state and $T\subset Q$ as set of terminal states.
For $p\in Q$ and $w\in A^*$, we denote $p\cdot w=q$ if
 there is a path labeled $w$ from $p$ to the state $q$ and $p\cdot
 w=\emptyset$
otherwise.

The set \emph{recognized}\index{recognized by an automaton} by the
automaton is the set of words $w\in A^*$ such that $i\cdot w\in T$. A
set of words is \emph{rational}\index{rational set} if is recognized
by a finite automaton.

All automata considered in this paper are deterministic and we 
call them simply automata.

The automaton $\A$ is \emph{trim}\index{automaton!trim}\index{trim
  automaton} if for any $q\in Q$, there is a path from $i$ to $q$ and
a path from $q$ to some $t\in T$.

An automaton is called
\emph{simple}\index{automaton!simple}\index{simple automaton} if it is
trim and if it has a unique terminal state which coincides with the
initial state.

An automaton $\A=(Q,i,T)$ is \emph{complete}%
\index{automaton!complete}\index{complete automaton}
if for any state $p\in Q$
and
any letter $a\in A$, one has $p\cdot a\ne\emptyset$.

For a set $X\subset A^*$, we denote by $\A(X)$ the \emph{minimal
  automaton}\index{automaton!minimal}\index{minimal automaton}
 of $X$. The states of $\A(X)$ are the nonempty sets
$u^{-1}X=\{v\in A^*\mid uv\in X\}$ for $u\in A^*$.  The initial state
is the set $X$ and the terminal states are the sets $u^{-1}X$ for
$u\in X$.


Let $X\subset A^*$ be a prefix code. Then there is a simple automaton
$\A=(Q,1,1)$ that recognizes $X^*$. Moreover, the minimal
automaton of $X^*$ is simple.

Let $X$ be a prefix code and let $P$ be the set of proper prefixes of
$X$. The \emph{literal automaton}\index{literal
  automaton}\index{automaton!literal} of $X^*$ is the simple automaton
$\A=(P,1,1)$ with transitions defined for $p\in P$ and $a\in A$ by
\begin{displaymath}
  p\cdot a=
  \begin{cases}
    pa&\text{if $pa\in P$}\,,\\
    1&\text{if $pa\in X$}\,,\\
    \emptyset&\text{otherwise}.
  \end{cases}
\end{displaymath}
One verifies that this automaton recognizes $X^*$.

Let $\A=(Q,i,T)$ be an automaton. For $w\in A^*$, we denote
$\varphi_\A(w)$ the partial map from $Q$ to $Q$ defined by
$p\varphi_\A(w)=q$ if $p\cdot w=q$. The \emph{transition
  monoid}\index{transition monoid}\index{monoid!transition} of
$\A$ is the monoid of partial maps from $Q$ to $Q$ of the form
$\varphi_\A(w)$ for $w\in A^*$.

\subsection{Maximal prefix codes}

Let $F$ be a subset of $A^*$.  A set $X\subset A^*$ is \emph{right
  dense}\index{right dense set} in $F\subset A^*$, or right $F$-dense,
if any $u\in F$ is a prefix of $X$.

A set $X\subset F$ is \emph{right complete}\index{right complete set}
in $F$, or right $F$-complete, if $X^*$ is right dense in $F$, that is
if every word in $F$ is a prefix of $X^*$.

A prefix code $X\subset F$ is \emph{maximal}\index{F-maximal
  code@$F$-maximal code} in $F$, or $F$-maximal, if it is not properly
contained in any other prefix code $Y\subset F$. The notion of an
$F$-maximal suffix code is symmetrical.

The following propositions are  extensions of Propositions 3.3.1 and 3.3.2, and of
Theorem 3.3.5 in~\cite{BerstelPerrinReutenauer2009}.

\begin{proposition}\label{propositionPrefixGlobal}
  Let $F$ be a subset of $A^*$. For any prefix code $X\subset F$, the
  following conditions are equivalent.
  \begin{enumerate}
  \item[\upshape{(i)}] Every element of $F$ is prefix-comparable with
    some element of $X$,
  \item[\upshape{(ii)}] $X$ is an $F$-maximal prefix code.
  \end{enumerate}
\end{proposition}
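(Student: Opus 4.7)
The statement is an equivalence, and the plan is to prove each implication by contradiction. Both directions are short; the only subtlety is handling the definition of ``prefix-comparable'' cleanly.

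\medskip

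\noindent\textbf{(i) $\Rightarrow$ (ii).} I would argue by contradiction. Suppose $X$ is not $F$-maximal. Then there is a prefix code $Y$ with $X \subsetneq Y \subset F$. Pick $y \in Y \setminus X$. By (i), $y$ is prefix-comparable with some $x \in X \subset Y$. Since $x \ne y$ and both lie in the prefix code $Y$, this contradicts the definition of a prefix code.

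\medskip

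\noindent\textbf{(ii) $\Rightarrow$ (i).} Again by contradiction. Assume $X$ is $F$-maximal, and suppose there exists $u \in F$ which is not prefix-comparable with any element of $X$. I would then form $Y = X \cup \{u\}$ and derive a contradiction by showing $Y$ is a prefix code with $X \subsetneq Y \subset F$. The inclusion $Y \subset F$ is immediate since $u \in F$ and $X \subset F$. The strict inclusion $X \subsetneq Y$ follows from $u \notin X$: indeed, if we had $u \in X$, then $u$ would be prefix-comparable with itself (viewed as an element of $X$), contrary to our assumption. To see that $Y$ is a prefix code, observe that any two distinct elements of $X$ are prefix-incomparable (since $X$ is a prefix code), and $u$ is prefix-incomparable with every element of $X$ by hypothesis; so no two distinct elements of $Y$ are prefix-comparable. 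This $Y$ contradicts $F$-maximality of $X$.

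\medskip

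The only step requiring a little care is the verification in the second direction that $u \notin X$, which relies on the (harmless but easy to overlook) fact that a word is prefix-comparable with itself. Aside from that, the proof is purely definitional and I do not anticipate any real obstacle.
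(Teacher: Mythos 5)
Your proof is correct and follows essentially the same route as the paper: both directions reduce to the observation that a word $u\in F\setminus X$ can be adjoined to $X$ while preserving the prefix-code property exactly when $u$ is prefix-incomparable with every element of $X$. The only cosmetic difference is that you phrase (i)\,$\Rightarrow$\,(ii) via an arbitrary larger prefix code $Y$ and a chosen $y\in Y\setminus X$, whereas the paper adjoins a single word directly; the content is identical.
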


\begin{proof}
(i) implies (ii). Any word $u\in F$ is prefix-comparable with some
word of $X$. This implies that if $u\notin X$, then $X\cup u$ is no
longer a prefix code. Thus $X$ is an $F$-maximal prefix code.

(ii) implies (i). Assume that $u\in F$ is not prefix-comparable to any
word in $X$. Then $X\cup u$ is prefix, and $X$ is not an $F$-maximal
prefix code.
\end{proof}

\begin{proposition}\label{propositionGlobal}
  Let $F$ be a factorial subset of $A^*$. For any set $X\subset F$ of
  nonempty words, the following conditions are equivalent.
  \begin{enumerate}
  \item[\upshape{(i)}] Every element of $F$ is prefix-comparable with
    some element of $X$,
  \item[\upshape{(ii)}] $XA^*$ is right $F$-dense,
  \item[\upshape{(iii)}] $X$ is right $F$-complete.
  \end{enumerate}
\end{proposition}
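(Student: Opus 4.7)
The plan is to prove the cyclic chain of implications (i) $\Rightarrow$ (ii) $\Rightarrow$ (iii) $\Rightarrow$ (i). The equivalence of (i) and (ii) is essentially a reformulation of prefix-comparability in terms of membership in $XA^*$; the substantive step, and the only one using factoriality, is (ii) $\Rightarrow$ (iii), which I would handle by induction on word length.

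For (i) $\Rightarrow$ (ii), given $u \in F$ prefix-comparable with some $x \in X$, I split cases: either $u$ is a prefix of $x$, and hence of $x = x\cdot 1 \in XA^*$; or $x$ is a prefix of $u$, so $u \in XA^*$ is a prefix of itself in $XA^*$. Either way, $XA^*$ is right $F$-dense.

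For (ii) $\Rightarrow$ (iii), I would proceed by induction on $|u|$ for $u \in F$. The base $u = 1$ is trivial, since $1 \in X^*$. For nonempty $u$, condition (ii) provides $x \in X$ and $w, w' \in A^*$ with $uw' = xw$. If $|u| \le |x|$, then $u$ is a prefix of $x \in X \subset X^*$ and we are done. Otherwise $x$ is a prefix of $u$, so $u = xu'$ with $|u'| < |u|$. Here factoriality of $F$ enters: it forces $u' \in F$, and the induction hypothesis gives $u'$ a prefix of some $y' \in X^*$, so $u = xu'$ is a prefix of $xy' \in X \cdot X^* \subset X^*$.

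Finally, (iii) $\Rightarrow$ (i) is immediate: given $u \in F$, it is a prefix of some $y \in X^*$. If $y = 1$, then $u = 1$ is prefix-comparable with any element of $X$; otherwise write $y = x_1 z$ with $x_1 \in X$ and $z \in X^*$, and observe that $u$, being a prefix of $x_1 z$, is either a prefix of $x_1$ or has $x_1$ as a prefix, hence is prefix-comparable with $x_1 \in X$. No step is technically delicate; the only essential point is that factoriality is needed precisely to keep the suffix $u'$ inside $F$ when reducing $u$ in the proof of (ii) $\Rightarrow$ (iii).
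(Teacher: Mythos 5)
Your proof is correct and follows essentially the same route as the paper's: the same one-line translations for (i)$\Rightarrow$(ii) and (iii)$\Rightarrow$(i), and the same induction on $|u|$ for (ii)$\Rightarrow$(iii), with factoriality invoked at exactly the same point to keep $u'$ in $F$. The only difference is cosmetic: you make the induction and the case analysis fully explicit where the paper writes ``arguing by induction.''
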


\begin{proof}
  (i) implies (ii). Let $u\in F$. Let
  $x\in X$ be prefix-comparable with $u$. Then there exist $v,w$ such
  that $uv=xw$. Thus $XA^*$ is right $F$-dense.

  (ii) implies (iii).  Consider a word $u\in F$. Let us show that $u$
  is a prefix of $X^*$.  Since $XA^*$ is right $F$-dense,
  one has $uw=xw'$ for some word $x\in X$ and $w,w'\in A^*$.  If $u$
  is a prefix of $X$, there is nothing to prove.  Otherwise,
  $x$ is a proper prefix of $u$. Thus $u=xu'$ for some $x\in X$ and
  $u'\in A^*$. Since $u$ is in $F$ and since $F$ is factorial, we have
  $u'\in F$. Since $x\ne 1$, we have $|u'|<|u|$. Arguing by induction,
  the word $u'$ is a prefix of $X^*$. Thus $u$ is a prefix
  of $X^*$.

(iii) implies (i).  Let $u\in F$. Then $u$ is a prefix of
$X^*$, and consequently $u$ is prefix-comparable with a word in $X$.

\end{proof}

The propositions have a dual formulation, replacing prefix by suffix,
and right by left.

\begin{example}\label{exaba}
  The set $X=\{a,ba\}$ is a maximal prefix code in the Fibonacci set
  $F$ since $XA^*$ is right $F$-dense.
\end{example}

The following is a generalization of Propositions 3.7.1 and
 3.7.2 in~\cite{BerstelPerrinReutenauer2009}. 

\begin{proposition}\label{propMaxPrefixCode}
  Let $F$ be a  right essential set.  Let $\proba$ be a positive right
  probability distribution on $F$.  Any prefix code $X\subset F$
  satisfies $\proba(X)\le 1$.  If $X$ is finite, it is $F$-maximal if and
  only if $\proba(X)=1$.
\end{proposition}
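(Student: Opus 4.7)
The plan is to reduce the statement to the identity (\ref{eqProbas}), $\proba(xA^{n-|x|}\cap F)=\proba(x)$, by choosing $n$ to be at least $\max_{x\in X}|x|$ (or by letting $n\to\infty$ for the inequality in the infinite case).

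First I would establish the inequality $\proba(X)\le1$. For any $n\ge 0$, let $X_n=X\cap A^{\le n}$. Since $X$ is a prefix code, for any two distinct $x,y\in X_n$ the sets $xA^{n-|x|}$ and $yA^{n-|y|}$ are disjoint. Therefore
\begin{displaymath}
  \sum_{x\in X_n}\proba(x)=\sum_{x\in X_n}\proba(xA^{n-|x|}\cap F)
  =\proba\Bigl(\bigcup_{x\in X_n}xA^{n-|x|}\cap F\Bigr)\le \proba(F\cap A^n)=1,
\end{displaymath}
using (\ref{eqProbas}) both for each $x$ and for $1$. Letting $n\to\infty$ yields $\proba(X)\le 1$.

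Now suppose $X$ is finite and set $n=\max_{x\in X}|x|$. Then the calculation above with $X_n=X$ gives
\begin{displaymath}
  \proba(X)=\sum_{x\in X}\proba(xA^{n-|x|}\cap F),
\end{displaymath}
and equality with $1$ holds if and only if every word of $F\cap A^n$ has a prefix in $X$. If $X$ is $F$-maximal, then by Proposition~\ref{propositionPrefixGlobal} every $w\in F\cap A^n$ is prefix-comparable with some $x\in X$; since $|x|\le n=|w|$, the word $x$ must be a prefix of $w$, so $\proba(X)=1$.

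The remaining direction is the place where positivity of $\proba$ enters. Suppose $X$ is not $F$-maximal. By Proposition~\ref{propositionPrefixGlobal} there exists $u\in F$ that is prefix-incomparable with every $x\in X$. Since $F$ is right essential and $u\in F$, there is a word $v$ with $|uv|=n$ and $uv\in F$. If we had $uv\in xA^{n-|x|}$ for some $x\in X$, then $x$ would be a prefix of $uv$; comparing $|x|$ with $|u|$ would force $u$ and $x$ to be prefix-comparable, contradicting the choice of $u$. Hence $\{uv\}$ is disjoint from $\bigcup_{x\in X}xA^{n-|x|}$, all sets being subsets of $F\cap A^n$, so
\begin{displaymath}
  \proba(X)+\proba(uv)\le\proba(F\cap A^n)=1.
\end{displaymath}
Positivity of $\proba$ gives $\proba(uv)>0$, whence $\proba(X)<1$. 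The only delicate point is exactly this last step: ensuring the existence of an extension $uv\in F\cap A^n$ that witnesses the failure of $F$-maximality at length $n$, which is why right essentiality is needed.
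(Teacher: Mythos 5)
Your proof is correct and follows essentially the same route as the paper: the identity $\proba(xA^{n-|x|}\cap F)=\proba(x)$ applied to the disjoint union $\bigcup_{x\in X}xA^{n-|x|}\cap F\subset A^n\cap F$, with maximality characterized via Proposition~\ref{propositionPrefixGlobal} (the paper dismisses the converse direction with ``clear since $\proba$ is positive,'' whereas you spell it out). One tiny point in your last step: the witness $u$ of non-maximality may have $|u|>n$, in which case no extension $uv\in F\cap A^n$ exists; this is repaired at once by replacing $u$ with its prefix of length $n$ (still prefix-incomparable with $X$ since every $x\in X$ has $|x|\le n$) or by running the argument at length $\max(n,|u|)$.
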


\begin{proof}
  Assume first that $X$ is finite.  Let $n$ be the maximal length of
  the words in $X$. We have
  \begin{equation}
    \bigcup_{x\in X}xA^{n-|x|}\cap F\subset A^n\cap F \label{eqX}
  \end{equation}
  and the terms of the union are pairwise disjoint.  Thus, using
  Equation~\eqref{eqProbas}
  \begin{equation}
    \proba(X)=\sum_{x\in X}\proba(xA^{n-|x|}\cap F)\le\proba(A^n\cap F)=1\,.\label{eqPi}
  \end{equation} 
  If $X$ is maximal in $F$, any word in $F\cap A^n$ has a prefix in
  $X$. Thus we have equality in~\eqref{eqX} and thus also
  in~\eqref{eqPi}.  This shows that $\proba(X)=1$. The converse is clear
  since $\proba$ is positive on $F$.

  If $X$ is infinite, then $\proba(Y)\le 1$ for any finite subset $Y$ of
  $X$. Thus $\proba(X)\le 1$.
\end{proof}

The statement has a dual for a suffix code included in a factorial
set $F$ with a positive left probability distribution on $F$.

\begin{example}\label{exabapi}
  Let $F$ be the Fibonacci set.  The set
  $X=\{a,ba\}$ is a maximal prefix code (Example~\ref{exaba}). One has
  $\proba_F(X)=1$ where $\proba_F$ is defined in Example~\ref{exProba}.
\end{example}

We will use the following result in the proof of
Proposition~\ref{propositionInternalTransformation}. 

\begin{proposition}\label{propositionFinitePrefixCodes}
  Let $F$ be a right essential subset of $A^*$, and let $G\subset F$
  be a right essential subset of $F$. For any finite $F$-maximal
  prefix code $X\subset F$, the set $X\cap G$ is a finite $G$-maximal
  prefix code.
\end{proposition}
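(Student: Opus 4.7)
The plan is to verify the two conditions: that $X\cap G$ is a prefix code (trivial, since it is a subset of the prefix code $X$) and that it is $G$-maximal. For the second condition I will invoke Proposition~\ref{propositionPrefixGlobal}, which reduces $G$-maximality to showing that every $u\in G$ is prefix-comparable with some element of $X\cap G$. Finiteness of $X\cap G$ follows from that of $X$.

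Fix $u\in G$ and set $N=\max\{|x|\mid x\in X\}$, which exists because $X$ is finite. Since $G$ is right essential, there is a word $v$ of length $N$ such that $uv\in G$. I would then use that $G\subset F$, so $uv\in F$, and apply the $F$-maximality of $X$ via Proposition~\ref{propositionPrefixGlobal} to obtain some $x\in X$ prefix-comparable with $uv$. Because $|uv|\ge N\ge |x|$, the comparability must take the form ``$x$ is a prefix of $uv$''. Consequently $x$ is a prefix of $uv\in G$, and since a right essential set is prefix-closed we get $x\in G$, hence $x\in X\cap G$.

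It remains to observe that $u$ and $x$ are both prefixes of $uv$, so they are prefix-comparable with each other. Thus $u$ is prefix-comparable with the element $x$ of $X\cap G$. Proposition~\ref{propositionPrefixGlobal} applied inside $G$ then gives that $X\cap G$ is an $F$-maximal (actually $G$-maximal) prefix code contained in $G$.

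The main subtlety, and the only non-routine point, is the choice of the length $N$: one must extend $u$ far enough inside $G$ (using right essentiality of $G$) to force $X$-maximality of $X$ inside $F$ to produce a word of $X$ that is a \emph{prefix} of the extension rather than an extension of it. Once this length choice is made, prefix-closure of $G$ transfers the chosen element of $X$ into $G$, and the result follows directly.
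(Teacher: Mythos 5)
Your proof is correct and follows essentially the same route as the paper's: extend $u$ inside $G$ (by right essentiality) beyond the maximal length of $X$, use $F$-maximality of $X$ to extract a prefix $x\in X$ of the extension, and conclude $x\in X\cap G$ by prefix-closure of $G$. The only difference is that you spell out more explicitly why the prefix-comparability must go in the direction ``$x$ is a prefix of $uv$,'' a step the paper leaves implicit.
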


\begin{proof}
  Set $Y=X\cap G$. The set $Y$ is clearly a finite prefix code.  We
  show that every $u\in G$ is prefix-comparable with some word in $Y$.
  This will imply that $Y$ is $G$-maximal by
  Proposition~\ref{propositionPrefixGlobal}.  Let $u\in G$.  Since $G$ is
  right essential, there are arbitrary long words $w$ such that $uw\in
  G$. Choose the length of $uw$ larger than the maximal length of the
  words of $X$. Since $X$ is an $F$-maximal prefix code, $uw$ has a
  prefix $x$ in $X$. This prefix $x$ is in $Y$ since $uw\in G$. Thus
  $u$ is prefix-comparable to $x\in Y$.
\end{proof}

The following example shows that Proposition~\ref{propositionFinitePrefixCodes}
is false for infinite prefix codes.

\begin{example}
  Let $F\subset A^*$ be a right essential set with $F\ne A^*$, and let $x$
  be a word which is not in $F$.  Let $X=A^*x \setminus A^*xA^+$ be the
  prefix code of words in $A^*$ ending with $x$ and having no other
  occurrence of $x$. $X$ is a maximal prefix code, and $X \cap
  F=\emptyset$ is not $F$-maximal.
\end{example}

We will use later the following result on transformations of prefix
codes.  It is adapted from Proposition~3.4.9
in~\cite{BerstelPerrinReutenauer2009}.

\begin{proposition}\label{propositionOperation1}
  Let $F$ be a factorial set and let $X\subset F$ be an $F$-maximal
  prefix code.  Let $w$ be a nonempty prefix of $X$ and set
  $D=w^{-1}X$. The set $Y=(X\setminus wD)\cup w$ is an $F$-maximal
  prefix code.
\end{proposition}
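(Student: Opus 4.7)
The plan is to verify two things: that $Y$ is a prefix code contained in $F$, and that every word of $F$ is prefix-comparable with some element of $Y$, so that $F$-maximality follows from Proposition~\ref{propositionPrefixGlobal}.

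For the containment $Y\subset F$, the words of $X\setminus wD$ already lie in $X\subset F$; for $w$ itself, since $w$ is a nonempty prefix of some $x\in X\subset F$ and $F$ is factorial (hence prefix-closed as $F$ is factorial), $w\in F$. To check that $Y$ is a prefix code, I would compare $w$ with elements of $X\setminus wD$. By the very definition of $D=w^{-1}X$, the elements of $X\setminus wD$ are precisely the elements of $X$ that do not have $w$ as a prefix, so $w$ is not a prefix of any of them. The harder direction is to rule out that some $y\in X\setminus wD$ is a proper prefix of $w$: picking any $x\in X$ having $w$ as a prefix (such an $x$ exists because $w$ is a prefix of $X$), we would have $y\le w\le x$, so $y\le x$ in $X$; since $X$ is a prefix code this forces $y=x$, hence $w\le y$, contradicting $y\notin wD$. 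Prefix-incomparability within $X\setminus wD$ is inherited from $X$.

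For the $F$-maximality, by Proposition~\ref{propositionPrefixGlobal} it suffices to show that every $u\in F$ is prefix-comparable with some element of $Y$. Since $X$ is $F$-maximal, $u$ is prefix-comparable with some $x\in X$. If $x\in X\setminus wD\subset Y$ we are done. Otherwise $x=wd$ for some $d\in D$, and I would do a short case analysis: if $x\le u$ then $w\le x\le u$, so $u$ is prefix-comparable with $w\in Y$; if $u\le x=wd$, then either $u\le w$ or $w\le u$, and in both cases $u$ is prefix-comparable with $w\in Y$.

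There is no serious obstacle in this proposition; it is essentially a bookkeeping argument. The only subtle point, as noted above, is the verification that no element of $X\setminus wD$ can be a proper prefix of $w$, which requires using both the fact that $w$ is a prefix of some word in $X$ and the prefix property of $X$.
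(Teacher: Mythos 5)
Your proof is correct and follows essentially the same route as the paper: the $F$-maximality is established exactly as in the paper's proof, by invoking Proposition~\ref{propositionPrefixGlobal} and splitting into the cases $x\in X\setminus wD$ and $x\in wD$. The only difference is that you spell out the verification that $Y$ is a prefix code (which the paper dismisses as clear), and your argument there --- in particular the observation that no element of $X\setminus wD$ can be a proper prefix of $w$ because $w$ is a prefix of some $x\in X$ --- is sound.
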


\begin{proof}
  It is clear that $Y$ is a prefix code. To show that it is
  $F$-maximal, we apply Proposition~\ref{propositionPrefixGlobal} and prove
  that every word $u\in F$ is prefix-comparable with a word of $Y$. So
  consider a word $u\in F$. Since $X$ is $F$-maximal, $u$ is
  prefix-comparable with a word of $X$. Thus $u$ is prefix-comparable
  with a word of $X\setminus wD$ or it is prefix-comparable with a
  word of $wD$. In the second case, either $u$ is a prefix of a word
  $wd$ with $d\in D$ or $u$ has $wd$ as a prefix. Consequently, $u$ is
  prefix-comparable with $w$. This proves that $u$ is
  prefix-comparable with a word of $Y$.
\end{proof}

Proposition~\ref{propositionOperation1} has a dual formulation for suffix codes.

\subsection{Average length}
Let $F$ be a right essential set and let $\proba$ be a right probability
distribution on $F$. Let $X\subset F$ be a prefix code such that
$\proba(X)=1$.  The \emph{average length}\index{average length} of $X$
with respect to $\proba$ is the sum
\begin{displaymath}
  \lambda(X)=\sum_{x\in X}|x|\proba(x)\,.
\end{displaymath}

\begin{proposition}\label{propositionP}
  Let $F$ be a right essential set and let $\proba$ be a positive right
  probability distribution on $F$. Let $X\subset F$ be a finite
  $F$-maximal prefix code and let $P$ be the set of proper prefixes of
  $X$.  Then $\proba(X)=1$ and $\lambda(X)=\proba(P)$.
\end{proposition}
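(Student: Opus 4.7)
The first assertion, $\proba(X)=1$, is exactly the content of Proposition~\ref{propMaxPrefixCode} applied to $X$, so only the equality $\lambda(X) = \proba(P)$ requires argument. My plan is a double counting. Each $x \in X$ has exactly $|x|$ proper prefixes (the empty word together with its strict prefixes), so swapping the order of summation in the definition of $\lambda(X)$ yields
\begin{displaymath}
\lambda(X) = \sum_{x \in X} |x|\,\proba(x) = \sum_{p \in P} \proba(Y_p),
\end{displaymath}
where $Y_p = \{x \in X \mid p \text{ is a proper prefix of } x\}$. The problem therefore reduces to establishing the local identity $\proba(p) = \proba(Y_p)$ for every $p \in P$.

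To prove this identity I would localize the disjoint-union argument already used in Proposition~\ref{propMaxPrefixCode}. Let $n$ be the maximal length of a word of $X$; observe first that $p \notin X$, since otherwise $p$ and any element of $X$ having $p$ as a strict prefix would violate the prefix code property. The key claim is the disjoint-union decomposition
\begin{displaymath}
pA^{n-|p|} \cap F \;=\; \bigcup_{x \in Y_p} xA^{n-|x|} \cap F.
\end{displaymath}
The inclusion $\supseteq$ is immediate since each $x \in Y_p$ has $p$ as a prefix and satisfies $|x| \le n$. Conversely, given $w$ in the left-hand side, the $F$-maximality of $X$ (Proposition~\ref{propositionPrefixGlobal}) produces a prefix $x \in X$ of $w$. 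Since $p$ and $x$ are both prefixes of $w$ they are prefix-comparable; the prefix code property of $X$ forbids $x$ from being a strict prefix of $p$, and the case $x = p$ is ruled out by $p \notin X$, so $p$ is a strict prefix of $x$, i.e., $x \in Y_p$. Disjointness is immediate from $X$ being a prefix code. Taking probabilities on both sides and applying Equation~\eqref{eqProbas} to each piece yields
\begin{displaymath}
\proba(p) \;=\; \proba(pA^{n-|p|} \cap F) \;=\; \sum_{x \in Y_p} \proba(xA^{n-|x|} \cap F) \;=\; \sum_{x \in Y_p} \proba(x) \;=\; \proba(Y_p),
\end{displaymath}
as required.

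The only real subtlety is the partition step, whose essential ingredient is the $F$-maximality of $X$ (and not merely its being a prefix code): maximality is what produces the prefix $x \in X$ of the generic word $w \in pA^{n-|p|} \cap F$, populating the union on the right. Once this partition is in hand, the rest is a routine application of condition (ii) of the definition of a right probability distribution.
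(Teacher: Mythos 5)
Your proof is correct and follows essentially the same route as the paper: both establish the local identity $\proba(p)=\proba(pA^{+}\cap X)$ for each $p\in P$ by partitioning a set of long extensions of $p$ according to their prefix in $X$ (using $F$-maximality) and applying Equation~\eqref{eqProbas}, and then interchange the order of summation. The only differences are cosmetic (you extend to length $n$ rather than $n+|p|$, and you make explicit the small observations that $p\notin X$ and that $x$ cannot be a strict prefix of $p$).
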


\begin{proof}
  We already know that $\proba(X)=1$ by
  Proposition~\ref{propMaxPrefixCode}.  Let us show that for any $p\in
  P$,
  \begin{equation}
    \proba(p)=\sum_{x\in pA^+\cap X}\proba(x)\,. \label{eqProbas2}
  \end{equation}
  Let indeed $n$ be an integer larger than the lengths of the words of
  $X$.  Then by Equation~\eqref{eqProbas}, $\proba(p)=\proba(pA^n\cap
  F)$. Since $X$ is an $F$-maximal prefix code, each word of $pA^n\cap
  F$ has a prefix in $X$, and conversely, each word in $X$ which has
  $p$ as a prefix is itself a prefix of $pA^n\cap F$. Thus
  \begin{displaymath}
    pA^n\cap F=\bigcup_{x\in pA^+\cap X}xA^{n+|p|-|x|}\cap F\,.
  \end{displaymath} 
  Since $\proba(xA^{n+|p|-|x|}\cap F)=\proba(x)$, this proves
  Equation~\eqref{eqProbas2}.

By Equation~\eqref{eqProbas2}, one gets 
  \begin{displaymath}
    \sum_{p\in P}\proba(p)=\sum_{p\in P}\sum_{x\in pA^+\cap X}\proba(x)=
    \sum_{x\in X}\sum_{p\in P:x\in pA^+}\proba(x)=\sum_{x\in X}|x|\proba(x)\,.
  \end{displaymath}
  Thus
  \begin{displaymath}
    \proba(P)=\sum_{p\in P}\proba(p)=\sum_{x\in X}|x|\proba(x)=\lambda(X)\,.
  \end{displaymath}
\end{proof}
A dual statement of Proposition~\ref{propositionP} holds for a suffix
code and its set of proper suffixes, for a positive left probability
distribution.

\begin{example}
  Let $F$ be the Fibonacci set and let
  $X=\{a,ba\}$.  We have already seen in Example~\ref{exabapi} that
  $X$ is an $F$-maximal prefix code and that $\proba_F(X)=1$ where
  $\proba_F$ is the unique invariant probability distribution on $F$
  defined in  Example~\ref{exProba}. We
  have $\lambda(X)=\lambda+2(1-\lambda)=2-\lambda$. On the other hand
  the set of proper prefixes of $X$ is $P=\{1,b\}$ and thus
  $\proba_F(P)=1+(1-\lambda)=2-\lambda$.
\end{example}

\section{Bifix codes in recurrent sets}\label{sectionBifixCodes}

In this section, we study bifix codes contained in a recurrent set. Since
$A^*$ itself is a recurrent set, it is a generalization of the usual
situation. We will see that all results on maximal bifix codes
can be generalized in this way. In particular, the notions of
degree, of kernel and of derived code can be defined in this
more general framework.

\subsection{Parses}


Recall that a set $X$ of nonempty words is a \emph{bifix
  code}\index{code!bifix}\index{bifix code} if any two distinct
elements of $X$ are incomparable for the prefix order and for the
suffix order.

A \emph{parse}\index{parse of a word} of a word $w$ with respect to a
set $X$ is a triple $(v,x,u)$ such that $w=vxu$ with $v\in
A^*\setminus A^*X$, $x\in X^*$ and $u\in A^*\setminus XA^*$.

\begin{proposition}\label{propParses}
  Let $F$ be a factorial set and let $X\subset F$ be a set.  For any
  factorization $w=uv$ of $w\in F$, there is a parse $(s,yz,p)$ of $w$
  with $y,z\in X^*$, $sy=u$ and $v=zp$.
\end{proposition}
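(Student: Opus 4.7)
The plan is to construct the desired parse by greedily extending from both sides of the cut between $u$ and $v$. Specifically, I would let $y$ be the \emph{longest} suffix of $u$ that lies in $X^*$, write $u = sy$, and symmetrically let $z$ be the \emph{longest} prefix of $v$ that lies in $X^*$, write $v = zp$. Both choices are legitimate because the empty word belongs to $X^*$ and $u$, $v$ have only finitely many suffixes/prefixes, so a longest element exists in each case.

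I then need to check that $(s, yz, p)$ satisfies the three conditions in the definition of a parse. First, $w = uv = syzp$ and $yz \in X^*$ since both factors are in $X^*$, so the middle condition is immediate. For $s \in A^* \setminus A^*X$, suppose for contradiction that $s$ ends in a word of $X$, say $s = s'x$ with $x \in X$. Then $u = s'(xy)$ with $xy \in X^*$, contradicting the maximality of $y$. The condition $p \in A^* \setminus XA^*$ is handled symmetrically: if $p = xp'$ with $x \in X$, then $v = (zx)p'$ with $zx \in X^*$, contradicting the maximality of $z$. Finally, by construction $sy = u$ and $zp = v$, which is what is required.

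There is essentially no obstacle here: the argument uses only the existence of longest suffix/prefix in a finite collection and the fact that $1 \in X^*$. The factoriality of $F$ does not really enter the proof; it is part of the background hypothesis and merely ensures that the words $s, y, z, p$ (being factors of $w \in F$) remain inside the ambient setting. The whole proof fits in a few lines.
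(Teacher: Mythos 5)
Your proof is correct and follows essentially the same route as the paper: decompose $u$ from the right as $sy$ and $v$ from the left as $zp$, then assemble the parse $(s,yz,p)$. The only difference is that the paper obtains these two decompositions by citing Proposition~\ref{propositionCharPrefix} (which is stated there for prefix codes), whereas your greedy maximality argument establishes their existence directly for an arbitrary set $X$ of nonempty words, which actually matches the generality of the statement more cleanly.
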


\begin{proof}
  Since $v\in F$, there exist, by
  Proposition~\ref{propositionCharPrefix}, words $z\in X^*$ and $p\in
  A^*\setminus XA^*$ such that $v=zp$. Symmetrically, there exist
  $y\in X^*$ and $s\in A^*\setminus A^*X$ such that $u=sy$. Then
  $(s,yz,p)$ is a parse of $w$ which satisfies the conditions of the
  statement.
\end{proof}


The number of parses of a word $w$ with respect to $X$ is denoted by
$\pars_X(w)$.  The function $\pars_X:A^*\to\N$ is the \emph{parse
  enumerator}\index{parse enumerator} with respect to $X$. 

The \emph{indicator}\index{indicator} of a set $X$
is the series $L_X$ defined for $w\in A^*$ by $(L_X,w)=\pars_X(w)$.

\begin{example}
  Let $X=\emptyset$. Then $\pars_X(w)=|w|+1$.
\end{example}

The following is a reformulation of Proposition 6.1.6
in~\cite{BerstelPerrinReutenauer2009}.

\begin{proposition}\label{propInterpretations}
  Let $F$ be a factorial set and let $X\subset F$ be a prefix code.
  For every word $w\in F$, the number $\pars_X(w)$ is equal to the
  number of prefixes of $w$ which have no suffix in $X$.
\end{proposition}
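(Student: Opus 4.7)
The plan is to exhibit a bijection between parses of $w$ and prefixes of $w$ having no suffix in $X$. The natural candidate sends a parse $(v,x,u)$ to its first component $v$: by definition $v \in A^*\setminus A^*X$ has no suffix in $X$, and the identity $w=vxu$ exhibits $v$ as a prefix of $w$, so the map is well-defined.

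For the inverse, I would take a prefix $p$ of $w$ with no suffix in $X$, write $w=pq$, and appeal to Proposition~\ref{propositionCharPrefix}, which gives $\u(A^*)=\underline{X}^*\u(U)$ with $U=A^*\setminus XA^*$. This identity expresses that every word admits a \emph{unique} factorization as an element of $X^*$ followed by an element of $U$. Applying it to $q$ yields $q=xu$ with $x\in X^*$ and $u\in U$, so that $(p,x,u)$ is a parse of $w$ whose first component is $p$.

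Verifying that the two maps are mutually inverse is essentially an invocation of the uniqueness just cited: starting from a parse $(v,x,u)$, the associated prefix $v$ determines the trailing word $q=xu$, whose $X^*U$-factorization must coincide with the original $xu$ itself, so the parse assigned to $v$ is again $(v,x,u)$; in the other direction the check is immediate. I do not anticipate a serious obstacle; the only potentially subtle point is ensuring that the $X^*U$-decomposition of $q$ is truly unique, but this is precisely the content of Proposition~\ref{propositionCharPrefix} for a prefix code, and the factoriality of $F$ plays no further role beyond the standing hypothesis $X\subset F$.
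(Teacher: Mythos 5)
Your proof is correct and is essentially the paper's own argument: the paper also establishes the bijection sending a parse $(v,x,u)$ to its first component $v$, with existence and uniqueness of the parse above a given prefix coming from the unique $X^*U$-factorization of Proposition~\ref{propositionCharPrefix}. Your write-up just makes explicit the uniqueness step that the paper leaves implicit.
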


\begin{proof}
For every prefix $v$ of $w$ which is in $A^*\setminus A^*X$, there is
a unique parse  of $w$ of the form $(v,x,u)$. Since any parse is
obtained
in this way, the statement is proved.
\end{proof}

Proposition~\ref{propInterpretations} has a dual statement for suffix
codes.

Note that, as a consequence of Proposition~\ref{propInterpretations},
we have for two prefix codes $X,Y$, and for all words $w$,
\begin{equation}
 X\subset Y \Rightarrow \pars_Y(w)\le \pars_X(w). \label{eqLsub}
\end{equation}
Indeed, a word without suffix in $Y$ is also a word without suffix in $X$.

\begin{proposition}\label{propositionEqLS}
  Let $X$ be a prefix code  and let $V=A^*\setminus A^*X$. Then
  \begin{equation}
    \underline{V}=L_X(1-\underline{A})\,. \label{eqLS}
  \end{equation}
  If  $X$ is bifix, one has
  \begin{equation}
    1-\u(X)=(1-\underline{A})L_X(1-\underline{A})\,. \label{eqLB}
  \end{equation}
\end{proposition}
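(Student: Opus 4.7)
The plan is to derive both identities as consequences of the combinatorial description of $\pars_X(w)$ given by Proposition~\ref{propInterpretations}, combined with the formal identity $\underline{A^*}(1-\underline{A})=(1-\underline{A})\underline{A^*}=1$, which holds in the ring of noncommutative formal power series over $A^*$.

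First I would rewrite $L_X$ as a Cauchy product of characteristic series. By Proposition~\ref{propInterpretations}, the coefficient $\pars_X(w)$ of $w$ in $L_X$ equals the number of prefixes of $w$ that have no suffix in $X$, i.e.\ the number of prefixes of $w$ lying in $V$. Since each prefix $u$ of $w$ determines a unique factorization $w=uv$ with $v\in A^*$, this count coincides with the coefficient of $w$ in $\underline{V}\cdot\underline{A^*}$. Hence $L_X=\underline{V}\cdot\underline{A^*}$, and multiplying on the right by $(1-\underline{A})$ yields $L_X(1-\underline{A})=\underline{V}$, which is equation~\eqref{eqLS}.

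For equation~\eqref{eqLB}, I would invoke the dual (suffix-side) form of Proposition~\ref{propInterpretations}, which is available because $X$ is now assumed to be bifix: $\pars_X(w)$ equals the number of suffixes of $w$ lying in $U=A^*\setminus XA^*$. Arguing symmetrically, this gives $L_X=\underline{A^*}\cdot\underline{U}$, so left-multiplication by $(1-\underline{A})$ yields $(1-\underline{A})L_X=\underline{U}$. Proposition~\ref{propositionCharPrefix} supplies $\underline{U}(1-\underline{A})=1-\underline{X}$ (rewriting $\underline{X}-1=\underline{U}(\underline{A}-1)$), so combining gives
\begin{displaymath}
(1-\underline{A})\,L_X\,(1-\underline{A})=\underline{U}(1-\underline{A})=1-\underline{X}.
\end{displaymath}

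There is no serious obstacle here; the only point to be careful about is that the two factorizations of $L_X$, as $\underline{V}\cdot\underline{A^*}$ on one side and as $\underline{A^*}\cdot\underline{U}$ on the other, are compatible with associativity of the Cauchy product. This is automatic in the ring of noncommutative formal series, so the triple product $(1-\underline{A})L_X(1-\underline{A})$ can be evaluated from either side.
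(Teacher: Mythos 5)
Your proof is correct and follows essentially the same route as the paper's: the paper also establishes $L_X=\u(V)\,\u(A^*)$ (remarking explicitly that this identity is equivalent to Proposition~\ref{propInterpretations}) and then telescopes with $1-\u(A)$. The only cosmetic difference is in the second identity, where the paper left-multiplies \eqref{eqLS} and invokes the dual of Proposition~\ref{propositionCharPrefix} to get $1-\u(X)=(1-\u(A))\u(V)$, whereas you work from the mirror factorization $L_X=\u(A^*)\,\u(U)$ and finish with the prefix-side identity $\u(U)(1-\u(A))=1-\u(X)$; both are valid symmetric variants of the same argument.
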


\begin{proof} Set $L=L_X$.
  Let $U=A^*\setminus XA^*$. By definition of the indicator, we
  have $L=\u(V)\ \underline{X}^*\u(U)$. Since $X$ is prefix, we have by
  Proposition~\ref{propositionCharPrefix}, the equality
  $\u(A^*)=\underline{X}^*\u(U)$. Thus we obtain $L=\u(V)\u(A^*)$ (note that this
  is actually equivalent to
  Proposition~\ref{propInterpretations}). Multiplying both sides on
  the right by $(1-\u(A))$, we obtain Equation~\eqref{eqLS}.

  If $X$ is suffix, we have by the dual of
  Proposition~\ref{propositionCharPrefix}, the equality
  $1-\u(X)=(1-\u(A))\u(V)$. This gives Equation~\eqref{eqLB} by
  multiplying both sides of Equation~\eqref{eqLS} on the left by
  $1-\u(A)$.
\end{proof}

The following is  Proposition 6.1.11
in~\cite{BerstelPerrinReutenauer2009}. 

\begin{proposition}\label{proposition6111}
  A function $\pars:A^*\to\N$ is the
  parse enumerator of
  some bifix code  if and only if it satisfies the
  following conditions.
  \begin{enumerate}
  \item[\upshape{(i)}] For any $a\in A$ and $w\in A^*$ 
    \begin{equation}
      0\le\pars(aw)-\pars(w)\le 1\,.\label{eq6111a}
    \end{equation}
  \item[\upshape{(ii)}] For any $w\in A^*$ and $a\in A$ 
    \begin{equation}
      0\le\pars(wa)-\pars(w)\le 1\,.\label{eq6111b}
    \end{equation}
  \item[\upshape{(iii)}] For any $a,b\in A$ and $w\in A^*$
    \begin{equation}
      \pars(aw)+\pars(wb)\ge \pars(w)+\pars(awb)\,.\label{eq6111c}
    \end{equation}
  \item[\upshape{(iv)}] $\pars(1)=1$\,.
  \end{enumerate}
\end{proposition}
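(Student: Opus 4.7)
\textit{Plan.} For the necessity of (i)--(iv), I would use Proposition~\ref{propInterpretations} and its dual: when $X$ is bifix, $\pars_X(w)$ counts both the prefixes of $w$ not in $A^*X$ and the suffixes of $w$ not in $XA^*$. Then (iv) is immediate; (i) follows because passing from $w$ to $aw$ adds exactly one suffix (namely $aw$), giving $\pars_X(aw) - \pars_X(w) = [aw \notin XA^*] \in \{0,1\}$; (ii) is symmetric via the prefix interpretation; and (iii) reduces, by the same identity, to $[aw \notin XA^*] \ge [awb \notin XA^*]$, which holds because any prefix of $aw$ lying in $X$ is also a prefix of $awb$.

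For the sufficiency, motivated by Equations~\eqref{eqLS} and~\eqref{eqLB}, I would define the formal series
\begin{displaymath}
  \underline{U} = (1-\underline{A})\pars, \qquad \underline{V} = \pars(1-\underline{A}), \qquad \underline{X} = 1 - (1-\underline{A})\pars(1-\underline{A}).
\end{displaymath}
The coefficient of $w \ne 1$ in $\underline{U}$ equals $\pars(w) - \pars(w')$ (where $w = cw'$ with $c \in A$), which lies in $\{0,1\}$ by (i), and $\underline{U}(1) = \pars(1) = 1$ by (iv); so $U$ is a set with $1 \in U$. Symmetrically, (ii) and (iv) make $V$ a set with $1 \in V$. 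For $\underline{X}$, the coefficient at $1$ is $1 - \pars(1) = 0$ by (iv); at $a \in A$ it equals $2 - \pars(a) \in \{0,1\}$ by (i); and at $w = cuc'$ with $c,c' \in A$ and $u \in A^*$ it equals $\pars(cu) + \pars(uc') - \pars(u) - \pars(cuc')$, which is $\ge 0$ by (iii) and $\le 1$ since $\pars(cu) - \pars(u) \le 1$ by (i) while $\pars(uc') - \pars(cuc') \le 0$ by (i). Hence $X \subset A^+$ is a well-defined set with $1 \notin X$.

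The heart of the argument is then to convert the algebraic identity built into the definition into the bifix property. By construction $1 - \underline{X} = \underline{U}(1 - \underline{A})$; multiplying on the right by $\underline{A^*} = (1 - \underline{A})^{-1}$ yields $(1 - \underline{X})\underline{A^*} = \underline{U}$, hence $\underline{A^*} = \underline{X}^*\underline{U}$, and dually $\underline{A^*} = \underline{V}\underline{X}^*$. Since every coefficient of $\underline{A^*}$ equals $1$, the first identity asserts that every $w \in A^*$ admits a unique factorization $w = x_1 \cdots x_n u$ with $x_i \in X$ and $u \in U$. If $X$ were not prefix, pick $x_1, x_2 \in X$ with $x_2 = x_1 v$ and $v \in A^+$; applying the unique decomposition to $v$ as $v = y u'$ with $y \in X^*$ and $u' \in U$ produces two distinct factorizations $(x_2, 1)$ and $(x_1 y, u')$ of $x_2$, a contradiction. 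Thus $X$ is prefix, and dually suffix, so bifix; the same uniqueness shows $U = A^* \setminus XA^*$ and $V = A^* \setminus A^*X$. Since $X$ is a code we have $\underline{X^*} = \underline{X}^*$, whence
\begin{displaymath}
  L_X = \underline{V}\,\underline{X}^*\,\underline{U} = \underline{V}\,\underline{A^*} = \pars\,(1-\underline{A})(1-\underline{A})^{-1} = \pars.
\end{displaymath}
The main obstacle is this uniqueness-of-factorization step, translating a formal-series identity into the combinatorial statement that $X$ is a prefix code; once that is in hand, the bifix property and the identification $L_X = \pars$ follow immediately.
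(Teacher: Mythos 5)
Your proposal is correct. Note that the paper does not actually prove this statement: it is quoted verbatim from Proposition~6.1.11 of \cite{BerstelPerrinReutenauer2009}, so there is no in-paper argument to compare against. What you give is essentially the classical proof from that book: necessity via the two counting interpretations of $\pars_X$ (Proposition~\ref{propInterpretations} and its dual, which require $X$ to be prefix and suffix respectively --- both available since $X$ is bifix), and sufficiency by inverting Equation~\eqref{eqLB}, i.e.\ setting $\underline{X}=1-(1-\underline{A})\pars(1-\underline{A})$, checking via (i)--(iv) that the coefficients lie in $\{0,1\}$ with $(\underline{X},1)=0$, and then converting the identities $\underline{A^*}=\underline{X}^*\underline{U}=\underline{V}\,\underline{X}^*$ into unique-factorization statements that force $X$ to be prefix and suffix and identify $U$, $V$ with $A^*\setminus XA^*$, $A^*\setminus A^*X$, whence $L_X=\underline{V}\,\underline{X}^*\,\underline{U}=\pars$. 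All the individual verifications you sketch (the bound $2-\pars(a)\in\{0,1\}$ at letters, the combination of (i) and (iii) at longer words, the two-factorizations contradiction when $X$ fails to be prefix) are sound; the only places where you are terse --- ``the same uniqueness shows $U=A^*\setminus XA^*$'' and the passage from $\underline{X^*}=\underline{X}^*$ to $L_X=\pars$ --- are routine and correctly justified by the code property you have just established.
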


The following is a reformulation of Proposition 6.1.12
in
\cite{BerstelPerrinReutenauer2009}.
\begin{proposition}\label{propositioneqL}
Let  $X$ be a prefix code. For any $u\in A^*$ and
$a\in A$, one has
\begin{equation}
\pars_X(ua)=\begin{cases}\pars_X(u)&\text{if } ua\in A^*X \\
\pars_X(u)+1&\text{otherwise}
\end{cases} \label{eqL}
\end{equation}
\end{proposition}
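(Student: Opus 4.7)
The plan is to derive the recurrence directly from the combinatorial interpretation of $\pars_X$ given by Proposition~\ref{propInterpretations}. Applying that proposition with $F=A^*$ (which is factorial), we know that for any word $w$, the quantity $\pars_X(w)$ equals the number of prefixes $v$ of $w$ such that $v\notin A^*X$, i.e.\ prefixes of $w$ having no suffix in $X$.

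From here the computation is immediate. The set of prefixes of $ua$ is the disjoint union of the set of prefixes of $u$ with the singleton $\{ua\}$. Hence, counting those prefixes that have no suffix in $X$,
\begin{displaymath}
  \pars_X(ua)=\pars_X(u)+\varepsilon,
\end{displaymath}
where $\varepsilon=1$ if $ua\notin A^*X$ and $\varepsilon=0$ if $ua\in A^*X$. This is exactly the formula~\eqref{eqL}.

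There is essentially no obstacle in this argument: the only thing to be careful about is that Proposition~\ref{propInterpretations} was stated for $w$ in a factorial set $F$ containing $X$, so we need to notice that we may freely take $F=A^*$ since $X\subset A^*$ and $A^*$ is factorial. After this observation, the proof reduces to the one-line combinatorial count above. Alternatively, one could give a direct argument using the definition of a parse: a parse $(v,x,p)$ of $ua$ with $p\in A^*\setminus XA^*$ satisfies either $p=p'a$ with $p'\in A^*\setminus XA^*$ (in which case $(v,x,p')$ is a parse of $u$), or $p=1$ and $ua\in X^*$, so $ua\in A^*X$; pairing parses of $u$ with parses of $ua$ yields the same dichotomy. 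But the first approach via Proposition~\ref{propInterpretations} is cleaner and uses a result already established.
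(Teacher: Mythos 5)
Your proof is correct and is exactly the paper's argument: the paper also derives the formula directly from Proposition~\ref{propInterpretations}, and your write-up simply makes the prefix count explicit (including the harmless observation that one may take $F=A^*$).
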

\begin{proof}
This follows directly from Proposition~\ref{propInterpretations}.
\end{proof}
Proposition~\ref{propositioneqL} has a dual for suffix codes
expressing
$\pars_X(au)$ in terms of $\pars_X(u)$.

Recall also that by Proposition 6.1.8 in
\cite{BerstelPerrinReutenauer2009}, for a bifix code $X$ and for
all $u,v,w\in F$ such that $uvw\in F$,
one has
\begin{equation}
  \pars_X(v)\le \pars_X(uvw).  \label{eqL1}
\end{equation}
Moreover, if $uvw\in X$ and $u, w \in A^+$ then the inequality is
strict, that is,
\begin{equation}
  \pars_X(v) < \pars_X(uvw).  \label{eqLi}
\end{equation}
\subsection{Maximal bifix codes}

Let $F$ be set of words. A set $X\subset F$ is said to be
\emph{thin}\index{thin set} in $F$, or $F$-thin, if there exists a
word of $F$ which is not a factor of a word in $X$.

The following example shows that there exist a uniformly recurrent  set $F$,
and a bifix code $X\subset F$ which is not $F$-thin.

\begin{example}\label{exampleDenseBifixCode}

  Let $F$ be the Thue--Morse set, which is the set of factors of a
  fix-point of the substitution $f$ defined by $f(a)=ab$, $f(b)=ba$
  (see Example~\ref{exampleMorse}).  Set $x_n=f^n(a)$ for $n\ge
  1$. Note that $x_{n+1}=x_n\bar{x}_n$ where $u\to\bar{u}$ is the
  substitution defined by $\bar{a}=b$ and $\bar{b}=a$.  Note also that
  $u\in F$ if and only if $\bar{u}\in F$.  Consider the set
  $X=\{x_nx_n\mid n\ge 1\}$.  We have $X\subset F$. Indeed, for $n\ge
  1$, $x_{n+2}=x_{n+1}\bar{x}_{n+1}=x_{n}\bar{x}_{n}\bar{x}_{n}x_n$
  implies that $\bar{x}_{n}\bar{x}_{n}\in F$ and thus $x_nx_n\in F$.
  Next $X$ is a bifix code. Indeed, for $n<m$, $x_m$ begins with
  $x_n\bar{x}_n$, and thus cannot have $x_nx_n$
  as a prefix.  Similarly, since $x_{m}$ ends with $\bar{x}_{n}x_{n}$
  or with $x_{n}\bar{x}_{n}$, it cannot have $x_ nx_ n$ as a suffix.
  Finally any element of $F$ is a factor of a word in $X$. Indeed, any
  element $u$ of $F$ is a factor of some $x_n$, and thus of $x_nx_n\in
  X$.

  A simpler proof uses Theorem~\ref{theoremCompletion} proved
  later.
\end{example}

An \emph{internal factor}\index{internal
  factor}\index{factor!internal}\index{word!internal factor} of a word
$x$ is a word $v$ such that $x=uvw$ with $u,w$ nonempty.  Let
$F\subset A^*$ be a factorial set and let $X\subset F$ be a
set. Denote by
\begin{displaymath}
I(X)=\{w\in A^*\mid A^+wA^+\cap X\ne\emptyset\}
\end{displaymath}
 the set of internal factors of words in
$X$.\footnote{The set $I(X)$ is denoted by $H(X)$ in~\cite{BerstelPerrinReutenauer2009}.}

When $F$ is right essential and left essential, then $X$ is $F$-thin
if and only if $\nobreak{F\setminus I(X)\ne\emptyset}$. Indeed, the
condition is necessary.  Conversely, if $w$ is in $\nobreak{F\setminus
  I(X)}$, let $a,b\in A$ be such that $awb\in F$. Since $awb$ cannot
be a factor of a word in $X$, it follows that $X$ is $F$-thin.

We say that a bifix code $X\subset F$ is
\emph{maximal}\index{F-maximal bifix code@$F$-maximal bifix code} in
$F$, or $F$-maximal, if it is not properly contained in any other
bifix code $Y\subset F$.

The following is a generalisation of Proposition 6.2.1 
 in~\cite{BerstelPerrinReutenauer2009}.

\begin{theorem}\label{theoremEquivMax}
  Let $F$ be a recurrent set and let $X\subset F$ be an $F$-thin
  set. The following conditions are equivalent.
  \begin{enumerate}
  \item[\upshape{(i)}] $X$ is an $F$-maximal bifix code.
  \item[\upshape{(ii)}] $X$ is a left $F$-complete prefix code.
  \item[\upshape{(ii')}] $X$ is a right $F$-complete suffix code.
  \item[\upshape{(iii)}] $X$ is an $F$-maximal prefix code and an
    $F$-maximal suffix code.
  \end{enumerate}
\end{theorem}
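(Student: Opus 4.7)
The plan is to establish the cyclic implications $(iii) \Rightarrow (ii) \Rightarrow (i) \Rightarrow (iii)$, together with the symmetric chain $(iii) \Rightarrow (ii') \Rightarrow (i)$ obtained via prefix/suffix duality. The three implications not having $(i)$ as hypothesis do not require $F$-thinness and follow quickly from Propositions~\ref{propositionPrefixGlobal} and \ref{propositionGlobal}; only $(i) \Rightarrow (iii)$ uses the thinness assumption, and this is where the real work lies.

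For $(iii) \Rightarrow (ii)$: $X$ is a prefix code because it is $F$-maximal prefix, and the dual of Proposition~\ref{propositionGlobal} applied to the $F$-maximality of $X$ as a suffix code shows that $X$ is left $F$-complete. For $(ii) \Rightarrow (i)$: the dual of Proposition~\ref{propositionGlobal} implies that every $u \in F$ is suffix-comparable with some element of $X$; if $Y \supsetneq X$ were a bifix code in $F$ with some $u \in Y \setminus X$, then $u$ would be both prefix- and suffix-incomparable with every $x \in X$, and the latter contradicts the above. The implications $(iii) \Rightarrow (ii')$ and $(ii') \Rightarrow (i)$ are symmetric.

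For $(i) \Rightarrow (iii)$, by prefix/suffix symmetry it is enough to show that $X$ is $F$-maximal prefix. Arguing by contradiction, assume $X$ is not $F$-maximal prefix; by Proposition~\ref{propositionPrefixGlobal} some $u \in F$ is prefix-incomparable with every element of $X$, and $X \cup \{u\}$ is then a prefix code. Since $X$ is $F$-maximal bifix, $X \cup \{u\}$ cannot be a suffix code, so $u$ is suffix-comparable with some $x \in X$. The aim is now to produce $u' \in F$ both prefix- and suffix-incomparable with every $x \in X$, contradicting $F$-maximality of $X$ as a bifix code. Using $F$-thinness, choose $w \in F \setminus \Fact(X)$; by recurrence pick $v \in F$ with $uvw \in F$, and set $u' = uvw$. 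Then $u'$ inherits prefix-incomparability from $u$ (any prefix overlap with an $x \in X$ would force either $u$ to be prefix-comparable with $x$, against the choice of $u$, or $w$ to be a factor of $x$, against $w \notin \Fact(X)$); likewise $u'$ cannot be a suffix of any $x \in X$ (else $w$ would be a factor of $x$), and no $x \in X$ with $|x| > |w|$ can be a suffix of $u'$.

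The main obstacle is ruling out that some $x_0 \in X$ with $|x_0| \le |w|$ is itself a suffix of $w$. Since $X$ is a suffix code (from being bifix), at most one such $x_0$ can occur. The key step is to refine the choice of $w$ so that no element of $X$ is a suffix of $w$. One iteratively right-extends $w$ within $F \setminus \Fact(X)$---this set is stable under right-extension within $F$ by factoriality---using that at each stage at most one $x \in X$ can be a suffix of the current extension. The critical sub-claim is that this process must eventually reach an extension with no $X$-suffix: otherwise an infinite right-extension would produce an infinite word all of whose sufficiently long prefixes end in $X$, and the resulting dense occurrence of elements of $X$ near the end, combined with $F$-thinness and the suffix-code property of $X$, yields a contradiction. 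Once such a $w$ is chosen, $u' = uvw$ is the sought bifix extension of $X$, completing the proof.
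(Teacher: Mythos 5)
Your easy reductions are fine, but both substantive implications in your cycle have genuine gaps, and they are precisely the points where the paper has to deploy its heavy machinery (the relations $\varphi_{u,v}$ on the finite set $\Fact(u)$ of factorizations, Lemmas~\ref{lemmaBijection1}--\ref{lemmaBijection2} and Proposition~\ref{lemmaBijection}).

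For $(ii)\Rightarrow(i)$: condition (ii) only asserts that $X$ is a prefix code which is left $F$-complete; it does not assert that $X$ is a suffix code. Your argument shows that no bifix code of $F$ properly contains $X$, but to conclude (i) you must also show that $X$ is itself bifix. That $F$-thinness plus recurrence plus left $F$-completeness of a prefix code force it to be a suffix code is the hardest part of the whole theorem; the paper obtains it in the step (ii)$\Rightarrow$(iii) by passing to $Y=X\setminus A^+X$ and invoking the dual of Proposition~\ref{lemmaBijection}, whose proof is a pigeonhole argument (an injective self-map of the finite set $\Fact(u)$ is surjective) for $u\notin I(X)$. Nothing in your sketch replaces this.

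For $(i)\Rightarrow(iii)$: the ``critical sub-claim'' --- that $w$ can be right-extended inside $F\setminus\Fact(X)$ until it has no suffix in $X$ --- cannot be established the way you indicate, and the target word $u'$ you are after need not exist. If $X$ is left $F$-complete (which (i) implies, and which is exactly the delicate case, namely $X$ an $F$-maximal suffix code), then every word of $F$ is suffix-comparable with some element of $X$; a word outside $\Fact(X)$ cannot be a suffix of a word of $X$, hence it and every right-extension of it has a suffix in $X$, so your process never terminates. Your reason why non-termination is contradictory (``dense occurrences of elements of $X$ near the end'') never uses the word $u$, i.e.\ the hypothesis that $X$ fails to be $F$-maximal prefix; without that hypothesis there is no contradiction to be had (take $F=A^*$, $X=a\cup ba^*b$, $w=babb$: every right-extension keeps a suffix in $X$ and nothing is wrong). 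The only case your style of argument handles is the easy one where $X$ is maximal in neither order, where the paper's $yuz$ construction applies; the case ``maximal suffix but allegedly not maximal prefix'' must be excluded by the counting argument, not by exhibiting a doubly incomparable word.
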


As a preparation for the proof of Theorem~\ref{theoremEquivMax},
we introduce the following notation. Let $F$ be a recurrent set
and let $X\subset F$.

A \emph{factorization}\index{factorization of a word} of a word $u$ is
a pair $(p,s)$ of words such that $u=ps$. We denote by $\Fact(u)$ the
set of factorizations of $u$.

Let $C(X,F)$ be the set of pairs $(u,v)$ of words such that $uvu\in
F$, $v \ne 1$ and $u$ is not an internal factor of $X$.  We define for
each pair $(u,v)\in C(X,F)$ a relation $\varphi_{u,v}$ on the set
$\Fact(u)$ as follows.  For $\pi=(p,s),\rho=(q,t)\in \Fact(u)$, one
has $(\pi,\rho)\in\varphi_{u,v}$ if and only if the pair $(\pi,\rho)$
satisfies one of the following conditions (see
Figure~\ref{figureVarphi}).
\begin{enumerate}
\item[(i)] $px=q$ for some $x\in X$,
\item[(ii)] $svq=x_1\cdots x_n$ with $n\ge 1$ and $x_i\in X$ for $1\le
  i\le n$, $s$ is a proper prefix of $x_1$ and $q$ is a proper suffix
  of $x_n$.
\end{enumerate}
Since $ps=qt$, the condition (i) is equivalent to $s=xt$. This means that
both conditions are symmetric for reading from left to right or from
right to left.
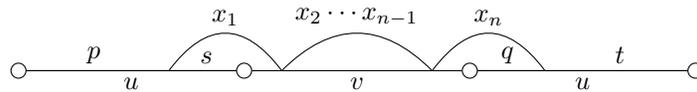
\begin{figure}[hbt]
  \centering
  \gasset{Nadjust=wh,AHnb=0}
  \begin{picture}(90,10)(-10,0)
   \node(p)(-10,0){}
    \node[Nframe=n,Nadjustdist=0](x1)(10,0){}
    \node(v)(20,0){}
    \node[Nframe=n,Nadjustdist=0](x2)(25,0){}
    \node[Nframe=n,Nadjustdist=0](xn)(45,0){}
    \node(q)(50,0){}
    \node[Nframe=n,Nadjustdist=0](x')(60,0){}
    \node(end)(80,0){}
    \drawedge[curvedepth=5](x1,x2){$x_1$}
    \drawedge[curvedepth=5](x2,xn){$x_2\cdots x_{n-1}$}
   \drawedge[curvedepth=5](xn,x'){$x_n$}
    \drawedge[ELside=r](p,v){$u$}
    \drawedge[ELside=r](v,q){$v$}
    \drawedge(q,x'){$q$}
\drawedge(x',end){$t$}
    \drawedge[ELside=r](q,end){$u$}
    \drawedge(p,x1){$p$}
    \drawedge(x1,v){$s$}
  \end{picture}

\caption{The relation $\varphi_{u,v}$ (case (ii)).}\label{figureVarphi}
\end{figure}

\noindent
We prove a series of lemmas concerning the relations $\varphi_{u,v}$
(see Exercise 6.2.1 in~\cite{BerstelPerrinReutenauer2009}).

\begin{lemma}\label{lemmaBijection1}
  Let $F$ be a recurrent set and let $X\subset F$ be an $F$-thin set.
  If $X$ is a prefix code, then for all pairs $(u,v)\in C(X,F)$, the
  relation $\varphi_{u,v}$ is a partial function from $\Fact(u)$ into
  itself, that is
  \begin{equation}
    \label{equationPartial}
    (\pi,\rho),(\pi,\rho') \in \varphi_{u,v}\quad\Rightarrow\quad\rho=\rho'\,.
  \end{equation}
  Conversely, if $X$ is an $F$-maximal suffix code, and if
  \eqref{equationPartial} holds for all pairs $(u,v)\in C(X,F)$, then
  $X$ is a prefix code.
\end{lemma}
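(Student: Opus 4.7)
The forward direction will proceed by case analysis on which clause of the definition of $\varphi_{u,v}$ is responsible for $(\pi,\rho)$ and for $(\pi,\rho')$. The first key observation is that, for a fixed $\pi=(p,s)$, clauses (i) and (ii) are mutually exclusive when $X$ is a prefix code: if both held, we would have some $x \in X$ which is a prefix of $s$ (from clause (i), since $s=xt$) and some $x_1 \in X$ having $s$ as a proper prefix (from clause (ii)), so $x$ is a prefix of $x_1$, and the prefix-code property forces $x=x_1$, contradicting $|x| \le |s| < |x_1|$. This reduces the analysis to the "both in (i)" and "both in (ii)" cases. The first is immediate: if $s = xt = x't'$ with $x,x' \in X$, then $x$ and $x'$ are prefix-comparable, hence equal, so $\rho = \rho'$.

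The "both in (ii)" subcase is the technical heart, and my plan is to exploit uniqueness of factorizations in $X^*$. Since $X$ is a prefix code, it is in particular a code and $X^*$ is right unitary. The words $q$ and $q'$ are both prefixes of $u$, hence prefix-comparable; taking without loss of generality $|q| \le |q'|$, write $q' = qq_0$. Then $svq \in X^*$ and $svq' = (svq)\,q_0 \in X^*$ force $q_0 \in X^*$, and uniqueness of the $X^*$-factorization makes $y_1\cdots y_m$ extend $x_1\cdots x_n$ by the $X$-factorization of $q_0$. In particular, either $q_0 = 1$ (and then $q=q'$, as desired) or $y_m$ is the last factor of the $X$-factorization of $q_0$, whence $|y_m| \le |q_0|$; combined with the proper-suffix requirement $|q'| < |y_m|$ and $|q'| = |q| + |q_0|$, this gives $|q| < 0$, which is impossible.

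For the converse, I will argue by contraposition: assuming $X$ is not a prefix code, there exist $x,x' \in X$ with $x' = xz$ for some $z \ne 1$, and I produce a pair $(u,v) \in C(X,F)$ at which $\varphi_{u,v}$ fails to be a partial function. Using $F$-thinness, pick $u_0 \in F \setminus I(X)$; by recurrence there is $\gamma$ with $u_0\gamma x' \in F$, and I set $u = u_0\gamma x'$. Then $u$ ends in $x'$ and $u \notin I(X)$, because any internal occurrence of $u$ inside a word of $X$ would yield an internal occurrence of $u_0$ (since $u_0$ is a proper prefix of $u$), contradicting $u_0 \notin I(X)$. A further application of recurrence, possibly iterated once to avoid $v=1$, supplies $v \ne 1$ with $uvu \in F$, so $(u,v) \in C(X,F)$. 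At $\pi = (p,x')$ with $p=u_0\gamma$, clause (i) applied with the $X$-word $x'$ produces $\rho = (u,1)$, while the same clause applied with $x \in X$ produces $\rho' = (px,z)$, which lies in $\Fact(u)$ because $pxz = px' = u$. These two images differ since $x \ne x'$, so $\varphi_{u,v}$ is not a partial function, contradicting the hypothesis.

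The main obstacle is the "both in (ii)" subcase of the forward direction, where uniqueness of $X$-factorizations must be combined with the proper-suffix condition on $y_m$ to eliminate the possibility $q \ne q'$. The remaining parts of the argument reduce to a short mutual-exclusion observation, a direct use of the prefix-code property, and a construction on the converse side that uses only $F$-thinness and recurrence; the $F$-maximal suffix hypothesis in the statement appears to be present for symmetry with the prefix-code conclusion rather than strict necessity.
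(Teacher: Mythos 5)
Your proof is correct. The forward direction is essentially the paper's argument: the same four-way case split according to which clause produces $(\pi,\rho)$ and $(\pi,\rho')$, the same mutual-exclusion observation killing the mixed cases, and the same combination of unique $X$-factorization with the proper-suffix condition on the last factor in the both-(ii) case (the paper writes $q=q'w$ with $w=x_{m+1}\cdots x_n$ forced to be empty, which is your $|q|<0$ contradiction in different clothing). The converse, however, takes a genuinely different and simpler route. The paper keeps the comparable pair $x'\le x''=x'r'$ \emph{outside} of $u$: it sets $u=r'r''w$ so that $x'u\in F$, and then must invoke left $F$-completeness of $X$ (i.e.\ the $F$-maximal suffix code hypothesis, via the dual of Proposition~\ref{propositionGlobal}) to locate a factorization $\pi$ of $u$ whose two distinct images are produced by clause~(ii). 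You instead make $u=u_0\gamma x'$ end with the longer word $x'=xz$ and read off two distinct images of $\pi=(u_0\gamma,\,x')$ directly from clause~(i), one for each of $x$ and $x'$; this uses only $F$-thinness and recurrence. Your closing observation is accurate: the $F$-maximal suffix code hypothesis is not needed for this implication (so you in fact prove a slightly stronger statement); it is carried in the statement so that the lemma, its dual, and Lemma~\ref{lemmaBijection2} can be chained together in Proposition~\ref{lemmaBijection}. Both arguments are sound; yours is shorter.
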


Define the \emph{transpose}\index{transpose of a relation}
$\varphi'_{u,v}$ of the relation $\varphi_{u,v}$ by the condition
$(\rho,\pi)\in \varphi'_{u,v}$ if $(\pi,\rho)\in\varphi_{u,v}$. Then
\eqref{equationPartial} expresses the fact that the transpose
$\varphi'_{u,v}$ is injective. \medskip

\begin{proof}
  Assume first that $X$ is a prefix code.  For $(u,v)\in C(X,F)$, let
  $\pi=(p,s),\rho=(q,t),\rho'=(q',t')$ be three factorizations of $u$
  such that $(\pi,\rho),(\pi,\rho') \in \varphi_{u,v}$.  We 
  prove that $\rho = \rho'$.  By definition, the following cases may
  occur for $(\pi,\rho),(\pi,\rho')$ .
  \begin{enumerate}
  \item[(1)] 
    $px=q$ and $px'=q'$, with $x, x' \in X$,
  \item[(2)] 
    $px=q$ with $x \in X$, and $svq'=x'_1\cdots x'_m$, with $m \ge 1$
    and $x'_1,\ldots, x'_m \in X$, and moreover $s$ is a proper prefix
    of $x'_1$ and $q'$ is a proper suffix of $x'_m$,
  \item[(3)] $px'=q'$ with $x' \in X$ and $svq = x_1\cdots x_n$, with
    $n \ge 1$ and $x_1,\ldots, x_n\in X$, and moreover $s$ is a proper
    prefix of $x_1$ and $q$ is a proper suffix of $x_n$,
  \item[(4)] 
    $svq = x_1\cdots x_n$ and $svq'=x'_1\cdots x'_m$, with $n \ge 1$,
    $m \ge 1$, $x_1,\ldots, x_n,\allowbreak x'_1,\ldots, x'_m\in X$,
    and moreover $s$ is a proper prefix both of $x_1$ and of $x'_1$,
    $q$ is a proper suffix of $x_n$ and $q'$ is a proper suffix of
    $x'_m$.
  \end{enumerate}

  (1) Assume that $px=q$, $px'=q'$, with $x, x' \in X$. Since $q$ and
  $q'$ are prefixes of $u$, they are prefix-comparable. Thus $x$ and
  $x'$ are also prefix-comparable. Since $X$ is a prefix code, it
  follows that $x=x'$, whence $q=q'$ and $\rho = \rho'$.

  (2) We show that this case is impossible. Indeed, $x$ is a prefix of
  $s$ (by $ps = qt = pxt$) and $s$ is a proper prefix of $x'_1$, thus
  $x$ is a proper prefix of $x'_1$, and this is impossible because $X$
  is a prefix code.  The same argument holds in the symmetric case
  (3).

  (4) Since $u=qt=q't'$, the words $q$ and $q'$ are
  prefix-comparable. We may suppose that $q=q'w$ (see
  Figure~\ref{figureVarphi2}).  Since $svq, svq'$ are in $X^*$ and $X$
  is a prefix code, we have $w\in X^*$.  Since $X$ is a code, the
  decompositions $svq = x_1\cdots x_n = svq'w= x'_1\cdots x'_m w$
  coincide. Consequently, $w=x_{m+1}\cdots x_n$. By hypothesis, $q =
  q'w = q'x_{m+1} \cdots x_n$ is a proper suffix of $x_n$.  This
  forces $n=m$, $w=1$ and $q=q'$, hence $\rho=\rho'$.
  \begin{figure}[hbt]
    \centering
    \gasset{Nadjust=wh,AHnb=0}
    \begin{picture}(100,18)(-15,-10)
      \node(u1)(-10,0){}
      \node[Nframe=n,Nadjustdist=0](s)(10,0){}
      \node(v)(25,0){}
      \node(u2)(40,0){}
      \node[Nframe=n,Nadjustdist=0](r)(50,0){}
      \node[Nframe=n,Nadjustdist=0](q)(60,0){}
      \node(end)(80,0){}
      \drawedge[ELside=r](u1,v){$u$}
      \drawedge[ELside=r](v,u2){$v$}
      \drawedge(u1,s){$p$}
      \drawedge(s,v){$s$}
      \drawedge(u2,q){$q$}
      \drawedge(q,end){$t$}
      \drawedge[ELside=r,ELpos=60](u2,end){$u$}
      \drawedge[ELside=r](u2,r){$q'$}
      \drawedge[curvedepth=7](s,q){}
       \drawbpedge(s,-70,12,r,-110,12){}
      \drawedge[curvedepth=-7,linecolor=red](r,q){$\textcolor{red}{w}$}      
    \end{picture}
    \caption{The factorizations $(p,s)$, $(q,t)$ and
      $(q',t')$ with $t'=wt$.}\label{figureVarphi2}
  \end{figure}
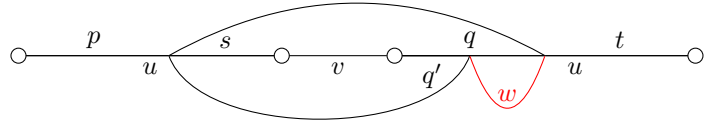

  Conversely, assume that $X$ is an $F$-maximal suffix code and that
  it is not a prefix code. Let $x',x''$ be distinct words in $X$ such
  that $x'$ is a prefix of $x''$. Set $x''=x'r'$ with $r'\ne 1$.

  Since $X$ is $F$-thin, there is a word $w \in F\setminus
  I(X)$. Since $F$ is recurrent, there is a word $r''$ such that
  $x''r''w \in F$. Let $u=r'r''w$. Then $x''r''w=x'u\in F$. Let $t$ be
  a word such that $utx'u \in F$. Set $v=tx'$.  Thus
  $(u,v)\in C(X,F)$ (see Figure~\ref{figLemma211}).  By the dual of
  Equation~\eqref{eqX-1}, there exist $p \in A^*\setminus A^*X$ and
  $z\in X^*$ such that $ut=pz$.

  Since $X$ is left $F$-complete, $p$ is a proper suffix of a word in
  $X$. Since $u \notin I(X)$, $p$ is a prefix of $u$.  Thus $z=1=t$ or
  $z\in
  X^+$. In the latter case, set
  $z=z_1\cdots z_n$ with $z_i \in X$. Since $ut=pz$, one of the
  following two cases holds:

  \begin{enumerate}
  \item[(1)] $u = pz'$, with $z', t \in X^*$,
  \item[(2)] there is an $i$ with $1 \le i \le n$ such that $z_i=rs$
    with $u=pz_1\cdots z_{i-1}r$, $t=sz_{i+1}\cdots z_n$, and $r \ne 1$,
    $s \ne 1$.
  \end{enumerate}

  In case (1), consider the three factorizations $\pi = (u, 1)$, $\rho
  = (1,u)$, $\rho' = (r',r''w)$ of $u$.  Since $r'\ne 1$, we have
  $\rho \ne\rho'$.  We have $v = t x' \in X^+$, and thus
  $(\pi,\rho)\in \varphi_{u,v}$ (this is case (ii) of the definition
  with $s=q=1$).  Next, $v r' = t x' r' = t x'' \in X^+$, with $t \in
  X^*$ and where $r'$ is a proper suffix of $x''$.  Hence $(\pi,\rho')
  \in \varphi_{u,v}$. Thus, $\varphi_{u,v}$ is not a partial function.

  In case (2), let $\pi = (pz_1\cdots z_{i-1},r)$ and let $\rho,
  \rho'$ be as above.  We have $rv = r t x' = rsz_{i+1}\cdots z_n x'
  \in X^+$, whence $(\pi,\rho)\in \varphi_{u,v}$. Next, $r v r' = r s
  z_{i+1}\cdots z_n x' r' = r s z_{i+1}\cdots z_n x'' \in X^+$, and
  $r'$ is a proper suffix of $x''$. Thus $(\pi,\rho') \in
  \varphi_{u,v}$. Since $\rho \ne\rho'$, $\varphi_{u,v}$ is not a
  partial function.
\end{proof}

\begin{figure}[hbt]
  \centering
  \gasset{Nadjust=wh,AHnb=0}
  \begin{picture}(100,25)(0,-5)
    \node(t)(40,15){}
    \node(x')(50,15){}
    \node(r')(60,15){}
    \node(r'')(70,15){}
    \node(w)(80,15){}\node(wend)(100,15){}
    \drawedge(t,x'){$t$}
    \drawedge(x',r'){}
    \drawedge(r',r''){$r'$}
    \drawedge(r'',w){$r''$}
    \drawedge(w,wend){$w$}
    
    \drawedge[curvedepth=3](x',r'){$x'$}
    \drawedge[curvedepth=-5,ELside=r,ELpos=60](x',r''){$x''$}

    \node[Nframe=n,Nadjustdist=0](z)(10,0){}
    \node(ul)(0,0){}
    \node(v)(40,0){}
    \node[Nframe=n,Nadjustdist=0](x'b)(50,0){}
    \node(ur)(60,0){}
    \node[Nframe=n,Nadjustdist=0](r''b)(70,0){}
    \node[Nframe=n,Nadjustdist=0](p'')(90,0){}
    
    \node[Nframe=n,Nadjustdist=0](p')(85,0){}
    \node(end)(100,0){}
    
    \drawedge(ul,z){$p$}
    \drawedge[curvedepth=5](z,x'b){$z$}
    \drawedge[ELside=r](ul,v){$u$}\drawedge[ELside=r](v,ur){$v$}
    \drawedge[ELpos=40](ur,end){$u$}
    
    \drawedge[dash={0.2 0.5}0](t,v){}\drawedge[dash={0.2 0.5}0](r',ur){}
    \drawedge[dash={0.2 0.5}0](x',x'b){}\drawedge[dash={0.2 0.5}0](r'',r''b){}
    
  \end{picture}
  \caption{$\varphi_{u,v}$ is not a partial function.}\label{figLemma211}
\end{figure}
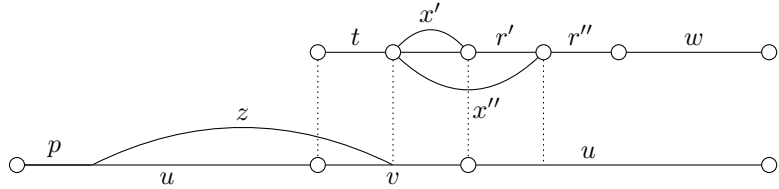

Lemma~\ref{lemmaBijection1} has a dual formulation for suffix codes:
if $X$ is a suffix code, then for all pairs $(u,v)\in C(X,F)$, the
relation $\varphi_{u,v}$ is injective: if $(\pi,\rho),(\pi',\rho) \in
\varphi_{u,v}$ , then $\pi=\pi'$. Conversely, if $X$ is an $F$-maximal
prefix code, and if this implication holds for all pairs $(u,v)\in
C(X,F)$, then $X$ is a suffix code.

Recall that a set $X\subset F$ is right $F$-complete if any
word of $F$ is a prefix of~$X^*$.

\begin{lemma}\label{lemmaBijection2}
  Let $F$ be a recurrent set and let $X\subset F$ be an $F$-thin set.
  The set $X$ is right $F$-complete if and only if, for all pairs
  $(u,v)\in C(X,F)$, the relation $\varphi_{u,v}$ contains a total
  function from $\Fact(u)$ into itself, that is for every
  $\pi\in\Fact(u)$, there exists $\rho\in\Fact(u)$ such that
  $(\pi,\rho)\in \varphi_{u,v}$.
\end{lemma}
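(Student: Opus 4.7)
The plan is to prove each implication separately. In both directions, for a pair $(u,v)\in C(X,F)$ and a factorization $\pi=(p,s)\in\Fact(u)$, I will write $\alpha=|sv|$ and $\beta=|svu|$.

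Forward direction. Assume $X$ is right $F$-complete and fix $(u,v)\in C(X,F)$ and $\pi=(p,s)\in\Fact(u)$. If $s$ has a prefix $x\in X$, write $s=xs'$ and take $\rho=(px,s')$: condition (i) is satisfied. Otherwise $s$ has no prefix in $X$; since $svu$ is a suffix of $uvu\in F$, it lies in $F$, and right $F$-completeness gives $w\in A^*$ and $y_1,\ldots,y_m\in X$ with $svuw=y_1\cdots y_m$. Put $\ell_i=|y_1\cdots y_i|$. The absence of a prefix of $s$ in $X$, combined with $s$ being a prefix of $y_1\cdots y_m$, forces $|s|<|y_1|$, so $s$ is a proper prefix of $y_1$. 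Let $j$ be the largest index with $\ell_j\le\alpha$ (well defined as $\ell_0=0$). If $\alpha=\ell_j$, then $sv=y_1\cdots y_j$, with $j\ge1$ because $v\ne 1$; the choice $\rho=(1,u)$ then satisfies condition (ii) with $n=j$. If $\alpha>\ell_j$, the crucial claim is $\ell_{j+1}\le\beta$: otherwise $\ell_j<\alpha<\beta<\ell_{j+1}$, and $u$ would appear as a factor of $y_{j+1}$ with nonempty context on both sides, contradicting $u\notin I(X)$. Once $\ell_{j+1}\le\beta$, the prefix $q$ of $u$ of length $\ell_{j+1}-\alpha$ satisfies $svq=y_1\cdots y_{j+1}$ and, because $\alpha>\ell_j$, is a proper suffix of $y_{j+1}$; so condition (ii) holds with $n=j+1$.

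Backward direction. Assume the total function property and take $f\in F$ with $f\ne 1$ (the case $f=1$ being trivial); I show that $f$ is a prefix of $X^*$. By $F$-thinness, pick $w_0\in F$ that is not a factor of any word in $X$, so that any word of $F$ containing $w_0$ as a factor lies outside $I(X)$. Apply recurrence to $w_0$ and $f$ to obtain $w_1$ with $w_0 w_1 f\in F$ and set $u=w_0 w_1$: then $u\in F\setminus I(X)$ and $uf\in F$. Apply recurrence again to $uf$ and $u$ to obtain $w_2$ with $ufw_2 u\in F$, and set $v=fw_2$, so that $(u,v)\in C(X,F)$. Now apply the hypothesis to $\pi=(u,1)$: condition (i) would require $q=ux$ for some $x\in X$, impossible since $q$ is a prefix of $u$; hence condition (ii) yields $q$ with $vq=x_1\cdots x_n\in X^+$. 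Therefore $v=fw_2$, and in particular its prefix $f$, is a prefix of $X^*$.

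The main obstacle is the combinatorial step in the forward direction: one must use $u\notin I(X)$ to forbid $u$ from sitting strictly inside a single factor $y_{j+1}$ of the chosen decomposition, and handle the boundary configurations ($\alpha=\ell_j$, and the degenerate case $u=1$, which is subsumed in the $\alpha=\ell_j=\ell_m$ situation) with care. The backward direction is comparatively routine once one realizes that the choice $\pi=(u,1)$ rules out condition (i) by length and forces condition (ii) to exhibit $v$ — hence $f$ — as a prefix of $X^*$.
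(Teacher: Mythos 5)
Your proof is correct and follows essentially the same route as the paper's: the forward direction factors $svuw$ over $X$ and uses $u\notin I(X)$ to force the breakpoint of the relevant $X$-factor to fall inside $u$, and the converse builds a pair of $C(X,F)$ from a word of $F\setminus I(X)$ via recurrence and applies the total-function hypothesis. The only (harmless) deviation is in the converse, where you embed the target word in the second component $v$ and test $\pi=(u,1)$ (which conveniently rules out case (i) by length and yields right $F$-completeness directly), whereas the paper embeds it in the first component and tests $\pi=(1,r)$, then passes through prefix-comparability and Proposition~\ref{propositionGlobal}.
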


\begin{proof}
  Assume first that $X$ is right $F$-complete.  Let $u,v\in F$ be such
  that $(u,v)\in C(X,F)$.  Let $\pi=(p,s)\in\Fact(u)$. Suppose first
  that $s$ has a prefix $x$ in $X$.  Let $s = xt$, with $x \in
  X$. Thus $u = ps = pxt$.  Let $q=px$ and $\rho = (q,t)$. Then
  $(\pi,\rho) \in \varphi_{u,v}$.  Suppose next that $s$ has no prefix
  in $X$.  Since $X$ is right $F$-complete, there exists a word $w$
  such that $svuw = x_1 \cdots x_m$, with $x_1,\ldots,x_m \in X$.

  Let $n$ be the smallest integer such that $sv$ is a prefix of $x_1
  \cdots x_n$, $1 \leq n \leq m$.  Let $q$ be the prefix of $uw$ such
  that $svq = x_1 \cdots x_n$.  Since $sv \not = 1$, $q$ is a proper
  suffix of $x_n$.  The word $q$ is a prefix of $u$ since $u$ is not
  an internal factor of $X$. Define the factorization $\rho=(q,t)$ of
  $u$ by $svq = x_1 \cdots x_n$. Since $s$ has no prefix in $X$, the
  word $s$ is a proper prefix of $x_1$. Therefore, $(\pi,\rho) \in
  \varphi_{u,v}$. This shows that $\varphi_{u,v}$ contains a total
  function.

  Conversely, assume that for all $(u,v)\in C(X,F)$, the relation
  $\varphi_{u,v}$ contains a total function from $\Fact(u)$ into
  itself. We show that any $u\in F$ is prefix-comparable
  with a word of $X$.  By Proposition~\ref{propositionGlobal},
  this implies that $X$ is right $F$-complete.

  Let $u\in F$.  Since $X$ is $F$-thin, the set $F\setminus I(X)$ is
  nonempty.  Let $w\in F\setminus I(X)$ and let $v$ be such that
  $uvw\in F$. Set $r=uvw$. Note that $r\in F\setminus I(X)$. Let $z\ne
  1$ be such that $rzr\in F$. Then $(r,z)\in C(X,F)$. Set $\pi=(1,r)$.
  Since $\varphi_{r,z}$ contains a total function, there is a
  factorization $\rho=(q,t)$ of $r$ such that
  $(\pi,\rho)\in\varphi_{r,z}$. If $q\in X$, then $r$ has the prefix $q$
  in $X$, the word $u$ is prefix-comparable with $q$, and we obtain the
  conclusion. Otherwise, we have $uvwzq=x_1\cdots x_n$ with $x_i\in X$
  and $uvw$ is a prefix of $x_1$, whence our conclusion again.
\end{proof}

Lemma~\ref{lemmaBijection2} has a dual formulation for left
$F$-complete sets: the set $X$ is left $F$-complete if and only if,
for all pairs $(u,v)\in C(X,F)$, the transpose of the relation
$\varphi_{u,v}$ contains a total function from $\Fact(u)$ into itself.

\begin{proposition}\label{lemmaBijection}
  Let $F$ be a recurrent set and let $X\subset F$ be an $F$-thin and
  $F$-maximal prefix code.  Then $X$ is a suffix code if and only if
  it is left $F$-complete.
\end{proposition}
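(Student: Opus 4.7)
The plan is to deduce this equivalence from Lemmas~\ref{lemmaBijection1} and~\ref{lemmaBijection2} together with their duals, by exploiting the finiteness of $\Fact(u)$. The key observation is that for any $(u,v)\in C(X,F)$, the set $\Fact(u)$ has $|u|+1$ elements, so a total self-map of $\Fact(u)$ is injective if and only if it is surjective.

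First, I would show that in our setting $\varphi_{u,v}$ is a genuine total function from $\Fact(u)$ to itself for every $(u,v)\in C(X,F)$. Indeed, $X$ is $F$-maximal prefix, so by Proposition~\ref{propositionPrefixGlobal} and Proposition~\ref{propositionGlobal} it is right $F$-complete; then Lemma~\ref{lemmaBijection2} yields that $\varphi_{u,v}$ contains a total function. On the other hand, since $X$ is prefix, Lemma~\ref{lemmaBijection1} tells us that $\varphi_{u,v}$ is a partial function. Combining these, $\varphi_{u,v}$ is a total function from $\Fact(u)$ to itself.

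Now the two implications are symmetric. If $X$ is a suffix code, then by the dual of Lemma~\ref{lemmaBijection1} the map $\varphi_{u,v}$ is injective; being an injective total self-map of the finite set $\Fact(u)$, it is a bijection, so the transpose $\varphi'_{u,v}$ is also a total function, and the dual of Lemma~\ref{lemmaBijection2} then gives that $X$ is left $F$-complete. Conversely, if $X$ is left $F$-complete, the dual of Lemma~\ref{lemmaBijection2} says that $\varphi'_{u,v}$ contains a total function, i.e.\ $\varphi_{u,v}$ is surjective; a surjective total self-map of a finite set is injective, and the dual of Lemma~\ref{lemmaBijection1} (applicable because $X$ is an $F$-maximal prefix code) then yields that $X$ is a suffix code.

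I do not anticipate a serious obstacle: all the hard work has been pushed into the four preliminary lemmas, and the present proposition is essentially a finite-pigeonhole bookkeeping step on top of them. The only point that requires a little care is to check that one may legitimately apply the duals: this relies on $F$ being recurrent (so left and right play symmetric roles) and on the hypothesis that $X$ is an $F$-maximal prefix code, which is exactly what is needed to invoke the dual of Lemma~\ref{lemmaBijection1} in the second implication.
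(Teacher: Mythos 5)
Your proof is correct and follows essentially the same route as the paper's: establish that $\varphi_{u,v}$ is a total function on the finite set $\Fact(u)$ via Lemmas~\ref{lemmaBijection1} and~\ref{lemmaBijection2}, then transfer injectivity to surjectivity (and conversely) by pigeonhole and conclude with the duals of those lemmas. The only cosmetic difference is that you spell out the passage from $F$-maximality to right $F$-completeness via Propositions~\ref{propositionPrefixGlobal} and~\ref{propositionGlobal}, which the paper leaves implicit.
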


\begin{proof}
  Since $X$ is an $F$-maximal prefix code, by
  Lemmas~\ref{lemmaBijection1} and \ref{lemmaBijection2}, for any pair
  $(u,v)\in C(X,F)$, the relation $\varphi_{u,v}$ is a total function
  from $\Fact(u)$ into itself.

  Assume first that $X$ is a suffix code. Then, by the dual of
  Lemma~\ref{lemmaBijection1}, for any pair $(u,v)\in C(X,F)$, the
  function $\varphi_{u,v}$ from $\Fact(u)$ into itself is
  injective. 
  Since $\Fact(u)$ is a finite set, $\varphi_{u,v}$ is
  also surjective for any pair $(u,v)\in C(X,F)$. This implies by the
  dual of Lemma~\ref{lemmaBijection2} that $X$ is left $F$-complete.

  Assume conversely that $X$ is left $F$-complete. By the dual of
  Lemma~\ref{lemmaBijection2}, the function $\varphi_{u,v}$ maps
  $\Fact(u)$ onto itself for every pair $(u,v)\in C(X,F)$. This
  implies as above that it is also injective. By the dual of
  Lemma~\ref{lemmaBijection1}, and since $X$ is an $F$-maximal prefix
  code, $X$ is a suffix code.
\end{proof}

Proposition~\ref{lemmaBijection} has a dual formulation for an
$F$-maximal suffix code.

\medskip\par
\begin{proofof}{of Theorem \ref{theoremEquivMax}}
  We first show that (i) implies (ii). If $X$ is an $F$-maximal suffix
  code, then $X$ is left $F$-complete and thus condition (ii) is true.
  Assume next that $X$ is an $F$-maximal prefix code. Since $X$ is
  suffix, by Proposition~\ref{lemmaBijection}, it is left $F$-complete
  and thus (ii) holds. Finally assume that $X$ is neither an
  $F$-maximal prefix code nor an $F$-maximal suffix code. Let $y,z\in
  F$ be such that $X\cup y$ is prefix and $X\cup z$ is suffix. Since
  $F$ is recurrent, there is a word $u$ such that $yuz\in
  F$. Then $X\cup yuz$ is bifix and thus we get a contradiction.

The proof that (i) implies (ii') is similar.

(ii) implies (iii). Consider the set $Y=X\setminus A^+X$. It is a
suffix code by definition. It is prefix since it is contained in $X$.
It is left $F$-complete. Indeed, one has $A^*X=A^*Y$ and thus $A^*Y$
is left $F$-dense by the dual of
Proposition~\ref{propositionGlobal}.  Hence $Y$ is an
$F$-maximal suffix code.  By the dual of
Proposition~\ref{lemmaBijection}, the set $Y$ is right
$F$-complete. Thus $Y$ is an $F$-maximal prefix code. This implies
that $X=Y$ and thus that $X$ is an $F$-maximal prefix code and an
$F$-maximal suffix code.

The proof that (ii') implies (iii) is similar.  It is clear that (iii)
implies (i).
\end{proofof}

\begin{example}
  Let $A=\{a,b\}$ and let $F$ be the set of words without factor $bb$
  (Example~\ref{exampleGolden}). The set $X=\{aaa,aaba,ab,baa,baba\}$
  is a finite $F$-maximal bifix code.  As an example of computation of
  the relation $\varphi_{u,v}$, note that for $u=aaa$ and $v=b$, we
  have $\Fact(u)=\{\pi_1,\pi_2,\pi_3,\pi_4\}$ with $\pi_1=(1,aaa)$,
  $\pi_2=(a,aa)$, $\pi_3=(aa,a)$, $\pi_4=(aaa,1)$. The function
  $\varphi_{u,v}$ is the cycle $(\pi_1\pi_4\pi_3)$ and fixes $\pi_2$.
\end{example}

The following example shows that Theorem~\ref{theoremEquivMax} is
false if $F$ is not recurrent.

\begin{example}
  Let $F=a^*b^*$. Then $X=\{aa,ab,b\}$ is an $F$-maximal prefix
  code. It is not a suffix code but it is left $F$-complete as it can
  be easily verified.
\end{example}

Let $F\subset A^*$ be a factorial set. The
$F$-\emph{degree}\index{F-degree@$F$-degree}, denoted $d_F(X)$, of a
set $X\subset A^*$ is the maximal number of parses of words of $F$
with respect to $X$, that is
\begin{displaymath}
   d_F(X)=\max_{w\in F}\, \pars_X(w)\,.
\end{displaymath}
The $F$-degree of a set $X$ is finite or infinite.  The $A^*$-degree
is called the \emph{degree}\index{degree}, and is denoted $d(X)$.
Observe that $d_F(X)=d_F(X\cap F)$, and that $d_F(X)\le d(X)$.

The following is a generalization of Theorem 6.3.1
in~\cite{BerstelPerrinReutenauer2009}.

\begin{theorem}\label{theoremDegree}
  Let $F$ be a recurrent set and let $X\subset F$ be a bifix code.
  Then $X$ is an $F$-thin and $F$-maximal bifix code if and only if
  its $F$-degree $d_F(X)$ is finite.
  In this case,
  \begin{equation}
    I(X)=\{w\in F\mid \pars_X(w)< d_F(X)\}\,. \label{eqH}
  \end{equation}
\end{theorem}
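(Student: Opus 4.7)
The plan is to prove both directions together with identity~\eqref{eqH} around a single key claim: if $X$ is $F$-thin and $F$-maximal bifix and $w_0 \in F \setminus I(X)$, then $\pars_X(u) \le \pars_X(w_0)$ for every $u \in F$. For the forward direction, note that since $F$ is recurrent (hence right and left essential) and $X$ is $F$-thin, the set $F \setminus I(X)$ is nonempty; fix $w_0$ there. Given $u \in F$, recurrence provides $v \in F$ with $uvw_0 \in F$, and Equation~\eqref{eqL1} gives $\pars_X(u) \le \pars_X(uvw_0)$, so it suffices to prove the equality $\pars_X(uvw_0) = \pars_X(w_0)$. The suffix-code dual of Proposition~\ref{propInterpretations} says $\pars_X(uvw_0)$ counts suffixes $s$ of $uvw_0$ with no prefix in $X$; suffixes of length at most $|w_0|$ are exactly suffixes of $w_0$ and jointly contribute $\pars_X(w_0)$. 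For a longer suffix $s = tw_0$ with $t$ a nonempty suffix of $uv$, right $F$-completeness of $X$ (Theorem~\ref{theoremEquivMax}) combined with Proposition~\ref{propositionGlobal} furnishes some $x \in X$ prefix-comparable with $s$. Either $x$ is a prefix of $s$, in which case $s \in XA^*$, or $s$ is a proper prefix of $x$; in the latter case $x = tw_0 w'$ with $w' \ne 1$ exhibits $w_0$ as an internal factor of $x$, contradicting $w_0 \notin I(X)$. Thus every long suffix has a prefix in $X$, hence is not counted, proving the equality.

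The key claim immediately gives $d_F(X) = \pars_X(w_0) < \infty$. For identity~\eqref{eqH}, applying the claim with any $w \in F \setminus I(X)$ in place of $w_0$ yields $\pars_X(w) = d_F(X)$; for $w \in I(X)$, choose $u, v \in A^+$ with $uwv \in X$ and combine Equation~\eqref{eqLi} with $\pars_X(uwv) \le d_F(X)$ to obtain $\pars_X(w) < d_F(X)$.

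For the converse, assume $d_F(X) < \infty$. If $X$ were not $F$-thin, every $w \in F$ would be a factor of some element of $X$; recurrence yields $a, b \in A$ with $awb \in F$ (hence a factor of some $x \in X$), which places $w$ as an internal factor of $x$ and gives $F \subset I(X)$. Equation~\eqref{eqLi} would then force $\pars_X(w) < d_F(X)$ for every $w \in F$, contradicting attainment of the finite maximum; so $X$ is $F$-thin. To show $X$ is $F$-maximal bifix, extend $X$ by Zorn's lemma to an $F$-maximal bifix code $Y \subset F$; Equation~\eqref{eqLsub} and its suffix-dual give $\pars_Y \le \pars_X$, so $d_F(Y) \le d_F(X) < \infty$, and the forward direction applied to $Y$ yields identity~\eqref{eqH} for $Y$. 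A strict containment $X \subsetneq Y$ is then refuted by picking $y \in Y \setminus X$: bifixness of $Y$ forces $y \notin XA^* \cup A^*X$ while $y \in YA^* \cap A^*Y$, so $\pars_X(y) \ge \pars_Y(y) + 1$, and tracing this discrepancy through~\eqref{eqH} for $Y$ produces a contradiction with $d_F(Y) \le d_F(X)$.

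The main obstacle is the key claim, specifically ruling out contributions from long suffixes $s = tw_0$: this is precisely where right $F$-completeness of $X$ (from $F$-maximality, via Theorem~\ref{theoremEquivMax}) meets $w_0 \notin I(X)$. The converse's $F$-maximality step also requires careful accounting of how $\pars_X$ and $\pars_Y$ differ at $y \in Y \setminus X$, but this is routine once identity~\eqref{eqH} has been established for the extension $Y$.
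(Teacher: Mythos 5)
Your forward direction, the proof of~\eqref{eqH}, and the $F$-thinness half of the converse are all correct. The forward argument is the left--right mirror of the paper's: the paper counts prefixes of $uvw_0$ with no suffix in $X$ and uses left $F$-completeness, while you count suffixes with no prefix in $X$ and use $F$-maximality as a prefix code; the content is the same.

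The gap is in the final step of the converse, where you deduce $F$-maximality. The paper does this directly: taking $w$ with $\pars_X(w)=d_F(X)$, it shows via the dual of~\eqref{eqL} that $uw\in XA^*$ for every nonempty $u\in F$ with $uw\in F$, whence $XA^*$ is right $F$-dense and Theorem~\ref{theoremEquivMax} concludes. Your route through a Zorn extension $Y\supseteq X$ is legitimate up to the point where you must refute $X\subsetneq Y$, but the refutation as stated does not work. From $y\in Y\setminus X$ you correctly get $\pars_X(y)\ge\pars_Y(y)+1$, but a discrepancy of $1$ at a single word is entirely consistent with $d_F(Y)\le d_F(X)<\infty$: tracing it through~\eqref{eqH} for $Y$ only yields $d_F(X)\ge d_F(Y)+1$ (e.g.\ by appending a word of $F\setminus I(Y)$), and nothing bounds $d_F(X)$ above by $d_F(Y)$. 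To obtain a contradiction you must \emph{amplify} the discrepancy: using recurrence, build $w_n=yv_1yv_2\cdots v_{n-1}y\in F$. Each of the $n$ prefixes $yv_1\cdots v_{i-1}y$ ends in $y$, and since $Y$ is a suffix code containing $X\cup\{y\}$, such a prefix has no suffix in $X$ (a suffix $x\in X$ would be suffix-comparable with $y$ with both in $Y$, forcing $x=y\notin X$). By Proposition~\ref{propInterpretations}, $\pars_X(w_n)\ge n$, so $\pars_X$ is unbounded on $F$; the contradiction is with $d_F(X)<\infty$, not with $d_F(Y)\le d_F(X)$. With this repair you need neither~\eqref{eqH} for $Y$ nor the maximal extension itself: any single word $z\in F$ such that $X\cup\{z\}$ is bifix already suffices for the same amplification.
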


\begin{proof}
  Assume first that $X$ is an $F$-thin and $F$-maximal bifix code.
  Since $X$ is $F$-thin, $F\setminus I(X)$ is not empty.  Let $u\in
  F\setminus I(X)$ and $w\in F$. Since $F$ is recurrent, there is a
  word $v\in F$ such that $uvw\in F$.  Since $X$ is prefix, by
  Proposition~\ref{propInterpretations}, the number of parses of $u$
  is equal to the number of prefixes of $u$ which have no suffix in
  $X$. Since $X$ is left $F$-complete, the set of words in $F$ which
  have no suffix in $X$ coincides with the set $S$ of words which are
  proper suffixes of words in $X$. Since $u$ is not an internal factor
  of a word in $X$, any prefix of $uvw$ which is in $S$ is a prefix of
  $u$.  Thus
  $\pars_X(uvw)=(\u(S)\underline{A}^*,uvw)=(\u(S)\underline{A}^*,u)=\pars_X(u)$. Since by
  Equation~\eqref{eqL1}, $\pars_X(w)\le\pars_X(uvw) $, we get $\pars_X(w)\le
  \pars_X(u)$. This shows that $\pars_X$ is bounded, and thus that the $F$-degree
  of $X$ is finite. Moreover, this shows that $F\setminus I(X)$ is
  contained in the set of words of $F$ with maximal value of $\pars_X$.
   Conversely, consider $w\in I(X)$. Then there exists $w'\in X$ and
  $p,s\in A^+$ such that $w'=pws$. Then by Equation~\eqref{eqLi}
  $\pars_X(w')>\pars_X(w)$, and thus $\pars_X(w)$ is not maximal in $F$.  This proves
  Equation~\eqref{eqH}.

  Conversely, let $w\in F$ be a word with $\pars_X(w)=d_F(X)$.  For any
  nonempty word $u\in F$ such that $uw\in F$ we have $uw\in
  XA^*$. Indeed, set $u=au'$ with $a\in A$ and $u'\in F$. Then
  $\pars_X(au'w)\ge\pars_X(u'w)\ge\pars_X(w)$ by Equation~\eqref{eqL1}. This implies
  $\pars_X(au'w)=\pars_X(u'w)=\pars_X(w)$.  By  the dual of
  Equation~\eqref{eqL} we obtain that $uw\in XA^*$.

  This implies first that $X$ is $F$-thin and next that $XA^*$ is
  right $F$-dense. Indeed suppose that $w$ is an internal factor of a
  word in $X$. Let $p,s\in F\setminus 1$ be such that $pws\in
  X$. Since $pw\in F$, 
  the previous argument shows that $pw\in XA^*$, a
  contradiction. Thus $w\in F\setminus I(X)$.  This shows that $X$ is
  $F$-thin.  

  Next, and since $F$ is recurrent, for any $v \in F$, there is a word
  $u\in F$ such that $vuw\in F$.  Then $vuw\in XA^*$ by using again
  the above argument.  Thus $XA^*$ is right $F$-dense and $X$ is an
  $F$-maximal bifix code by Theorem~\ref{theoremEquivMax}.
\end{proof}

\begin{example}\label{exampleBifixDegree2}
  Let $F$ be the Fibonacci set. The set
  $X=\{a,bab,baab\}$ is a finite bifix code. Since it is finite, it is
  $F$-thin. It is an $F$-maximal prefix code as one may check on
  Figure~\ref{figProbaFibo}. Thus it is, by
  Theorem~\ref{theoremEquivMax}, an $F$-thin and $F$-maximal bifix
  code. The parses of the word $bab$ are $(1,bab,1)$ and
  $(b,a,b)$. Since $bab$ is not in $I(X)$, one has $d_F(X)=2$.
\end{example}

\begin{example}\label{exampleBifix3}
  Let $F$ be the Fibonacci set. The set
  $X=\{aaba,ab,baa,baba\}$ is a bifix code. It is $F$-maximal since it
  is right $F$-complete (see Figure~\ref{figProbaFibo}).  It has
  $F$-degree $3$. Indeed, the word $aaba$ has three parses
  $(1,aaba,1)$, $(a,ab,a)$ and $(aa,1,ba)$ and it is in $F\setminus
  I(X)$.
\end{example}
The following result establishes the link between maximal bifix codes
and $F$-maximal ones.

\begin{theorem} \label{proposition1} Let $F$ be a recurrent set. For
  any thin maximal bifix code $X \subseteq A^+$ of degree $d$, the set
  $Y = X \cap F$ is an $F$-thin and $F$-maximal bifix code.  One has
  $d_F(Y)\le d$ with equality when $X$ is finite.
\end{theorem}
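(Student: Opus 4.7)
The set $Y = X \cap F$ is contained in $F$ and, being a subset of the bifix code $X$, is itself a bifix code. To show that $Y$ is $F$-thin and $F$-maximal, my plan is to bound its $F$-degree by $d$ and then invoke Theorem~\ref{theoremDegree}, which characterizes, in a recurrent set, the $F$-thin and $F$-maximal bifix codes as those of finite $F$-degree. This reduces the whole first assertion to a statement about parse counts.

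The key observation, which I will establish first, is that $\pars_Y(w) = \pars_X(w)$ for every $w \in F$. Count parses in both cases via Proposition~\ref{propInterpretations} (the proof of that proposition only uses that the code is prefix, not that it lies inside $F$, so it applies to $X$ as well as to $Y$): $\pars_X(w)$ equals the number of prefixes of $w$ having no suffix in $X$, and $\pars_Y(w)$ equals the number of prefixes of $w$ having no suffix in $Y$. Since $F$ is factorial, every prefix $p$ of $w$ lies in $F$, and every suffix $s$ of such a $p$ lies in $F$ too; hence $s \in X$ is equivalent to $s \in X \cap F = Y$. So the two sets of prefixes coincide, proving the equality of parse counts. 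Taking the supremum over $w \in F$ yields $d_F(Y) \le d(X) = d$, and by Theorem~\ref{theoremDegree} this finiteness forces $Y$ to be $F$-thin and $F$-maximal.

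It remains to upgrade the inequality $d_F(Y) \le d$ to equality when $X$ is finite. In that case $I(X)$, the set of internal factors of words of $X$, is finite. Since $F$ is recurrent and nontrivial, it is right essential and hence contains words of arbitrary length, so we may choose $w \in F$ strictly longer than every element of $I(X)$; such a $w$ satisfies $w \notin I(X)$. Applying the classical case of Theorem~\ref{theoremDegree} (i.e.\ $F = A^*$) to the thin maximal bifix code $X$ of degree $d$ gives $\pars_X(w) = d$. Combined with the parse-count identity above, this forces $d_F(Y) \ge d$, hence $d_F(Y) = d$.

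The substantive step is really the parse-count equality $\pars_Y(w) = \pars_X(w)$ on $F$; once this is in hand the rest is essentially formal. There is no deep obstacle, but one must be careful to invoke Proposition~\ref{propInterpretations} for $X$ — which need not be a subset of $F$ — by noting that its proof does not actually require this inclusion, only that $X$ is a prefix code.
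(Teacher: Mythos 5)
Your proof is correct and follows essentially the same route as the paper's: the paper's proof rests on the observation, stated just after the definition of $F$-degree, that $d_F(X)=d_F(X\cap F)\le d(X)$, then invokes Theorem~\ref{theoremDegree}, and for equality notes that words of $F$ longer than all words of $X$ have $d$ parses. You merely spell out the justification of that parse-count identity (via Proposition~\ref{propInterpretations} and factoriality of $F$) which the paper leaves implicit.
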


\begin{proof}
  Recall that $d_F(Y)=d_F(X\cap F)=d_F(X)\le d$.  Thus $d_F(Y)$ is
  finite and by Theorem~\ref{theoremDegree}, $Y$ is an $F$-thin and
  $F$-maximal bifix code. If $X$ is finite, then each word which is in
  $F$ and is longer than the longest words in $X$ has $d$
  parses. Thus $d_F(X)=d$, whence $d_F(Y)=d$.
\end{proof}

\begin{example}\label{exFiboDegre2}
  The set $X=a\cup ba^*b$ is a maximal bifix code of degree 2. Let $F$
  be the Fibonacci set. Then $X\cap
  F=\{a,baab,bab\}$ (see Figure~\ref{figProbaFibo}).

  As another example, let $Z = \{a^3, a^2ba, a^2b^2, ab, ba^2, baba,
  bab^2, b^2a, b^3\}$.  The set $Z$ is a finite maximal bifix code of
  degree $3$ (see~\cite{BerstelPerrinReutenauer2009}). Then $Z \cap F
  = \{a^2ba, ab, ba^2, baba \}$ (see Figure~\ref{figProbaFibo}).
\end{example}

\begin{example}\label{exMorseDegre2}
  Let $F$ be the  Thue--Morse set. Consider again
  $X=a\cup ba^*b$. Then $X\cap F=\{a,baab,bab,bb\}$ is a finite
  $F$-maximal bifix code of $F$-degree $2$ (see
  Figure~\ref{figProbaMorse}).
\end{example}

The following examples show that a strict inequality can hold in
Theorem~\ref{proposition1}. The second example shows that this may
happen even if all letters occur in
the words of $F$.

\begin{example}
  Let $A=\{a,b\}$ and let $X=a\cup ba^*b$. The set $X$ is a maximal
  bifix code of degree $2$. Let $F=a^*$. Then $F$ is a recurrent
  set. We have $Y=X\cap F=a$. The $F$-degree of $Y$ is $1$.
\end{example}

\begin{example}\label{exampleStrict} Let $A=\{a,b\}$ and
  let $X\subset A^+$ be the maximal bifix code of degree $3$ with
  kernel $K=\{aa,ab,ba\}$.  Let $F$ be the Fibonacci set.  Since
  $K=A^2\cap F$, $K$ is an $F$-maximal bifix code.  Since $K\subset
  X\cap F$ and $K$ is $F$-maximal, one has $X\cap F=K$.  Next
  $K=A^2\cap F$ and Theorem~\ref{proposition1} imply that
  $d_F(K)=2$. Thus $d(X)=3$ and $d_F(X\cap F)=2$.
\end{example}

\subsection{Derivation}
We first show that the notion of derived code can be extended to $F$-maximal
bifix codes. The following result generalizes Proposition 6.4.4 in
\cite{BerstelPerrinReutenauer2009}.

The \emph{kernel}\index{kernel} of a set of words $X$ is the set of
words in $X$ which are internal factors of words in $X$. We denote by
$K(X)$ the kernel of $X$. Note that $K(X)=I(X)\cap X$.

\begin{theorem}\label{thmDerived}
  Let $F$ be a recurrent set. Let $X\subset F$ be a bifix code of
  finite $F$-degree $d\ge 2$.  Set $I=I(X)$ and $K=K(X)$. Let
  $G=(IA\cap F)\setminus I$ and $D=(AI\cap F)\setminus I$. Then the
  set $X'=K\cup(G\cap D)$ is a bifix code of $F$-degree $d-1$.
\end{theorem}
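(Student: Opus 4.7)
The plan is to prove the theorem in two stages: (a) $X'$ is a bifix code, and (b) $d_F(X')=d-1$; the $F$-maximality then follows automatically by Theorem~\ref{theoremDegree}.

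\textbf{Stage (a): $X'$ is a bifix code.} The construction $X'=K\cup(G\cap D)$ is symmetric in left/right roles (with $G$ and $D$ swapping), so it suffices to prove the prefix-code property; the suffix-code property follows by the dual argument. Note first that $K\cap(G\cap D)=\emptyset$, since $K\subset I$ while $G\cap D\subset F\setminus I$. Suppose $x_1\in X'$ is a proper prefix of $x_2\in X'$, in one of four sub-cases. (i) $x_1,x_2\in K\subset X$: impossible since $X$ is prefix. (ii) $x_1,x_2\in G\cap D$: write $x_2=w_2a_2$ with $w_2\in I$; either $|x_1|=|w_2|$ and $x_1=w_2\in I$, or $|x_1|<|w_2|$ and any witness $pw_2q\in X$ with $p,q\ne 1$ places $x_1$ as an internal factor of $pw_2q$; either way $x_1\in I$, contradicting $x_1\in G\cap D$. (iii) $x_1\in K$, $x_2\in G\cap D$: since $x_2\in D$, write $x_2=bw$ with $w\in I$; by Theorem~\ref{theoremDegree}, $\pars_X(w)\le d-1$ and $\pars_X(x_2)=d$, so the dual of Proposition~\ref{propositioneqL} forces $x_2\notin XA^*$, but $x_1\in X$ is a prefix of $x_2$, a contradiction. (iv) $x_1\in G\cap D$, $x_2\in K\subset X\cap I$: take a witness $px_2q\in X$ with $p,q\ne 1$; then $x_1$ (as a proper prefix of $x_2$, hence strictly inside $px_2q$) is an internal factor of $px_2q$, so $x_1\in I$, contradicting $x_1\in G\cap D$.

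\textbf{Stage (b): $d_F(X')=d-1$.} I first verify $X'$ is $F$-thin by showing $I(X')\subset I$: for $y\in K\subset X$ this is immediate, since any internal factor of $y$ is an internal factor of the $X$-word $y$ itself; for $y=w_2a_2=b_2w_2'\in G\cap D$, every internal factor of $y$ is a factor of the middle block $y_1\cdots y_{|y|-2}$, which is a suffix of $w_2\in I$, and hence in $I$ because any factor of an internal factor is itself an internal factor. Thus $F\setminus I(X')\supset F\setminus I\ne\emptyset$. For the $F$-degree, I set up a refinement/contraction correspondence between parse enumerators: each $y\in G\cap D\subset X'$ carries an $X$-witness $pyq\in X$ with $p,q\ne 1$, so an $X'$-parse of $w\in F$ refines into an $X$-parse by expanding each $G\cap D$-block via its witness, while conversely an $X$-parse of $w$ whose middle lies in $I$ contracts to an $X'$-parse by collapsing consecutive $X$-pieces. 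A careful count yields $\pars_{X'}(w)=\pars_X(w)-1$ for every $w\in F\setminus I$, whence $d_F(X')=d-1$.

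\textbf{Main obstacle.} The mixed sub-cases (iii)-(iv) of Stage~(a) become transparent once one combines Theorem~\ref{theoremDegree}'s parse-count characterisation of $I$ with the increment formula of Proposition~\ref{propositioneqL} and its suffix-dual. The principal technical step is then the parse-count comparison in Stage~(b): establishing the bijective refinement requires a careful control of how blocks in ${X'}^*$ lift to $X^*$ through the $I$-witnesses of $G\cap D$-elements, and the recurrence of $F$ is used at this step to guarantee that the witnesses can be chosen consistently along the entire word $w$.
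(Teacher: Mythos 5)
Your Stage (a) is sound: the four sub-cases correctly establish the prefix-code property (and the suffix property by symmetry), using essentially the same ingredients as the paper's proof, namely that elements of $G\cap D$ lie outside $I$ while everything properly inside a word of $I\cup X$ lies in $I$, together with the parse-increment formula of Proposition~\ref{propositioneqL}. Your observation that $I(X')\subset I$, hence that $X'$ is $F$-thin, is also correct.

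Stage (b), however, contains a genuine error that breaks the whole degree computation. You write that ``each $y\in G\cap D\subset X'$ carries an $X$-witness $pyq\in X$ with $p,q\ne 1$.'' This is false, and in fact impossible: by definition $G\cap D\subset F\setminus I$, so an element $y$ of $G\cap D$ is precisely \emph{not} an internal factor of any word of $X$; no such witness exists. (What is true, and what the paper isolates as Lemma~\ref{lemmaDeriv}, is the one-sided version: $G\subset S$ and $D\subset P$, i.e.\ $y$ is a proper suffix of some word of $X$ and a proper prefix of some other word of $X$ --- but never simultaneously via the same word.) Since your entire refinement/contraction correspondence between $X'$-parses and $X$-parses is built on expanding $G\cap D$-blocks through these nonexistent witnesses, the claimed identity $\pars_{X'}(w)=\pars_X(w)-1$ is left without any actual proof; ``a careful count yields'' is doing all the work, and the count as described cannot be carried out.

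Two further points are missing and are needed for any correct version of Stage (b). First, you never show $G\cap D\ne\emptyset$ (the paper does this by picking $x\in X$ maximizing $\pars_X$ on $X$ and noting $x\notin K$, so $K$ alone is not $F$-maximal). Second, your plan to deduce $F$-maximality ``for free'' from Theorem~\ref{theoremDegree} after computing the degree inverts the logical order the paper needs: the paper first proves that $X'$ is an $F$-maximal prefix code directly (via Lemma~\ref{lemmaDeriv2}: the shortest prefix of $x\in X\setminus K$ not in $I$ lies in $X'$, so every word of $X$ has a prefix in $X'$), and only then uses this right $F$-completeness to identify the suffixes of a fixed $x'\in G\cap D$ having no prefix in $X'$ with the proper prefixes of $X'$, yielding exactly $d-1$ of them by the dual of Proposition~\ref{propInterpretations}. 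Without establishing maximality first, that identification --- and hence the count --- is not available.
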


The code $X'$ is called the \emph{derived}\index{derived
  code}\index{code!derived} code of $X$ with respect to $F$ or
$F$-derived code.

The proof uses two lemmas.  Let $P$ be the set of proper prefixes of
$X$ and let $S$ be the set of proper suffixes of $X$.

\begin{lemma}\label{lemmaDeriv}
  One has $G\subset S$ and $D\subset P$.
\end{lemma}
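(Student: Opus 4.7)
The approach is to reduce the lemma to the parse-count characterization of internal factors given by Theorem~\ref{theoremDegree} together with the local description of $\pars_X$ provided by Proposition~\ref{propositioneqL}. Since $d = d_F(X)$ is finite, Theorem~\ref{theoremDegree} tells us that $X$ is $F$-thin and $F$-maximal as a bifix code, and moreover that $I(X) = \{w \in F \mid \pars_X(w) < d\}$; in particular every $w \in F \setminus I$ satisfies $\pars_X(w) = d$. By Theorem~\ref{theoremEquivMax}, $X$ is then both an $F$-maximal prefix code and an $F$-maximal suffix code, so by Proposition~\ref{propositionPrefixGlobal} and its dual, any word of $F$ is prefix-comparable (respectively suffix-comparable) with some word of $X$.

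To establish $G \subset S$, I would take $w \in G$ and write $w = ua$ with $u \in I$ and $a \in A$. The two parse counts satisfy $\pars_X(u) \le d-1$ (because $u \in I$) and $\pars_X(w) = d$ (because $w \in F \setminus I$). Proposition~\ref{propositioneqL} states that $\pars_X(ua) - \pars_X(u)$ equals $0$ when $ua \in A^*X$ and $1$ otherwise; since the actual difference is at least $1$, it must be exactly $1$, forcing $w \notin A^*X$. In other words, no suffix of $w$ lies in $X$. Now $w$ is suffix-comparable with some $x \in X$ by $F$-maximality of $X$ as a suffix code, and $x$ cannot be a suffix of $w$; hence $w$ is a proper suffix of $x$, so $w \in S$.

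The inclusion $D \subset P$ is obtained by the mirror argument: for $w = au \in D$ with $u \in I$, the dual of Proposition~\ref{propositioneqL} forces $w \notin XA^*$, so no prefix of $w$ lies in $X$. Since $w$ is prefix-comparable with some $x \in X$ and $x$ cannot be a prefix of $w$, we conclude that $w$ is a proper prefix of $x$, whence $w \in P$. There is no real obstacle here; the only point requiring care is to make sure the parse-count characterization of $I(X)$ and the $F$-maximality of $X$ as a prefix and as a suffix code are both available, which is granted by Theorems~\ref{theoremDegree} and~\ref{theoremEquivMax} as soon as $d_F(X)$ is finite.
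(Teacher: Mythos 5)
Your proof is correct and follows essentially the same route as the paper: both derive $\pars_X(ha)>\pars_X(h)$ from the characterization of $I(X)$ in Theorem~\ref{theoremDegree}, invoke Proposition~\ref{propositioneqL} to conclude that $ha$ has no suffix in $X$, and then use the left $F$-denseness of $A^*X$ (equivalently, $F$-maximality of $X$ as a suffix code) to place $ha$ among the proper suffixes of $X$, with the symmetric argument for $D\subset P$.
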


\begin{proof}
By Theorem~\ref{theoremDegree}, the parse enumerator of $X$ is bounded 
 on $F$ and  $F\setminus I(X)=F\setminus I$ is the set of words in $F$
with maximal value $d_F(X)$.
Let $y=ha$ be in $G$ with $h\in I$ and $a\in A$. 
Since $y\notin I$, we have $\pars_X(ha)>\pars_X(h)$. Thus, by
Proposition~\ref{propositioneqL},
$y=ha$ does not
have a suffix in $X$. Since $A^*X$ is left $F$-dense, this implies that
$y$ is a proper suffix of a word in $X$. Thus $y$ is in $S$.
The proof that $D\subset P$ is symmetrical.
\end{proof}

\begin{lemma}\label{lemmaDeriv2}
  For any $x\in X\setminus K$, the shortest prefix of $x$ which is not
  in $I$ is in $X'$.
\end{lemma}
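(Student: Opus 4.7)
The plan is to show $y\in X'=K\cup(G\cap D)$, where $y$ denotes the shortest prefix of $x$ that is not in $I$. Such a prefix exists since $x\in X\setminus K$ forces $x\notin I$ (because $K=X\cap I$). Since $K\subseteq I$ and $y\notin I$, it suffices to prove $y\in G\cap D$. I would first note that $y\neq 1$: the hypothesis $d\ge 2$ forces $X$ to contain a word of length at least~$2$, so $1\in I$.

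To establish $y\in G=(IA\cap F)\setminus I$, I would write $y=hb$ with $b$ the final letter of $y$. The prefix $h$ is strictly shorter than $y$, so by minimality of $y$ we have $h\in I$. Combined with $y\in F$ (as a prefix of $x\in X\subseteq F$) and $y\notin I$, this gives $y\in G$.

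To establish $y\in D=(AI\cap F)\setminus I$, I would write $y=cu$ and show $u\in I$. If $y$ is a proper prefix of $x$, writing $x=yv$ with $v\neq 1$ gives $x=cuv\in X$, so $u$ appears as an internal factor of $x$ bordered by the nonempty words $c$ and $v$; hence $u\in I$ and thus $y\in D$. When instead $y=x$, the sub-case $|x|=1$ is trivial since $u=1\in I$. In the sub-case $y=x$ with $|x|\ge 2$, writing $x=a_1\cdots a_n$, all proper prefixes of $x$ lie in $I$; taking a witness $\alpha h\beta\in X$ (with $\alpha,\beta$ nonempty) for the longest proper prefix $h=a_1\cdots a_{n-1}\in I$, I would first argue that the first letter of $\beta$ cannot be $a_n$---otherwise either $\beta=a_n\beta'$ with $\beta'\neq 1$, forcing $x$ to be an internal factor of $\alpha h\beta=\alpha x\beta'\in X$ and contradicting $x\notin I$; or $\beta=a_n$, in which case $\alpha x\in X$ with $x$ as a suffix, and the suffix-code property (Theorem~\ref{theoremEquivMax}) forces $\alpha=1$, a contradiction. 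Combining this structural constraint with the recurrence of $F$ and with the fact that $X$ is both left and right $F$-complete, one then locates a word of $X$ in which $u=a_2\cdots a_n$ is flanked by nonempty words, giving $u\in I$.

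The main obstacle is precisely this last sub-case $y=x$ with $|x|\ge 2$. The cases where $y$ is a proper prefix of $x$, or where $y=x$ has length one, follow immediately from the definitions and the choice of $y$; the obstructed case requires exploiting the full structure of $X$ as an $F$-maximal bifix code in a recurrent set.
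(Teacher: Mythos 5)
Your treatment of the case where $y$ is a proper prefix of $x$ is fine (and in fact slightly more direct than the paper's, which deduces $s\in I$ for $y=a''s$ from the dual of Equation~\eqref{eqL} rather than from the definition of internal factor). But the sub-case you flag as the ``main obstacle'' --- $y=x$ with $|x|\ge 2$ --- is left as a genuine gap: the sketch ending with ``one then locates a word of $X$ in which $u$ is flanked by nonempty words'' is not an argument, and your preliminary observation about the first letter of $\beta$ only controls the witness for $h=a_1\cdots a_{n-1}$, not for $u=a_2\cdots a_n$.

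The idea you are missing is that this case never occurs: the shortest prefix of $x$ not in $I$ is \emph{always} a proper prefix of $x$. This is where the parse-enumerator characterization of $I$ does the work. By Theorem~\ref{theoremDegree} (Equation~\eqref{eqH}), $I=\{w\in F\mid \pars_X(w)<d_F(X)\}$, so $x\notin I$ means $\pars_X(x)=d_F(X)$. Writing $x=pa$ with $a\in A$, the fact that $x\in X$ gives $pa\in A^*X$, hence $\pars_X(pa)=\pars_X(p)$ by Proposition~\ref{propositioneqL}; therefore $\pars_X(p)=d_F(X)$ and $p\notin I$ as well. So $y$ is a prefix of $p$, i.e.\ a proper prefix of $x$, and your own argument for the proper-prefix case then finishes the proof. (This also disposes of your sub-case $|x|=1$: a letter $x=pa$ with $p=1$ would force $\pars_X(x)=\pars_X(1)=1<d$, so $x\in I\cap X=K$, contradicting $x\in X\setminus K$.) Without this step your proof is incomplete; with it, it coincides with the paper's.
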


\begin{proof}
  Since $x\notin K$, we have $x\notin I$.  Let $x'$ be the shortest
  prefix of $x$ which is not in $I$ or, equivalently such that
  $\pars_X(x')=d_F(X)$.  Let us show that $x'\in X'$. First, $x'$ is a
  proper prefix of $x$. Set indeed $x=pa$ with $p\in A^*$ and $a\in
  A$. Since $x\in X$, we have by Equation~\eqref{eqL},
  $\pars_X(x)=\pars_X(p)$. Thus $p \not \in I$ and $x'$ is a prefix of $p$.

  Since $1\in I$, we have $x'\ne1$.  Set $x'=p'a'$ with $p'\in A^*$
  and $a'\in A$. By definition of $x'$ we have $p'\in I$.  Thus $x'\in
  G=(IA\cap F)\setminus I$.

Next, set $x'=a''s$ with $a''\in A$ and $s\in A^*$. Since $x'\notin XA^*$,
we have
by the dual of Equation~\eqref{eqL}, $\pars_X(s)<\pars_X(x')$. Thus $s$ is
in
$I$. This shows that $x'\in D$.
Thus we conclude that $x'\in G\cap D\subset X'$.
\end{proof}

There is a dual of Lemma~\ref{lemmaDeriv2} concerning the shortest
suffix of a word in $X\setminus K$.

\noindent\emph{Proof of Theorem}~\ref{thmDerived}.

We first prove that $X'$ is a prefix code.  Suppose first that $k\in
K$ is a prefix of a word $z$ in $G\cap D$.  By Lemma~\ref{lemmaDeriv},
a word in $D$ is a proper prefix of $X$. Thus $k\in X$ would be a
proper prefix of $X$, which is impossible since $X$ is prefix.

Suppose next that a word $u$ of $G\cap D$ is a prefix of a word $k$ in $K$.
Since $k$ is in $I$, it follows that $u$ is in $I$, a contradiction.

Finally, no word $y\in G \cap D$ can be a proper prefix of another
 word $y'$ in $G \cap D$, otherwise $y' = yz$, with
$z \in A^+$. Therefore, since $G \subset S$ by Lemma~\ref{lemmaDeriv},
there is $t \in A^+$
such that $ty' = tyz \in X$. Consequently, $y \in G \cap I$,
a contradiction.

Thus $X'$ is a prefix code. To show that it is $F$-maximal, it is
enough to show that any word in $X$ has a prefix in $X'$. 

Consider indeed $x\in X$. If $x$ is in $K$ then $x\in X'$. Otherwise,
let $x'$ be the shortest prefix of $x$ which is not in $I$.
 By Lemma~\ref{lemmaDeriv2}, we have $x'\in X'$.

Thus $X'$ is an $F$-maximal prefix code.

A symmetric argument shows that $X'$ is an $F$-maximal suffix code.

Let us show that $d_F(X')=d_F(X)-1$. We first note that $G\cap
D\ne\emptyset$. Indeed, let $x\in X$ be such that $\pars_X(x)$ is maximal
on $X$. If $x$ were an internal factor of a word $y\in X$, then by
Equation~\eqref{eqLi} $\pars_X(x)<\pars_X(y)$ which contradicts our
assumption. Thus $x\notin K$. This shows that $K$ is not an
$F$-maximal bifix code and thus that $X'\setminus K=G\cap D\ne
\emptyset$. Consider $x'\in G\cap D$.  Since $(G \cap D)\cap I(X)$ is
empty, and since $I(X')\subset I(X)$, $x'$ cannot be in $I(X')$.  Thus
the number of parses of $x'$ with respect to $X'$ is $d_F(X')$.

Let $P'$ be the set of proper prefixes of  $X'$.  We show that
$x'$ has $d_F(X)-1$ suffixes which are in $P'$. This will show that
$d_F(X')=d_F(X)-1$ by the dual of
Proposition~\ref{propInterpretations}.

Since $x'\in F\setminus I$, we have $\pars_X(x')=d_F(X)$. Thus $x'$ has
$d_F(X)$ suffixes in $P$. One of them is $x'$ itself since $x'\in
D\subset P$. Let $p$ be a proper suffix of $x'$ which is in $P$. Let
us show that $p$ does not have a prefix in $X'$. Indeed, arguing by
contradiction, assume that $x''\in X'$ is a prefix of $p$.  We cannot
have $x''\in K$ since $p$ is a proper prefix of a word in $X$. We
cannot have either $x''\in G\cap D$.  Indeed, since $x'$ is in $AI$,
$p$ is in $I$ and thus also $x'' \in I$. Thus $p$ cannot have a prefix
in $X'$. Since $X'$ is an $F$-maximal prefix code, this implies that
$p$ is a proper prefix of  $X'$. Thus, the $d_F(X)-1$ proper
suffixes of $x'$ which are in $P$ are in $P'$.  \QED

\begin{example}\label{exampleDerived}
  Let $F$ be the Fibonacci set. Let
  $X=\{a,bab,baab\}$. The set $X$ is an $F$-thin and $F$-maximal bifix
  code of $F$-degree 2 (see Example~\ref{exampleBifixDegree2}).  We have
  $K=\{a\}$, $I=\{1,a,aa\}$, $G=\{b,ab,aab\}$ and $D=\{b,ba,baa\}$.
  Thus $X'=\{a,b\}$.
\end{example}

The following is a generalization of Proposition 6.3.14 in~\cite{BerstelPerrinReutenauer2009}.
\begin{proposition}\label{propositionScapH}
Let $F$ be a recurrent set. Let $X\subset F$ be a
bifix code of $F$-degree $d\ge 2$. Let $S$ be the set of proper suffixes of $X$
and set $I=I(X)$. The set $S\setminus I$ is an $F$-maximal prefix
code
and the set $S\cap I$ is the set of proper suffixes of the derived
code $X'$.
\end{proposition}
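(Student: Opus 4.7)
The plan is to prove two separate assertions: first, that $S\setminus I$ is an $F$-maximal prefix code, and second, that $S\cap I$ coincides with the set $S'$ of proper suffixes of the derived code $X'=K\cup(G\cap D)$.

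I would start with the fact that $S\setminus I$ is a prefix code by a direct contradiction argument: if $s$ is a proper prefix of some $s'\in S\setminus I$ with $s\in S\setminus I$, write $s'=sw$ with $w$ nonempty; since $s'\in S$, there is $x\in X$ with $x=us'$ and $u$ nonempty, giving $x=usw$ with both $u,w$ nonempty, so that $s\in I$, a contradiction. For the $F$-maximality I would apply Proposition~\ref{propositionPrefixGlobal}: given any $u\in F$, pick $y\in F\setminus I$ (nonempty because $X$ is $F$-thin by Theorem~\ref{theoremDegree}) and, using recurrence of $F$, a word $v$ with $uvy\in F$. By Equation~\eqref{eqL1} combined with $\pars_X(y)=d_F(X)$, one obtains $\pars_X(uvy)=d_F(X)$, so Proposition~\ref{propInterpretations} yields $d_F(X)$ distinct prefixes of $uvy$ with no suffix in $X$. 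Each such nonempty prefix lies in $S$, using that $S$ coincides with the set of words in $F$ having no suffix in $X$ (valid since $X$ is left $F$-complete, as exploited in the proof of Theorem~\ref{theoremDegree}). The longest such prefix $p^*$ is nonempty because $d_F(X)\ge 2$; since all $d_F(X)$ such prefixes lie inside $p^*$, one has $\pars_X(p^*)=d_F(X)$, hence $p^*\notin I$. Then $p^*\in S\setminus I$, and it is prefix-comparable with $u$ (both being prefixes of $uvy$), which is what is needed.

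For the second assertion I would prove both inclusions. The inclusion $S'\subset S\cap I$ should be straightforward: if $x'\in K\subset X\cap I$, any proper suffix lies in $S$ (since $K\subset X$) and inherits the internal-factor property from $x'$; if $x'\in G\cap D$, then $G\subset S$ (Lemma~\ref{lemmaDeriv}) gives $x'\in S$, so proper suffixes of $x'$ lie in $S$, and writing $x'=ah$ with $h\in I$ (from $x'\in D$), any proper suffix of $x'$ has length at most $|h|$, hence is a suffix of $h$ and therefore lies in $I$.

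The step I expect to be the main obstacle is the reverse inclusion $S\cap I\subset S'$, since it has to exploit the $F$-maximality of $X'$ together with the specific structure of $X'$. Starting from $s\in S\cap I$, Theorem~\ref{thmDerived} tells us $X'$ is an $F$-maximal bifix code, so by the suffix analogue of Proposition~\ref{propositionPrefixGlobal}, $s$ is suffix-comparable with some $x'\in X'$. I would then rule out the cases $s=x'$ and $x'$ a proper suffix of $s$. The equality case fails because $s=x'\in K\subset X$ would make $s$ a proper suffix of a word in $X$ (since $s\in S$), contradicting that $X$ is a suffix code, while $s=x'\in G\cap D$ would force $s\notin I$, contradicting $s\in I$. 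The case where $x'$ is a proper suffix of $s$ fails similarly: if $x'\in K\subset X$, then $x'$ is a proper suffix of some word in $X$ through $s$, again contradicting $X$ being a suffix code; if $x'\in G\cap D$, then any proper suffix of the internal factor $s$ is itself an internal factor (substitute $x'$ into the witness $y=u'sw'$ for $s\in I$), contradicting $x'\notin I$. Only the case where $s$ is a proper suffix of $x'$ survives, giving $s\in S'$.
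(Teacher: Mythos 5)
Your proof is correct, and for most of the statement it runs parallel to the paper's: the prefix-code property of $S\setminus I$ is shown by the same internal-factor contradiction, your $F$-maximality argument is an inlined version of Lemma~\ref{lemmaMaxSinH} (the paper takes the longest prefix of $uvw$ lying in $S$ and shows via an incremental parse computation that it is outside $I$; you take the longest prefix with no suffix in $X$ and count its $d_F(X)$ parses directly via Proposition~\ref{propInterpretations} --- the same idea), and the inclusion $S'\subset S\cap I$ is argued identically. The one step where you genuinely diverge is the reverse inclusion $S\cap I\subset S'$. The paper is constructive there: starting from $x\in X$ having $s$ as a proper suffix, it exhibits the shortest suffix $y$ of $x$ not in $I$, shows $y\in X'$ by the dual of Lemma~\ref{lemmaDeriv2}, and observes that $s$ must be a proper suffix of $y$. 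You instead invoke the $F$-maximality of $X'$ as a suffix code (available from Theorem~\ref{thmDerived}) to get some $x'\in X'$ suffix-comparable with $s$, and then eliminate the cases $s=x'$ and $x'$ a proper suffix of $s$ by splitting according to $x'\in K$ or $x'\in G\cap D$. Both are sound; the paper's version produces the witness explicitly and reuses a lemma already proved for Theorem~\ref{thmDerived}, while yours trades that for a nonconstructive appeal to maximality plus a clean four-way case analysis, at the cost of leaning on the full conclusion of Theorem~\ref{thmDerived} rather than only on its preparatory lemmas.
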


The proof uses the following lemma.
\begin{lemma}\label{lemmaMaxSinH}
Let $F$ be a recurrent set. Let $X\subset F$ be an $F$-thin and
$F$-maximal bifix code. Let $S$ be the set of proper suffixes of $X$
and set $I=I(X)$. For any $w\in F\setminus I$
the
longest prefix of $w$ which is in $S$ is not in $I$.
\end{lemma}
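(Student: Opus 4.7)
The plan is to exploit the characterization of the $F$-degree from Theorem~\ref{theoremDegree}, which states that $F\setminus I$ is exactly the set of words $u\in F$ with $\pars_X(u)=d_F(X)$. Combined with Proposition~\ref{propInterpretations}, this reduces the lemma to a counting argument about the prefixes that lie in $S$.

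First I would note that the longest prefix $v$ of $w$ belonging to $S$ is well-defined: the empty word $1$ is a proper suffix of every element of $X\subset A^+$ and hence lies in $S$, and the set of prefixes of $w$ is finite. Next I would establish the key translation: for every $p\in F$,
\begin{displaymath}
p\in S\ \Longleftrightarrow\ p\text{ has no suffix in }X.
\end{displaymath}
The left-to-right implication is immediate from $X$ being a suffix code, since the suffixes of a proper suffix of $x\in X$ are themselves proper suffixes of $x$ and so cannot lie in $X$. For the converse, $X$ is an $F$-maximal bifix code, so by Theorem~\ref{theoremEquivMax} it is left $F$-complete; by the dual of Proposition~\ref{propositionGlobal}, every $p\in F$ is suffix-comparable with some $x\in X$, and if $p$ has no suffix in $X$ then $x$ cannot be a suffix of $p$, which forces $p$ to be a proper suffix of $x$, i.e., $p\in S$.

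With this translation, Proposition~\ref{propInterpretations} gives, for every $u\in F$,
\begin{displaymath}
\pars_X(u)=\Card\{p\mid p\text{ is a prefix of }u,\ p\in S\}.
\end{displaymath}
Applying this to both $w$ and $v$: since $v$ is the longest prefix of $w$ in $S$, any prefix of $w$ that lies in $S$ has length at most $|v|$ and is therefore a prefix of $v$; conversely every prefix of $v$ is a prefix of $w$. Hence the two sets of prefixes in $S$ coincide, so $\pars_X(v)=\pars_X(w)=d_F(X)$. By Theorem~\ref{theoremDegree}, this forces $v\notin I$, which is the desired conclusion.

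The only delicate point is the equivalence between ``having no suffix in $X$'' and ``belonging to $S$'' for words in $F$; this is where the $F$-maximal bifix hypothesis actually enters, through left $F$-completeness. The remainder is straightforward bookkeeping combining the parse-count characterization of $F\setminus I$ with Proposition~\ref{propInterpretations}.
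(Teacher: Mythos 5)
Your proof is correct and follows essentially the same route as the paper's: both arguments show that the parse enumerator does not increase past the longest prefix in $S$ (you by directly counting prefixes in $S$ via Proposition~\ref{propInterpretations}, the paper by a one-letter-at-a-time induction using Equation~\eqref{eqL}, which amounts to the same thing), and both conclude via the characterization of $F\setminus I$ as the words of maximal parse number from Theorem~\ref{theoremDegree}. Your explicit justification of the equivalence ``$p\in S$ iff $p\in F$ has no suffix in $X$'' via left $F$-completeness is a point the paper's proof uses only implicitly, so making it explicit is a welcome addition.
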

\begin{proof}
  Let $s$ be the longest prefix of $w$ which is in $S$.  Set
  $w=st$. Let us show that for any prefix $t'$ of $t$, we have
  $\pars_X(st')=\pars_X(s)$.  It is true for $t'=1$. Assume that it is
  true for $t'$ and let $a\in A$ be the letter such that $t'a$ is a
  prefix of $t$. Since $st'a\notin S$, we have $st'a\in A^*X$. Thus by
  Equation~\eqref{eqL}, this implies $\pars_X(st'a)=\pars_X(st')$.
  Thus $\pars_X(st'a)=\pars_X(s)$. We conclude that
  $\pars_X(st)=\pars_X(s)$. Since $w=st$ is in $F\setminus I$, and
  since $F\setminus I$ is the set of words in $F$ with maximal value
  of $\pars_X$, this implies that $s\in F\setminus I$.
\end{proof}
This lemma has a dual statement for the longest suffix of a word in 
$w\in F\setminus I$ which is in $P$.

\begin{proofof}{of Proposition~\ref{propositionScapH}}
  Set $Y=S\setminus I$.  Let us first show that $Y$ is prefix. Assume
  that $u,uv\in Y$. Since $uv\in S$ there is a nonempty word $p$ such
  that $puv\in X$. Since $u\notin I$, this forces $v=1$. Thus $Y$ is
  prefix.

  We show next that $YA^*$ is right $F$-dense. Consider $u\in F$ and
  let $w\in F\setminus I$. Since $F$ is recurrent, there exists $v\in
  F$ such that $uvw\in F$. Let $s$ be the longest word of $S$ which is
  a prefix of $uvw$. By Lemma~\ref{lemmaMaxSinH}, we have $s\in
  F\setminus I$.  Thus $s\in S\setminus I=Y$ and $uvw\in YA^*$. This
  shows that $YA^*$ is right $F$-dense.

  Let us now show that the set $S'$ of proper suffixes of the words of
  $X'$ is $S\cap I$.  Let $s$ be a proper suffix of a word $x'\in
  X'$. If $x'\in K$, then $s$ is in $S\cap I$. Suppose next that
  $x'\in G\cap D$. Since $G\subset S$ by Lemma~\ref{lemmaDeriv}, we
  have $s\in S$. Furthermore, since $D\subset AI$, we have $s\in
  I$. This shows that $s\in S\cap I$.

  Conversely, let $s$ be in $S\cap I$. Let $x\in X$ be such that $s$
  is a proper suffix of $x$. If $x$ is in $K$ then $x$ is in $X'$ and
  thus $s$ is in $S'$. Otherwise, let $y$ be the shortest suffix of
  $x$ which is in not in $I$. By the dual of Lemma~\ref{lemmaDeriv2},
  the word $y$ is in $X'$. Then $s$ is a proper suffix of $y$ (since
  $s\in I$ and $y\notin I$) and therefore $s$ is in $S'$.
\end{proofof}

There is a dual version of Proposition~\ref{propositionScapH}
concerning the set of proper prefixes of an $F$-thin and $F$-maximal
bifix code $X\subset F$.

The following property generalizes Theorem 6.3.15
in~\cite{BerstelPerrinReutenauer2009}.

\begin{theorem}\label{theoremDisjointUnion}
  Let $F$ be a recurrent set. Let $X$ be a bifix code of finite
  $F$-degree $d$.  The set of its nonempty proper suffixes is a
  disjoint union of $d-1$ $F$-maximal prefix codes.
\end{theorem}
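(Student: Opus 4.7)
The plan is to argue by induction on $d\ge 1$, using Proposition~\ref{propositionScapH} as the single engine that peels off one $F$-maximal prefix code at each step. First, note that by Theorem~\ref{theoremDegree} the hypothesis of finite $F$-degree guarantees that $X$ is $F$-thin and $F$-maximal, so Proposition~\ref{propositionScapH} applies throughout (as does Theorem~\ref{thmDerived} as long as $d\ge 2$).

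For the base case $d=1$, I will argue that $X$ contains only letters, so that the set of nonempty proper suffixes is empty and the conclusion reads as an empty disjoint union. If some $x\in X$ had length at least $2$, then using Proposition~\ref{propInterpretations} together with Equation~\eqref{eqL} applied to the prefixes of $x$, one exhibits a prefix of $x$ with at least two parses, contradicting $d_F(X)=1$. Hence $X\subseteq A$ and there are no nonempty proper suffixes.

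For the inductive step ($d\ge 2$), let $S$ be the set of proper suffixes of $X$ and $I=I(X)$. By Proposition~\ref{propositionScapH}, $S\setminus I$ is an $F$-maximal prefix code, and $S\cap I$ is precisely the set of proper suffixes of the derived code $X'$. By Theorem~\ref{thmDerived}, $X'$ is a bifix code of $F$-degree $d-1$, so the induction hypothesis applied to $X'$ yields a partition of its nonempty proper suffixes into $d-2$ $F$-maximal prefix codes $Y_2,\ldots,Y_{d-1}$. Setting $Y_1=S\setminus I$, the nonempty proper suffixes of $X$ decompose as $Y_1\sqcup Y_2\sqcup\cdots\sqcup Y_{d-1}$, a disjoint union of $d-1$ $F$-maximal prefix codes.

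The only delicate point is the bookkeeping around the empty word: one must check that $1\in S\cap I$ (so that removing $1$ from $S\cap I$ produces exactly the nonempty proper suffixes of $X'$) and that $1\notin S\setminus I$, which is what makes the two pieces $Y_1$ and $Y_2\cup\cdots\cup Y_{d-1}$ fit together without overlap or loss. Both facts reduce to the observation that for $d\ge 2$ the code $X$ must contain a word of length at least $2$, hence $1\in I$ by definition of $I(X)$. Once this is in place, the disjointness $(S\setminus I)\cap(S\cap I)=\emptyset$ is tautological and the induction closes.
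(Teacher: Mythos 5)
Your proof is correct and follows essentially the same route as the paper: peel off $S\setminus I$ as one $F$-maximal prefix code via Proposition~\ref{propositionScapH}, identify $S\cap I$ with the proper suffixes of the derived code $X'$, and induct on the $F$-degree using Theorem~\ref{thmDerived}. The only difference is that you spell out the base case ($X\subseteq A$ when $d=1$) and the bookkeeping on the empty word, which the paper leaves implicit.
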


\begin{proof}
  Let $S$ be the set of proper suffixes of $X$.  If $d=1$, then
  $S\setminus 1$ is empty. If $d\ge 2$, by
  Proposition~\ref{propositionScapH}, the set $Y=S\setminus I$ is an
  $F$-maximal prefix code and the set $S\cap I$ is equal to the set
  $S'$ of proper suffixes of the words of the derived code
  $X'$. Arguing by induction, the set $S'\setminus 1$ is a disjoint
  union of $d-2$ $F$-maximal prefix codes. Thus $S\setminus 1=Y\cup
  (S'\setminus 1)$ is a disjoint union of $d-1$ $F$-maximal prefix
  codes.
\end{proof}

The following generalizes Corollary 6.3.16 in
\cite{BerstelPerrinReutenauer2009}, with two restrictions.
First, it applies only in the case of finite maximal bifix codes instead of
thin 
bifix codes (in order to be able to use
Proposition~\ref{propMaxPrefixCode}).
Next, it applies only for recurrent sets such that there exists a
positive
invariant probability distribution (in order to be able to use
Proposition~\ref{propositionP}).
\begin{corollary}\label{corollaryAveragelength}
Let $F$ be a recurrent set such that there exists a positive
invariant
probability distribution $\proba$ on $F$. Let $X$ be a finite  bifix code
of finite
$F$-degree $d$. The average length of $X$ with respect to $\proba$  is equal to $d$.
\end{corollary}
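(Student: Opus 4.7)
The plan is to reduce the computation of $\lambda(X)$ to a probability mass on the set of proper suffixes of $X$, and then use the structural decomposition of that set provided by Theorem~\ref{theoremDisjointUnion}. First, since $X$ is a bifix code of finite $F$-degree, Theorem~\ref{theoremDegree} shows it is $F$-thin and $F$-maximal, and then Theorem~\ref{theoremEquivMax} implies that $X$ is simultaneously a finite $F$-maximal prefix code and a finite $F$-maximal suffix code. The invariance of $\proba$ means it is both a positive right and a positive left probability distribution on $F$, so both the prefix and the suffix versions of the earlier results in this section are at my disposal.

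Applying the suffix-code dual of Proposition~\ref{propositionP} to $X$ (viewed as a finite $F$-maximal suffix code, with $\proba$ as a positive left probability distribution) yields $\lambda(X)=\proba(S)$, where $S$ denotes the set of proper suffixes of $X$. It then suffices to show that $\proba(S)=d$. By Theorem~\ref{theoremDisjointUnion}, the set $S\setminus\{1\}$ is a disjoint union of $d-1$ $F$-maximal prefix codes $Y_1,\ldots,Y_{d-1}$, each of which is finite because $X$ (and hence $S$) is finite. Proposition~\ref{propMaxPrefixCode} then gives $\proba(Y_i)=1$ for every $i$, so
\[
\proba(S)=\proba(\{1\})+\sum_{i=1}^{d-1}\proba(Y_i)=1+(d-1)=d,
\]
and hence $\lambda(X)=d$.

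I do not foresee any real obstacle: the argument is essentially a bookkeeping corollary of the structural theorems of the section. The only point that requires minimal care is that the argument invokes the dual (suffix, left) formulations of Propositions~\ref{propMaxPrefixCode} and~\ref{propositionP}, which is precisely why the hypothesis that $\proba$ be \emph{invariant} (rather than merely a right probability distribution) is used in an essential way.
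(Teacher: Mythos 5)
Your proof is correct and follows essentially the same route as the paper: reduce $\lambda(X)$ to $\proba(S)$ via the suffix dual of Proposition~\ref{propositionP}, decompose $S\setminus\{1\}$ into $d-1$ finite $F$-maximal prefix codes by Theorem~\ref{theoremDisjointUnion}, and apply Proposition~\ref{propMaxPrefixCode} to each piece. The only difference is that you spell out a few justifications (that $X$ is both an $F$-maximal prefix and suffix code, and that the pieces $Y_i$ are finite) that the paper leaves implicit.
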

\begin{proof}
Let $\proba$ be a positive invariant probability distribution on $F$. By
the dual of
Proposition \ref{propositionP}, one has $\lambda(X)=\proba(S)$. In view
of Theorem~\ref{theoremDisjointUnion}, we have $S\setminus 1=Y_1\cup\ldots\cup
Y_{d-1}$
where each $Y_i$ is a finite $F$-maximal prefix code. By
Proposition~\ref{propMaxPrefixCode},
we have $\proba(Y_i)=1$ for $1\le i\le d-1$. Thus $\lambda(X)=d$.
\end{proof}

\begin{example}
  Let $F$ be the Fibonacci set and let $X=\{a,bab,baab\}$
  (Example~\ref{exampleDerived}). The set $X$ is an $F$-maximal bifix
  code of $F$-degree $2$.  With respect to the unique invariant
  probability distribution of $F$ (Example~\ref{exProba}), we have
  $\lambda(X)=\lambda+3(2-3\lambda)+4(2\lambda-1)=2$.
\end{example}

Now we show that an $F$-thin and $F$-maximal bifix code
is determined by its $F$-degree and its kernel.
We first prove the following generalization of  Proposition 6.4.1
from~\cite{BerstelPerrinReutenauer2009}. 

\begin{proposition}\label{proposition631}
  Let $F$ be a recurrent set.  Let $X\subset F$ be a bifix code of
  finite $F$-degree $d$ and let $K$ be the kernel of $X$. Let $Y$ be a
  set such that $K\subset Y\subset X$. Then for all $w\in I(X)\cup Y$,
  \begin{equation}
    \pars_Y(w)=\pars_X(w).\label{eqprop631a}
  \end{equation}
  For all $w\in F$,
  \begin{equation}
    \pars_X(w)=\min\{d,\pars_Y(w)\}.\label{eqprop631}
  \end{equation}
\end{proposition}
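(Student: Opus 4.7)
My plan is to prove the first identity via the prefix--suffix characterization of Proposition~\ref{propInterpretations}. Note first that $Y$ is itself a bifix code (being a subset of the bifix code $X$), so both $\pars_X(w)$ and $\pars_Y(w)$ equal the number of prefixes of $w$ having no suffix in the corresponding code. From $Y\subset X$ and \eqref{eqLsub} we immediately obtain $\pars_X(w)\le\pars_Y(w)$ for every $w$, so the whole content of the first identity lies in the reverse inequality when $w\in I(X)\cup Y$. Equivalently, I need to show that for such a $w$, every prefix $p$ of $w$ that has a suffix in $X$ also has a suffix in $Y$.

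Fix a prefix $p=qs$ of $w$ with $s\in X$, and write $w=pr$. Consider first the case $w\in I(X)$: choose $\alpha,\beta\in A^+$ with $\alpha w\beta\in X$. Then $\alpha w\beta=\alpha q s r\beta$ with $\alpha\ne 1$ and $r\beta\ne 1$, so $s$ is an internal factor of a word of $X$; hence $s\in I(X)\cap X=K\subset Y$, and $s$ is the desired suffix of $p$ in $Y$.

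Consider now the case $w\in Y\subset X$. If $r\ne 1$ (so $p$ is a proper prefix of $w$), two sub-cases arise: if $q=1$ then $s=p$ is a proper prefix of $w\in X$ with $s\in X$, contradicting the prefix code property; hence $q\ne 1$ and $s$ is an internal factor of $w\in X$, giving $s\in K\subset Y$. If instead $r=1$ (so $p=w$), then either $s=p=w\in Y$ directly, or $s$ is a proper suffix of $w\in X$ with $s\in X$, contradicting the suffix code property. In every situation the prefix $p$ acquires a suffix in $Y$. The main delicate step is this last sub-case $p=w\in Y$, where I use \emph{both} the prefix and suffix code conditions; this is the only place where the full bifix hypothesis is needed.

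The second equation is then straightforward. We have $\pars_X(w)\le d$ by the definition of the $F$-degree and $\pars_X(w)\le\pars_Y(w)$ by \eqref{eqLsub}, whence $\pars_X(w)\le\min\{d,\pars_Y(w)\}$ for every $w\in F$. For the reverse inequality, I split on whether $w\in I(X)$: if $w\in I(X)$ the first identity gives $\pars_Y(w)=\pars_X(w)$, while if $w\notin I(X)$ then \eqref{eqH} in Theorem~\ref{theoremDegree} forces $\pars_X(w)=d$. In either case $\min\{d,\pars_Y(w)\}\le\pars_X(w)$, completing the proof.
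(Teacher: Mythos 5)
Your proof is correct. It differs from the paper's only in which characterization of the parse enumerator it leans on. The paper proves \eqref{eqprop631a} by rewriting Equation~\eqref{eqLB} as $L_X=\u(A^*)(1-\u(X))\u(A^*)$, so that $\pars_X(w)$ is determined by the occurrences in $w$ of factors belonging to $X$; it then reduces the claim to the set identity $F(w)\cap X=F(w)\cap Y$ for $w\in I(X)\cup Y$, where $F(w)$ denotes the set of factors of $w$. You instead invoke Proposition~\ref{propInterpretations} and show that every prefix of $w$ having a suffix in $X$ already has a suffix in $Y$. The combinatorial core is the same in both arguments: any factor of such a $w$ lying in $X$ is either $w$ itself (hence in $Y$) or an internal factor of a word of $X$ (hence in $K\subset Y$), and the bifix hypothesis on $X$ is what rules out proper prefixes and proper suffixes of $w\in Y$ lying in $X$ --- exactly the ``delicate sub-case'' you flag; the paper uses the same fact, though it leaves it implicit in the sentence ``the words in $F(w)\cap X$ other than $w$ are all in $K$''. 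Your treatment of \eqref{eqprop631} coincides with the paper's. A small merit of your route is that it makes explicit where bifixity, as opposed to mere prefixity, enters; the paper's route is marginally slicker because the factor-set identity disposes of all prefixes of $w$ at once.
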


\begin{proof}
  Denote by $F(w)$ the set of factors of the word $w$.  Notice that
  Equation~\eqref{eqLB} is equivalent to $L_X = \u(A^*)(1-\u(X))\u(A^*)$.
  Thus, to prove \eqref{eqprop631a}, we have to show that for any
  $w\in I(X)\cup Y$ one has $F(w)\cap X=F(w)\cap Y$.  The inclusion
  $F(w)\cap Y\subset F(w)\cap X$ is clear.  Conversely, if $w$ is in
  $I(X)$, then $F(w)\cap X\subset K$ and thus $F(w)\cap X\subset
  F(w)\cap Y$. Next, assume that $w$ is in $Y$. The words in $F(w)\cap
  X$ other than $w$ are all in $K$. Thus we have again $F(w)\cap
  X\subset F(w)\cap Y$.

  To show Equation~\eqref{eqprop631}, assume first that $w\in
  I(X)$. Then $\pars_X(w)<d$ by Theorem~\ref{theoremDegree}. Moreover,
  $\pars_X(w)=\pars_Y(w)$ by Equation~\eqref{eqprop631a}. Thus
  Equation~\eqref{eqprop631} holds.  Next, suppose that $w\in
  F\setminus I(X)$.  Then $\pars_X(w)=d$. Since $Y\subset X$, we have
  $\pars_X(w)\le\pars_Y(w)$ by Equation~\eqref{eqLsub}.  This
  proves~\eqref{eqprop631}.
\end{proof}

Proposition~\ref{proposition631} will be used to prove the following
generalization of Theorem 6.4.2 in~\cite{BerstelPerrinReutenauer2009}.

\begin{theorem}\label{theoremkerneldegree}
  Let $F$ be a recurrent set and let $X\subset F$ be a bifix code of
  finite $F$-degree $d$. For any $w\in F$, one has
  \begin{displaymath}
    \pars_X(w)=\min\{d,\pars_{K(X)}(w)\}\,.
  \end{displaymath}
  In particular $X$ is determined by its $F$-degree and its kernel.
\end{theorem}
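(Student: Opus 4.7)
The plan is to obtain the formula as an immediate specialization of Proposition~\ref{proposition631}. Since trivially $K(X)\subset K(X)\subset X$, taking $Y=K(X)$ in Proposition~\ref{proposition631} satisfies the hypothesis $K\subset Y\subset X$, and Equation~\eqref{eqprop631} becomes the desired identity $\pars_X(w)=\min\{d,\pars_{K(X)}(w)\}$ for every $w\in F$. So the first assertion is a direct corollary: essentially all the work has already been done in Proposition~\ref{proposition631}.

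For the ``in particular'' claim, I would reduce it to the statement that a bifix code $X\subset F$ of finite $F$-degree is recovered from its parse enumerator restricted to $F$. Indeed, suppose $X_1,X_2\subset F$ are two bifix codes with common $F$-degree $d$ and common kernel $K$. By the formula just established, $\pars_{X_1}(w)=\pars_{X_2}(w)$ for every $w\in F$, so once the reduction is justified we conclude $X_1=X_2$.

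To recover $X$ from $\pars_X|_F$, I would use Proposition~\ref{propositioneqL}: for any $u\in F$ and letter $a$ with $ua\in F$, one has $ua\in A^*X$ if and only if $\pars_X(ua)=\pars_X(u)$. Because $F$ is factorial, every prefix of a word in $F$ lies in $F$, so iterating the criterion starting from $\pars_X(1)=1$, the values of $\pars_X$ on $F$ completely determine the set $A^*X\cap F$. Since $X$ is a suffix code, $X=A^*X\setminus A^+X$; and for $w=aw'\in F$ with $a\in A$ (so $w'\in F$ by factoriality), $w\in A^+X$ if and only if $w'\in A^*X$. Therefore membership in $A^+X\cap F$, hence in $X$, is determined by $\pars_X|_F$, which proves the reduction.

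There is no real obstacle: the substantive content of the theorem is entirely captured by Proposition~\ref{proposition631}, and the remaining ``in particular'' part is a bookkeeping argument showing that a suffix code inside $F$ is recovered from the parse enumerator on~$F$ via the standard equivalence $X=A^*X\setminus A^+X$.
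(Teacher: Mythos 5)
Your proof is correct and follows essentially the same route as the paper: the formula is obtained exactly as in the paper by taking $Y=K(X)$ in Proposition~\ref{proposition631}, and the uniqueness claim reduces in both cases to the fact that a bifix code in $F$ is recovered from its parse enumerator on $F$. The only difference is in the final bookkeeping step, where the paper appeals to Equation~\eqref{eqLB} (which, since $F$ is factorial, involves only factors of $w$ and hence only values of $\pars_X$ on $F$), whereas you recover $A^*X\cap F$ via Proposition~\ref{propositioneqL} and then extract $X=A^*X\setminus A^+X$ using that $X$ is a suffix code; your version is slightly more explicit about why knowledge of $\pars_X$ on $F$ alone suffices.
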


\begin{proof}
  Take $Y=K(X)$ in Proposition~\ref{proposition631}. Then the formula
  follows from Equation~\eqref{eqprop631}. Next $X$ is determined by
  $L_X$, and so by $\pars_X$, through Equation~\eqref{eqLB}.
\end{proof}

We now state the following generalization of Theorem 6.4.3
in~\cite{BerstelPerrinReutenauer2009}.

\begin{theorem}\label{theorem643}
  Let $F$ be a recurrent set.  A bifix code $Y\subset F$ is the kernel
  of some bifix code of finite $F$-degree $d$ if and only if
  \begin{enumerate}
  \item[\upshape{(i)}] $Y$ is not an $F$-maximal bifix code,
  \item[\upshape{(ii)}] $\max\{\pars_Y(y)\mid  y\in Y\}\le d-1$.
  \end{enumerate}
\end{theorem}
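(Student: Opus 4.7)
Assume $Y=K(X)$ for a bifix code $X\subset F$ of finite $F$-degree $d$. By Theorem~\ref{theoremDegree}, $X$ is $F$-thin and $F$-maximal. If $Y$ were itself an $F$-maximal bifix code, the inclusion $Y\subset X$ would force $Y=X$; then every $x\in X$ realizing $\pars_X(x)=d$ would lie in $K(X)$, hence be an internal factor of some $x'\in X$, and \eqref{eqLi} would give $\pars_X(x')>d$, contradicting the $F$-degree. This proves (i). For (ii), each $y\in K(X)$ is an internal factor of some $x\in X$, so \eqref{eqLi} gives $\pars_X(y)<\pars_X(x)\le d$; since $y\in Y\subset I(X)\cup Y$, Proposition~\ref{proposition631} yields $\pars_Y(y)=\pars_X(y)\le d-1$.

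\textbf{Sufficiency: candidate parse enumerator.} Assume $Y$ satisfies (i) and (ii). Define $g:A^*\to\N$ by $g(w)=\min\{d,\pars_Y(w)\}$. The plan is to verify the four axioms of Proposition~\ref{proposition6111} for $g$, obtain a bifix code $X_0\subset A^*$ with $\pars_{X_0}=g$, and then set $X=X_0\cap F$. Axioms (i), (ii) and (iv) transfer from $\pars_Y$ to $g$ by a straightforward case split on whether the cap $d$ is active. The subtle axiom is (iii), which I expect to be the main technical step: the only nontrivial case is $\pars_Y(awb)\ge d$ with $\pars_Y(aw)=\pars_Y(wb)=d-1$, and there \eqref{eq6111c} applied to $\pars_Y$ forces $\pars_Y(w)\le\pars_Y(aw)+\pars_Y(wb)-\pars_Y(awb)\le 2(d-1)-d=d-2$, which is exactly what is needed to derive $g(aw)+g(wb)\ge g(w)+g(awb)$.

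\textbf{Sufficiency: properties of $X$.} The set $X=X_0\cap F$ is bifix and contained in $F$. The crucial intermediate lemma is $\pars_X(w)=g(w)$ for $w\in F$: by factoriality of $F$, any $X_0$-parse $(v,x,u)$ of $w\in F$ is an $X$-parse, because each $X_0$-factor of $x$ is itself a factor of $w\in F$ and so lies in $X_0\cap F=X$; conversely, if an $X$-parse had $v\in A^*X_0$, the corresponding $X_0$-suffix of $v$ would be a factor of $w\in F$, hence in $X$, contradicting $v\notin A^*X$. Consequently $d_F(X)\le d$; on the other hand condition (i) combined with Theorem~\ref{theoremDegree} rules out $Y$ having finite $F$-degree, so $\pars_Y$ attains the value $d$ somewhere on $F$, giving $d_F(X)=d$. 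For $K(X)=Y$, Theorem~\ref{theoremDegree} yields $I(X)=\{w\in F:\pars_Y(w)<d\}$, and unfolding \eqref{eqLB} expresses $[w\in X_0]$ as a four-term combination of $g$-values at $w$ and at its first- and last-letter deletions; in the regime where all four arguments satisfy $\pars_Y<d$, this combination agrees with the analogous one for $Y$, so $w\in X_0$ iff $w\in Y$. Condition (ii) ensures every $y\in Y$ lies in this regime, giving $Y\subset X$ and hence $Y\subset K(X)$; conversely, $x\in K(X)$ satisfies $\pars_X(x)<d$, so $\pars_Y(x)<d$, and the same identity forces $x\in Y$. The main hurdles are therefore the case analysis for axiom (iii) of Proposition~\ref{proposition6111} and the parse-translation argument that exploits factoriality of $F$ to match $\pars_X$ with $\pars_{X_0}$ on words of $F$.
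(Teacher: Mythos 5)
Your sufficiency argument follows the same route as the paper: the capped enumerator $\min\{d,\pars_Y\}$, Proposition~\ref{proposition6111}, intersection of the resulting bifix code with $F$, and the four-term identity extracted from \eqref{eqLB} to identify the kernel. It is correct, and you in fact supply two verifications the paper leaves to the reader — the case analysis for axiom (iii) of Proposition~\ref{proposition6111}, and the argument (via factoriality of $F$) that parses with respect to $X_0$ and to $X=X_0\cap F$ coincide on words of $F$ — and both check out. Condition (ii) in the necessity direction is argued exactly as in the paper, via Proposition~\ref{proposition631} and \eqref{eqLi}.

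The gap is in your argument for condition (i). You suppose $Y=K(X)$ is $F$-maximal, correctly deduce $Y=X$, and then seek a contradiction from ``every $x\in X$ realizing $\pars_X(x)=d$''. But under the hypothesis $X=K(X)$ no such $x$ exists: every element of $X$ then lies in $I(X)$, and by \eqref{eqH} in Theorem~\ref{theoremDegree} every word of $I(X)$ has strictly fewer than $d$ parses. Your universally quantified statement is therefore vacuous and yields no contradiction. The repair is to take $x\in X$ with $\pars_X(x)$ maximal \emph{on $X$} (not necessarily equal to $d$): if $X=K(X)$, then $x$ is an internal factor of some $x'\in X$, and \eqref{eqLi} gives $\pars_X(x')>\pars_X(x)$, contradicting maximality. (The paper avoids the issue differently, observing that $X=K(X)$ would force $X$ to coincide with its derived code, whose $F$-degree is $d-1$.) With this one repair the proof is complete.
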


\begin{proof}
  Let $X$ be an $F$-thin and $F$-maximal bifix code of $F$-degree $d$
  and let $Y=K(X)$ be its kernel. Condition (i) is satisfied because
  $X=Y$ implies that $X$ is equal to its derived code which has
  $F$-degree $d-1$. Moreover, for every $y\in Y$ one has $\pars_X(y)\le
  d-1$. Since $\pars_X(y)=\pars_Y(y)$ by Equation~\eqref{eqprop631a},
  condition (ii) is also satisfied.

  Conversely, let $Y\subset F$ be a bifix code satisfying conditions
  (i) and (ii).  Let $\pars:A^*\rightarrow\N$ be the function defined by
  \begin{displaymath}
    \pars(w)=\min\{d,\pars_Y(w)\}.
  \end{displaymath}
  It can be verified that the function
$\pars$ satisfies the four conditions of
  Proposition~\ref{proposition6111}. Thus $\pars$ is the parse
  enumerator  of
  a bifix code $Z$. Let $X=Z\cap F$. Then $\pars_X$ and $\pars$ have
  the same restriction to $F$. Since $\pars$ is bounded on $F$, the
  same holds for $\pars_X$. This implies that the code $X$ is
  an $F$-thin and $F$-maximal bifix code by
  Theorem~\ref{theoremDegree}. Since the code $Y$ is not an
  $F$-maximal bifix code, the $F$--parse
  enumerator $\pars_Y$ is not
  bounded. Consequently $\max\{\pars(w)\mid w\in F\}=d$, showing that $X$
  has $F$-degree $d$.  Let us prove finally the $Y$ is the kernel of
  $X$. Since, by condition (ii), $\max\{\pars_Y(y)\mid y\in Y\}\le d-1$,
  we have $Y\subset I(X)$.

  Moreover, for $w\in I(X)$ we have $\pars_X(w)=\pars_Y(w)$.  Let $L$
  (resp. $L_Y$) be the indicator of $X$ (resp. of $Y$).  Since
  $1-\u(X)=(1-\u(A))L(1-\u(A))$ and $1-\u(Y)=(1-\u(A))L_Y(1-\u(A))$ by
  Equation~\eqref{eqLB}, we conclude that for $w\in I(X)$, we have
  $(\u(X),w)=(\u(Y),w)$. This implies that if $w\in I(X)$, then $w$ is
  in $X$ if and only if $w$ is in $Y$. Thus $K(X) = I(X) \cap X =
  I(X)\cap Y =Y$ and $Y$ is the kernel of $X$.
\end{proof}

\begin{figure}[hbt]
  \centering
  \gasset{Nadjust=wh,AHnb=0}
  \begin{picture}(110,20)
    \put(0,0){
      \begin{picture}(20,20)
        \node(1)(0,5){}\node(a)(10,10){}\node(b)(10,0){}
        \node[Nmr=0](aa)(20,15){}\node[Nmr=0](ba)(20,0){}\node[Nmr=0](ab)(20,5){}
        \drawedge(1,a){$a$}\drawedge[ELside=r](1,b){$b$}
        \drawedge(a,aa){$a$}\drawedge[ELside=r](a,ab){$b$}\drawedge[ELside=r](b,ba){$a$}
      \end{picture}
    }
    \put(30,0){
      \begin{picture}(40,20)(0,-5)
        \node(1)(0,5){}\node[Nmr=0](a)(10,10){}\node(b)(10,0){}
        \node(ba)(20,0){}\node(baa)(30,5){}\node[Nmr=0](bab)(30,-5){}
        \node[Nmr=0](baab)(40,5){}
        \drawedge(1,a){$a$}\drawedge[ELside=r](1,b){$b$}
        \drawedge(b,ba){$a$}
        \drawedge(ba,baa){$a$}
        \drawedge[ELside=r](ba,bab){$b$}\drawedge(baa,baab){$b$}
      \end{picture}
    }
    \put(80,0){
      \begin{picture}(20,20)(0,)
        \node(1)(0,5){}\node(a)(10,10){}\node[Nmr=0](b)(10,0){}
        \node[Nmr=0](aa)(20,15){}\node(ab)(20,5){}
        \node[Nmr=0](aba)(30,10){}
        \drawedge(1,a){$a$}\drawedge[ELside=r](1,b){$b$}
        \drawedge(a,aa){$a$}\drawedge[ELside=r](a,ab){$b$}\drawedge(ab,aba){$a$}
      \end{picture}
    }
  \end{picture}
  \caption{The three $F$-maximal bifix codes of $F$-degree $2$ in the Fibonacci
    set~$F$.}\label{figureBifix2}
\end{figure}
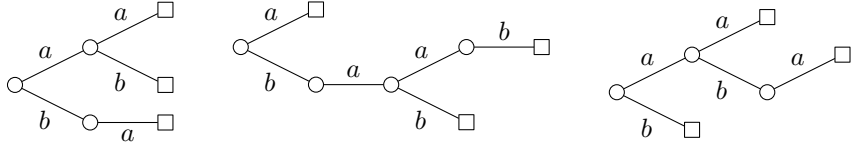

\begin{example}\label{exampleFiboDegree2}
  Let $A=\{a,b\}$ and let $F\subset A^*$ be the Fibonacci set. There
  are three maximal bifix codes of $F$-degree $2$ in $F$ represented
  on Figure~\ref{figureBifix2}. Indeed, by Theorem~\ref{theorem643},
  the possible kernels are $\emptyset$, $\{a\}$ and $\{b\}$.
\end{example}

\subsection{Finite maximal bifix codes}

The following generalizes Theorem 6.5.2
of~\cite{BerstelPerrinReutenauer2009}. 

\begin{theorem}
  For any recurrent set $F$ and any integer $d\ge 1$ there is a finite
  number of finite $F$-maximal bifix codes $X\subset F$ of $F$-degree $d$.
\end{theorem}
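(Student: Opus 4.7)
The plan is to argue by induction on the $F$-degree $d$, combining the derivation operation of Theorem~\ref{thmDerived} (which lowers the $F$-degree by one) with Theorem~\ref{theoremkerneldegree} (a bifix code of finite $F$-degree is determined by its $F$-degree and its kernel).

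For the base case $d=1$, I would observe that $\pars_X(w)=1$ must hold for every $w\in F$. Applied to length-one prefixes (via the characterization of $\pars_X$ for a prefix code), this forces every letter appearing in $F$ to belong to $X$, so $A\cap F\subset X$. Since $A\cap F$ is itself an $F$-maximal bifix code of $F$-degree $1$, the $F$-maximality of $X$ yields $X=A\cap F$. Hence there is a unique, and automatically finite, such code.

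For the inductive step, suppose the statement holds for $d-1\ge 1$, and let $X\subset F$ be a finite $F$-maximal bifix code of $F$-degree $d$. Form the derived code $X'=K(X)\cup(G\cap D)$ provided by Theorem~\ref{thmDerived}, where $I=I(X)$, $G=(IA\cap F)\setminus I$, $D=(AI\cap F)\setminus I$. Because $X$ is finite, its set of internal factors $I(X)$ is finite, whence $G\subset IA$ and $D\subset AI$ are finite, and therefore $X'$ is finite. So $X'$ is a finite $F$-maximal bifix code of $F$-degree $d-1$, and by the induction hypothesis $X'$ ranges over a finite set.

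The decisive observation is that $K(X)\subset X'$, which is immediate from the decomposition $X'=K(X)\cup(G\cap D)$. Thus for each of the finitely many candidate $X'$, the kernel $K(X)$ is one of at most $2^{|X'|}$ subsets of the finite set $X'$; and by Theorem~\ref{theoremkerneldegree} the pair $(d,K(X))$ determines $X$ uniquely. Summing over the finitely many $X'$ gives only finitely many $X$. The only point requiring care is the implication ``$X$ finite $\Rightarrow X'$ finite'', which is precisely what makes the induction usable; once this is in hand, the remainder is a counting argument over subsets of a finite set.
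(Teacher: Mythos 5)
Your proof is correct and follows essentially the same route as the paper: induct on $d$, pass to the derived code $X'$ (which is a finite $F$-maximal bifix code of $F$-degree $d-1$), and use the facts that $K(X)\subset X'$ and that the pair ($F$-degree, kernel) determines $X$ via Theorem~\ref{theoremkerneldegree}. Your explicit checks of the base case and of the finiteness of $X'$ are details the paper leaves implicit, but the argument is the same.
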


\begin{proof}
  The only $F$-maximal bifix code of $F$-degree $1$ is $F\cap A$.  Arguing
  by induction on $d$, assume that there are only finitely many finite
  $F$-maximal bifix codes of $F$-degree $d$.  Each finite $F$-maximal
  bifix code $X\subset F$ of $F$-degree $d+1$ is determined by its kernel
  which is a subset of its derived code $X'$. Since $X'$ is a finite
  $F$-maximal bifix code of $F$-degree $d$, there are only a finite number
  of kernels and we are done.
\end{proof}

\begin{example}
  Let $A=\{a,b\}$ and let $F$ be the set of words without factor $bb$.
  There are two finite $F$-maximal bifix codes of $F$-degree $2$,
  namely the code $\{aa,ab,ba\}$ with empty kernel and the code
  $\{aa,aba,b\}$ with kernel $b$. The code of $F$-degree $2$ with kernel
  $a$ is $a\cup ba^+b$, and thus is infinite.
\end{example}

The following result shows that the case of a uniformly recurrent set
contrasts with the case $F=A^*$ since in $A^*$, as soon as
$\Card(A)\ge 2$, there exist infinite maximal bifix codes of degree
$2$ and thus of all degrees $d\ge 2$; see
e.g. \cite[Example~6.4.7]{BerstelPerrinReutenauer2009} for degree~$2$
and \cite[Theorem~6.4.6]{BerstelPerrinReutenauer2009} for the general
case.


\begin{theorem}\label{theoremCompletion}
  Let $F$ be a uniformly recurrent set.  Any $F$-thin bifix code
  $X\subset F$ is finite.  Any finite bifix code is contained in a
  finite $F$-maximal bifix code.
\end{theorem}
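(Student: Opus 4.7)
The plan is to handle the two claims separately, with the first being the essential tool used in the second.

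For the first claim, I would exploit the uniform recurrence of $F$ directly. Since $X$ is $F$-thin, pick a witness $w\in F$ which is not a factor of any word of $X$. By uniform recurrence applied to $w$, there exists an integer $n$ such that $w$ is a factor of every word in $F\cap A^n$. Now suppose for contradiction that some $x\in X$ satisfies $|x|\ge n$; then any factor of $x$ of length $n$ lies in $F\cap A^n$ (since $F$ is factorial) and therefore contains $w$ as a factor, so $w$ is a factor of $x$, contradicting the choice of $w$. Hence every word of $X$ has length less than $n$, so $X$ is finite.

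For the second claim, let $Y\subset F$ be a finite bifix code. If $Y$ is already $F$-maximal, then since $Y$ is finite, any word of $F$ strictly longer than all elements of $Y$ is not a factor of any word of $Y$, so $Y$ is $F$-thin and we can take $X=Y$. Otherwise, I would invoke Theorem~\ref{theorem643}: choose any integer $d$ with $d-1\ge\max\{\pars_Y(y)\mid y\in Y\}$, which exists because $Y$ is finite (and each $\pars_Y(y)$ is at most $|y|+1$). Since $Y$ is not $F$-maximal and satisfies the bound, Theorem~\ref{theorem643} provides a bifix code $X\subset F$ of finite $F$-degree $d$ having $Y$ as its kernel. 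In particular $Y=K(X)\subset X$.

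It remains to observe that $X$ is finite. By Theorem~\ref{theoremDegree}, having finite $F$-degree forces $X$ to be $F$-thin and $F$-maximal, and then the first claim of the present theorem gives that $X$ is finite.

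The main obstacle is essentially cosmetic: one must verify that the proof of part 1 correctly uses factoriality of $F$ (guaranteed since uniformly recurrent sets are factorial) and that in part 2 one really can choose $d$ and apply Theorem~\ref{theorem643}. There is no subtle obstruction, because the deep work has already been done in Theorems~\ref{theoremDegree} and~\ref{theorem643}; the role of uniform recurrence is confined to the short pigeonhole-style argument of part~1, which is exactly what converts $F$-thinness into finiteness in the uniformly recurrent setting.
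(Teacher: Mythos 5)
Your proof is correct and follows essentially the same route as the paper: a pigeonhole argument from uniform recurrence bounds the lengths in $X$, and the completion claim is obtained by applying Theorem~\ref{theorem643} with $d-1\ge\max\{\pars_Y(y)\mid y\in Y\}$ and then invoking part~1. The only (harmless) differences are that you use the direct definition of $F$-thin rather than the internal-factor characterization $F\setminus I(X)\ne\emptyset$, and that you explicitly dispose of the case where $Y$ is already $F$-maximal, which the paper leaves implicit.
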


\begin{proof}
  Let $X\subset F$ be an $F$-thin bifix code.  Since $X$ is $F$-thin,
  there exists a word $w\in F\setminus I(X)$. Since $F$ is uniformly
  recurrent there is an integer $r$ such that $w$ is factor of every
  word in $F_r=F\cap A^r$. Assume $F_k\cap X\ne\emptyset$ for some $k
  \geq r+2$, and let $x\in F_k\cap X$. Set $x=pqs$, with $q \in
  F_r\cap I(X)$, and $p$, $s$ nonempty. Then $w$ is factor of $q$,
  hence $w$ is in $I(X)$, a contradiction. We deduce that each $x$ in
  $X$ has length at most $r+1$.  Thus $X$ is finite.

Let $X\subset F$ be a finite bifix code which is not $F$-maximal.  Let
$d=\max\{\pars_X(x)\mid x\in X\}$. By Theorem~\ref{theorem643}, $X$ is
the kernel of an $F$-thin and $F$-maximal bifix code $Z\subset F$ of
$F$-degree $d+1$. By the previous argument, $Z$ is finite.
\end{proof}

By Theorem 6.6.1 of~\cite{BerstelPerrinReutenauer2009}, any rational
bifix code is contained in a maximal rational bifix code. We have seen
that the situation is simpler for bifix codes in uniformly recurrent
sets.

\begin{example}
  Let $F$ be the Fibonacci set. Let
  $X=\{a,bab\}$. Then $X$ is contained in the bifix code
  $\{a,bab,baab\}$ which has $F$-degree $2$ (see
  Figure~\ref{figureBifix2}). It is also the kernel of
  $\{a,baabaab,baababaab,bab\}$ which is a bifix code of $F$-degree $3$
  (see Table~\ref{tableBifix}).
\end{example}

The following is a generalization of Proposition~6.2.10
in~\cite{BerstelPerrinReutenauer2009}.  The equality $d(Y)=d(X)$ is
stated as a comment following Proposition~6.3.9 in~\cite[page
243]{BerstelPerrinReutenauer2009}, in a more general framework.

\begin{proposition}\label{propositionInternalTransformation}
  Let $F$ be a recurrent set, let $X\subset F$ be a finite $F$-maximal
  bifix code and let $w$ be a nonempty word in $F$. Let $G=Xw^{-1}$, and $D=
  w^{-1}X$. If
  \begin{displaymath}
    G\ne\emptyset\,,\quad D\ne\emptyset\,,\text{ and }\quad Gw\cap
    wD=\emptyset\,,  
  \end{displaymath}
  then the  set
  \begin{equation}
    Y=(X\cup w\cup (GwD\cap F))\setminus (Gw\cup
    wD)\label{eqInternalTransformation} 
  \end{equation}
  is a finite $F$-maximal bifix code with the same $F$-degree as $X$.
\end{proposition}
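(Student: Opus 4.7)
My plan is to verify four properties of $Y$ in turn: its expression as a disjoint union, the bifix property, $F$-maximality, and the degree equality $d_F(Y) = d_F(X)$. The last of these is the main obstacle and will require an argument via average lengths with respect to an invariant probability distribution on $F$.

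First, the hypothesis $Gw \cap wD = \emptyset$ forces $w \notin X$ (otherwise $1 \in G \cap D$ and $w \in Gw \cap wD$). Moreover $GwD \cap X = \emptyset$, since $gw \in X$ as a proper prefix of $gwd \in X$ would violate $X$ being prefix. Hence $Y$ is the disjoint union $(X \setminus (Gw \cup wD)) \sqcup \{w\} \sqcup (GwD \cap F)$ and is visibly finite.

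For the prefix property, I would consider all cases of $y_1, y_2 \in Y$ with $y_1$ a proper prefix of $y_2$. The delicate case is $y_1 = w$ and $y_2 = gwd \in GwD \cap F$. Then $w$ and $gw$ are both prefixes of $gwd$, so prefix-comparable. If $w$ is a prefix of $gw$, writing $gw = ws$ gives $s \in D$ and $gw \in Gw \cap wD$, contradicting the hypothesis. If $gw$ is a prefix of $w$, then $g = 1$, so $w \in X$, contradiction. The remaining cases reduce to the prefix property of $X$ or to $GwD \cap X = \emptyset$. By the symmetric argument $Y$ is also a suffix code, hence bifix.

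For $F$-maximality, by Theorem~\ref{theoremEquivMax} it suffices to show $Y$ is $F$-maximal prefix. Given $u \in F$, extend $u$ to $u' = uv \in F$ longer than every word of $X$; by $F$-maximality of $X$, $u'$ has a prefix $x \in X$. If $x \in X \setminus (Gw \cup wD)$, then $x \in Y$ is comparable with $u$. If $x = wd \in wD$, then $u$ is comparable with $w \in Y$. If $x = gw \in Gw$ with $u' = gw \cdot r$, I would further extend to $u'' = u' v'' \in F$ and apply $F$-maximality of $X$ to the factor $wrv''$: the resulting prefix $x' \in X$ cannot be a proper prefix of $w$ (such $x'$ would be a proper prefix of $wd \in X$) nor equal $w$, so $x' = wd'$ with $d' \in D$. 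Then $gwd'$ is a prefix of $u'' \in F$ and lies in $GwD \cap F \subset Y$, comparable with $u$.

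The degree equality is the main obstacle. Since $F$ is recurrent there exists an invariant probability distribution $\pi$ on $F$, and by Corollary~\ref{corollaryAveragelength} we have $d_F(X) = \lambda(X)$ and $d_F(Y) = \lambda(Y)$. Expanding $\lambda(X) - \lambda(Y)$ and grouping terms by length, the coefficient of $|w|$ is $\pi(Gw) + \pi(wD) - \pi(w) - \pi(GwD \cap F)$, which vanishes because $\pi(X) = \pi(Y) = 1$ by Proposition~\ref{propMaxPrefixCode}. The coefficients of $|g|$ and $|d|$ vanish via the identities $\pi(gw) = \sum_{d: gwd \in F}\pi(gwd)$ and $\pi(wd) = \sum_{g: gwd \in F}\pi(gwd)$. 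The first identity holds because every infinite word in $S(x)$ starting with $gw$ must, after the shift by $|g|$, start with $w$ and hence with some $wd \in wD$ (applying $F$-maximality of the prefix code $X$ to the shifted word). The second identity follows by iterating the left-invariance of $\pi$: since $X$ is $F$-maximal suffix, every sufficiently long left-extension of $w$ inside $F$ has a suffix in $G$, which truncates the iteration at $\sum_{g \in G}\pi(gwd)$. Hence $\lambda(X) = \lambda(Y)$, giving $d_F(X) = d_F(Y)$.
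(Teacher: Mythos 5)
Your proof is correct in substance but takes a genuinely different route for the key point, the preservation of the $F$-degree. The paper establishes the $F$-maximality of $Y$ by factoring the transformation through Proposition~\ref{propositionOperation1} and Proposition~\ref{propositionCA347} (your direct extension argument is essentially an unfolding of the same idea), and then proves $d_F(Y)=d_F(X)$ combinatorially: it fixes a word $u$ that is an internal factor of neither $X$ nor $Y$ and constructs an explicit bijection between the suffixes of $u$ that are proper prefixes of $X$ and those that are proper prefixes of $Y$, concluding by the dual of Proposition~\ref{propInterpretations}. Your measure-theoretic argument --- both degrees equal the common average length $\lambda(X)=\lambda(Y)$, the difference telescoping via the completeness identities $\pi(gw)=\sum_{d}\pi(gwd)$ and $\pi(wd)=\sum_{g}\pi(gwd)$ --- is a genuine alternative, and the identities you need do hold: they follow from $wD=X\cap wA^*$ and $Gw=X\cap A^*w$ together with the right and left $F$-completeness of $X$ (a prefix in $X$ of a long right-extension of $w$ cannot be a proper prefix of $w$, since $w$ is itself a proper prefix of a word of $X$; dually on the left). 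What each approach buys: the paper's bijection is self-contained and purely combinatorial, while yours is shorter once the average-length machinery is in place and makes the invariance of the degree conceptually transparent.

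There is, however, one point you must repair. Corollary~\ref{corollaryAveragelength} and Proposition~\ref{propMaxPrefixCode} (which you invoke for $\pi(X)=\pi(Y)=1$) are stated for a \emph{positive} (invariant, resp.\ right) probability distribution, and for a merely recurrent set $F$ the paper only guarantees the existence of an invariant probability distribution, not a positive one; positivity is established only when $F$ is uniformly recurrent. As written you are therefore applying these results outside their stated hypotheses, while the proposition is claimed for all recurrent $F$. The gap is repairable: the implications you actually use (a finite $F$-maximal prefix code has $\pi$-mass $1$, and a finite $F$-maximal bifix code of finite $F$-degree $d$ has average length $d$) do not use positivity in their proofs --- positivity is needed only for the converse directions --- but you need to say this explicitly, or your argument covers only the uniformly recurrent case.
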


We use in the proof the following proposition which is an extension of
Corollary~3.4.7 of~\cite{BerstelPerrinReutenauer2009}.

\begin{proposition}\label{propositionCA347}
  Let $F$ be a recurrent set and let $X\subset F$ be a $F$-maximal
  prefix code.  Let $X=X_1\cup X_2$ be a partition of $X$ into two
  prefix codes and let $Y$ be a finite prefix code such that $Y\cap
  x^{-1}F$ is $x^{-1}F$-maximal for all $x\in X_2$. Then the set
  $Z=X_1\cup(X_2Y\cap F)$ is an $F$-maximal prefix code.
\end{proposition}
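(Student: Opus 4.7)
The plan is to verify both defining properties of $Z$: that it is a prefix code, and that it is $F$-maximal (via Proposition~\ref{propositionPrefixGlobal}, by showing every $u\in F$ is prefix-comparable with some word of $Z$).

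For the prefix code property, I decompose the check into three parts corresponding to the two pieces $X_1$ and $X_2Y\cap F$ of $Z$. The piece $X_1\subset X$ is a prefix code by hypothesis. Two distinct words $x_1y_1, x_2y_2$ in $X_2Y\cap F$ (with $x_i\in X_2$, $y_i\in Y$) cannot be prefix-comparable: both $x_1$ and $x_2$ would then be prefixes of the longer of the two words, hence prefix-comparable with each other, which forces $x_1=x_2$ because $X$ is a prefix code, and then $y_1=y_2$ because $Y$ is a prefix code. Finally, a word $x_1\in X_1$ prefix-comparable with some $xy\in X_2Y$ would make $x_1$ and $x$ prefix-comparable (in either direction of comparability, $x$ is necessarily a prefix of the longer word along with $x_1$), forcing $x_1=x$ and contradicting $X_1\cap X_2=\emptyset$.

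For $F$-maximality, let $u\in F$. By Proposition~\ref{propositionPrefixGlobal} applied to $X$, the word $u$ is prefix-comparable with some $x\in X$. If $x\in X_1\subset Z$ we are done. Otherwise $x\in X_2$, and I split into two subcases. If $x$ is a prefix of $u$, write $u=xv$; since $u\in F$, one has $v\in x^{-1}F$, so the $x^{-1}F$-maximality of $Y\cap x^{-1}F$ together with Proposition~\ref{propositionPrefixGlobal} furnishes some $y\in Y\cap x^{-1}F$ that is prefix-comparable with $v$. Then $xy$ is prefix-comparable with $xv=u$, and $xy$ lies in $X_2Y\cap F\subset Z$ because $y\in x^{-1}F$ means $xy\in F$. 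If instead $u$ is a proper prefix of $x$, then any $xy$ with $y\in Y\cap x^{-1}F$ belongs to $X_2Y\cap F\subset Z$ and has $u$ as a prefix, so it suffices to know that $Y\cap x^{-1}F$ is nonempty.

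The only delicate point, and the main place the recurrence hypothesis enters, is confirming nonemptiness of $Y\cap x^{-1}F$ for $x\in X_2$. Since $F$ is recurrent it is right essential, so there exists a letter $a$ with $xa\in F$, giving a nonempty element $a\in x^{-1}F$. By Proposition~\ref{propositionPrefixGlobal} applied to $Y\cap x^{-1}F$ inside $x^{-1}F$, this letter $a$ must be prefix-comparable with some element of $Y\cap x^{-1}F$, which is therefore nonempty. With this, both subcases above go through and $Z$ is an $F$-maximal prefix code.
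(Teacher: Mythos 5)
Your proof is correct and follows essentially the same route as the paper's: the same case analysis for the prefix-code property, and the same use of Proposition~\ref{propositionPrefixGlobal} with the split on $x\in X_1$ versus $x\in X_2$ and on the direction of prefix-comparability. The only difference is that you explicitly justify the nonemptiness of $Y\cap x^{-1}F$ via right essentiality of $F$, a point the paper simply asserts as a consequence of $x^{-1}F$-maximality; this is a harmless (and slightly more careful) refinement rather than a different argument.
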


\begin{proof}
  We first prove that $Z$ is a prefix code.  Let $z$ and $z'$ be
  distinct words in $Z$.  We show that they are not prefix-comparable.
  Since $X_1$ is a prefix code, this holds if both words are in $X_1$.
Assume next that $z\in X_2Y$. Then
$z=xy$ with $x\in X_2$ and $y\in Y$.  

If $z'$ is in $X_1$, then $z'$
and $x$ are not prefix-comparable because they are distinct since
$z'\in X_1$ and $x\in X_2$, and so $z$ and $z'$ are not
prefix-comparable.

If $z'\in X_2Y$, set $z'=x'y'$ with $x'\in X_2$ and $y'\in
Y$. Either $x$ and $x'$ are not prefix-comparable, and then so are $z$
and $z'$, or $x=x'$. In the latter case, $y$ and $y'$ are not
prefix-comparable because $Y$ is a prefix code, and again $z$ and $z'$
are not prefix-comparable.
Thus $Z$ is a prefix code.

Let us show that $Z$ is $F$-maximal. Let $u\in F$. Since $X$ is an
$F$-maximal
prefix code, there is an $x\in X$ which is prefix-comparable with $u$.
If $x$ is in $X_1$, then $x\in Z$ and thus $u$ is prefix-comparable
with a word of $Z$. Otherwise, we have $x\in X_2$.

Suppose first that $u$ is a prefix of $x$.  Since $Y$
is a finite $x^{-1}F$-maximal prefix code, it is not empty and 
$u$ is a prefix of  $xv$ for every $v\in Y\cap x^{-1}F$.

Suppose next that $u=xv$ for some word $v$. Since $v$ is in $x^{-1}F$
and since $Y\cap x^{-1}F$ is an $x^{-1}F$-maximal prefix code, the
word $v$ is prefix-comparable with some $y\in Y\cap x^{-1}F$.  Thus
$u$ is prefix-comparable with $xy\in Z$.
\end{proof}

\begin{proofof}{of Proposition~\ref{propositionInternalTransformation}}
  The condition $G\ne\emptyset$ ( resp. $D\ne\emptyset$) means that
  $w$ is a suffix of $X$ (resp. a prefix of $X$). The condition
  $Gw\cap wD=\emptyset$ implies that $w$ is not in $X$.

  By Theorem~\ref{theoremEquivMax}, the set $X$ is an $F$-maximal
  prefix code.  By Proposition~\ref{propositionOperation1}, the set
  $Y_1=(X\cup w)\setminus wD$ is an $F$-maximal prefix code. Next, we
  have
\begin{displaymath}
Y=(Y_1\setminus Gw)\cup (GwD\cap F).
\end{displaymath}

We show that $Y$ is an $F$-maximal prefix code, by applying
Proposition~\ref{propositionCA347}. Indeed, consider the partition
$Y_1=X_1\cup X_2$ with $X_1= Y_1\setminus Gw$ and $X_2=Gw$.  Then
$Y=X_1\cup (X_2D\cap F)$.  Clearly $D$ is a finite $w^{-1}F$-maximal
prefix code. Since $(gw)^{-1}F$ is a subset of $w^{-1}F$ for all
$g\in G$, the set $D\cap (gw)^{-1}F$ is a finite $(gw)^{-1}F$-maximal
prefix code for all $g\in G$ by
Proposition~\ref{propositionFinitePrefixCodes}, So the claim follows
from by Proposition~\ref{propositionCA347}. This proves that $Y$ is an
$F$-maximal prefix code.  Since $Y$ it is also a suffix code, it
follows that $Y$ is an $F$-maximal bifix code by
Theorem~\ref{theoremEquivMax}. 

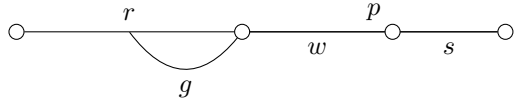
\begin{figure}[hbt]
  \centering
  \gasset{Nadjust=wh,AHnb=0,Nfill=y,fillgray=1,ELdist=1.5}
  \begin{picture}(65,10)(0,-5)
    \node(0)(0,0){}
    \node[Nframe=n,Nadjustdist=0](g)(15,0){}
    \node(r)(30,0){}\drawedge(0,r){$r$}
    \node(s)(65,0){}\drawedge(r,s){$p$}
    \node(w)(50,0){}
    \drawedge[ELside=r](r,w){$w$}\drawedge[ELside=r](w,s){$s$}
    \drawedge[curvedepth=-5,ELside=r](g,r){$g$}
      \end{picture}
  \caption{Construction of $\varphi(p)$ (second case).}\label{figInternal1}
\end{figure}
To show that $X$ and $Y$ have the same degree, consider a word $u\in
F$ which is not an internal factor of $X$ nor $Y$. Such a word exists
since $X$ and $Y$ are finite. Let $P$ (resp. $Q$) be the set of proper
prefixes of the words of $X$ (resp. $Y$).  We define a bijection
$\varphi$ between the set $P(u)$ of suffixes of $u$ which are in $P$
and the set $Q(u)$ of suffixes of $u$ which are in $Q$. This will
imply that $d_F(X)=d_F(Y)$ by the dual of
Proposition~\ref{propInterpretations}.

Let $p\in P$ be a suffix of $u$ and set $u=rp$. If $w$ is not a prefix
of $p$, then $p$ is in $Q$. Otherwise, set $p=ws$. Since the words in
$P$ starting with $w$ are all prefixes of $wD$, the word $s$ is a
proper prefix of $D$.  Since $G$ is an $Fw^{-1}$-maximal suffix code,
$r$ is suffix-comparable with a word of $G$. If $r$ is a proper suffix
of $G$, then $urws$ is an internal factor of $GwD$, a
contradiction. Thus $r$ has a suffix $g\in G$. This suffix is unique
because $G$ is a suffix code.  Since $gp=gws$, the word $gp$ is a
proper prefix of $GwD$, and thus a proper prefix of $Y$. Thus $gp\in
Q(u)$.  We set (see Figure~\ref{figInternal1})
\begin{displaymath}
  \varphi(p)=
  \begin{cases}
    p&\text{if $p\notin wA^*$,}\\
    gp&\text{if $p\in wA^*$ and $g\in G$ is the suffix of $r$ in $G$.}
  \end{cases}
\end{displaymath}
Thus $\varphi$ maps $P(u)$ into $Q(u)$.  We show that it is
injective. Suppose that $\varphi(p)=\varphi(p')$ for some $p,p'\in
P(u)$. Assume that $\varphi(p)=gp$ and $\varphi(p')=g'p'$ with
$g,g'\in G$. Since $p$ and $p'$ start with $w$, the word $gp=g'p'$
starts with the words $gw$ and $g'w$ which are in $X$. This shows that
$g=g'$ and thus $p=p'$. Assume next that $p=g'p'$ with $g'\in G$ and
$p'\in wA^*$. But then $g'w$ is a prefix of $p$, with $p$ in $P$ and $g'w$
 in $X$, a contradiction.

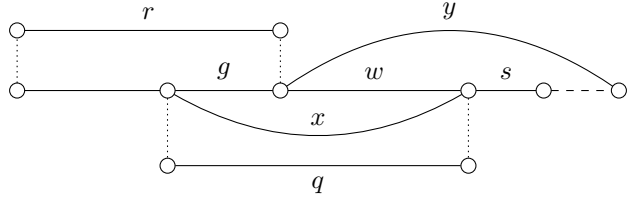
\begin{figure}[hbt]
  \centering
  \gasset{Nadjust=wh,AHnb=0,Nfill=y,fillgray=1,ELdist=1.5}
  \begin{picture}(80,22)(0,-10)
    \node(0)(0,0){}
    \node(g)(20,0){}
    \node(r)(35,0){}
    \node(s)(70,0){}    
    \node(q)(60,0){}
    \node(y)(80,0){}
    \node(qleft)(20,-10){}
    \node(qright)(60,-10){}\drawedge[ELside=r](qleft,qright){$q$}
    \drawedge(0,g){}\drawedge(g,r){$g$}\drawedge(r,q){$w$}
    \drawedge(q,s){$s$}
    \drawedge[curvedepth=-6](g,q){$x$}
    \drawedge[curvedepth=8](r,y){$y$}
    \node(0')(0,8){}
    \node(r')(35,8){}
    \drawedge(0',r'){$r$}
    \drawedge[dash={1 1}0](s,y){}
    \drawedge[dash={0.2 0.5}0](0,0'){}
    \drawedge[dash={0.2 0.5}0](r,r'){}
 \drawedge[dash={0.2 0.5}0](g,qleft){} 
 \drawedge[dash={0.2 0.5}0](q,qright){}
      \end{picture}
  \caption{Reconstruction of the factorization.}\label{figInternal2}
\end{figure}


To show that $\varphi$ is surjective, consider $q\in Q(u)$. Assume first
that $q$ has a prefix $x$ in $X$ (see Figure~\ref{figInternal2}). 
By Equation~\eqref{eqInternalTransformation}, one has
$x=gw$ and 
 $q=gws$ for some $g\in G$ and $s$ a proper prefix of the word $d$ in
$D$. Thus $ws$ is a proper prefix of $wD\subset X$, and consequently
$ws$ is a proper prefix of $X$. Since $ws$ is a suffix of $q$, it is a
suffix of $u$.  Thus $ws\in P(u)$. Set $u=rws$. Then $g$ is a suffix
of $r$. Moreover $ws\in wA^*$. Consequently $\varphi(ws)=q$.

Finally, if $q$ has no prefix in $X$, then $q$ is a proper prefix of
$X$. Moreover, since $q$ is a prefix of $Y$, either $q$ is a proper
prefix of $w$ or $q$ is not a prefix of $wD$. In both cases, $w$ is
not a prefix of $q$ and therefore $\varphi(q)=q$.  Thus $\varphi$ is
surjective.
\end{proofof}

The set $Y$ defined by Equation~\eqref{eqInternalTransformation}, is
said to be obtained from $X$ by \emph{internal
  transformation}\index{internal transformation} (with respect to
$w$).

\begin{example}\label{exampleInternalFibo1}
  Let $F$ be the Fibonacci set. The set $X=\{aa,ab,ba\}$ is an
  $F$-maximal bifix code of $F$-degree $2$. Then $Y=\{aa,aba,b\}$ is a
  bifix code of $F$-degree $2$ which is obtained from $X$ by internal
  transformation with respect to $w=b$. Indeed, here $G=D=\{a\}$,
  $Gw=\{ab\}$, $wD=\{ba\}$ and $GwD=\{aba\}$.
\end{example}

The following theorem is due to C\'esari. It is Theorem~6.5.4
in~\cite{BerstelPerrinReutenauer2009}.

\begin{theorem}\label{theoremCesari}
  For any finite maximal bifix code $X$ over $A$ of degree $d$, there
  is a sequence of internal transformations which, starting from the
  code $A^d$, gives the code~$X$.
\end{theorem}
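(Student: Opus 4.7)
The plan is to proceed by well-founded induction on a complexity measure of the code $X$. A convenient choice is the pair $(M(X), N(X))$ ordered lexicographically, where $M(X) = \max_{x \in X}|x|$ is the maximum word length and $N(X)$ is the number of words of $X$ attaining this length. The goal in each inductive step is to exhibit a finite maximal bifix code $X'$ of degree $d$ with strictly smaller measure, together with a word $w$ to which Proposition~\ref{propositionInternalTransformation} applies, such that the internal transformation of $X'$ with respect to $w$ produces exactly $X$. The inductive hypothesis then supplies a chain $A^d = X_0, X_1, \ldots, X_n = X'$, and the final step $X' \to X$ completes the chain to $X$.

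For the base case $M(X) = d$, I would argue that $X = A^d$. Since $X$ is $A^*$-maximal as a prefix code by Theorem~\ref{theoremEquivMax}, every word of length $d$ is prefix-comparable with some element of $X$; because no element of $X$ is longer than $d$, this forces every word of length $d$ to either lie in $X$ or to strictly extend some shorter element of $X$. Using the $F$-degree hypothesis (with $F = A^*$) combined with the Kraft equality for maximal prefix codes, one rules out proper prefixes in $X$, yielding $A^d \subseteq X$, and maximality then gives $X = A^d$.

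For the inductive step with $M(X) > d$, the natural candidate for $w$ comes from dissecting a longest word $x \in X$. Writing $x = g_0 w d_0$ with $g_0, d_0$ nonempty and $w$ chosen to be a common ``core'' appearing as a prefix and suffix of several long words of $X$, one sets $G = \{g : gw \in X\}$ and $D = \{d : wd \in X\}$ as determined by $X$, and defines the preimage
\[
  X' = \bigl(X \cup Gw \cup wD\bigr) \setminus \bigl(\{w\} \cup (GwD \cap F)\bigr),
\]
which is designed so that applying Proposition~\ref{propositionInternalTransformation} to $X'$ with respect to $w$ returns $X$. The verifications to carry out are: (a) $w \notin X'$ and $G, D$ (now computed from $X'$) satisfy the hypotheses $G \neq \emptyset$, $D \neq \emptyset$, $Gw \cap wD = \emptyset$; (b) $X'$ is a finite maximal bifix code of degree $d$ (automatic once (a) holds and we run the transformation); and (c) $(M(X'), N(X'))$ is strictly smaller than $(M(X), N(X))$ in lexicographic order—this is where the choice of $w$ matters, since the shortest newly introduced words $g w$ and $w d$ have length less than the word $gwd$ of $X$ they replace.

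The principal obstacle is (c) together with the subtlety in choosing $w$. One must choose $w$ so that the longest words $gwd \in X$ are genuinely replaced by strictly shorter words in $X'$, rather than by words of the same length or longer. A plausible recipe is to let $x$ be a word of length $M(X)$ in $X$, pick $w$ to be the longest proper factor of $x$ of the form $w = u$ where $u$ occurs as a proper prefix of some element of $X$ and as a proper suffix of some element of $X$ (thus both $G$ and $D$ are nonempty), and then check that the conditions $Gw \cap wD = \emptyset$ and the strict decrease of the measure hold simultaneously. Dealing with potential collisions $Gw \cap wD \neq \emptyset$ may require refining the choice of $w$, for instance by taking $w$ shorter or relying on the bifix structure of $X$ to rule them out via a prefix/suffix incomparability argument.
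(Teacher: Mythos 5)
The paper does not prove this statement: it is quoted as C\'esari's theorem with a reference to Theorem~6.5.4 of \cite{BerstelPerrinReutenauer2009}, so there is no internal proof to compare with, and your proposal must stand on its own. Your overall strategy --- descending induction on a length-based measure, peeling off one internal transformation at a time, with base case $M(X)=d\Rightarrow X=A^d$ (which does follow, most cleanly from the average-length identity $\lambda(X)=d$ of Corollary~\ref{corollaryAveragelength} applied to the uniform Bernoulli distribution) --- is the right general shape of the classical argument. The inductive step, however, contains a concrete error and leaves the actual content of the theorem unproved.

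First, your definition of $G$ and $D$ is inconsistent with your formula for $X'$. With $G=\{g: gw\in X\}=Xw^{-1}$ and $D=w^{-1}X$ one has $Gw\cup wD\subseteq X$ by construction, so $X'=(X\cup Gw\cup wD)\setminus(\{w\}\cup GwD)$ collapses to $X\setminus(\{w\}\cup GwD)$, a proper subset of a maximal code and hence not maximal. To invert an internal transformation you must recover the sets $G,D$ of the \emph{predecessor} code $X'$, and inside $X$ these are visible only through the rectangle $GwD\subseteq X$, since the words $gw$ and $wd$ are precisely the ones that were deleted. For instance $Z=\{a^3,a^2ba,a^2b^2,ab,ba^2,baba,bab^2,b^2a,b^3\}$ is obtained from $A^3$ by the transformation with $w=ab$, yet $\{g\in A^+: g\,ab\in Z\}=\emptyset$, while the correct $G=\{a,b\}$ must be read off from the factorizations $g(ab)d\in Z$. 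Second, your claim that $X'$ being a finite maximal bifix code of degree $d$ is ``automatic once (a) holds and we run the transformation'' is circular: Proposition~\ref{propositionInternalTransformation} presupposes that its input is a finite $F$-maximal bifix code, so it cannot be used to certify $X'$. Establishing that some word $w$ admits an inverse transformation --- that the words of $X$ containing $w$ internally form a full rectangle $GwD$, that $(X\setminus(\{w\}\cup GwD))\cup Gw\cup wD$ is again a maximal bifix code of degree $d$ satisfying the hypotheses of the transformation, and that the measure strictly drops --- is exactly the hard part of C\'esari's theorem, and you explicitly leave the choice of $w$ as ``a plausible recipe'' that ``may require refining.'' As it stands the proposal is an outline of the known strategy with the decisive lemma missing.
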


Theorem~\ref{theoremCesari} has been generalized to finite $F$-maximal
bifix codes when $F$ is the set of paths in a strongly connected graph
(see~\cite{DeFelice1988}).  It is not true in any recurrent, or even
uniformly recurrent set, as shown by the following example.

\begin{example}
  Let $F$ be the Fibonacci set. The set $X=\{a,bab,baab\}$ is a finite
  bifix code of $F$-degree $2$. It cannot be obtained by a sequence of
  internal transformations from the code $A^2\cap
  F=\{aa,ab,ba\}$. Indeed, the only internal transformation which can
  be realized is with respect to $w=b$.  The result is $\{aa,aba,b\}$
  by Example~\ref{exampleInternalFibo1}.  Next, no internal
  transformation can be realized from this code. See also
  Figure~\ref{figureBifix2}. 
\end{example}

A more general form of internal transformation is described
in~\cite{BerstelPerrinReutenauer2009} in Proposition~6.2.8. We do not
know whether its adaptation to finite $F$-maximal bifix codes allows
one to obtain all finite $F$-maximal bifix codes of $F$-degree $d$
starting with the code $A^d\cap F$.

\section{Bifix codes in Sturmian sets}

In this section, we study bifix codes in Sturmian sets. This time, the
situation is completely specific. First of all, as we have already
seen, any $F$-thin bifix code included in a uniformly recurrent set
$F$ is finite (Theorem~\ref{theoremCompletion}).  Next, in a
Sturmian set $F$, any bifix code of finite $F$-degree $d$ on $k$
letters has $(k-1)d+1$ elements (Theorem~\ref{theoremBifixd+1}).  
Since $A^d$ is a bifix code of degree $d$, this
generalizes the fact that $\Card(F\cap A^d)=(k-1)d+1$ for all $d\ge
1$.

Additionally, if an infinite word $x$ is $X$-stable, that is if , for
some thin maximal bifix code $X$, one has $d_{F(y)}(X)=d_{F(x)}(X)$
for all suffixes $y$ of $x$, then the inequality $\Card(X\cap F(x))\le
d_{F(x)}(X)$ implies that $x$ is ultimately periodic
(Theorem~\ref{theoremPeriod}).

\subsection{Sturmian sets}

Let $F$ be a factorial set on the alphabet $A$. Recall that a word $w$
is strict right-special if $wA\subset F$. It is strict left-special if
$Aw\subset F$. A suffix of a (strict) right-special word is (strict)
right-special, a prefix of a (strict) left-special word is (strict)
left-special.

A set of words $F$ is called \emph{Sturmian}\index{Sturmian set} if it
is the set of factors of a strict episturmian word.  By
Proposition~\ref{propositionSturmianMinimal} a Sturmian set $F$ is
uniformly recurrent. Moreover, every right-special (left-special) word
in $F$ is strict.

The following statement gives a direct definition of Sturmian sets.

\begin{proposition}
  A set $F$ is Sturmian if and only if it is uniformly recurrent and
  \begin{enumerate}
  \item[\upshape{(i)}] it is closed under reversal,
  \item[\upshape{(ii)}] for each $n$, there is exactly one right-special
    word in $F$ of length $n$, and this right-special
    word is strict.
  \end{enumerate}
\end{proposition}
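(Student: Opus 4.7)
The plan is to prove both directions by unwinding definitions; no deep machinery beyond Propositions~\ref{uniformImpliesRecurrent}, \ref{propExists} and \ref{propositionSturmianMinimal} is needed.

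For the forward implication, I would suppose $F$ is Sturmian, so $F=F(x)$ for some strict episturmian word $x$. Proposition~\ref{propositionSturmianMinimal} immediately gives that $x$ is uniformly recurrent, so $F$ is uniformly recurrent. Condition (i) is part of the very definition of an episturmian word (the factor set is closed under reversal), and condition (ii) is precisely the defining property of ``strict'' episturmian words recalled in Section~\ref{subsection-episturmian}: there is exactly one right-special factor of each length and each such factor $u$ satisfies $uA\subset F(x)$.

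For the backward implication, assume $F$ is uniformly recurrent and satisfies (i) and (ii). By Proposition~\ref{uniformImpliesRecurrent}, $F$ is recurrent, so Proposition~\ref{propExists} produces an infinite word $x$ with $F(x)=F$. Conditions (i) and (ii) now translate directly to properties of $x$: $F(x)$ is closed under reversal and contains, for each $n\ge 1$, a unique (hence at most one) right-special factor of length $n$, so $x$ is episturmian; moreover (ii) says that each right-special factor is strict, so $x$ is a strict episturmian word. By definition, $F=F(x)$ is then a Sturmian set.

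I do not expect a significant obstacle: the statement is essentially a bookkeeping result that transfers the definition of a (strict) episturmian word between the word $x$ and its set of factors $F(x)$. The only care point is the degenerate length $n=0$, where the empty word is right-special iff $|A|\ge 2$ and is strict iff every letter of $A$ occurs in $F$; this is harmless under the standing convention that $A$ denotes the set of letters appearing in $F$.
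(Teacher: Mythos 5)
Your proof is correct, but in the backward direction it takes a genuinely different route from the paper's. You obtain \emph{some} infinite word $x$ with $F(x)=F$ from Proposition~\ref{propExists} (after passing through Proposition~\ref{uniformImpliesRecurrent}) and then observe that, since right-special and strict are notions relative to the factor set, conditions (i) and (ii) for $F=F(x)$ are word-for-word the definition of $x$ being a strict episturmian word; this is shorter, and it uses uniform recurrence only to guarantee recurrence. The paper instead constructs a \emph{specific} $x$, namely the infinite word whose prefixes are the left-special words of $F$ (the reversals of the right-special words, which are nested by uniqueness), and then uses uniform recurrence twice in a more substantive way: once to show that $F(x)$ is closed under reversal (every $u\in F$ sits inside some right-special word, hence $\widetilde u$ sits inside a prefix of $x$) and once to show $F\subset F(x)$. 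What the paper's construction buys is that this $x$ is \emph{standard} (all its left-special factors are prefixes), which is precisely what the subsequent Proposition~\ref{leftInfinite} extracts as "a direct consequence of the previous proof"; your generic word from Proposition~\ref{propExists} carries no such structure, so under your argument that uniqueness/standardness statement would need a separate justification. The forward direction is the same in both, and your remark about $n=0$ is a harmless point the paper does not address either.
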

\begin{proof}
  If $F=F(x)$ for some strict episturmian word, then the conclusions
  of the proposition hold. 

  Conversely, assume that $F$ has the required properties. For each
  $n$, the reversal of the strict right-special word of length $n$ is
  a strict left-special word. Since all these left-special words are
  prefixes one of the other, there is an infinite word $x$ that such
  that all its prefixes are these strict left-special words. Clearly,
  $F(x)\subset F$. To show that $x$ is strict episturmian, we verify
  that $F(x)$ is closed under reversal. Let $u\in F(x)$. Then $u\in
  F$. Since $F$ is uniformly recurrent, there is an integer $m$ such
  that $u$ is a factor of the right-special word $w$ of length
  $m$. Consequently the reversal $\widetilde{u}$ of $u$ is a factor of
  the left-special word $\widetilde{w}$ of length $m$, and therefore
  is in $F(x)$.

  To prove that $F\subset F(x)$, let $u\in F$. Since $F$ is uniformly
  recurrent, there is an integer $m$ such that $u$ is a factor of the
  left-special word $w$ of length $m$. Since $w$ is a prefix of $x$,
  this shows that $u\in F(x)$.
\end{proof}

The following statement is a direct consequence of the previous proof.

\begin{proposition}\label{leftInfinite}
  Let $F$ be a Sturmian set of words.  There is a unique strict standard
  episturmian infinite word $s$ such that $F=F(s)$.
\end{proposition}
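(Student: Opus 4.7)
The plan is to recycle the construction from the proof of the preceding proposition and then check two additional properties: that the word produced there is standard, and that standardness forces uniqueness.

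For existence, recall that in the previous proof one builds an infinite word $x$ whose prefix of length $n$ is the unique strict left-special factor $\widetilde{w}_n$ of length $n$, and one shows that $x$ is strict episturmian with $F(x)=F$. I would now observe that $x$ is in fact standard. Indeed, by the defining property (ii) of a Sturmian set (exactly one right-special factor of each length, and its reversal gives the unique left-special factor of each length), every left-special factor of $F$ is of the form $\widetilde{w}_n$ for some $n$, i.e.\ a prefix of $x$. Since $F(x)=F$, every left-special factor of $x$ is a prefix of $x$, so $x$ is standard by definition. Call it $s$; it is strict, standard, episturmian, with $F(s)=F$.

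For uniqueness, suppose $s$ and $s'$ are both strict standard episturmian words with $F(s)=F(s')=F$. Fix $n\ge 1$. By the Sturmian property of $F$ there is a unique left-special factor of $F$ of length $n$; since $s$ is standard this factor is the prefix of $s$ of length $n$, and since $s'$ is standard it is also the prefix of $s'$ of length $n$. Hence $s$ and $s'$ agree on every prefix of length $n$, so $s=s'$.

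There is no real obstacle here because all the combinatorial work was done in the preceding proposition; the only new ingredient is the identification of the prefixes of the constructed $x$ with the strict left-special factors of $F$, which follows directly from the uniqueness of the left-special factor of each length in a Sturmian set. The statement is essentially a bookkeeping consequence of the construction, together with the rigidity provided by ``standard'' (left-special factors are forced to be prefixes).
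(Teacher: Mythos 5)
Your proposal is correct and follows the same route as the paper, which simply asserts the proposition as a direct consequence of the preceding proof: existence comes from the word $x$ built there (whose prefixes are exactly the unique left-special factors of each length, hence $x$ is standard), and uniqueness from the fact that standardness forces the length-$n$ prefix of any such $s$ to be the unique left-special factor of length $n$ of $F$. You have merely made explicit the bookkeeping the paper leaves implicit, and all the steps check out.
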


As a consequence of Proposition~\ref{leftInfinite}, for every
left-special word $w$ of a Sturmian set $F$, exactly one of the words
$wa$, for $a\in A$, is left-special in $F$.  Symmetrically, for every
right-special word $w$ in $F$, exactly one of the words $aw$ for $a\in A$ is
right-special in $F$. More generally, for every $n\ge 1$ there is exactly one
word $u$ of length $n$ such that $uw$ is a right-special word in $F$.

\begin{proposition}\label{propositionPrefixSpecial}
  Any word in a Sturmian set $F$ is a prefix of some right-special
  word in $F$.
\end{proposition}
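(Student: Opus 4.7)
The plan is to reduce the statement to a factorization problem in the unique strict standard episturmian word associated with $F$, and then use closure under reversal to pass back to right-special words.

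First, I would invoke Proposition~\ref{leftInfinite} to get the strict standard episturmian word $s$ with $F=F(s)$. Given $u\in F$, since $F$ is closed under reversal, the reversal $\widetilde{u}$ also lies in $F=F(s)$, so $\widetilde{u}$ is a factor of $s$. In particular, there is a prefix $p$ of $s$ ending with $\widetilde{u}$; write $p=q\widetilde{u}$.

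Next I would argue that every prefix of $s$ is left-special in $F$. Since $F$ is a Sturmian set, for each $n\ge 1$ there is exactly one left-special word of length $n$ (the reversal of the unique right-special word of length $n$). Because $s$ is standard episturmian, every left-special factor of $s$ is a prefix of $s$. As $s$ has exactly one prefix of each length, that prefix must coincide with the unique left-special word of that length. Hence $p$ is left-special.

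Finally, since $F$ is closed under reversal, the reversal of a left-special word is right-special. Thus $w=\widetilde{p}=u\widetilde{q}$ is a right-special word of $F$ having $u$ as a prefix, which proves the proposition. The argument is essentially a bookkeeping exercise once one observes that the prefixes of the standard episturmian word $s$ are precisely the left-special words of $F$; the only mildly delicate step is this last observation, which follows immediately from counting left-special words of each length together with the definition of ``standard.''
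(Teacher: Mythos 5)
Your proof is correct, but it takes a different route from the paper's. The paper argues directly with uniform recurrence: given $u\in F$, it chooses $n$ so that $u$ is a factor of \emph{every} word of $F$ of length $n$, in particular of the unique right-special word $w$ of length $n$; writing $w=pus$, the suffix $us$ is again right-special and has $u$ as a prefix. You instead pass to the standard word $s$ of Proposition~\ref{leftInfinite}, locate $\widetilde{u}$ at the end of some prefix $p$ of $s$, identify the prefixes of $s$ with the (unique) left-special words of each length via standardness, and reverse back. Both arguments are sound and each step you use is available at this point in the paper (closure under reversal, the uniqueness of the special word of each length, and the fact that standardness forces each left-special factor to be a prefix). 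The paper's version is slightly more economical in its prerequisites, needing only uniform recurrence and the elementary fact that suffixes of right-special words are right-special, whereas yours buys a sharper structural picture as a by-product: the right-special words of $F$ are exactly the reversals of the prefixes of $s$, so every $u\in F$ extends to the right-special word $u\widetilde{q}$ obtained from any occurrence $q\widetilde{u}$ of $\widetilde{u}$ as a suffix of a prefix of $s$.
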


\begin{proof} Let indeed $u\in F$.  Since $F$ is uniformly recurrent,
  there is an integer $n$ such that $u$ is a factor of any word in $F$
  of length $n$.  Let $w$ be the right-special word of length
  $n$. Then $u$ is a factor of $w$, thus $w=pus$ for some words
  $p,s$. Since $w$ is right-special, its suffix $us$ is also
  right-special.  Thus $u$ is a prefix of a right-special word.
\end{proof}

The following example shows that for a Sturmian set $F$, there exists
bifix codes $X\subset F$ which are not $F$-thin (we have seen such an
example for a uniformly recurrent but not Sturmian set in
Example~\ref{exampleDenseBifixCode}).

\begin{example}
Let $F$ be a Sturmian set. Consider the
following sequence $(x_n)_{n\ge 1}$ of words of $F$. Set $x_1=a$, for
some $a\in A$.

Suppose inductively that $x_1,\ldots,x_n$ have been defined in such a
way that $X_n=\{x_1,x_2,\ldots,x_n\}$ is bifix and not $F$-maximal bifix.
Define $x_{n+1}$ as follows.  By Theorem~\ref{theoremEquivMax}, $X_n$
is not right $F$-complete, thus there is a word $u$ in $F$ which is
incomparable for the prefix order with the words of $X_n$.  By
Proposition~\ref{propositionPrefixSpecial}, the word $u$ is a prefix
of a right special word $v$ in $F$. Symmetrically, since $X_n$ is not
an $F$-maximal bifix code, there is a word $w\in F$ which is
incomparable with the words of $X_n$ for the suffix order. Since $F$
is recurrent, there is a word $t$ such that $vatw\in F$.  Then we
choose $x_{n+1}=vatw$.

The set $X_{n+1}=X_n\cup x_{n+1}$ is a bifix code since $x_{n+1}$ is
incomparable with the words of $X_n$ for the prefix and for the suffix
order. It is not an $F$-maximal prefix code since $vb$, for all
letters $b\ne a$, is incomparable for the prefix order with the words
of $X_{n+1}$: indeed, its prefix $u$ is incomparable for the prefix
order with all words in $X_n$ and $vb$ is incomparable with
$x_{n+1}$. Since it is finite, it is not an $F$-maximal bifix code by
Theorem~\ref{theoremEquivMax}.  The infinite set
$X=\{x_1,x_2,\ldots\}$ is a bifix code included in $F$ and it is not
$F$-thin by Theorem~\ref{theoremCompletion}.
\end{example}

\begin{proposition}\label{propPrefixSpecial}
  Let $F$ be a Sturmian set and let $X\subset F$ be a prefix code.
  Then $X$ contains at most one left-special word. If $X$ is a finite
  $F$-maximal prefix code, it contains exactly one left-special word.
\end{proposition}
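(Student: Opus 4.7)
The plan is to first pin down the structure of left-special words in a Sturmian set $F$. By Proposition~\ref{leftInfinite}, there is a unique strict standard episturmian word $s$ with $F=F(s)$. Since $F$ is closed under reversal and has exactly one right-special factor of each length $n\ge 1$, it has exactly one left-special factor of each length (the reversal of the right-special one). By standardness of $s$, every left-special factor is a prefix of $s$; combining these two facts, the nonempty left-special words of $F$ are exactly the nonempty prefixes of $s$, and in particular they form a chain under the prefix order.

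Given this chain structure, the first assertion is immediate: if $u,v\in X$ were both left-special, they would be nonempty prefixes of $s$, hence prefix-comparable. Since $X$ is a prefix code, this forces $u=v$, so $X$ contains at most one left-special word.

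For the second assertion, I would exploit that $X$ is finite. Let $N$ be the maximum length of a word in $X$, and let $p$ be the prefix of $s$ of length $N+1$. Then $p\in F$, and by Proposition~\ref{propositionPrefixGlobal} applied to the $F$-maximal prefix code $X$, the word $p$ is prefix-comparable with some $x\in X$. Since $|p|=N+1>|x|$, the word $x$ must be a prefix of $p$, hence a nonempty prefix of $s$, hence left-special. Combined with the first part, $X$ contains exactly one left-special word.

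I do not anticipate any serious obstacle here. The only step requiring care is the initial reduction to the ``chain of prefixes of $s$'' picture for left-special words, which rests on the combination of reversal-closure, uniqueness of right-special factors of each length, and standardness; once that is in place, both the uniqueness and the existence arguments are short length-comparison arguments using Proposition~\ref{propositionPrefixGlobal}.
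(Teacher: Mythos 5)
Your proof is correct and follows essentially the same route as the paper's: both parts rest on the uniqueness of the left-special word of each length (equivalently, that the left-special words form a chain of prefixes of the standard word $s$), and the existence part picks a long left-special word and extracts a prefix of it lying in $X$ via $F$-maximality. The detour through Proposition~\ref{leftInfinite} is harmless but not needed; the paper argues directly that two left-special words in $X$ must be prefix-comparable, and takes the left-special word of length exactly $\max_{x\in X}|x|$ rather than $N+1$.
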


\begin{proof} Assume on the contrary that $x,y\in X$ are two
  left-special words. We may assume that $|x|<|y|$.  Let $x'$ be the
  prefix of $y$ of length $|x|$. Then $x'$ is left-special and thus
  $x,x'$ are two left-special words of the same length.  This implies
  that $x=x'$. Thus $x$ is a prefix of $y$. Since $X$ is prefix, this
  implies $x=y$.

  Assume now that $X$ is a finite $F$-maximal prefix code. Let $n$ be
  the maximal length of the words in $X$. Let $u\in F$ be the
  left-special word of length $n$. Since $XA^*$ is right $F$-dense,
  there is a prefix $x$ of $u$ which is in $X$. Thus $x$ is a
  left-special element of $X$. It is unique by the previous statement.
\end{proof}

A dual of Proposition~\ref{propPrefixSpecial} holds for suffix codes
and right-special words.
\subsection{Cardinality}

The following result shows that Theorem~\ref{theoremCompletion}
can be made much more precise for Sturmian sets.

\begin{theorem}\label{theoremBifixd+1}
  Let $F$ be a Sturmian set on an alphabet with $k$ letters. For any
  finite $F$-maximal bifix code $X\subset F$, one has
  $\Card(X)=(k-1)d_F(X)+1$.
\end{theorem}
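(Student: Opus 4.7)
My plan is to count $|X|$ by combining two ingredients: a counting identity for $F$-maximal prefix codes expressing $|X|$ in terms of right orders at proper prefixes, together with the Sturmian-specific restriction on right orders.

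Since $X$ is finite it is $F$-thin, so Theorem~\ref{theoremEquivMax} gives that $X$ is an $F$-maximal prefix code. Let $P$ be the set of proper prefixes of $X$ (including the empty word), and for $p\in F$ let $r_F(p)$ denote its right order in $F$. Because $X$ is an $F$-maximal prefix code, the map $(p,a)\mapsto pa$ restricted to $\{(p,a)\in P\times A: pa\in F\}$ is a bijection onto $(P\cup X)\setminus\{1\}$, and $P\cap X=\emptyset$. Thus
\[
\sum_{p\in P} r_F(p) = |P|-1+|X|,\qquad\text{hence}\qquad |X|=\sum_{p\in P}\bigl(r_F(p)-1\bigr)+1.
\]
Now the Sturmian hypothesis enters: every right-special word of $F$ is strict, while non-right-special words of $F$ have right order $1$. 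So $r_F(p)-1$ equals $k-1$ if $p$ is right-special in $F$ and $0$ otherwise, giving
\[
|X| = (k-1)R+1,\qquad R=|\{p\in P : p\text{ is right-special in }F\}|.
\]

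It then suffices to prove $R=d_F(X)$. The key structural fact, used on both sides, is that in a Sturmian set there is exactly one right-special word of each length; since a suffix of a right-special word is right-special, the right-special words of $F$ form a chain for the suffix order.

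For the inequality $R\ge d_F(X)$, I pick $w_0\in F$ with $\pars_X(w_0)=d_F(X)$ and use Proposition~\ref{propositionPrefixSpecial} to extend $w_0$ to a right-special word $w$, still with $\pars_X(w)=d_F(X)$ by \eqref{eqL1}. By the dual of Proposition~\ref{propInterpretations}, $w$ has exactly $d_F(X)$ suffixes with no prefix in $X$. Since $X$ is an $F$-maximal prefix code, the words of $F$ without prefix in $X$ are precisely the elements of $P$, and all suffixes of the right-special word $w$ are themselves right-special; these yield $d_F(X)$ distinct right-special elements of $P$. For the reverse inequality $R\le d_F(X)$, let $p_1,\dots,p_m$ be distinct right-special words of $P$ listed by increasing length. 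The chain property forces each $p_i$ to be a suffix of $p_m$. Pick any $u\in F$ with $\pars_X(u)=d_F(X)$ and, using recurrence of $F$, choose $t$ with $r:=p_m t u\in F$. Then $\pars_X(r)\ge \pars_X(u)=d_F(X)$ by \eqref{eqL1}, so equality holds. Each $p_i$ is a suffix of $r$ and has no prefix in $X$ (being in $P$); the dual of Proposition~\ref{propInterpretations} then forces $m\le d_F(X)$.

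The main obstacle I anticipate is the equality $R=d_F(X)$: the argument must translate "right-special prefixes of $X$" into "suffixes with no prefix in $X$ of a single word", and this translation works only because the Sturmian hypothesis organizes the right-special words into a single suffix chain, allowing all candidate right-special prefixes to be stacked as suffixes of one word with controlled parse count.
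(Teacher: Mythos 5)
Your proof is correct and follows essentially the same route as the paper's: the counting identity $\Card(X)=\sum_{p\in P}(r_F(p)-1)+1$ is the handshake form of the paper's tree lemma (Lemma~\ref{lemmaTree}), and your identification of $R$ with $d_F(X)$ — via the dual of Proposition~\ref{propInterpretations} and the fact that the right-special words form a single suffix chain — is exactly the content of Lemma~\ref{lemmadSpecial}, merely split into two inequalities instead of being read off from one sufficiently long right-special word. No gaps; only cosmetic differences.
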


The following corollary is strong generalization of a result related
to Sturmian words. 

\begin{corollary}\label{corSturm}
  Let $x$ be a Sturmian word over $A=\{a,b\}$, and let $X\subset
  A^+$ be a finite maximal bifix code of degree $d$. Then $\Card(X\cap
  F(x))=d+1$. 
\end{corollary}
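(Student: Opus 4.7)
The plan is to derive the corollary as a direct specialization of the two main theorems of the subsection, namely Theorem~\ref{proposition1} (which relates maximal bifix codes in $A^+$ to $F$-maximal bifix codes) and Theorem~\ref{theoremBifixd+1} (the cardinality formula for $F$-maximal bifix codes in a Sturmian set).

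First I would set $F = F(x)$. Since $x$ is a Sturmian word on the binary alphabet $A$, the set $F$ is a Sturmian set in the sense of Subsection~\ref{subsection-episturmian} (a binary strict episturmian word is exactly a Sturmian word). In particular, by Proposition~\ref{propositionSturmianMinimal}, $F$ is uniformly recurrent, hence recurrent by Proposition~\ref{uniformImpliesRecurrent}, so the hypotheses of Theorem~\ref{proposition1} are satisfied.

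Next I would apply Theorem~\ref{proposition1} to $X$, regarded as a thin maximal bifix code of degree $d$ in $A^+$ (it is thin because it is finite). The conclusion is that $Y := X \cap F$ is an $F$-thin and $F$-maximal bifix code, and moreover $d_F(Y) = d$ since $X$ is finite (this is the equality case of the theorem, which uses that every word of $F$ longer than the longest word of $X$ has exactly $d$ parses with respect to $X$).

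Finally, since $F$ is a Sturmian set on an alphabet with $k = 2$ letters and $Y \subset F$ is a finite $F$-maximal bifix code, Theorem~\ref{theoremBifixd+1} gives
\[
\Card(X \cap F(x)) = \Card(Y) = (k-1)\, d_F(Y) + 1 = d+1,
\]
which is the desired conclusion. There is no real obstacle here: the entire content of the corollary is packaged in the two theorems already proved, and the only thing to check is that Sturmian words on a two-letter alphabet do fall under the Sturmian-set framework and that the finiteness of $X$ gives equality (not merely inequality) in Theorem~\ref{proposition1}.
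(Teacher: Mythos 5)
Your proof is correct and follows exactly the paper's own argument: set $F=F(x)$, use Theorem~\ref{proposition1} (with the equality case for finite $X$) to get $d_F(X\cap F)=d$, and then apply the cardinality formula of Theorem~\ref{theoremBifixd+1} with $k=2$. The extra checks you include (that $F$ is a Sturmian set and hence recurrent) are correct and only make explicit what the paper leaves implicit.
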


Indeed, since $A^d$ is a finite maximal bifix code of degree $d$, this
corollary (re)proves that any Sturmian word $x$ has $d+1$ factors of
length $d$, and it extends this to arbitrary finite maximal bifix code
of degree $d$. A similar extension holds for strict episturmian
words.\medskip

\begin{proofof}{of Corollary \ref{corSturm}}
  Set $F=F(x)$. In view of Theorem~\ref{proposition1}, one has
  $d=d_F(X\cap F)$. Consequently, by the formula of
  Theorem~\ref{theoremBifixd+1}, $\Card(X\cap F)=d_F(X)+1=d+1$.
\end{proofof}

The proof of Theorem~\ref{theoremBifixd+1} uses two lemmas.

\begin{lemma}\label{lemmadSpecial}
  Let $F$ be a Sturmian set. Let $X\subset F$ be a finite bifix code
  of finite $F$-degree $d$ and let $P$ be the set of proper prefixes
  of $X$.  There exists a right-special word $u\in F$
  such that $\pars_X(u)=d$. The $d$ suffixes of $u$ which are in $P$ are
  the right-special words contained in $P$.
\end{lemma}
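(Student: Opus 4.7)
The plan is to take $u$ to be the unique right-special word of $F$ of sufficiently large length, and then to exploit two facts specific to Sturmian sets: uniqueness of the right-special word of each length, and the stability of (strict) right-specialness under taking suffixes.

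First I would observe that $X$ is finite and therefore $F$-thin, so by Theorems~\ref{theoremDegree} and~\ref{theoremEquivMax} it is an $F$-maximal bifix code; in particular $X$ is right and left $F$-complete. Since $F$ is Sturmian, for each $n\ge 0$ there is a unique right-special word $r_n\in F$ of length $n$, and $r_nA\subset F$. Because $X$ is finite, both $I(X)$ and $P$ are bounded in length, so I can choose $n$ large enough that $r_n\notin I(X)$ and $n\ge\max\{|p|:p\in P\}$, and set $u:=r_n$. Equation~\eqref{eqH} of Theorem~\ref{theoremDegree} then gives $\pars_X(u)=d$.

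Next I would identify the $d$ parses of $u$ with $d$ suffixes of $u$ in $P$. By the dual of Proposition~\ref{propInterpretations}, $\pars_X(u)$ equals the number of suffixes $s$ of $u$ satisfying $s\notin XA^*$. Such an $s$ lies in $F$, and since $X$ is right $F$-complete, $s$ is a prefix of some product in $X^*$; the condition $s\notin XA^*$ then forces $s$ to be a proper prefix of the first factor, that is $s\in P$. The converse is immediate from $X$ being a prefix code. Hence $u$ has exactly $d$ suffixes in $P$, which accounts for the first assertion and for the fact that there are $d$ such suffixes.

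Finally I would prove that these $d$ suffixes are precisely the right-special words in $P$. Any suffix $s$ of $u$ inherits strict right-specialness from $u$: for every $a\in A$, $sa$ is a suffix of $ua\in F$, so $sa\in F$, giving $sA\subset F$. Conversely, if $w\in P$ is right-special, then $|w|\le n=|u|$ by choice of $n$; the length-$|w|$ suffix $v$ of $u$ is right-special by what we have just shown, so uniqueness of right-special words of length $|w|$ in $F$ forces $w=v$, and hence $w$ is a suffix of $u$. The main obstacle, which is modest, is the translation carried out in the second paragraph between ``suffix of $u$ outside $XA^*$'' and ``suffix of $u$ in $P$''; it hinges on right $F$-completeness and the prefix-code property. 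The rest rests on the Sturmian-specific uniqueness of right-special words and on the straightforward inheritance of strict right-specialness by suffixes.
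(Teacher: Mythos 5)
Your proof is correct and follows essentially the same route as the paper's: choose $u$ to be the unique right-special word of length exceeding the lengths of the words of $X$, invoke Theorem~\ref{theoremDegree} to get $\pars_X(u)=d$, count the $d$ suffixes of $u$ lying in $P$ via the dual of Proposition~\ref{propInterpretations}, and use the uniqueness of the right-special word of each length to match them with the right-special words of $P$. The only difference is that you spell out two steps the paper leaves implicit (that $X$ is $F$-maximal, and the translation between suffixes outside $XA^*$ and suffixes in $P$ via right $F$-completeness), which is harmless.
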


\begin{proof}
  Let $n\ge 1$ be larger than the length of the words of $X$.  By
  definition, there is a right-special word $u$ of length $n$. Then
  $u$ is not a factor of a word of $X$. By Theorem~\ref{theoremDegree}
  it implies that $\pars_X(u)=d_F(X)$.

  By the dual of Proposition~\ref{propInterpretations}, the word $u$
  has $d_F(X)$ suffixes which are in $P$. They are all right-special
  words. Furthermore, any right-special word $p$ contained in $P$ is a
  suffix of $u$. Indeed, the suffix of $u$ of the same length than $p$
  is the unique right-special word of this length.
\end{proof}

The next lemma is a well-known property of trees translated into the
language of prefix codes.  Let $X$ be a prefix code or the set $\{1\}$
and let $P$ be the set of proper prefixes of $X$. For $p\in P$, let
$d(p)=\Card\{a\in A\mid pa\in P\cup X\}$.

\begin{lemma}\label{lemmaTree}
  Let $A$ be an alphabet with $k$ letters.  Let $X\subset A^*$ be a
  finite prefix code or the set $\{1\}$ and let $P$ be the set of
  proper prefixes of the words of $X$. Assume that for all $p\in P$,
  $d(p)=k$ or $1$. Let $Q_X=\{p\in P\mid d(p)=k\}$. Then,
  $\Card(X)=(k-1)\Card(Q_X)+1$.
\end{lemma}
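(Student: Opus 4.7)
The plan is a straightforward double-count of the edges of the prefix tree. View $T = P \cup X$ as a rooted tree with root $1$: for any $w \in T \setminus \{1\}$, its parent is obtained by removing the last letter, and since $X$ is a prefix code while $P$ is its set of proper prefixes, this parent lies in $P$. Consequently the elements of $X$ are exactly the leaves of $T$, the elements of $P$ are exactly its internal nodes, and the two sets are disjoint.

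I will first dispose of the degenerate case $X = \{1\}$: then $P = Q_X = \emptyset$, and both sides of the claimed identity equal $1$. From now on assume $X \neq \{1\}$, so that $1 \in P$ and $T$ has exactly $\Card(P) + \Card(X)$ nodes.

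Next I count the edges of $T$ in two different ways. On the one hand, $T$ is a tree on $\Card(P) + \Card(X)$ nodes, hence has $\Card(P) + \Card(X) - 1$ edges. On the other hand, every edge of $T$ issues from a unique node of $P$, and the node $p \in P$ contributes exactly $d(p)$ outgoing edges (its children being the words $pa$ with $pa \in P \cup X$). Using the hypothesis $d(p) \in \{1, k\}$ together with the definition of $Q_X$, one gets
\[
\Card(P) + \Card(X) - 1 \;=\; \sum_{p \in P} d(p) \;=\; k\,\Card(Q_X) + (\Card(P) - \Card(Q_X)).
\]
Cancelling $\Card(P)$ on both sides yields $\Card(X) = (k-1)\Card(Q_X) + 1$, as desired.

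There is essentially no real obstacle in this argument; the only care needed is the identification of $P \cup X$ as the node set of a tree with $\Card(P) + \Card(X) - 1$ edges, which relies on $X$ being a prefix code (so leaves and internal nodes are disjoint) and on $P$ being the set of proper prefixes of $X$ (so every non-root node has a unique parent in $P$).
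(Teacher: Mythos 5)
Your proof is correct, but it takes a different route from the paper's. The paper argues by induction on the maximal length of the words of $X$, splitting at the root: if $d(1)=1$ all words share their first letter and one strips it off; if $d(1)=k$ one writes $X=\bigcup_{a\in A}aX_a$ and sums the inductive formula over the $k$ subtrees. You instead give a single global double count of the edges of the prefix tree $T=P\cup X$: once as $\Card(P)+\Card(X)-1$ (a tree on that many nodes), and once as $\sum_{p\in P}d(p)=k\,\Card(Q_X)+(\Card(P)-\Card(Q_X))$, after which $\Card(P)$ cancels. Your identification of $X$ with the leaves and $P$ with the internal nodes, and the observation that every non-root node's parent lies in $P$, are exactly the facts needed to justify the two counts, and you use the prefix-code hypothesis in the right place (to make $P$ and $X$ disjoint and $X$ childless). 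The double-counting argument is arguably cleaner in that it avoids the induction and isolates precisely where the hypothesis $d(p)\in\{1,k\}$ enters; the paper's induction has the mild advantage of producing the subtree quantities $\Card(Q_{X_a})$, which mirror the recursive structure used elsewhere. One small remark common to both proofs: the statement tacitly assumes $X\neq\emptyset$ (the formula fails for $X=\emptyset$), and your explicit treatment of the case $X=\{1\}$ matches the paper's base case $n=0$.
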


\begin{proof}
  Let us prove the property by induction on the maximal length $n$ of
  the words in $X$. The property is true for $n=0$ since in this case
  $X=\{1\}$ and $P=Q_X=\emptyset$. Assume $n \geq 1$.  If $1 \notin
  Q_X$, then all words of $X$ begin with the same letter $a$.  We have
  then $X=aY$, $Y$ is a prefix code or the set $\{1\}$ and $\Card(Q_Y)
  = \Card(Q_X)$. Hence, by induction hypothesis $\Card(X) = \Card(Y) =
  (k-1)\Card(Q_Y)+1=(k-1)\Card(Q_X)+1$.  Otherwise, $X = \cup_{a\in
    A}aX_a$. Set $t_a = \Card(Q_{X_a})$.  We have $\sum_{a\in A}t_a=
  \Card(Q_X) -1$. By induction hypothesis, $\Card(X_a) = (k-1)t_a+1$.
  Therefore, $\Card(X) = \sum_{a\in A}\Card(X_a) = \sum_{a\in
    A}(k-1)t_a+ k = (k-1)\Card(Q_X)+1$.
\end{proof}

\begin{proofof}{of Theorem \ref{theoremBifixd+1}}
  Let $P$ be the set of proper prefixes of $X$.  An element $p$ of $P$
  satisfies $pA\subset P\cup X$ if and only it is right-special.  Thus
  the conclusion follows directly by Lemmas~\ref{lemmadSpecial} and
  \ref{lemmaTree}.
\end{proofof}

\begin{table}[hbt]
\begin{displaymath}
\begin{array}{|l|l|l|}
\hline \text{code}      &\text{kernel }          &\text{derived  code}\\ \hline
aab,aba,baa,bab         &\emptyset               &aa,ab,ba         \\ \cline{1-2}
aa,aba,baab,bab         &aa                      & \\ \cline{1-2}
aaba,ab,baa,baba         &ab                      &\\    \cline{1-2}
aab,abaa,abab,ba        &ba                      &       \\   \cline{1-2}
aa,ab,baaba,baba         &aa,ab                   &\\   \cline{1-2}
aa,abaab,abab,ba        &aa,ba                   &\\  \cline{1-2}
aabaa,aababaa,ab,ba     &ab,ba                   &\\ \hline
a,baabaab,baabab,babaab         &a                      &a,baab,bab\\ \cline{1-2}
a,baab,babaabaabab,babaabab     &a,baab                  &\\ \cline{1-2}
a,baabaab,baababaab,bab  &a,bab                   &\\ \hline

aaba,abaa,ababa,b       &b                       &aa,aba,b \\\cline{1-2}
aa,abaaba,ababa,b     &aa,b                    &           \\  \cline{1-2}
aabaa,aababa,aba,b      &aba,b                   &            \\
\hline
\end{array}
\end{displaymath}
\caption{The $13$ $F$-maximal bifix codes of $F$-degree $3$ 
 in the Fibonacci set~$F$.}\label{tableBifix}
\end{table}

\begin{example}\label{exampleallDegree3}
  Let $F$ be the Fibonacci set.  We have seen in
  Example~\ref{exampleFiboDegree2} that there are $3$ $F$-maximal
  bifix codes of $F$-degree 2. It appears that there are $13$
  $F$-maximal bifix codes of degree $3$ listed in
  Table~\ref{tableBifix}. These codes are determined by their derived
  $F$-maximal bifix codes of $F$-degree 2, and by the choice of the
  kernel. The construction of the code can be done by
  Theorem~\ref{theoremkerneldegree}.  By
  Theorem~\ref{theoremBifixd+1}, all these codes have $4$ elements.
\end{example}

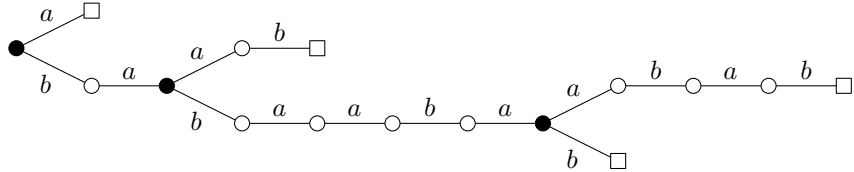
\begin{figure}[hbt]
  \gasset{Nadjust=wh,AHnb=0}
  \centering
  \begin{picture}(110,20)(0,-10)
    \node[fillgray=0](1)(0,5){}\node[Nmr=0](a)(10,10){}\node(b)(10,0){}
    \node[fillgray=0](ba)(20,0){}\node(baa)(30,5){}\node[Nmr=0](baab)(40,5){}
    \node(bab)(30,-5){}
    \node(baba)(40,-5){}
    \node(babaa)(50,-5){}
    \node(babaab)(60,-5){}
    \node[fillgray=0](babaaba)(70,-5){}
    \node(babaabaa)(80,0){}
    \node(babaabaab)(90,0){}
    \node(babaabaaba)(100,0){}
    \node[Nmr=0](babaabaabab)(110,0){}
    \node[Nmr=0](babaabab)(80,-10){}
    \drawedge(1,a){$a$}\drawedge[ELside=r](1,b){$b$}
    \drawedge(b,ba){$a$}\drawedge(ba,baa){$a$}
    \drawedge[ELside=r](ba,bab){$b$}\drawedge(baa,baab){$b$}
    \drawedge(bab,baba){$a$}
    \drawedge(baba,babaa){$a$}
    \drawedge(babaa,babaab){$b$}
    \drawedge(babaab,babaaba){$a$}
    \drawedge(babaaba,babaabaa){$a$}
    \drawedge[ELside=r](babaaba,babaabab){$b$}
    \drawedge(babaabaa,babaabaab){$b$}
    \drawedge(babaabaab,babaabaaba){$a$}
    \drawedge(babaabaaba,babaabaabab){$b$}%
  \end{picture}
  \caption{The $F$-maximal bifix code of $F$-degree 3 with kernel $\{a,baab\}$.}
  \label{figureExd+1}
\end{figure}

\begin{example}\label{exampleDegree3}
  We may illustrate the proof of Theorem~\ref{theoremBifixd+1} on the
  code $X=\{a,baab,babaabaabab,babaabab\}$ (see
  Table~\ref{tableBifix}). According to Lemma~\ref{lemmadSpecial}, the
  right-special word $ababaaba$ (which is the reversal of the prefix
  $abaababa$ of the Fibonacci word) has exactly three suffixes which
  are proper prefixes of words of $X$, namely $1$, $ba$ and $babaaba$
  (these are the ``fork nodes'', that is the nodes with two childred,
  indicated in black on Figure~\ref{figureExd+1}). This implies, by
  Lemma~\ref{lemmaTree}, that $X$ has four elements.
\end{example}

The following example shows that Theorem~\ref{theoremBifixd+1} is
not true for the set of factors of an episturmian word which is not
strict.

\begin{example}\label{exampleNotStrict}
  Set $A=\{a,b,c\}$.  Let $y$ be the Fibonacci word and let
  $x=\psi_c(y)$ be the infinite word of
  Example~\ref{exampleNonStrict}. It is an episturmian word which is
  not strict. Set $F=F(x)$.  Let $\psi:A^*\rightarrow G$ be the
  morphism from $A^*$ onto the group $G=(\Z/2\Z)^3$ defined by
  $\psi(a)=(1,0,0)$, $\psi(b)=(0,1,0)$ and $\psi(c)=(0,0,1)$. Let $Z$
  be the group code such that $Z^*=\psi^{-1}(0,0,0)$.  Since $G$ has
  $8$ elements, the degree of $Z$ is $8$ (see Proposition~\ref{propositionGroupAutomaton} below). The bifix code $X=Z\cap F$
  has $10$ elements obtained by inserting $c$ in two possible ways in
  the $5$ words of the bifix code $Z\cap F(y)$. The latter has degree~4
  by Theorem~\ref{theoremBifixd+1}.
 The bifix code $X=Z\cap F$ is given in
  Figure~\ref{figureExNonStrict}. The numbering of the nodes is for
  later use, in Example~\ref{exNonStrictGroup}.

\begin{figure}[hbt]
\begin{picture}(108,55)(0,-10)
\gasset{Nadjust=wh,AHnb=0}
\node(1)(0,20){$1$}
\node(a)(9,30){$2$}\node(b)(9,20){$3$}\node(c)(9,10){$4$}
\node(ac)(18,30){$5$}\node(bc)(18,20){$6$}\node(ca)(18,10){$7$}\node(cb)(18,-1){$8$}
\node(aca)(27,40){$9$}\node(acb)(27,30){$10$}\node(bca)(27,20){$11$}
\node(cac)(27,10){$12$}\node(cbc)(27,-1){$13$}
\node[Nmr=0](acac)(36,40){$1$}\node(acbc)(36,30){$14$}\node(bcac)(36,20){$15$}
\node[Nmr=0](caca)(36,14){$1$}\node(cacb)(36,7){$16$}
\node(cbca)(36,-1){$17$}
\node(acbca)(45,30){$18$}\node(bcaca)(45,22){$19$}\node(bcacb)(45,15){$20$}
\node(cacbc)(45,7){$21$}\node(cbcac)(45,-1){$22$}
\node(acbcac)(54,30){$23$}\node(bcacac)(54,22){$24$}\node(bcacbc)(54,15){$25$}
\node(cacbca)(54,7){$26$}\node(cbcaca)(54,-1){$27$}\node(cbcacb)(54,-10){$28$}
\node(acbcaca)(63,40){$29$}\node(acbcacb)(63,30){$9$}
\node(bcacacb)(63,22){$9$}\node(bcacbca)(63,15){$9$}
\node(cacbcac)(63,7){$30$}\node(cbcacac)(63,-1){$31$}
\node(cbcacbc)(63,-10){$32$}
\node(acbcacac)(72,40){$33$}\node[Nmr=0](acbcacbc)(72,30){$1$}
\node[Nmr=0](bcacacbc)(72,22){$1$}\node[Nmr=0](bcacbcac)(72,15){$1$}
\node(cacbcaca)(72,10){$34$}
\node[Nmr=0](cacbcacb)(72,4){$1$}
\node[Nmr=0](cbcacacb)(72,-1){$1$}\node[Nmr=0](cbcacbca)(72,-10){$1$}
\node(acbcacacb)(81,40){$20$}
\node(cacbcacac)(81,10){$35$}
\node(acbcacacbc)(90,40){$25$}
\node(cacbcacacb)(90,10){$28$}
\node(acbcacacbca)(99,40){$9$}
\node(cacbcacacbc)(99,10){$32$}
\node[Nmr=0](acbcacacbcac)(108,40){$1$}
\node[Nmr=0](cacbcacacbca)(108,10){$1$}

\drawedge(1,a){$a$}\drawedge(1,b){$b$}\drawedge(1,c){$c$}
\drawedge(a,ac){$c$}\drawedge(b,bc){$c$}\drawedge(c,ca){$a$}\drawedge(c,cb){$b$}
\drawedge(ac,aca){$a$}\drawedge(ac,acb){$b$}
\drawedge(bc,bca){$a$}
\drawedge(ca,cac){$c$}\drawedge(cb,cbc){$c$}
\drawedge(aca,acac){$c$}\drawedge(acb,acbc){$c$}
\drawedge(bca,bcac){$c$}
\drawedge(cac,caca){$a$}\drawedge(cac,cacb){$b$}
\drawedge(cbc,cbca){$a$}
\drawedge(acbc,acbca){$a$}
\drawedge(bcac,bcaca){$a$}\drawedge(bcac,bcacb){$b$}
\drawedge(cacb,cacbc){$c$}\drawedge(cbca,cbcac){$c$}
\drawedge(acbca,acbcac){$c$}
\drawedge(bcaca,bcacac){$c$}\drawedge(bcacb,bcacbc){$c$}
\drawedge(acbcac,acbcaca){$a$}\drawedge(acbcac,acbcacb){$b$}
\drawedge(bcacac,bcacacb){$b$}
\drawedge(cacbc,cacbca){$a$}
\drawedge(cbcac,cbcaca){$a$}\drawedge[ELside=r](cbcac,cbcacb){$b$}

\drawedge(acbcaca,acbcacac){$c$}\drawedge(acbcacb,acbcacbc){$c$}
\drawedge(bcacacb,bcacacbc){$c$}\drawedge(bcacbc,bcacbca){$a$}
\drawedge(bcacbca,bcacbcac){$c$}
\drawedge(cacbca,cacbcac){$c$}
\drawedge(cbcaca,cbcacac){$c$}\drawedge(cbcacb,cbcacbc){$c$}
\drawedge(cacbcac,cacbcaca){$a$}\drawedge(cacbcac,cacbcacb){$b$}
\drawedge(cbcacac,cbcacacb){$b$}\drawedge(cbcacbc,cbcacbca){$a$}

\drawedge(acbcacac,acbcacacb){$b$}
\drawedge(cacbcaca,cacbcacac){$c$}

\drawedge(acbcacacb,acbcacacbc){$c$}
\drawedge(cacbcacac,cacbcacacb){$b$}

\drawedge(acbcacacbc,acbcacacbca){$a$}
\drawedge(cacbcacacb,cacbcacacbc){$c$}

\drawedge(acbcacacbca,acbcacacbcac){$c$}
\drawedge(cacbcacacbc,cacbcacacbca){$a$}

\end{picture}
\caption{An $F$-maximal bifix code with $10$ elements. The numbers in
  the vertices are for later use.}\label{figureExNonStrict}
\end{figure}
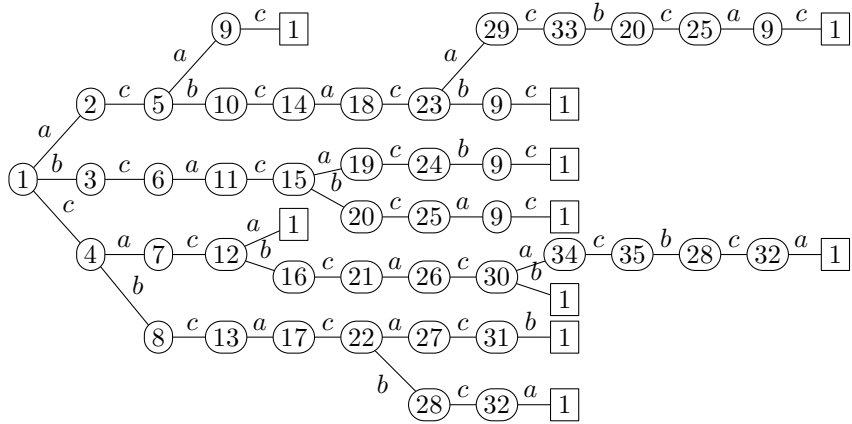
By Theorem~\ref{proposition1}, $X$ is an
$F$-maximal bifix code. Its $F$-degree is $8$. Indeed, the word
$acbcacbc$ has $8$ parses. Thus Theorem~\ref{theoremBifixd+1} is not
true in this case.
\end{example}


As a consequence of Theorem~\ref{theoremBifixd+1}, an internal
transformation does not change the cardinality of a finite $F$-maximal
bifix code for a Sturmian set $F$. Indeed, by
Proposition~\ref{propositionInternalTransformation}, an internal
transformation preserves the $F$-degree.

Actually, if $Y$ is obtained from $X$ by internal transformation with
respect to $w$, we have
\begin{equation}\label{eqInternal}
  Y=(X\cup w\cup (GwD\cap F))\setminus (Gw\cup wD)
\end{equation}
and
\begin{displaymath}
  \Card(Y)=\Card(X)+1+\Card(GwD\cap F)-\Card(G)-\Card(D).
\end{displaymath}

The fact that internal transformations preserve the cardinality can be
proved directly by the following statement. This statement applies to
the internal transformation~\eqref{eqInternal} because $Gw\cup wD$ is
a bifix code, which implies property (i) and $DA^*=w^{-1}XA^*$
(resp. $A^*G=A^*Xw^{-1}$) which implies property (ii) (resp. (iii)).

\begin{proposition}\label{propositionChristophe}
  Let $F$ be Sturmian set, let $w\in F$ be a nonempty word and let
  $D,G$ be finite sets such that
  \begin{enumerate}
  \item[\upshape{(i)}] any word $u$ has at most one factorization $u=gwd$
    with $g\in G$ and $d\in D$,
  \item[\upshape{(ii)}] $wD$ is a prefix code contained in $F$ and $DA^*$ is
    right $w^{-1}F$-dense,
  \item[\upshape{(iii)}] $Gw$ is a suffix code contained in $F$ and
    $A^*G$ is left $Fw^{-1}$-dense.
  \end{enumerate}
  Then $\Card(GwD\cap F)=\Card(G)+\Card(D)-1$.
\end{proposition}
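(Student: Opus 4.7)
The plan is to express $|GwD\cap F|$ using the tree-counting Lemma~\ref{lemmaTree} in two steps and then resolve the remaining combinatorial count via a bijection coming from the standard episturmian word of $F$. Let $k=\Card(A)$. Condition (i) gives the disjoint decomposition $|GwD\cap F|=\sum_{d\in D}|G(d)|$, where $G(d):=\{g\in G:gwd\in F\}$.

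I first analyze the prefix tree of $D$ as a subtree of the right-extension tree of $w$. Condition (ii) forces every proper prefix $v$ of $D$ to have children in the prefix tree of $D$ equal exactly to $\{va:a\in R(wv)\}$: each $a\in R(wv)$ makes $va\in w^{-1}F$, hence prefix-comparable with $D$ by density, while any $a\notin R(wv)$ gives $va\notin w^{-1}F$. Since $F$ is Sturmian, $|R(wv)|\in\{1,k\}$, so Lemma~\ref{lemmaTree} yields $|D|=1+(k-1)q_D$, with $q_D$ the number of proper prefixes $v$ of $D$ such that $wv$ is right-special. Reversing the same argument (using that a Sturmian $F$ is closed under reversal) gives $|G|=1+(k-1)q_G$, with $q_G$ the number of proper suffixes $h$ of $G$ such that $hw$ is left-special.

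Next, for each $d\in D$ I construct the tree $T_d$ whose nodes are suffixes $h$ of elements of $G$ with $hwd\in F$, and whose edges are $h\to bh$ for letters $b$; its leaves are exactly $G(d)$. The key structural claim is that every internal node $h$ has exactly $|L(hwd)|\in\{1,k\}$ children. The delicate verification is that for every $b\in L(hwd)$ the word $bh$ lies in the suffix tree of $G$: density of $A^*G$ in $Fw^{-1}$ makes $bh$ suffix-comparable with some $g\in G$, and if $g$ were a proper suffix of $bh$, then because $|g|\le|h|$ one would deduce that $g$ is a suffix of $h$, hence of the $g^*\in G$ of which $h$ is a proper suffix; the suffix-code property of $G$ then forces $g=g^*$, contradicting $|g|<|g^*|$. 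Lemma~\ref{lemmaTree}, applied to $T_d$ read in reverse as a prefix tree, gives $|G(d)|=1+(k-1)q_d$, where $q_d$ counts internal $h$ with $hwd$ left-special. Summing yields
\[
|GwD\cap F|=|D|+(k-1)\sum_{d\in D}q_d.
\]

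The remaining point, and the main step, is $\sum_{d\in D}q_d=q_G$, which I prove bijectively using the unique strict standard episturmian word $s$ with $F(s)=F$ (Proposition~\ref{leftInfinite}). Since $s$ is standard, its left-special factors are exactly its prefixes. Forward map $(h,d)\mapsto h$: if $hwd$ is left-special it is a prefix of $s$, so $hw$ is also a prefix of $s$, hence left-special. Backward: if $h$ is a proper suffix of $G$ with $hw$ left-special, then $hw$ is a prefix of $s$; the finite prefixes of $s[|hw|:]$ lie in $w^{-1}F$ (each is a factor of $s$ read from position $|h|$), so condition (ii) together with the prefix-code property of $D$ produces a unique $d\in D$ that is a prefix of $s[|hw|:]$, and $hwd$ is then a prefix of $s$, hence left-special. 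These maps are mutual inverses, so $\sum_d q_d=q_G$, and
\[
|GwD\cap F|=|D|+(k-1)q_G=|D|+(|G|-1)=|G|+|D|-1.
\]
I expect the two main obstacles to be the fullness claim for the children of internal nodes of $T_d$—where the suffix-code property of $G$ and the density (iii) must be combined carefully—and the bijection in the last step, which relies on invoking the canonical standard episturmian word and the fact that its prefixes are exactly the left-special factors of $F$.
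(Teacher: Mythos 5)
Your proof is correct, but it follows a genuinely different route from the paper's. The paper encodes $GwD\cap F$ as the edge set of a bipartite \emph{incidence graph} on the disjoint union of $G$ and $D$ (condition (i) making the edge count equal to $\Card(GwD\cap F)$) and proves that this graph is a tree by induction on $\sum_{d\in D}|d|$, pruning the deepest level of the prefix tree of $D$ and distinguishing whether the parent $wd'$ is right-special; the identity $\Card(E)=\Card(V)-1$ then finishes the proof. You instead apply the fork-counting Lemma~\ref{lemmaTree} three times (to $D$, to the reversal of $G$, and to the reversal of each leaf set $G(d)$), reduce everything to counting special factors, and close the computation with a bijection between the pairs $(h,d)$ with $hwd$ left-special and the proper suffixes $h$ of $G$ with $hw$ left-special, realized through the standard episturmian word $s$ of Proposition~\ref{leftInfinite}. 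Your argument is non-inductive and makes the arithmetic transparent, at the price of invoking the standard word; the paper's induction stays entirely inside the right-special structure of $F$. Two small points you should make explicit: the step ``$hwd$ is a prefix of $s$, hence left-special'' uses the fact that \emph{every} prefix of a strict standard episturmian word is left-special (there is exactly one left-special factor of each length and it is a prefix of $s$), not merely that left-special factors are prefixes; and in identifying the internal nodes of $T_d$ with the proper suffixes of $G(d)$ (as Lemma~\ref{lemmaTree} requires), one should note that every internal node reaches a leaf along a descending path, so it is indeed a proper suffix of some element of $G(d)$ and not only of some element of $G$. Both verifications are immediate, and with them your proof is complete.
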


\begin{proof}
  Let $V=(1\otimes G)\cup (D\otimes 1)$ be a set made of copies of $G$
  and $D$.  The tensor product notation is used to emphasize that the
  copies of $G$ and $D$ are disjoint.  Let $H=(V,E)$ be the undirected
  graph having $V$ as set of vertices and as edges the pairs
  $\{1\otimes g,d\otimes 1\}$ such that $gwd\in F$ (this graph is
  close to, but slightly different from the incidence graph for $GwD$
  as it will be defined in Section~\ref{sectionPreliminaries}).  We
  have $\Card(V)=\Card(G)+\Card(D)$ and, by condition (i),
  $\Card(E)=\Card(GwD\cap F)$.  We show that the graph $H$ is a
  tree. This implies our conclusion since, in a tree, one has
  $\Card(E)=\Card(V)-1$.

  Let us prove that the graph $H$ is a tree by induction on the sum of
  the lengths of the words of $D$, assuming that the pair $G,D$ satisfies
  conditions~(ii) and~(iii). Assume first that $D=\{1\}$. Since
  $Gw\subset F$, one has $GwD\subset F$. Consequently, $\{1\otimes
  g,1\otimes 1\}\in E$ for any $g\in G$. Thus $H$ is a tree.

  Next, assume that $D\ne\{1\}$. Let $d\in D$ be of maximal
  length. Set $d=d'a$ with $a\in A$.

  Suppose first that $d'A\cap D=\{d\}$. Let $D'=(D\cup d')\setminus
  d$.  Since $DA^*$ is $w^{-1}F$-dense, the word $wd'$ is not
  right-special. Thus for each $g\in G$, we have $gwd'\in F$ if and
  only if $gwd\in F$. This shows that the graph $H$ is isomorphic to
  the graph $H'$ corresponding to the pair $(G,D')$.  The set $D'$
  satisfies condition (ii). By induction
  $H'$ is a tree. Consequently $H$ is a tree.

  Suppose next that $d'A\cap D$ has more than one element. Then $d'$
  is right-special and $d'A \cap D = d'A$. Let $D'=(D\cup d')\setminus d'A$.  Then $D'$
  satisfies condition (ii). Let $H'$ be the graph corresponding to the
  pair $(G,D')$. By induction hypothesis, the graph $H'$ is tree.
  Since $wD\subset F$, $wd'$ is right-special. Let $uwd'$ be a
  right-special word such that $u$ is longer than any word of $G$.
  Since $A^*G$ is $Fw^{-1}$-dense, and since $u\in Fw^{-1}$, $u$ has a
  suffix $g$ in $G$. Thus $gwd'$ is right-special. We have $\{1\otimes
  g,d'a\otimes 1\}\in E$ for all $a\in A$. For any other element
  $g'\in G$ such that $g'wd'\in F$, since $g'wd'$ is not
  right-special, there is exactly one $a'\in A$ such that $g'wd'a'\in
  F$. There is a path between $g$ and every $g'\ne g$, since
  $\{1\otimes g',1\otimes d'a'\}\in E$ for some $a'$ and $\{1\otimes
  g,1\otimes d'a'\}\in E$ for all $a'$ (see
  Figure~\ref{figureGraphs}). Thus the graph $H$ is connected and
  acyclic, and
  therefore is a tree.
\end{proof}

\begin{figure}[hbt]
  \centering\gasset{AHnb=0}
  \begin{picture}(80,30)
    \put(0,0){
      \begin{picture}(30,40)
        \node(g')(0,30){$g'$}
        \node(g)(0,20){$g$}
        \node(g'')(0,10){$g''$}
        \node[Nframe=n](G)(0,0){$G$}
        \node(d')(30,20){$d'$}
        \node[Nframe=n](D')(30,0){$D'$}
        \drawedge(g,d'){}\drawedge(d',g'){}\drawedge(d',g''){}
      \end{picture}
    }
    \put(50,0){
      \begin{picture}(30,40)
        \node(g')(0,30){$g'$}
        \node(g)(0,20){$g$}
        \node(g'')(0,10){$g''$}
        \node[Nframe=n](G)(0,0){$G$}
        \node(d'a)(30,25){$d'a$}\node(d'b)(30,15){$d'b$}
        \drawedge(g,d'a){}\drawedge(g,d'b){}
        \drawedge(d'a,g'){}\drawedge(d'b,g''){}
        \node[Nframe=n](D)(30,0){$D$}
      \end{picture}
    }
  \end{picture}
  \caption{The graphs $H'$ and $H$.}\label{figureGraphs}
\end{figure}
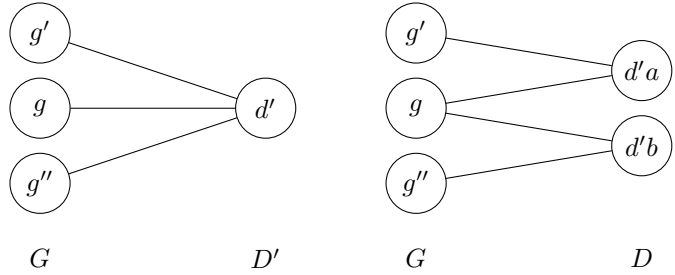

The following example shows  that condition (i) is necessary. 

\begin{example}
  Let $F$ be the Fibonacci set. Let $G=\{ab,aba\}$, $w=a$ and
  $D=\{ab,b\}$.  Then conditions (ii) and (iii) are satisfied but not
  condition (i).  We have $GwD=\{abaab,abab\}$ and thus the conclusion
  of Proposition~\ref{propositionChristophe} is false.
\end{example}

\begin{example}
  Let $F$ be the Fibonacci set and let $X=\{aaba,abaa,abab,\allowbreak
  baab,
  baba\}$ be the set of words of $F$ of length $4$. The internal
  transformation from $X$ relative to the word $w=aba$ gives
  $Y=\{aabaa,aabab,aba,baab,babaa\}$. We have $G=D=\{a,b\}$. The
  codes $X,Y$ and the graph $H$ of the proof of
  Proposition~\ref{propositionChristophe} are represented on
  Figure~\ref{exInternal}.
\end{example}

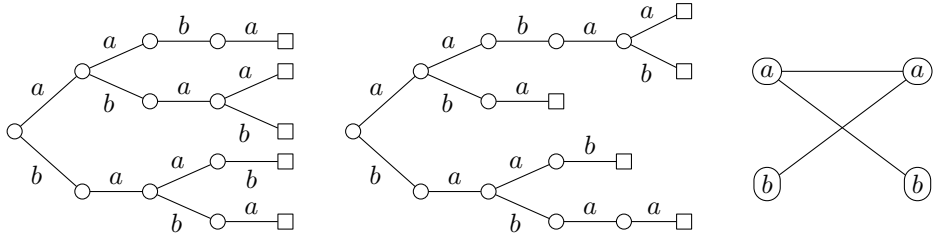
\begin{figure}[hbt]
  \gasset{Nadjust=wh,AHnb=0}
  \begin{picture}(120,30)
    \put(0,0){
      \begin{picture}(40,30)
        \node(1)(0,12){}\def\h{9}
        \node(a)(\h,20){}\node(b)(\h,4){}\def\h{18}
        \node(aa)(\h,24){}\node(ab)(\h,16){}\node(ba)(\h,4){}\def\h{27}
        \node(aab)(\h,24){}\node(aba)(\h,16){}\node(baa)(\h,8){}
        \node(bab)(\h,0){} \def\h{36}
        \node[Nmr=0](aaba)(\h,24){}\node[Nmr=0](abaa)(\h,20){}\node[Nmr=0](abab)(\h,12){}
        \node[Nmr=0](baab)(\h,8){}\node[Nmr=0](baba)(\h,0){}
        \drawedge(1,a){$a$}\drawedge[ELside=r](1,b){$b$}
        \drawedge(a,aa){$a$}\drawedge[ELside=r](a,ab){$b$}\drawedge(b,ba){$a$}
        \drawedge(aa,aab){$b$}\drawedge(ab,aba){$a$}\drawedge(ba,baa){$a$}
        \drawedge[ELside=r](ba,bab){$b$}
        \drawedge(aab,aaba){$a$}\drawedge(aba,abaa){$a$}\drawedge[ELside=r](aba,abab){$b$}
        \drawedge[ELside=r](baa,baab){$b$}\drawedge(bab,baba){$a$}
      \end{picture}
    }
    \put(45,0){
      \begin{picture}(40,30)
        \node(1)(0,12){}\def\h{9}
         \node(a)(\h,20){}\node(b)(\h,4){}\def\h{18}
        \node(aa)(\h,24){}\node(ab)(\h,16){}\node(ba)(\h,4){}\def\h{27}
        \node(aab)(\h,24){}\node[Nmr=0](aba)(\h,16){}\node(baa)(\h,8){}
        \node(bab)(\h,0){} \def\h{36}
        \node(aaba)(\h,24){}
        \node[Nmr=0](baab)(\h,8){}\node(baba)(\h,0){}\def\h{44}
        \node[Nmr=0](aabaa)(\h,28){}\node[Nmr=0](aabab)(\h,20){}
        \node[Nmr=0](babaa)(\h,0){}
        \drawedge(1,a){$a$}\drawedge[ELside=r](1,b){$b$}
        \drawedge(a,aa){$a$}\drawedge[ELside=r](a,ab){$b$}\drawedge(b,ba){$a$}
        \drawedge(aa,aab){$b$}\drawedge(ab,aba){$a$}\drawedge(ba,baa){$a$}
        \drawedge[ELside=r](ba,bab){$b$}
        \drawedge(aab,aaba){$a$}
        \drawedge(baa,baab){$b$}\drawedge(bab,baba){$a$}
        \drawedge(aaba,aabaa){$a$}\drawedge[ELside=r](aaba,aabab){$b$}
        \drawedge(baba,babaa){$a$}
      \end{picture}
    }
    \put(100,5){
      \begin{picture}(20,20)
        \gasset{AHnb=0}
        \node(1a)(0,15){$a$}\node(1b)(0,0){$b$}
        \node(a1)(20,15){$a$}\node(b1)(20,0){$b$}
        \drawedge(1a,a1){}\drawedge(1a,b1){}
        \drawedge(1b,a1){}
      \end{picture}
    }
  \end{picture}
  \caption{The codes $X,Y$ and the graph $H$.}\label{exInternal}
\end{figure}

\subsection{Periodicity}

Let $x=a_0a_1\cdots$, with $a_i\in A$, be an infinite word.  It is
\emph{periodic}\index{periodic word}\index{word!periodic} if there is
an integer $n\ge 1$ such that $a_{i+n}=a_i$ for all $i\ge 0$.  It is
\emph{ultimately periodic}\index{ultimately periodic
  word}\index{word!ultimately periodic} if the equalities hold for all
$i$ large enough. Thus, $x$ is ultimately periodic if there is a word
$u$ and a periodic infinite word $y$ such that $x=uy$.  The following
result, due to Coven and Hedlund, is well-known
(see~\cite{Lothaire2002}, Theorem 1.3.13).

\begin{theorem}\label{theoremCovenHedlund}
  Let $x\in A^\N$ be an infinite word on an alphabet with $k$
  letters. If there exists an integer $d\ge 1$ such that $x$ has at
  most $d+k-2$ factors of length $d$ then $x$ is ultimately periodic.
\end{theorem}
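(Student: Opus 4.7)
The plan is to follow the classical Morse--Hedlund approach using the factor complexity function $p_n(x) = \Card(F(x) \cap A^n)$. I read the hypothesis under the implicit assumption that every letter of $A$ appears in $x$, so that $p_1(x) = k$; otherwise one may first restrict to the subalphabet of letters actually occurring in $x$ (see the remark below).

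The starting observation is that $p_n$ is non-decreasing: since every factor $u$ of length $n$ admits at least one right extension $ua \in F(x)$, we have $p_{n+1} \ge p_n$. More precisely, $p_{n+1} - p_n = \sum_{u}(r(u) - 1)$, summed over $u \in F(x) \cap A^n$, where $r(u) \ge 1$ denotes the number of letters $a$ with $ua \in F(x)$.

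The first key step is to exhibit some $n$ with $1 \le n \le d-1$ such that $p_{n+1}(x) = p_n(x)$. If no such $n$ existed, each difference $p_{n+1} - p_n$ would be at least one, and telescoping would give $p_d \ge p_1 + (d-1) = k + d - 1$, contradicting the hypothesis $p_d \le d + k - 2$.

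The second step deduces ultimate periodicity from $p_{n+1} = p_n$. The equality forces $r(u) = 1$ for every length-$n$ factor $u$, so the letter following any occurrence of $u$ in $x$ is uniquely determined by $u$. Iterating, the infinite suffix of $x$ starting at position $i$ depends only on the length-$n$ factor that occurs at that position. Since only $p_n$ distinct such factors can appear, two positions $i < j$ must carry the same factor, whence the suffixes of $x$ starting at $i$ and $j$ coincide, and $x$ is ultimately periodic of period $j - i$.

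The only real subtlety, which I expect to be the main obstacle, is the reading of the hypothesis when not all letters of $A$ occur in $x$: passing to the subalphabet of size $k' = p_1(x) < k$ would weaken rather than strengthen the bound $p_d \le d + k - 2$ (one would need $p_d \le d + k' - 2$ instead), so the statement is really about the alphabet of letters actually used by $x$. Once this reading is fixed, no further difficulty arises and the two steps above combine to give the conclusion.
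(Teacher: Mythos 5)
Your proof is correct and complete. It is worth noting, however, that the paper does not actually prove Theorem~\ref{theoremCovenHedlund}: it is quoted as a known result with a reference to \cite{Lothaire2002} (Theorem~1.3.13), and the machinery developed later in the section (Theorem~\ref{theoremPeriod} and Corollary~\ref{corollaryPeriod}, which rest on the Critical Factorization Theorem and a counting of parses with respect to a maximal bifix code) is explicitly said to recover Coven--Hedlund only in the case $k=2$, via the code $A^d$. Your argument is the classical Morse--Hedlund complexity argument --- monotonicity of $n\mapsto p_n(x)$, the telescoping bound $p_d\ge p_1+(d-1)$ when the complexity never stabilizes, and the determinism-plus-pigeonhole step once $p_{n+1}=p_n$ --- and it is self-contained, elementary, and valid for every $k$, so in that respect it does strictly more than what the paper's own results yield. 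Your remark on the reading of the hypothesis is also a genuine and necessary point, not a pedantic one: with $k=\Card(A)$ and letters of $A$ allowed to be absent from $x$, the statement is false (a Sturmian word over $\{a,b\}$ viewed inside a three-letter alphabet has $p_d=d+1=d+k-2$ and is not ultimately periodic), so the hypothesis must indeed be read with $k=p_1(x)$, exactly as you do.
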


We will prove a  generalization of this result. 

Let $x$ be an infinite word and let $X$ be a thin maximal bifix
code. Let $u$ be a prefix of $x$ and set $x=uy$. Since $F(y)\subset
F(x)$, one has $d_{F(y)}(X)\le d_{F(x)}(X)$.  The word $x$ is called
$X$-\emph{stable}\index{stable word}\index{X-stable@$X$-stable
  word}\index{word!stable} if $d_{F(y)}(X)=d_{F(x)}(X)$ for all
suffixes $y$ of $x$.  Let $u$ be a prefix of $x$ such that
$d_{F(y)}(X)$ is minimal. Then the infinite word $y$ is $X$-stable.

For example, if $x=ba^\omega$ and $X=a\cup ba^*b$, then an $X$-stable
suffix of $x$ is~$a^\omega$.

\begin{theorem}\label{theoremPeriod} 
 Let  $X$ be a thin maximal
 bifix code and let $x\in A^\N$ be an $X$-stable infinite word. 
If $\Card(X\cap F(x))\le  d_{F(x)}(X)$, then $x$ is ultimately periodic.
\end{theorem}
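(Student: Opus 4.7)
The plan is to set $F=F(x)$, $d=d_F(X)$, and $Y=X\cap F$, and show that the rigidity imposed by $\Card(Y)\le d$ forces arbitrarily long factors of $x$ with maximum parse count to carry a bounded period, which then propagates along $x$. To begin, I would verify that $Y$ is a finite $F$-maximal bifix code of $F$-degree $d$: the equality $\pars_X(w)=\pars_Y(w)$ for $w\in F$ gives $d_F(Y)=d$, and $Y$ inherits $F$-maximality both as a prefix code and as a suffix code from the maximality of $X$. Even if $F(x)$ fails to be recurrent, the $X$-stability hypothesis gives $d_{F(y)}(X)=d$ for every suffix $y$ of $x$, so replacing $x$ by a sufficiently far suffix I may assume that every element of $Y$ occurs infinitely often in $x$. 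Since $Y$ is a finite $F$-maximal prefix code and $x$ is infinite, iteratively peeling off $Y$-prefixes yields the unique factorization $x=y_1y_2y_3\cdots$ with each $y_i\in Y$.

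Next, I would exploit the elementary identity $\Card(Y)=1+\sum_{p\in P}(d(p)-1)$, where $P$ is the set of proper prefixes of $Y$ and $d(p)=\Card\{a\in A:pa\in P\cup Y\}$. The $F$-maximality of $Y$ as a prefix code implies that $pa\in P\cup Y$ whenever $p\in P$ and $pa\in F$, so the branching proper prefixes (those with $d(p)\ge 2$) are exactly the right-special proper prefixes of $Y$ in $F$. Hence $\Card(Y)\le d$ yields at most $d-1$ right-special proper prefixes of $Y$, and symmetrically at most $d-1$ left-special proper suffixes. By Proposition~\ref{propInterpretations} and its dual, any factor $w\in F$ with $\pars_X(w)=d$ has exactly $d$ prefixes without a $Y$-suffix and exactly $d$ suffixes without a $Y$-prefix; these special positions correspond, via the left- and right-parses in $Y^*$, precisely to the special boundary words just bounded.

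The decisive step is to apply the Critical Factorization Theorem to an arbitrarily long factor $w$ of $x$ with $\pars_X(w)=d$ (such $w$ exist, and in fact begin at every sufficiently advanced position, by $X$-stability). CFT produces a factorization $w=uv$ at which the local period equals the minimal period $p(w)$ of $w$. Using the rigidity from the previous step, one argues that the critical position must align with a boundary in the $Y$-parse of $w$, forcing $p(w)$ to be bounded by a constant depending only on $Y$ and independently of $|w|$. Taking $|w|\to\infty$ along factors beginning at ever-larger positions of $x$, the Fine and Wilf theorem then propagates these bounded periods to a common period on a suffix of $x$, so $x$ is ultimately periodic.

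The main obstacle will be the quantitative alignment in the CFT step: translating the counting constraint $\Card(Y)\le d$ into a proof that the critical position sits at a $Y$-word boundary of $w$. This requires a careful analysis of the trajectory of $w$ through the literal automaton of $Y^*$, matching the $d$ positions of prefixes without $Y$-suffix against the right-special proper prefixes enumerated in the second paragraph, together with the symmetric matching on the left side. One must also verify that the initial reduction to an $X$-stable suffix in which every element of $Y$ occurs infinitely often preserves the uniformity required for the Fine--Wilf argument.
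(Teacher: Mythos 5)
Your overall architecture matches the paper's: bound the least period of arbitrarily long factors of $x$ via the Critical Factorization Theorem, then propagate to ultimate periodicity. But the core of the argument --- the step that actually converts the hypothesis $\Card(X\cap F(x))\le d$ into a bound on local periods --- is missing, and the mechanism you propose in its place does not work. The paper's proof fixes a long prefix $u$ of $x$ with $d$ parses, a middle factor $w$, and a long continuation $v$ with $d$ parses, and then shows that for \emph{every} factorization $(p,s)$ of $w$ one has $\rep(p,s)\le n$, where $n$ is the maximal length of words of $X\cap F(x)$. The engine is a pigeonhole: the $d$ parses of $upsv$ produce $d-1$ code words $x_i=p_is_i$ straddling the cut, plus the word $x_0\in X$ that is a suffix of $up$ and the word $x_1\in X$ that is a prefix of $sv$; these are $d+1$ elements of $X\cap F(x)$, so two coincide, and each possible coincidence directly exhibits a word of length at most $n$ that is a suffix of $up$ and a prefix of $sv$. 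Since this holds at every cut point, the CFT bounds the least period of $w$ by $n$. Your plan instead tries to show that the \emph{critical} position ``aligns with a $Y$-boundary,'' which is backwards: the CFT hands you a factorization you do not control, and nothing forces it to sit at a parse boundary; moreover, even granting such an alignment, no bound on the period follows from it. Your counting of right-special proper prefixes (at most $d-1$ of them, via the tree identity) is correct but is the content of Theorem~\ref{theoremBifixd+1}-type statements and plays no role in the periodicity proof; it does not substitute for the pigeonhole on the $d+1$ words crossing or abutting a cut.

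Two secondary problems. First, your reduction ``replacing $x$ by a sufficiently far suffix I may assume that every element of $Y$ occurs infinitely often'' is not available: an infinite word need not have any suffix all of whose factors recur, and $F(x)$ need not be recurrent, so you cannot invoke the $F$-maximal bifix machinery (Theorem~\ref{proposition1}, Theorem~\ref{theoremEquivMax}) which is stated for recurrent $F$. The paper deliberately avoids this, using only that $X$ is a thin maximal bifix code of $A^*$ together with $X$-stability to produce infinitely many prefixes with $d$ parses. Second, the final propagation step needs only the elementary Lemma~\ref{lemmaPeriodInfinite} (infinitely many prefixes of period at most $n$ force periodicity); invoking Fine and Wilf is unnecessary and, as you set it up, the uniformity it would require is exactly what you have not established.
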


\begin{corollary}\label{corollaryPeriod} 
  Let $x\in A^\N$ be an infinite word. If there
  exists a finite maximal bifix code $X$ of degree $d$ such that
  $\Card(X\cap F(x))\le d$, then $x$ is ultimately periodic.
\end{corollary}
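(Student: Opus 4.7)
The plan is to deduce Corollary~\ref{corollaryPeriod} directly from Theorem~\ref{theoremPeriod} applied to $x$ itself, without any need to pass to an $X$-stable suffix. I would verify the two hypotheses of that theorem for $x$, the crux being to establish that $d_{F(x)}(X) = d$ automatically whenever $X$ is finite.

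First, since $X$ is finite, it is in particular a thin maximal bifix code. Let $m$ denote the length of the longest word in $X$. The key observation is that any $w \in A^*$ with $|w| \ge m$ lies outside $I(X)$: a factorization $pws \in X$ with $p,s$ nonempty would force $|pws| \ge m+2 > m$, which is impossible. Applying Theorem~\ref{theoremDegree} to the recurrent set $F=A^*$ gives the identity $I(X) = \{w \in A^* \mid \pars_X(w) < d\}$, so every such $w$ satisfies $\pars_X(w) = d$. Since $F(x)$ contains factors of $x$ of arbitrarily large length, this forces $d_{F(x)}(X) = d$. The same reasoning applied to any suffix $y$ of $x$ yields $d_{F(y)}(X) = d = d_{F(x)}(X)$, so $x$ is $X$-stable.

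Combining these observations, the hypothesis $\Card(X \cap F(x)) \le d$ of the corollary is exactly the hypothesis $\Card(X \cap F(x)) \le d_{F(x)}(X)$ required by Theorem~\ref{theoremPeriod}, which then concludes that $x$ is ultimately periodic. I do not expect any serious obstacle: the only non-routine step is the identity $d_{F(x)}(X) = d$, and once one notices that any factor of $x$ longer than the longest element of $X$ has the maximal number of parses, the corollary becomes a one-line application of Theorem~\ref{theoremPeriod}.
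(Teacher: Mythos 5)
Your proposal is correct and matches the paper's own argument: the paper likewise notes that since $X$ is finite, every long enough word has $d$ parses, hence $d_{F(y)}(X)=d$ for every suffix $y$ of $x$ (so $x$ is $X$-stable), and then invokes Theorem~\ref{theoremPeriod}. You have simply filled in the routine justification that words longer than the longest element of $X$ lie outside $I(X)$.
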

\begin{proof}
  Since any long enough word has $d$ parses, $d_{F(x)}(X)=d$ and $x$
  is $X$-stable.  Since $\Card(X\cap F(x))\le d$, the conclusion
  follows by Theorem~\ref{theoremPeriod}.
\end{proof}

Corollary~\ref{corollaryPeriod} implies
Theorem~\ref{theoremCovenHedlund} in the case $k=2$ since $A^d$ is a
maximal bifix code of degree $d$.

\begin{example}
  Let us consider again the finite maximal bifix code $X$ of degree
  $3$ defined by $X = \{a^3, a^2ba, a^2b^2, ab, ba^2, baba, bab^2,
  b^2a, b^3\}$ (see Example~\ref{exFiboDegre2}). Assume that $X \cap F
  = \{ a^2ba,ab, baba \}$, where $F = F(x)$ and $x \in A^{\N}$. Since
  $ba$ is a factor of $x$, there exist a word $u$ and an infinite word
  $y$ such that $x=ubay$. Next, the first letter of $y$ is $b$
  (otherwise, $ba^2 \in X \cap F$) and the second letter of $y$ is $a$
  (otherwise, $bab^2 \in X \cap F$). This argument shows that whenever
  $uba$ is a prefix of $x$ then $ubaba$ is also a prefix of $x$, i.e.,
  $x = u (ba)^{\omega}$, with $u \in A^*$.
\end{example}

\begin{example}
  The set $X=a\cup ba^*b$ is a maximal bifix code of degree 2.  An
  argument similar to the previous one shows that any infinite word
  $x\in A^\N$ such that $X\cap F(x)=\{a,bab\}$ belongs to the set
  $a^*(ba)^\omega$. Thus it is ultimately periodic.
\end{example}

\begin{corollary}\label{corollaryPeriod2} 
  Let $x\in A^\N$ be an infinite word and let $X$ be a thin maximal
  bifix code.  Let $y$ be an $X$-stable suffix of $x$ and let
  $F=F(y)$.  If $\Card(X\cap F)\le d_F(X)$, then $x$ is ultimately
  periodic.
\end{corollary}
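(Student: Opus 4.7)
The plan is to reduce this corollary directly to Theorem~\ref{theoremPeriod}. Since $y$ is a suffix of $x$, there exists a finite word $u$ such that $x = uy$. To conclude that $x$ is ultimately periodic, it suffices to show that $y$ is ultimately periodic: for if $y = vw^\omega$ with $v,w\in A^*$ and $w\ne 1$, then $x = uvw^\omega$ is ultimately periodic as well.

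The work is to verify that the hypotheses of Theorem~\ref{theoremPeriod} apply to $y$. By assumption, $y$ is $X$-stable, and $F = F(y)$. The inequality $\Card(X\cap F)\le d_F(X)$ is exactly the numerical hypothesis $\Card(X\cap F(y))\le d_{F(y)}(X)$ required by Theorem~\ref{theoremPeriod}. Applying the theorem therefore gives that $y$ is ultimately periodic, and hence so is $x$.

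There is essentially no obstacle here: the corollary is really just the observation that ultimate periodicity is a tail property of infinite words, so Theorem~\ref{theoremPeriod}, which applies to $X$-stable words, automatically transfers to any word having an $X$-stable suffix satisfying the cardinality bound. The only very minor point to record is that $w\ne 1$ in the periodicity of $y$ (which is automatic from the definition of periodicity of an infinite word in the paper).
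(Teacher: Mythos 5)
Your proposal is correct and is exactly the paper's argument: apply Theorem~\ref{theoremPeriod} to the $X$-stable suffix $y$ to get that $y$ is ultimately periodic, and then observe that prepending the finite prefix $u$ preserves ultimate periodicity. The paper states this in one line; you have merely spelled out the (trivial) transfer step.
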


\begin{proof}
  By Theorem~\ref{theoremPeriod}, the word $y$ is ultimately periodic,
  and so is $x$.
\end{proof}

The following example shows that Corollary~\ref{corollaryPeriod2}
may become false if we replace $F=F(y)$ by $F=F(x)$ in the
statement.

\begin{example}
  Let $X$ be the maximal bifix code of degree $4$ on the alphabet
  $A=\{a,b,c\}$ with kernel $K=\{a,b\}^2$.

Let $x=ccay$ where $y$ is an infinite word without any occurrence of
$c$.  Then $cca$ has no factor in $X$. Indeed, a word of $X$ of length
at most $3$ is in the kernel of $X$ and thus is not a factor of $cca$.
Thus $cca$ has $4$ parses with respect to $X$, namely $(1,1,cca)$,
$(c,1,ca)$, $(cc,1,a)$ and $(cca,1,1)$.  Thus we have $d_{F(x)}(X)=4$. On
the other hand $X\cap F(x)\subset\{a,b\}^2$ and thus $\Card(X\cap
F(x))\le d_{F(x)}(x)$ although $x$ need not be ultimately
periodic. This shows that we cannot replace $F(y)$ by $F(x)$ in the
statement of Corollary~\ref{corollaryPeriod2}.
\end{example}

The proof uses the Critical Factorization Theorem
(see~\cite{Lothaire1983,CrochemorePerrin1991}) that we recall
below. For a pair of words $(p,s)\ne(1,1)$, consider the set of
nonempty words $r$ such that
\begin{displaymath}
  A^*p\cap A^*r\ne\emptyset,\quad sA^*\cap rA^*\ne\emptyset\, .
\end{displaymath}
This is the set of nonempty words $r$ which are prefix-comparable with
$s$ and suffix-comparable with $p$. This set is nonempty since it
contains $r = sp$.  The \emph{repetition}\index{repetition}
$\rep(p,s)$ is the minimal length of such a nonempty word $r$.

Let $w=a_1a_2\cdots a_m$ be a word with $a_i\in A$.  An
integer $n\ge 1$ is a \emph{period}\index{period of a word} of $w$ if
for $1\le i\le j\le m$, $j-i=n$ implies $a_i=a_j$.  Recall that a
\emph{factorization}\index{factorization of a word} of a word $w\in
A^*$ is a pair $(p,s)$ of words such that $w=ps$.

\begin{theorem}[Critical Factorization
  Theorem]\label{theoremCriticalFactorization} 
  For any word $w\in A^+$, the maximal value of $\rep(p,s)$ for all
  factorizations $(p,s)$ of $w$ is the least period of $w$.
\end{theorem}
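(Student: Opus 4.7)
The plan is to prove the two inequalities separately: first that $\rep(p,s)\le \pi$ for every factorization $(p,s)$ of $w$, where $\pi$ denotes the least period of $w$, and then that some factorization achieves equality.

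For the upper bound, I would start from the fact that, since $w$ has period $\pi$, there is a word $u$ of length $\pi$ such that $w$ is a prefix of the infinite word $u^\omega$. Given a factorization $w = ps$ with $|p|=k$, define $r$ to be the factor of $u^\omega$ of length $\pi$ occupying positions $k+1,\ldots,k+\pi$, i.e.\ the cyclic conjugate of $u$ aligned with the cut. I would then verify two comparability properties using the period relation $a_i = a_{i+\pi}$: on the right of the cut, if $|s|\ge \pi$ then $r$ is a prefix of $s$, and if $|s|<\pi$ then $s$ is a prefix of $r$; on the left of the cut, if $|p|\ge \pi$ then $r$ is the suffix of $p$ of length $\pi$, and if $|p|<\pi$ then $p$ is a suffix of $r$ (checked by a direct index computation modulo $\pi$). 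In all four combinations $r$ is prefix-comparable with $s$ and suffix-comparable with $p$, so $\rep(p,s)\le |r|=\pi$.

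The main obstacle is the lower bound: producing a factorization $(p,s)$ with $\rep(p,s)\ge \pi$. Here I would follow Duval's idea. Fix a total order $\le_1$ on $A$ and let $\le_2$ be its reverse; extend each to the lexicographic order on $A^*$. For $i\in\{1,2\}$, let $v_i$ be the lexicographically largest (with respect to $\le_i$) nonempty proper suffix of $w$, and consider the factorization $w=p_iv_i$. I would argue that at least one of these two factorizations is critical. The key point is that, at a critical factorization, if $\rho=\rep(p_i,v_i)<\pi$ then the shortest admissible $r$ would force an overlap producing, after two applications of the Fine and Wilf theorem (or a direct overlap argument), a period of $w$ strictly smaller than $\pi$, contradicting the minimality of $\pi$. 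The lex-maximality of $v_i$ is used precisely to rule out the only configuration in which the overlap argument fails: if it fails for $\le_1$ then the failure implies the existence of a suffix $\le_2$-larger than $v_2$, which is impossible.

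Putting the two bounds together gives the theorem; I expect the write-up of the lower bound to be the delicate part, since the correct case analysis on how $r$ sits against $p$ and $s$, and the verification that lex-maximality excludes the pathological overlaps, require careful notation but no new idea beyond Duval's construction.
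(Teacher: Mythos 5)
The paper does not prove this theorem at all: it imports it as a classical result with references to Lothaire and to Crochemore--Perrin, and uses it only as a tool in the proof of Theorem~\ref{theoremPeriod}. So there is no proof in the paper to compare yours against; the assessment below is of your proposal on its own terms.

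Your upper bound is correct and essentially complete: writing $w$ as a prefix of $u^\omega$ with $|u|$ the least period and taking for $r$ the conjugate of $u$ aligned with the cut does produce, by the index computation you describe, a nonempty word of length equal to the least period that is prefix-comparable with $s$ and suffix-comparable with $p$. The genuine gap is in the lower bound, where the assertion that one of the two maximal-suffix factorizations is critical is precisely the hard content of the theorem and is not actually established. Two concrete problems. First, the factorization $w=p_iv_i$ can be degenerate: when $w$ begins with its $\le_1$-largest letter the $\le_1$-maximal suffix is $w$ itself, so $p_1=1$ and $\rep(1,w)=1$; the standard argument does not take ``either of the two'' but specifically the order whose maximal suffix is \emph{shorter}, and one must check that this choice is nondegenerate unless the period is $1$. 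Second, the mechanism you invoke (``two applications of Fine and Wilf'' plus ``failure for $\le_1$ yields a suffix $\le_2$-larger than $v_2$'') does not match the structure of the actual proof. What is really needed is a four-way case analysis on how the shortest admissible $r$ sits against the cut ($r$ a suffix of $p$ or $p$ a suffix of $r$, crossed with $r$ a prefix of $v$ or $v$ a prefix of $r$), together with the combinatorial lemmas on maximal suffixes that make three of the cases contradict lexicographic maximality or minimality of the period (e.g.\ that the $\preceq$-maximal suffix $v$ of $w$ cannot occur as an internal factor of $w$, and that $uv\preceq v$ for every nonempty suffix $u$ of $p$ forces a contradiction when $r$ is simultaneously a suffix of $p$ and a prefix of $v$); only the remaining case uses the second order. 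As written, the sentence carrying the whole weight of the lower bound is not a verifiable statement, so the proposal is a correct plan rather than a proof. Given that the paper itself treats the theorem as a citation, the honest alternatives are to do the same or to write out the Crochemore--Perrin case analysis in full.
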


We will also use the following lemma.

\begin{lemma}\label{lemmaPeriodInfinite}
  Let $x$ be an infinite word and $n\ge 1$ be an integer such that the
  least period of an infinite number of prefixes of $x$ is at most
  $n$. Then $x$ is periodic.
\end{lemma}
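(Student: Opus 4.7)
The plan is to use a pigeonhole argument on the least period, followed by a standard passage-to-the-limit on prefixes.

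First, since each of the infinitely many prefixes in the hypothesis has least period in the finite set $\{1,2,\ldots,n\}$, the pigeonhole principle yields some integer $p$ with $1\le p\le n$ such that infinitely many prefixes of $x$ admit $p$ as their least period. Extract a sequence $(w_k)_{k\ge 1}$ of such prefixes with $|w_k|\to\infty$.

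Next, write $x=b_1b_2\cdots$ with $b_i\in A$. For each $k$, the word $w_k=b_1b_2\cdots b_{m_k}$ has $p$ as a period, so $b_i=b_{i+p}$ for all $i$ with $1\le i$ and $i+p\le m_k$. Fixing any index $i\ge 1$, since $m_k\to\infty$, we have $i+p\le m_k$ for all sufficiently large $k$, and therefore $b_i=b_{i+p}$. This holds for every $i\ge 1$, so $x$ is periodic of period $p$.

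The argument is essentially immediate once the pigeonhole step is made explicit; the only subtlety is that one must extract a single period $p$ which is shared by infinitely many prefixes, rather than argue with varying periods, so that the relation $b_i=b_{i+p}$ propagates to every index of $x$.
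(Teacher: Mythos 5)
Your proof is correct and follows essentially the same route as the paper: a pigeonhole step to fix a single least period $p\le n$ shared by infinitely many prefixes, then propagation of $b_i=b_{i+p}$ to every index $i$ by choosing a sufficiently long prefix with period $p$. No gaps.
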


\begin{proof}
  Since the least period of an infinite number of prefixes of $x$ is
  at most $n$, an infinity of them have the same least period. Let $p$
  be such that an infinite number of prefixes of $x$ have least period
  $p$.  Set $x=a_0a_1\cdots$ with $a_i\in A$. For each $i\ge 0$, there
  is a prefix of $x$ of length larger than $i+p$ with least period
  $p$. Thus $a_i=a_{i+p}$. This shows that $x$ is periodic.
\end{proof}

\begin{proofof}{of Theorem \ref{theoremPeriod}}

  Let $S=A^*\setminus A^*X$ and $P=A^*\setminus XA^*$. Set $F=F(x)$
  and $d=d_F(X)$. Since $\Card(X\cap F)\le d_F(X)\le d(X)$, the set
  $X\cap F$ is finite.  Since $x$ is $X$-stable, there are an infinite number of
  factors and therefore also of prefixes of $x$ which have $d$ parses
  with respect to $X$. Indeed, for any factorization $x=uy$, we have
  $d_{F(y)}(X)=d$ and thus $y$ has a factor which has $d$ parses, so
  it has a prefix $w$ with $d$ parses, and finally $uw$ is a
  prefix of $x$ with $d$ parses.

  Let $n$ be the maximal length of the words in $X\cap F$. Let $u$ be
  a prefix of $x$ of length larger than $n$ which has $d$ parses and
  set $x=uy$.  Let $w$ be a nonempty prefix of $y$ and set $y=wz$.
  Let $v$ be a prefix of $z$ of length larger than $n$ which has $d$
  parses.

  Let $(p,s)$ be a factorization of $w$. We show that $\rep(p,s)\le
  n$.

Since $up$ has $d$
parses with respect to $X$, there are $d$  suffixes
$p_1,p_2,\ldots,p_d$
of $up$
which are in $P$. We may assume that $p_1=1$. Similarly, there are
$d$ prefixes $s_1,s_2,\ldots,s_d$ of $sv$ which are in $S$. We may
assume that $s_1=1$. 

Since $upsv$ has $d$ parses, for each $p_i$ with $2\le i\le d$ there
is exactly one $s_j$ with $2\le j\le d$ such that $p_is_j\in
X$. Indeed, there is a prefix $s'$ of $sv$ such that $p_is'\in
X$. Since $s'$ must be one of the $s_j$, the conclusion follows.

We may renumber the $s_i$ in such a way that $p_is_i\in X$ for $2\le
i\le d$. Set $x_i=p_is_i$. Since $up\notin S$, we have $up\in A^*X$.
Let $x_0$ be the word of $X$ which is a
suffix of $up$. Similarly, let $x_1$ be the word of $X$ which is a
prefix of $sv$ (see Figure~\ref{figureInterpretations}).

\begin{figure}[hbt]
\centering
\gasset{Nadjust=wh,AHnb=0}
\begin{picture}(80,20)(0,-5)
\node(uh)(0,10){}\node(ph)(20,10){}\node(sh)(40,10){}\node(zh)(60,10){}\node(endh)(80,10){}
\node(u)(0,0){}\node(p)(20,0){}\node(x1)(25,0){}\node(xi)(32,0){}\node(s)(40,0){}\node(fin)(48,0){}
\node(x2)(55,0){}
\node(z)(60,0){}\node(end)(80,0){}

\drawedge[curvedepth=3](xi,fin){$x_i$}
\drawedge(uh,ph){$u$}\drawedge(ph,sh){$p$}\drawedge(sh,zh){$s$}\drawedge(zh,endh){$v$}
\drawedge(u,p){$u$}\drawedge(p,x1){}\drawedge(x1,xi){}\drawedge(xi,s){}
\drawedge(s,fin){}\drawedge(fin,x2){}\drawedge(x2,z){}\drawedge(z,end){$v$}
\drawedge[curvedepth=-5,ELside=r](x1,s){$x_0$}\drawedge[curvedepth=-5,ELside=r](s,x2){$x_1$}
\end{picture}
\caption{The $d+1$ words $x_0,x_1,\ldots,x_d$.}\label{figureInterpretations}
\end{figure}
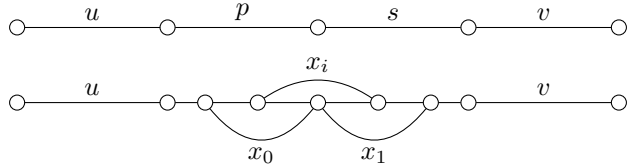
Since $\Card(X\cap F)\le d$, two of the $d+1$ words
$x_0,x_1,\ldots,x_d$ are equal.

If $x_0=x_1$, then $\rep(p,s)\le n$. 

If $x_0=x_i$ for an index $i$ with $2\le i\le d$, then $s_i$ is a
suffix of $up$ (since it is a suffix of $x_0$) and a prefix of $s v$
(by definition of $s_i$). Furthermore $|s_i| \leq n$ (since $n$ is the
maximal length of the words of $X\cap F$).  Thus $\rep(p,s) \leq |s_i| \leq
n$.

The case where $x_i=x_1$ for an index $i$ with $2\le i\le d$ is
similar.

Assume finally that $x_i=x_j$ for some indices $i,j$ such that $2\le
i<j\le d$.  We may assume that $|p_i| < |p_j|$. Thus $p_j = p_i t$,
$ts_j = s_i$. As a consequence, $t$ is both a suffix of $up$ (since it
is a suffix of $p_j$) and a prefix of $s v$ (since it is a prefix of
$s_i$). Thus again, $\rep(p,s)\le|t|\le n$.

By the Critical Factorization Theorem, this implies that the least
period of $w$ is at most equal to $n$.  Thus an infinite number of
prefixes of $y$ have least period at most $n$. By
Lemma~\ref{lemmaPeriodInfinite}, it implies that $y$ is periodic.
  
\end{proofof}

\section{Bases of subgroups}\label{sectionBasis}

In this section, we push further the study of bifix codes in Sturmian
sets.  The main result of Section~\ref{subsectionSturmianBasis} is
Theorem~\ref{theoremGroups}. It states that a $F$-maximal bifix code
$X\subset F$ of $F$-degree $d$ is a basis of a subgroup of index $d$
of the free group on $A$. The proof uses two sets of preliminary
results. The first part concerns bases of subgroups composed of words
over $A$, already considered in~\cite{Reutenauer1979}. The second one
uses the first return words, which were introduced independently
in~\cite{Durand1998},~\cite{HoltonZamboni1999}, and which we use in the framework
of~\cite{JustinVuillon2000} and~\cite{Vuillon2001}, up to a left-right
symmetry (see also~\cite{AraujoBruyere2005}).

We denote by $A^\circ$ the free group generated by $A$. The
\emph{rank}\index{rank!free group} of $A^\circ$ is $\Card(A)$.  Note
that all sets generating a free group of rank $k$ have at least $k$
elements. A basis is a minimal generating set. All bases have exactly
$k$ elements (see e.g.~\cite{LyndonSchupp2001}).

Let $H$ be a subgroup of rank $n$ and of index $d$ of a free group of
rank $k$. Then
\begin{equation}
  n=d(k-1) +1\,. \label{equationSchreier}
\end{equation}
Formula~\eqref{equationSchreier} is called \emph{Schreier's
  Formula}\index{Schreier's Formula}.

The free monoid $A^*$ is viewed as embedded in $A^\circ$. An element
of the free group is represented by its unique reduced word on the
alphabet $A\cup A^{-1}$. The elements of the free monoid $A^*$ are
themselves reduced words since they do not contain any letter in
$A^{-1}$. Thus $A^*$ is a submonoid of $A^\circ$. The subgroup of
$A^\circ$ generated by a subset $X$ of $A^\circ$ is denoted $\langle
X\rangle$. 

In any group $G$, the \emph{right cosets}\index{right cosets of a
  subgroup}\index{subgroup!right cosets} of a subgroup $H$ are the
sets of the form $Hg$ for $g\in G$. Two right cosets of the same
subgroup are disjoint or equal. The \emph{index}\index{index of a
  subgroup}\index{subgroup!index} of a subgroup is the number of its
distinct right cosets.  If $K$ is a subgroup of the subgroup $H$, then
the index of $K$ in $G$ is the product of the index of $K$ in $H$ and
of the index of $H$ in $G$. If $H,K$ are two subgroups of index $d$ of
a group $G$, then $H\subset K$ implies $H=K$.

Assume now that $G$ is a group of permutations over a set $Q$. 
For any $q$ in $Q$, the set of elements of $G$ that fixes $q$ is a
subgroup of $G$.

The group $G$ is \emph{transitive}\index{transitive permutation
  group}\index{permutation group!transitive} if, for all $p,q\in Q$,
there is an element $g\in G$ such that $pg=q$. In this case, the
subgroup $H$ of permutations fixing a given element $p$ of $Q$ has
index $\Card(Q)$. Indeed, for each $q\in Q$ let $g_q$ be an element of
$G$ such that $pg_q=q$. 
If $g\in G$ is such that $pg=q$, then
$pgg_q^{-1}=p$ and consequently $gg_q^{-1}\in H$, whence $g\in
Hg_q$. Thus each $g\in G$ is in one of the right cosets $Hg_q$, for
$q\in Q$.  Since these right cosets are pairwise disjoint, the index
of $H$ is $\Card(Q)$.

\subsection{Group automata}

A simple automaton $\A=(Q,1,1)$ is said to be
\emph{reversible}\index{automaton!reversible}\index{reversible
  automaton} if for any $a\in A$, the partial map
$\varphi_\A(a):p\mapsto p\cdot a$ is injective. This condition allows
to construct the \emph{reversal} of the automaton\index{reversal of an
  automaton} as follows: whenever $q\cdot a =p$ in $\A$, then $p\cdot
a =q$ in the reversal automaton. The state~$1$ is the initial and the
unique terminal state of this automaton.  Any reversible automaton is
minimal~\cite{Reutenauer1979}. The set recognized by a reversible
automaton is a left and right unitary submonoid. Thus
it is generated by a bifix code.

An automaton $\A=(Q,1,1)$
is a \emph{group automaton}\index{group!automaton}\index{automaton!group}
if for any $a\in A$ the map $\varphi_\A(a):p\mapsto p\cdot a$
is a permutation of $Q$. When $Q$ is finite, a group automaton
is a reversible automaton which is complete. 

 The
following result is from~\cite{Reutenauer1979} (see also Exercise
6.1.2 in \cite{BerstelPerrinReutenauer2009}).

\begin{proposition}\label{lemmaExercise612}
  Let $X\subset A^+$ be a bifix code.  The following conditions are
  equiva\-lent.
  \begin{enumerate}
  \item[\upshape{(i)}] $X^*=\langle X\rangle\cap A^*$;
  \item[\upshape{(ii)}] the minimal automaton of $X^*$ is reversible.
  \end{enumerate}
\end{proposition}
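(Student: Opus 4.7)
The plan is to prove each implication separately, working with the structure of the minimal automaton $\A(X^*)$, whose states are the nonempty sets $u^{-1}X^*$ with initial and terminal state $X^*$ (which equals $1^{-1}X^*$).

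For the direction $(i)\Rightarrow(ii)$, I would start from a putative failure of reversibility: states $p=u^{-1}X^*$ and $q=v^{-1}X^*$ with $p\cdot a=q\cdot a\ne\emptyset$ for some letter $a$. Since $\A(X^*)$ is trim (as $X^*$ is left and right unitary, the minimal automaton is simple), there is a word $y$ such that $(ua)\cdot y=(va)\cdot y=1$, hence $uay,\,vay\in X^*$. Working inside $A^\circ$, the element $uv^{-1}=(uay)(vay)^{-1}$ then lies in $\langle X\rangle$. For any $w$ with $uw\in X^*$, the word $vw=vu^{-1}\cdot uw$ is an element of $\langle X\rangle$, and being in $A^*$ it belongs to $\langle X\rangle\cap A^*=X^*$ by hypothesis~(i); the symmetric argument gives $u^{-1}X^*=v^{-1}X^*$, so $p=q$, proving reversibility.

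For the converse $(ii)\Rightarrow(i)$, the inclusion $X^*\subset\langle X\rangle\cap A^*$ is trivial, so the point is the reverse inclusion. My idea is to extend the partial action of $A^*$ on the state set $Q$ of $\A(X^*)$ to a partial action of $A^\circ$: reversibility means each $\varphi_{\A}(a)$ is an injective partial function, so its inverse provides the action of $a^{-1}$. The crucial observation is that this extended action is compatible with free reduction, i.e.\ a segment $aa^{-1}$ or $a^{-1}a$ acts as the identity wherever it is defined. Consequently, reading any word $w\in A^\circ$ (viewed as a word on $A\cup A^{-1}$) at state $1$ depends only on the reduced form of~$w$. For any $x\in X$ one has $1\cdot x=1$, and by the reversibility argument also $1\cdot x^{-1}=1$; hence for any product $w=x_1^{\varepsilon_1}\cdots x_m^{\varepsilon_m}$ with $x_i\in X$, $\varepsilon_i=\pm1$, one has $1\cdot w=1$. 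If moreover $w$ reduces to an element of $A^*$ (i.e.\ $w\in\langle X\rangle\cap A^*$), then reading this reduced word at state~$1$ returns to $1$, which means $w\in X^*$.

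The main obstacle I foresee is the verification that the extended partial action is well defined and factors through free reduction: one must check that in any sequence of letters in $A\cup A^{-1}$, cancelling a pair $aa^{-1}$ (or $a^{-1}a$) does not change the resulting state, whenever both sides are defined. This is a straightforward but careful point-set argument using only the injectivity of $\varphi_{\A}(a)$ and the definition of $a^{-1}$ as the inverse partial map; once it is established the rest of $(ii)\Rightarrow(i)$ follows immediately. The $(i)\Rightarrow(ii)$ direction, by contrast, is essentially a one-line free-group manipulation once the trim property of $\A(X^*)$ is exploited.
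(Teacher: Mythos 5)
Your proof is correct. Note that the paper itself gives no proof of this proposition: it is quoted from Reutenauer (1979) and Exercise 6.1.2 of Berstel--Perrin--Reutenauer, so there is nothing internal to compare against; your argument is a complete, self-contained one along the standard lines. The direction (i)$\Rightarrow$(ii) is exactly right: the key point is that the minimal automaton of $X^*$ is simple because $X$ is a prefix code, so coaccessibility of $p\cdot a=q\cdot a$ yields $uay,vay\in X^*$ and hence $uv^{-1}\in\langle X\rangle$, after which hypothesis (i) forces $u^{-1}X^*=v^{-1}X^*$. For (ii)$\Rightarrow$(i), the only point to state with care is the direction of the comparison under free reduction: it is not quite true that $1\cdot w$ ``depends only on the reduced form of $w$'' (the unreduced word may be undefined at a state where the reduced word is defined, e.g.\ $aa^{-1}$ at a state with no outgoing $a$). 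What you need, and what does hold, is the one-sided inclusion of partial maps $\varphi_\A(uaa^{-1}v)\subseteq\varphi_\A(uv)$ and $\varphi_\A(ua^{-1}av)\subseteq\varphi_\A(uv)$, since $\varphi_\A(a)\varphi_\A(a)^{-1}$ and $\varphi_\A(a)^{-1}\varphi_\A(a)$ are restrictions of the identity. This is precisely the direction your argument uses: $1\cdot x^{\varepsilon}=1$ for all $x\in X$, $\varepsilon=\pm1$ (using injectivity of each $\varphi_\A(a)$ for $\varepsilon=-1$), hence $1\cdot w=1$ for the unreduced product, hence $1\cdot\bar w=1$ for its reduction $\bar w\in A^*$, i.e.\ $\bar w\in X^*$. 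With that phrasing tightened, the proof is complete.
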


Let $\A=(Q,i,T)$ be a deterministic automaton. A \emph{generalized
  path} \index{generalized path}\index{path!generalized} is a sequence
$(p_0,a_1,p_1,a_2,\ldots,p_{n-1},a_n,p_n)$ with $a_i\in A\cup A^{-1}$
and $p_i\in Q$, such that for $1\le i\le n$, one has $p_{i-1}\cdot
a_i=p_i$ if $a_i\in A$ and $p_{i}\cdot a^{-1}_i=p_{i-1}$ if $a_i\in
A^{-1}$.  The \emph{label} of the generalized path is the element
$a_1a_2\cdots a_n$ of the free group $A^\circ$. Note that if
$\A=(Q,1,1)$, the set of labels of generalized paths from $1$ to $1$ in $\A$ is a
subgroup of $A^\circ$. It is called the \emph{subgroup
  described}\index{subgroup described} by $\A$.

A path in an automaton is a particular case of a generalized
path. In the case where $\A$ has a unique terminal state which is
equal to the initial state, the submonoid of $A^*$ recognized by $\A$
is contained in the subgroup of $A^\circ$ described by $\A$.

\begin{example}\label{exGroupRecognized}
  Let $\A=(Q,1,1)$ be the automaton defined by $Q=\{1,2\}$, $1\cdot
  a=1\cdot b=2$ and $2\cdot a=2\cdot b=\emptyset$. The submonoid
  recognized by $\A$ is $\{1\}$.  The subgroup described by $\A$ is
  the cyclic group generated by $ab^{-1}$.
\end{example}

\begin{proposition}\label{propGeneratedGroup}
  Let $\A$ be a simple automaton and let $X$ be the prefix code
  generating the submonoid recognized by $\A$. The subgroup $H$ described
  by $\A$ is generated by $X$. If moreover $\A$ is reversible, then
  $X^*=H\cap A^*$.
\end{proposition}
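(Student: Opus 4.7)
The proof splits into two parts, which I would handle in order. For the first part, the inclusion $\langle X\rangle\subseteq H$ is immediate, since $X\subseteq X^*\subseteq H$: any word in $X^*$ labels an ordinary path from $1$ to $1$, and such paths are particular generalized paths. For the reverse inclusion, the plan is a Schreier-style telescoping. Since $\A$ is simple (hence trim), for every state $q\in Q$ I can choose a word $p_q\in A^*$ with $1\cdot p_q=q$, setting $p_1=1$. The preliminary observation is this: whenever $u,v\in A^*$ satisfy $1\cdot u=1\cdot v$, one has $uv^{-1}\in\langle X\rangle$. Indeed, trimness gives some $t\in A^*$ with $(1\cdot u)\cdot t=1$, and then $ut,vt\in X^*$, so $uv^{-1}=(ut)(vt)^{-1}\in\langle X\rangle$.

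Given this, take $w\in H$ realized by a generalized path through states $1=q_0,q_1,\ldots,q_n=1$ with successive symbols $a_i^{\epsilon_i}\in A\cup A^{-1}$. Telescoping, and using $p_{q_0}=p_{q_n}=1$, one rewrites
\begin{displaymath}
w=\prod_{i=1}^{n}p_{q_{i-1}}\,a_i^{\epsilon_i}\,p_{q_i}^{-1}
\end{displaymath}
in $A^\circ$. When $\epsilon_i=1$, the observation applied to $p_{q_{i-1}}a_i$ and $p_{q_i}$ (both sending $1$ to $q_i$) shows the $i$-th factor lies in $\langle X\rangle$; when $\epsilon_i=-1$, the factor is the inverse of such an element. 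Hence $w\in\langle X\rangle$, and $H=\langle X\rangle$.

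For the second part, assuming $\A$ is reversible, I need $H\cap A^*\subseteq X^*$. I would extend the partial action of $A^*$ on $Q$ to $(A\cup A^{-1})^*$ by setting $q\cdot a^{-1}=q'$ iff $q'\cdot a=q$; reversibility makes this a well-defined partial map. The central claim, which I expect to be the main obstacle, is that this extended action respects free-group reduction in a one-sided way: if $q\cdot u$ is defined and $u'$ is obtained from $u$ by deleting an adjacent $aa^{-1}$ or $a^{-1}a$, then $q\cdot u'$ is also defined and $q\cdot u'=q\cdot u$. A short case analysis using reversibility establishes this; note that the converse direction (inserting a cancelling pair) can fail because of missing transitions, which is precisely why one must only reduce, never expand. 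Iterating, any $w\in A^*\cap H$ arises from some $u\in(A\cup A^{-1})^*$ with $1\cdot u=1$ reducing to $w$ in $A^\circ$; hence $1\cdot w=1$ in $\A$, so $w\in X^*$, completing the proof.
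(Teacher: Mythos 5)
Your proof is correct. For the first half your argument is in substance the same as the paper's: both rest on the observation that trimness lets you complete any word reaching a state $q$ into an element of $X^*$ by appending a return word $t$ to state $1$, so that two words with the same endpoint differ by an element of $\langle X\rangle$. The paper organizes this as an induction on the number of inverse letters in the generalized path, eliminating the leftmost one at each step via the identity $ua^{-1}v=ut(wat)^{-1}wv$; your Schreier-transversal telescoping $w=\prod_i p_{q_{i-1}}a_i^{\epsilon_i}p_{q_i}^{-1}$ treats all edges uniformly and avoids the induction, which is arguably tidier, but it buys nothing essentially new. Where you genuinely diverge is the second half. The paper disposes of it in one line: a reversible automaton is minimal, so Proposition~\ref{lemmaExercise612} (quoted from Reutenauer's 1979 paper, not reproved here) gives $X^*=\langle X\rangle\cap A^*$ directly. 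You instead give a self-contained argument by extending the partial action of $A^*$ on $Q$ to $(A\cup A^{-1})^*$ --- well defined precisely because of reversibility --- and proving that deleting a cancelling pair preserves definedness and the value of $q\cdot u$; iterating reduction of a loop $1\cdot u=1$ down to its reduced form $w\in A^*$ yields $1\cdot w=1$, hence $w\in X^*$. Your one-sided caveat (reduction is safe, insertion of a cancelling pair is not, because of missing transitions) is exactly the right point to flag. The trade-off: the paper's route is shorter but leans on an external result, while yours makes the proposition self-contained and in effect reproves the relevant direction of Proposition~\ref{lemmaExercise612}.
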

\begin{proof}
  Set $\A=(Q,1,1)$.  Let $K$ be the subgroup
  described by $\A$. Let us show that $K=H$.  First, $X\subset K$
  implies $\langle X\rangle=H\subset K$. To prove the converse
  inclusion, let $(p_0,a_1,p_1,a_2,\ldots ,p_{n-1},a_n,p_n)$ be a
  generalized path with $a_i\in A\cup A^{-1}$, $p_i\in Q$ and
  $p_0=p_n=1$. Let $h\in A^\circ$ be the label of the path. Let $r$ be
  the number of indices $i$ such that $a_i\in A^{-1}$. We show by
  induction on $r$ that $h\in H$. This holds clearly if $r=0$. Assume
  that it is true for $r-1$. Let $i$ be the least index such that
  $a_i\in A^{-1}$. Set $u=a_1\cdots a_{i-1}$, $a=a_i^{-1}$,
  $v=a_{i+1}\cdots a_n$ in such a way that $h=ua^{-1}v$. Set also
  $p=p_{i-1}$ and $q=p_i$. Thus $1\cdot u=p$, $q\cdot a=p$ and $v$ is
  the label of a generalized path from $q$ to $1$.
Since $\A$ is trim there exist words
  $w,t\in A^*$ such that $p\cdot t=1$ and $1\cdot w=q$.  Since $1\cdot
  ut=1\cdot wat=1$ (see Figure~\ref{figureStallings}), we have
  $ut,wat\in X^*$.  By induction hypothesis, since $wv$ is the label
  of a generalized path from $1$ to $1$, we have $wv\in H$.  Then
  $ua^{-1}v=utt^{-1}a^{-1}w^{-1}wv=ut(wat)^{-1}wv$ is in $H$ and thus
  $h\in H$.
  
  Assume now that $\A$ is reversible. Then is minimal and, by
  Proposition~\ref{lemmaExercise612}, one has $X^*=H\cap A^*$.
\end{proof}

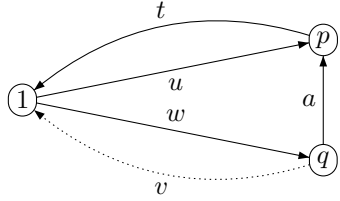
\begin{figure}[hbt]
  \centering
  \gasset{Nadjust=wh}
  \begin{picture}(40,20) 
    \node(1)(0,10){$1$}\node(p)(40,18){$p$}\node(q)(40,2){$q$}
    \drawedge[ELside=r](1,p){$u$}\drawedge(q,p){$a$}
    \drawedge(1,q){$w$}
    \drawedge[ELside=r,curvedepth=-6](p,1){$t$}
    \drawedge[dash={0.2 0.5}0,curvedepth=6](q,1){$v$}
  \end{picture}
  \caption{Paths in the automaton $\A$. The generalized path is dashed.}\label{figureStallings}
\end{figure}

For any subgroup $H$ of $A^\circ$, the submonoid $H\cap A^*$ is right
and left unitary. Thus $H\cap A^*$ is generated by a bifix code.
A subgroup $H$ of $A^\circ$ is \emph{positively
  generated}\index{subgroup!positively generated} if there is a set
$X\subset A^*$ which generates $H$. In this case, the set $H\cap A^*$
generates the subgroup $H$. Let $X$ be the bifix code which generates
the submonoid $H\cap A^*$. Then $X$ generates the subgroup $H$. This
shows that, for a positively generated subgroup $H$, there is a bifix
code which generates $H$.

\begin{proposition}\label{propStallings}
  For any positively generated subgroup $H$ of $A^\circ$, there is a
  unique reversible  automaton $\A$ such that $H$ is the
  subgroup described by $\A$. 
\end{proposition}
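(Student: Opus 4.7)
The plan is to combine the construction of the bifix code generating $H\cap A^*$ with Propositions~\ref{lemmaExercise612} and~\ref{propGeneratedGroup}, which together pin down the automaton on both sides.

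For existence, I would proceed as follows. Since $H$ is positively generated, the discussion preceding the statement shows that $H\cap A^*$ is a right and left unitary submonoid generated by a bifix code $X$, and that $X$ generates the whole subgroup $H$. In particular $\langle X\rangle\cap A^* = H\cap A^* = X^*$, so condition (i) of Proposition~\ref{lemmaExercise612} holds. Hence the minimal automaton $\A$ of $X^*$ is reversible. By construction, $\A$ is simple, and the prefix code generating the submonoid it recognizes is $X$; so Proposition~\ref{propGeneratedGroup} identifies the subgroup described by $\A$ with $\langle X\rangle = H$.

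For uniqueness, suppose $\A'=(Q',1,1)$ is any reversible automaton describing $H$. Let $Y$ be the prefix code generating the submonoid recognized by $\A'$. Since $\A'$ is reversible, Proposition~\ref{propGeneratedGroup} gives $Y^* = \langle Y\rangle\cap A^* = H\cap A^*$. By the uniqueness of the generating prefix code of a right unitary submonoid, $Y=X$ and $\A'$ recognizes $X^*$. Because any reversible automaton is minimal (a fact recalled just before Proposition~\ref{lemmaExercise612}), $\A'$ must be isomorphic to the minimal automaton $\A$ of $X^*$ constructed above.

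The main point where one has to be careful is the uniqueness step: one needs to know both that the submonoid recognized by a reversible automaton determines the whole automaton (because reversible implies minimal) and that, conversely, the subgroup described together with the requirement of being positively generated forces this submonoid to equal $H\cap A^*$. Once Proposition~\ref{propGeneratedGroup} is applied in both directions, everything reduces to the fact that a right unitary submonoid has a unique prefix generating set, so no genuinely new calculation is required.
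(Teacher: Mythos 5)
Your proof is correct and follows essentially the same route as the paper's: existence by taking the bifix code $X$ generating $H\cap A^*$, invoking Proposition~\ref{lemmaExercise612} to get reversibility of the minimal automaton of $X^*$ and Proposition~\ref{propGeneratedGroup} to identify the described subgroup with $\langle X\rangle=H$; uniqueness by showing any reversible automaton describing $H$ recognizes $H\cap A^*$ and then using that reversible automata are minimal. The only cosmetic difference is that in the uniqueness step you cite Proposition~\ref{propGeneratedGroup} where the paper cites Proposition~\ref{lemmaExercise612}; both yield the needed identification of the recognized submonoid with $H\cap A^*$.
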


\begin{proof}
  Let $X$ be the bifix code generating the submonoid $H\cap A^*$, so
  that $X^*=H\cap A^*$.  Since $H$ is positively generated, the
  subgroup generated by $X$ is equal to $H$, that is $\langle
  X\rangle=H$. Thus $X^*=\langle X\rangle\cap A^*$. In view of
  Proposition~\ref{lemmaExercise612}, the minimal automaton $\A$ of
  $X^*$ is reversible.  Thus the submonoid recognized by $\A$ is
  $H\cap A^*$ and by Proposition~\ref{propGeneratedGroup}, $H$ is the
  subgroup described by $\A$.

  If $\B$ is another reversible automaton such that $H$ is the
  subgroup described by $\B$, then by
  Proposition~\ref{lemmaExercise612}, $\B$ recognizes the set $H\cap
  A^*$. Since $\B$ is minimal and since minimal automata are unique,
  the uniqueness follows.
\end{proof}

The reversible automaton $\A$ such that $H$ is the subgroup described
by $\A$ is called the \emph{Stallings automaton}\index{Stallings
  automaton} of the subgroup $H$. It can also be defined for a
subgroup which is not positively generated
(see~\cite{BartholdiSilva2011} or \cite{KapovichMyasnikov2002}).

\begin{proposition}\label{propositionGroupAutomaton}
  The following conditions are equivalent for a submonoid $M$ of
  $A^*$.
  \begin{enumerate}
  \item[\rm (i)] $M$ is recognized by a group automaton with $d$
    states.
  \item[\rm (ii)] $M=\varphi^{-1}(K)$, where $K$ is a subgroup of
    index $d$ of a group $G$ and $\varphi$ is a surjective morphism
    from $A^*$ onto $G$.
  \item[\rm (iii)] $M=H\cap A^*$, where $H$ is a subgroup of index $d$
    of $A^\circ$.
  \end{enumerate}
  If one of these conditions holds, the minimal generating set of $M$
  is a maximal bifix code of degree $d$.
\end{proposition}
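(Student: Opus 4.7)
The plan is to prove the chain (i) $\Rightarrow$ (ii) $\Rightarrow$ (iii) $\Rightarrow$ (i), and then to derive the final assertion from condition (i).

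(i) $\Rightarrow$ (ii): Let $\A=(Q,1,1)$ be a group automaton with $|Q|=d$ recognizing $M$. Since each letter $a\in A$ acts as a permutation of $Q$, the transition monoid $G$ of $\A$ is a group of permutations of $Q$, and the map $\varphi:A^*\to G$ sending $a$ to $\varphi_\A(a)$ is a surjective morphism. Because $\A$ is trim (as any simple automaton is) and each letter acts as a permutation, the group $G$ acts transitively on $Q$. Let $K=\{g\in G\mid 1\cdot g=1\}$, the stabilizer of the state $1$. By the remarks on transitive permutation groups preceding Section 5.1, $K$ has index $\Card(Q)=d$ in $G$, and clearly $M=\varphi^{-1}(K)$.

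(ii) $\Rightarrow$ (iii): Given $\varphi:A^*\to G$ surjective and $K\le G$ of index $d$, the universal property of the free group extends $\varphi$ uniquely to a surjective morphism $\bar\varphi:A^\circ\to G$. Set $H=\bar\varphi^{-1}(K)$. Since $\bar\varphi$ is surjective, $H$ has index $d$ in $A^\circ$, and since $\bar\varphi$ and $\varphi$ agree on $A^*$, we have $H\cap A^*=\varphi^{-1}(K)=M$.

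(iii) $\Rightarrow$ (i): Let $H\le A^\circ$ have index $d$, and let $Q$ be the set of right cosets of $H$. Define an automaton $\A=(Q,H,H)$ with transitions $(Hg)\cdot a=Hga$ for $a\in A$. Each letter permutes $Q$ (its inverse action being $(Hg)\cdot a^{-1}=Hga^{-1}$), so $\A$ is a group automaton with $d$ states. Trimness follows from the fact that $A$ generates $A^\circ$ as a group, hence $A\cup A^{-1}$ generates the action on $Q$; combined with permutability this yields that $A^*$ alone acts transitively on $Q$. A word $w\in A^*$ is recognized by $\A$ iff $Hw=H$, iff $w\in H$, iff $w\in H\cap A^*=M$.

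Finally, suppose (i)--(iii) hold and let $X$ be the minimal generating set of $M$. Because $H\cap A^*=M$ is both right and left unitary in $A^*$, $X$ is a bifix code. Applying Proposition~\ref{lemmaExercise612}, the minimal automaton of $X^*$ is reversible; combined with the fact that $M$ is recognized by the complete automaton from (iii), the minimal automaton of $X^*=M$ is itself a group automaton with exactly $d$ states (any quotient of a complete reversible automaton recognizing $M$ that is still reversible and complete has at least $d$ states by index, and at most $d$ because the coset automaton already recognizes $M$). Since the minimal automaton is complete, $X$ is a maximal prefix code, and being bifix it is a maximal bifix code. Its degree equals the number of states of its minimal automaton, namely $d$: indeed, in a group automaton every word has exactly $d$ parses (one ending at each state, via Proposition~\ref{propInterpretations} applied to any word long enough to visit every state of $Q$ through its suffixes, which exists because the action is transitive).

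The main subtlety is the last step: verifying that the minimal automaton of $X^*$ has exactly $d$ states and that the degree of $X$ coincides with this number. Both rely crucially on the reversibility given by Proposition~\ref{lemmaExercise612} together with the completeness coming from the group structure.
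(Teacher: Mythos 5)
Your proof of the three equivalences (i)$\Leftrightarrow$(ii)$\Leftrightarrow$(iii) is essentially identical to the paper's: stabilizer of the initial state for (i)$\Rightarrow$(ii), extension of $\varphi$ to $A^\circ$ for (ii)$\Rightarrow$(iii), and the right-coset automaton for (iii)$\Rightarrow$(i). Your route to maximality is packaged a little differently but is sound: completeness and trimness of the group automaton make $XA^*$ right dense, so $X$ is a maximal prefix code by Propositions~\ref{propositionGlobal} and~\ref{propositionPrefixGlobal}, and a bifix code that is a maximal prefix code is a maximal bifix code (the paper instead gets maximality from finiteness of the degree via Theorem~\ref{theoremDegree}).

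The gap is in the computation of the degree. The claim that in a group automaton ``every word has exactly $d$ parses'' is false: for the group code $X=a\cup ba^*b$ of degree $2$, the word $a^n$ has exactly one parse for every $n$, since its only prefix without a suffix in $X$ is the empty word. So length alone does not force $d$ parses, and your qualification ``long enough to visit every state of $Q$ through its suffixes'' is carrying all the weight --- but the satisfiability of that condition is precisely what has to be proved, and transitivity alone does not give it. Transitivity yields, for each $q\in Q$, some word $p_q$ with $1\cdot p_q=q$; what you need is a single word $w$ admitting all such witnesses simultaneously as suffixes lying in the set $P$ of proper prefixes of $X$, that is, a chain of them for the suffix order. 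The paper supplies exactly this missing step: it takes $w$ with the maximal number of suffixes in $P$, shows such a $w$ cannot be an internal factor of $X$, and then uses the group property to exhibit, for each $q\in Q$, a parse $(s,x,p)$ of $w$ with $1\cdot p=q$. You also omit the upper bound $\pars_X(w)\le d$, which requires the reversibility argument that distinct suffixes of $w$ lying in $P$ reach distinct states under $p\mapsto 1\cdot p$; without it, ``one ending at each state'' does not pin the degree down to $d$.
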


\begin{proof}
  \noindent (i) implies (ii). Let $\A=(Q,1,1)$ be a group automaton
  with $d$ states and let $M$ be the set recognized by $\A$. Since a
  composition of permutations is a permutation, the monoid
  $G=\varphi_\A(A^*)$ is a permutation group. Since $\A$ is trim,
  there is a path from every state $q$ to any state $q'$ in
  $\A$. Consequently, $G$ is transitive.  Let $K$ be the subgroup of
  $G$ formed of the permutations fixing $1$. As we have seen earlier,
  $K$ has index $d$.  Then $M=\varphi_\A^{-1}(K)$.

\noindent (ii) implies (iii). Let $\psi$ be the morphism from
$A^\circ$ onto $G$ extending $\varphi$. Then $H=\psi^{-1}(K)$ is a
subgroup
of index $d$ of $A^\circ$ and $M=H\cap A^*$.

\noindent (iii) implies (i). Let $Q$ be the set of  right cosets of
$H$ with $1$ denoting the right coset $H$.
 The representation of $A^\circ$ by permutations on $Q$ defines
a group automaton $\A$ with $d$ states and $M$ is recognized by $\A$.

Finally, let $X$ be the minimal generating set of a submonoid $M$
satisfying one of these conditions. It is clearly a bifix code. Let
$P$ be the set of proper prefixes of $X$.  The number of suffixes of a
word which are in $P$ is at most equal to $d$. Indeed, let
$\A=(Q,1,1)$ be a group automaton recognizing $X^*$. If $s,t$ are
distinct suffixes of a word $w$ which are in $P$, then $1\cdot s\ne
1\cdot t$. Indeed, otherwise, since $s$ and $t$ are suffix-comparable,
we may assume that $s=ut$. Let $p=1\cdot u$.  Then $p\cdot t=1\cdot
ut=1\cdot s=1\cdot t$ and thus $p=1$ since $\A$ is reversible. Thus
$s=t$. Let $w$ be a word with the maximal number of suffixes in
$P$. Then $w$ cannot be an internal factor of $X$. Moreover the number
of suffixes of $w$ in $P$ is equal to $d$. Indeed, since $\A$ is a
group code, for any $q\in Q$, there is a state $q'$ such that $q'\cdot
w=q$. Since $w$ is not an internal factor of $X$, there is a
factorization $w=sxp$ such that $q'\cdot s=1$, $1\cdot x=1$ and
$1\cdot p=q$, and such that $(s,x,p)$ is a parse of $w$. Thus $X$ has
degree $d$.
\end{proof}

A bifix code $Z$ such that $Z^*$ satisfies one of the equivalent
conditions of the proposition~\ref{propositionGroupAutomaton} is
called a \emph{group code}\index{group!code}\index{code!group}.

The following proposition shows in particular that a subgroup
of finite index is positively generated.

\begin{proposition}\label{propGroupCode}
Let $H$ be a subgroup of finite index of $A^\circ$. The minimal
automaton $\A$ of $H\cap A^*$ is a group automaton which
describes the subgroup $H$. Let $X$ be the group code such that
$\A$ recognizes $X^*$. The subgroup generated by $X$ is $H$.
\end{proposition}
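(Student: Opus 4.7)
The strategy is to identify the minimal automaton $\A$ of $H\cap A^*$ with the coset automaton of $H$ and to conclude via the uniqueness part of Proposition~\ref{propStallings}. Let $Q$ be the (finite) set of right cosets of $H$ and define $\A'=(Q,H,H)$ with transitions $Hg\cdot a=Hga$ for every $a\in A$. Each letter acts as a permutation of $Q$, so $\A'$ is a group automaton with $d$ states; it is trim because the subsemigroup of $S_Q$ generated by $\varphi_{\A'}(A)$ is a finite group, which therefore acts transitively on the cosets. For $w\in A^*$ one has $Hw=H$ iff $w\in H$, so $\A'$ recognizes $H\cap A^*$. Moreover, any generalized path in $\A'$ from $H$ labelled by $g\in A^\circ$ ends at $Hg$, and therefore returns to $H$ iff $g\in H$; the subgroup described by $\A'$ is thus exactly $H$.

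Next I would let $X'$ be the prefix code generating the submonoid recognized by $\A'$. By Proposition~\ref{propGeneratedGroup} applied to the simple automaton $\A'$, $\langle X'\rangle=H$, so $H$ is positively generated by $X'\subset A^*$. Proposition~\ref{propStallings} then applies and guarantees a unique reversible automaton describing $H$; the proof of that proposition constructs this automaton as the minimal automaton of $H\cap A^*$, which is precisely $\A$. Since $\A'$ is reversible (every group automaton is reversible) and also describes $H$, uniqueness forces $\A=\A'$. Consequently $\A$ is a group automaton with $d$ states and describes $H$. For the last claim, letting $X$ be the group code such that $\A$ recognizes $X^*$, a second application of Proposition~\ref{propGeneratedGroup} to the simple automaton $\A$ yields that the subgroup described by $\A$, which by the previous step is $H$, is generated by $X$; hence $\langle X\rangle=H$.

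The main obstacle is to identify the right leverage point: once the coset automaton $\A'$ has been exhibited as a reversible automaton describing $H$, the uniqueness in Proposition~\ref{propStallings} automatically collapses it with the minimal automaton $\A$ and transfers the group-automaton structure for free. The only prerequisite needing care is to verify that $H$ is positively generated, which is necessary to invoke Proposition~\ref{propStallings}; but this is supplied, circularly-looking yet harmlessly, by applying Proposition~\ref{propGeneratedGroup} to $\A'$ itself.
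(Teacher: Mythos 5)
Your proof is correct and follows essentially the same route as the paper's: both identify the minimal automaton of $H\cap A^*$ with the coset automaton of $H$ (a group automaton), show via generalized paths that the subgroup it describes is $H$, and conclude with Proposition~\ref{propGeneratedGroup}. The only cosmetic difference is that you obtain the identification through the uniqueness in Proposition~\ref{propStallings}, whereas the paper gets it directly from Proposition~\ref{propositionGroupAutomaton} together with the fact that a finite group automaton is reversible, hence minimal.
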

\begin{proof}
   By Proposition~\ref{propositionGroupAutomaton}, the monoid $H\cap
  A^*$ is recognized by a group automaton $\A=(Q,1,1)$, which is the
  minimal automaton of $H\cap A^*$.  The morphism $\varphi_\A$ from
  $A^*$ onto the group $G=\varphi_\A(A^*)$ of
  Proposition~\ref{propositionGroupAutomaton} extends to a morphism
  $\psi$ from $A^\circ$ onto $G$. The subgroup $K$ is composed of the
  permutations that fix~$1$, and the subgroup $H$ is formed of the
  elements $w\in A^\circ$ such that the permutation $\psi(w)$ fixes
  $1$. There is a
  generalized path in $\A$ from $p$ to $q$ labeled $w$ 
  if and only if $p\psi(w)=q$.
  Thus $\psi(w)$ fixes $1$ if and only there is a generalized
path from $1$ to $1$ labeled $w$, that is if $w\in
  H$. Thus the subgroup described by $\A$ is $H$.
By Proposition~\ref{propGeneratedGroup}, the subgroup $H$ is generated
by $X$.
\end{proof}




\begin{example}\label{exampleAd}
  The set $A^d$ is a group code by
  Proposition~\ref{propositionGroupAutomaton}(ii). Thus it is a
  maximal bifix code of degree $d$.  The intersection of the subgroup
  generated by $A^d$ with $A^*$ is the submonoid generated by $A^d$
  (Proposition~\ref{propGroupCode}). It is composed of the words with
  length a multiple of $d$.
\end{example}

\subsection{Main result}\label{subsectionSturmianBasis}

We will prove the following
result.

\begin{theorem}\label{theoremGroups}
  Let $F$ be a Sturmian set and let $d\ge 1$ be an integer. A bifix
  code $X\subset F$ is a basis of a subgroup of index $d$ of $A^\circ$
  if and only if it is a finite $F$-maximal bifix code of $F$-degree
  $d$.
\end{theorem}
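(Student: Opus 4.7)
The plan is to prove the two implications separately, with Schreier's formula as the bridge between cardinality and rank. In one direction, we start from a finite $F$-maximal bifix code $X$ of $F$-degree $d$: by Theorem~\ref{theoremBifixd+1} we already know $\Card(X)=(k-1)d+1$, and by Schreier's formula any subgroup of $A^\circ$ of index $d$ has rank exactly $(k-1)d+1$. Thus if we can prove that $\langle X\rangle$ has index $d$ in $A^\circ$, then $X$ is a generating set of a free group of rank $\Card(X)$, and hence is a basis (a generating set of a free group whose cardinality equals the rank is automatically a free basis).

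The main technical work will be in showing that $[A^\circ : \langle X\rangle] = d$. My strategy would be to build an automaton $\A$ with exactly $d$ states associated to $X$ and $F$, and then exhibit $\A$ as the Stallings automaton of $\langle X\rangle$. Concretely, I would take as states a set indexed by proper prefixes of $X$, modulo the equivalence that identifies two prefixes whenever they appear as suffixes of the same maximally-parsed word of $F\setminus I(X)$ (using Lemma~\ref{lemmadSpecial} so that there are exactly $d$ classes). Transitions are defined in the obvious way using right extensions in $F$. I expect this automaton to be reversible thanks to the suffix-code part of Theorem~\ref{theoremEquivMax}, and to describe (via Proposition~\ref{propGeneratedGroup}) precisely the subgroup $\langle X\rangle$. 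The fact that words in $F$ have a very rigid extension structure (each factor is a prefix of a right-special word by Proposition~\ref{propositionPrefixSpecial}, and first-return-word analysis for episturmian words) is what forces $\A$ to close up into a group automaton on exactly $d$ states. This is where I expect the real difficulty: turning the combinatorics of parses and the Sturmian extension condition into the reversibility and completeness of $\A$, most likely by iteratively reading arbitrarily long words of $F$ from a given state and showing every letter of $A$ can be realized as an incoming and outgoing transition at every state.

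For the converse direction, suppose $X\subset F$ is a bifix code that is a basis of a subgroup $H$ of $A^\circ$ of index $d$. Then $\Card(X)=(k-1)d+1$ by Schreier's formula, so in particular $X$ is finite. By Theorem~\ref{theoremCompletion} (since a Sturmian set is uniformly recurrent), $X$ can be embedded in a finite $F$-maximal bifix code $Y\subset F$ of some finite $F$-degree $d'$. Applying the direction proved above to $Y$ gives that $Y$ is a basis of a subgroup $H'\supset H$ of index $d'$, with $\Card(Y)=(k-1)d'+1$. Since $H\subset H'$, the index of $H'$ divides the index of $H$, so $d'\le d$; but then $\Card(Y)\le \Card(X)$, which combined with $X\subset Y$ forces $X=Y$ and $d=d'$. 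Hence $X$ is itself a finite $F$-maximal bifix code of $F$-degree $d$.

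The main obstacle, as indicated above, is the first direction: explicitly constructing the $d$-state group automaton (equivalently, the Stallings automaton of $\langle X\rangle$) from the parses of words in $F$, and checking that the Sturmian property is exactly what is needed to guarantee it has $d$ states, is complete, and is reversible. Everything else — Schreier's formula, counting arguments, and the completion step — is essentially bookkeeping once this construction is in place.
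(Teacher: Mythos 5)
Your converse direction and the final Schreier-formula bookkeeping are correct and coincide with the paper's argument. The gap is in the forward direction, precisely at the point you yourself flag as ``the real difficulty,'' and the construction you sketch there does not work as stated. The equivalence ``identify two proper prefixes whenever they appear as suffixes of the same maximally-parsed word of $F\setminus I(X)$'' collapses all $d$ such suffixes of a single word into one class, so it cannot produce $d$ states. The equivalence the paper actually uses ($\theta_X$: the transitive closure of $p\sim q$ whenever $ps,qs\in X$ for a common $s$) is a different relation; its classes correspond to right cosets of $\langle X\rangle$ (Proposition~\ref{propTheta}), and---crucially---the paper does \emph{not} prove directly that it has $d$ classes: that fact (Proposition~\ref{lemmaCosets}) is deduced only \emph{after} Theorem~\ref{theoremGroups} is established. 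So the route ``build a $d$-state group automaton and read off the index'' would require an independent proof that your automaton has exactly $d$ states and is complete and reversible, and no such proof is supplied.

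The two ingredients that actually close the argument in the paper, and which are absent from your plan, are the following. First, distinct right-special proper prefixes of $X$ lie in distinct right cosets of $H=\langle X\rangle$ (Corollary~\ref{corollaryCosets}); this rests on $\langle X\rangle\cap F=X^*\cap F$ (Proposition~\ref{propositionHcapF}), which in turn needs the incidence graph of $X$ to be a forest (Lemma~\ref{lemmaSuite})---a genuinely combinatorial consequence of the uniqueness of right-special factors in a Sturmian set. Second, the Justin--Vuillon theorem that the first return words $R_F(u)$ form a basis of the free group $A^\circ$ (Theorem~\ref{propositionGamma}). With $Q$ the set of $d$ right-special proper prefixes (Lemma~\ref{lemmadSpecial}), one sets $V=\{v\in A^\circ\mid Qv\subset HQ\}$, checks that $V$ is a subgroup containing $R_F(u)$, and concludes $V=A^\circ$, hence $A^\circ=HQ$ and the index is $d$. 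Your plan of ``iteratively reading arbitrarily long words of $F$'' cannot by itself show that the $d$ cosets $Hq$ exhaust all of $A^\circ$ (which contains inverses of letters, not just words of $F$); it is exactly the basis property of the return words that bridges from positive words to the whole free group, and this is the missing idea.
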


Note that Theorem~\ref{theoremBifixd+1} is contained in
Theorem~\ref{theoremGroups} (we will use Theorem~\ref{theoremBifixd+1}
in the proof of Theorem~\ref{theoremGroups}).  Indeed, let $X$ be an
$F$-maximal bifix code of $F$-degree $d$.  By
Theorem~\ref{theoremCompletion}, $X$ is finite.  By
Theorem~\ref{theoremGroups}, the subgroup $\langle X\rangle$ has rank
$\Card(X)$ and index $d$ in the free group $A^\circ$.  By Schreier's
Formula~\eqref{equationSchreier}, one get $\Card(X)=(\Card(A)-1)d+1$.

Before proving Theorem~\ref{theoremGroups}, we list some corollaries.

\begin{corollary}
  Let $F$ be a Sturmian set. For any $d\ge 1$, the set of words in $F$
  of length $d$ is a basis of the subgroup of $A^\circ$ generated by
  $A^d$.
\end{corollary}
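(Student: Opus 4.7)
The plan is to apply Theorem~\ref{theoremGroups} to $X = F \cap A^d$, combined with the general fact that a subgroup of finite index is determined by being contained in another of the same index.

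First I would observe that $A^d$ is itself a thin (in fact finite) maximal bifix code of degree $d$ (Example~\ref{exampleAd}). Applying Theorem~\ref{proposition1} to $A^d$ and the Sturmian (in particular recurrent) set $F$, we get that $X = A^d \cap F$ is an $F$-thin and $F$-maximal bifix code, and since $A^d$ is finite, $d_F(X) = d$. In particular, $X$ is nonempty and by Theorem~\ref{theoremCompletion} (which applies since $F$ is uniformly recurrent) it is also finite.

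Next I would invoke Theorem~\ref{theoremGroups}: since $X \subset F$ is a finite $F$-maximal bifix code of $F$-degree $d$, the set $X$ is a basis of a subgroup $H_1 = \langle X \rangle$ of index $d$ in $A^\circ$. On the other hand, let $H_2 = \langle A^d \rangle$. Since $A^d$ is a group code of degree $d$ (Example~\ref{exampleAd}), Proposition~\ref{propGroupCode} gives that $H_2$ is a subgroup of index $d$ in $A^\circ$ (equivalently, by Proposition~\ref{propositionGroupAutomaton}, the group automaton recognizing $(A^d)^*$ has $d$ states and $A^d$ generates the corresponding subgroup).

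Finally, the inclusion $X \subset A^d$ gives $H_1 \subset H_2$, and since both subgroups have the same finite index $d$ in $A^\circ$, the remark recalled before Section~5.1 (namely, $H \subset K$ and $[G:H] = [G:K] < \infty$ imply $H=K$) yields $H_1 = H_2$. Therefore $X = F \cap A^d$ is a basis of $\langle A^d \rangle$, as claimed. There is no real obstacle here: once Theorem~\ref{theoremGroups} is available, the corollary reduces to noting that both candidate subgroups have the same index and one contains the other.
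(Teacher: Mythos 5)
Your proposal is correct and follows essentially the same route as the paper: apply Theorem~\ref{proposition1} to get that $A^d\cap F$ is a finite $F$-maximal bifix code of $F$-degree $d$, invoke Theorem~\ref{theoremGroups} to conclude it is a basis of a subgroup of index $d$, and identify that subgroup with $\langle A^d\rangle$ because both have index $d$ and one contains the other. Your write-up merely makes explicit a few steps (finiteness via Theorem~\ref{theoremCompletion}, the index of $\langle A^d\rangle$ via Proposition~\ref{propGroupCode}) that the paper leaves implicit.
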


\begin{proof}
  The set $A^d$ is a group code (see Example~\ref{exampleAd}), and
  therefore is a maximal bifix code. The set $A^d\cap F$ is a finite
  bifix code. By Theorem~\ref{proposition1}, it is an $F$-maximal
  bifix code and has $F$-degree $d$.  The corollary follows from
  Theorem~\ref{theoremGroups}. Indeed $\langle A^d\cap F\rangle=
  \langle A^d\rangle$ since both are subgroups of $A^\circ$ of
  index~$d$.
\end{proof}

The following is also a complement to Theorem~\ref{proposition1}.  It
shows in particular that for any Sturmian set $F$, any subgroup of
$A^\circ$ of finite index has a basis contained in $F$. Note that this
This provides contains the fact that every subgroup of finite index
has a positive basis, see also Proposition~\ref{propGroupCode}.


\begin{corollary}\label{newCorollary}
  Let $F$ be a Sturmian set. The map which associates to $X\subset F$
  the subgroup $\langle X\rangle$ of $A^\circ$ generated by $X$ is a
  bijection between $F$-maximal bifix codes of $F$-degree $d$ and
  subgroups of $A^\circ$ of index $d$. Such a bifix code $X$ is a
  basis of $\langle X\rangle$.  The reciprocal bijection associates,
  to a subgroup $H$ of $A^\circ$, the set $Z\cap F$ where $Z$ is the
group code which is the
  minimal generating set of the submonoid $H\cap A^*$ of $A^*$.
\end{corollary}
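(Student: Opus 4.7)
The plan is to verify that the map $\phi\colon X\mapsto\langle X\rangle$ and the candidate inverse $\psi\colon H\mapsto Z\cap F$ are well-defined and mutually inverse. Well-definedness of $\phi$ is immediate from Theorem~\ref{theoremGroups}: any $F$-maximal bifix code $X$ of $F$-degree $d$ is a basis of the index-$d$ subgroup $\langle X\rangle$ of $A^\circ$. For $\psi$, given $H$ of index $d$, Proposition~\ref{propGroupCode} combined with Proposition~\ref{propositionGroupAutomaton} produces a group code $Z$---a maximal bifix code of degree $d$---with $Z^*=H\cap A^*$ and $\langle Z\rangle=H$. Since $d$ is finite, $Z$ is thin, so Theorem~\ref{proposition1} yields an $F$-thin, $F$-maximal bifix code $Z\cap F$ with $d_F(Z\cap F)\le d$, and Theorem~\ref{theoremCompletion} makes it finite. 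The containment $Z\cap F\subset H$ forces $\langle Z\cap F\rangle\subset H$; applying Theorem~\ref{theoremGroups} to $Z\cap F$ and using multiplicativity of index along a subgroup chain in $A^\circ$ pins down $\langle Z\cap F\rangle=H$ and $d_F(Z\cap F)=d$. This simultaneously establishes that $\psi$ is well-defined and that $\phi\circ\psi=\mathrm{id}$, hence $\phi$ is surjective.

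For $\psi\circ\phi=\mathrm{id}$, i.e.\ injectivity of $\phi$, fix $X$ in the domain and set $H=\langle X\rangle$ with group code $Z$. Since $X\subset H\cap A^*=Z^*$ and $F$ is factorial, each $x\in X$ has its (unique) $Z$-factorization $x=z_1\cdots z_n$ with every $z_i$ a factor of $x\in F$, hence in $Z\cap F$; thus $X\subset(Z\cap F)^*$. By Theorem~\ref{theoremBifixd+1}, $|X|=|Z\cap F|=(k-1)d+1$. What remains is to show $X=Z\cap F$.

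The main obstacle is forcing the length of the $Z$-decomposition of every $x\in X$ to be $n=1$, which places $X\subset Z\cap F$; equality then follows either from equal cardinality or from the $F$-maximality of $Z\cap F$ forbidding proper containment in another bifix code contained in $F$. Naive prefix/suffix analysis goes only so far: if $n\ge 2$, then $z_1\in Z\cap F$ is a proper prefix of $x\in X$, and by $F$-maximality of $X$ as a prefix code (Theorem~\ref{theoremEquivMax}) $z_1$ is prefix-comparable with some $x'\in X$, which combined with the fact that $Z\cap F$ is itself a prefix code forces $z_1$ to be the first $Z\cap F$-atom of $x'$; dually, $z_n$ is the last $Z\cap F$-atom of some $x^*\in X$. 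I expect the sharp contradiction to come from combining these two-sided observations with the equal-cardinality constraint---for instance by looking at the change-of-basis matrix $M$ between the bases $X$ and $Z\cap F$ in the free abelian group $H^{\mathrm{ab}}$, which has nonnegative integer entries and determinant $\pm 1$, and arguing that the bifix property together with $F$-maximality forces both $M$ and $M^{-1}$ to be nonnegative, hence $M$ is a permutation matrix and $X=Z\cap F$. This last permutation step, which genuinely uses that $F$ is Sturmian (through Theorem~\ref{theoremBifixd+1}) and that both codes are bifix and $F$-maximal rather than just bases of $H$, is where I expect the real work to lie.
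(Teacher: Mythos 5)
The surjectivity half of your argument (well-definedness of both maps and $\phi\circ\psi=\mathrm{id}$) is correct and is essentially the paper's own argument: index multiplicativity along $\langle Z\cap F\rangle\subset H\subset A^\circ$ combined with Theorem~\ref{theoremGroups} forces $d_F(Z\cap F)=d$ and $\langle Z\cap F\rangle=H$.

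The injectivity half contains a genuine gap, which you acknowledge. From $X\subset H\cap A^*=Z^*$ and factoriality of $F$ you correctly get $X\subset(Z\cap F)^*$ and, by Theorem~\ref{theoremBifixd+1}, $\Card(X)=\Card(Z\cap F)$; but neither of these, nor the two-sided prefix/suffix observations, yields $X=Z\cap F$ (compare $\{ab,ba\}\subset\{a,b\}^*$: two bifix codes of equal cardinality, one contained in the star of the other, yet distinct). Your proposed repair via the change-of-basis matrix $M$ in $H^{\mathrm{ab}}$ would indeed work \emph{if} $M^{-1}$ were known to be nonnegative, but that requires $Z\cap F\subset X^*$, and you offer no argument for it beyond the assertion that bifixity and $F$-maximality "force" it. This inclusion is precisely the content of Proposition~\ref{propositionHcapF} ($\langle X\rangle\cap F=X^*\cap F$), whose proof runs through the coset automaton and Lemma~\ref{lemmaBidet} --- the Sturmian-specific machinery you have not invoked; that it is not automatic is shown by the paper's own example following the coset automaton, where $bb\in Z^*\setminus X^*$ for a degree-$3$ code in the Fibonacci set. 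The paper closes this direction much more directly: Lemma~\ref{lemmaBidet} gives $X\subset Z$, hence $X\subset Z\cap F$, and $F$-maximality of $X$ forces equality. Once $Z\cap F\subset X^*$ is in hand, your abelianization argument (or simply $X^*=(Z\cap F)^*$ together with uniqueness of the minimal generating set of a free submonoid) does finish the proof, but as written the decisive inclusion is missing and the "real work" you defer is exactly the part that needs to be done.
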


\begin{proof}
Let first $X$ be a finite $F$-maximal bifix code of $F$-degree $d$. Then
$\langle X\rangle$ is a subgroup of index $d$ by
Theorem~\ref{theoremGroups}.

Conversely, let $H$ be a subgroup of index $d$ of $A^\circ$ and let
$Z$ be the group code such that $Z^*=H\cap A^*$. By
Theorem~\ref{proposition1}, the set $X=Z\cap F$ is an $F$-maximal
bifix code of $F$-degree $e\le d$. 
By Theorem~\ref{theoremCompletion}, $X$ is finite. 
By Theorem~\ref{theoremGroups}, the
subgroup $\langle X\rangle$ has index $e$. Since $\langle X\rangle$ is
a subgroup of $H$, $e$ is a multiple of $d$. Thus $d=e$ and $\langle
X\rangle=H$.

Finally, let $X$ be an $F$-maximal bifix code of $F$-degree $d$.
Then  $H=\langle X\rangle$ is a subgroup of index $d$ of $A^\circ$.
Let $Z$ be the group code such that $Z^*=H\cap A^*$ and let $Y=Z\cap F$.
Then $X\subset Y$ and thus $X=Y$ since $X$ is an $F$-maximal bifix code.
This shows that the two maps are mutually inverse.

\end{proof}

A set $W$ of words of $\{a,b\}^*$ is \emph{balanced}\index{balanced
  set of words} if for all $w,w'\in W$, $|w|=|w'|$ implies
$||w|_a-|w'|_a|\le 1$. It is a classical property that the set of
factors of a Sturmian word is balanced (Theorem 2.1.5
in~\cite{Lothaire2002}). Thus any Sturmian set on two letters is
balanced.

Following Richomme and S\'e\'ebold~\cite{RichommeSeebold2010}, we say
that a subset $X$ of $\{a,b\}^*$ is \emph{factorially
  balanced}\index{factorially balanced set of words}\index{balanced
  set of words!factorially} if the set of factors of words of $X$ is
balanced.  They show that a finite set $X\subset\{a,b\}^*$ is
contained in some Sturmian set if and only if it is factorially
balanced. Thus, we have the following consequence of
Theorem~\ref{theoremGroups}.

\begin{corollary}
  Let $X\subset \{a,b\}^*$ be a bifix code. The following conditions
  are equivalent.
  \begin{enumerate}
  \item[\upshape{(i)}] There exists a Sturmian set $F\subset\{a,b\}^*$
    such that $X\subset F$ and $X$ is a finite $F$-maximal bifix code.
  \item[\upshape{(ii)}] $X$ is a factorially balanced basis of a
    subgroup of finite index of $\{a,b\}^\circ$.
  \end{enumerate}
\end{corollary}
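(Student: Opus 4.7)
The plan is to obtain both implications as immediate consequences of Theorem~\ref{theoremGroups} combined with the Richomme--S\'e\'ebold characterisation of finite factorially balanced sets recalled just before the corollary.

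For the implication (i)$\Rightarrow$(ii), suppose $F$ is a Sturmian set with $X\subset F$ and $X$ is a finite $F$-maximal bifix code. Since $X$ is finite, it is $F$-thin, and by Theorem~\ref{theoremDegree} it has finite $F$-degree~$d$. Theorem~\ref{theoremGroups} then yields that $X$ is a basis of a subgroup of index $d$ of $\{a,b\}^\circ$, which is in particular of finite index. Moreover, every factor of a word in $X$ is a factor of a word in the factorial set $F$, hence lies in $F$; since $F$ is balanced (the classical property of Sturmian sets recalled before the corollary, Theorem~2.1.5 in~\cite{Lothaire2002}), the set of factors of $X$ is balanced, that is, $X$ is factorially balanced.

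For the reverse implication (ii)$\Rightarrow$(i), suppose $X$ is factorially balanced and a basis of a subgroup $H$ of finite index $d$ of $\{a,b\}^\circ$. By Schreier's Formula~\eqref{equationSchreier}, $|X|=d+1$, so $X$ is in particular finite. Applying the Richomme--S\'e\'ebold theorem, there exists a Sturmian set $F\subset\{a,b\}^*$ containing $X$. Then the ``only if'' direction of Theorem~\ref{theoremGroups} applied to the Sturmian set $F$ gives directly that $X$, being a bifix code contained in $F$ and a basis of a subgroup of $\{a,b\}^\circ$ of index $d$, is a finite $F$-maximal bifix code of $F$-degree~$d$, which is exactly condition~(i).

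There is essentially no obstacle here: the corollary is a transparent combination of the two ingredients. The only mild point worth noting is that in (ii) the finiteness of $X$, needed to invoke the Richomme--S\'e\'ebold theorem (which is stated for finite sets), is automatic from Schreier's formula and the finite-index hypothesis on $H$.
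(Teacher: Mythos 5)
Your proof is correct and follows exactly the route the paper intends: the paper states this corollary without an explicit proof, presenting it as an immediate consequence of Theorem~\ref{theoremGroups} together with the Richomme--S\'e\'ebold characterisation of finite factorially balanced sets, which is precisely the combination you carry out. Your remark that finiteness of $X$ in (ii) follows from Schreier's Formula is a worthwhile detail that the paper leaves implicit.
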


As a further consequence of Theorem~\ref{theoremGroups}, we have the
following result.

\begin{corollary}
  Let $F$ be a Sturmian set on an alphabet with $k$ letters.  The
  number $N_{d,k}$ of finite $F$-maximal bifix codes $X\subset F$ of
  $F$-degree $d$ satisfies $N_{1,k}=1$ and
  \begin{displaymath}
    N_{d,k}=d(d!)^{k-1}-\sum_{i=1}^{d-1}[(d-i)!]^{k-1}N_{i,k}.
  \end{displaymath}
\end{corollary}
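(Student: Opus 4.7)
The plan is to reduce to a classical counting problem about subgroups of the free group, and then derive the recurrence by double counting homomorphisms into the symmetric group.

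First, I would apply Corollary~\ref{newCorollary}: the map $X \mapsto \langle X \rangle$ is a bijection between the finite $F$-maximal bifix codes of $F$-degree $d$ contained in $F$ and the subgroups of index $d$ of $A^\circ$. Therefore $N_{d,k}$ equals the number of subgroups of index $d$ in the free group $A^\circ$ on $k$ generators. In particular the statement no longer involves $F$ or bifix codes; it is a purely group-theoretic identity (M.~Hall's formula), and it suffices to establish that identity.

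Next I would set up the double count. A homomorphism $\varphi : A^\circ \to S_d$ is determined by the images of the $k$ free generators, so the total number of such homomorphisms is $(d!)^k$. I would sort these homomorphisms according to the size $i$ of the orbit of the point $1$ under the action of the image. For a fixed $i$ with $1 \le i \le d$, such a homomorphism is obtained by: (a) choosing the $i-1$ other points of the orbit of $1$ among the remaining $d-1$ points, giving $\binom{d-1}{i-1}$ choices; (b) choosing a transitive action on this pointed $i$-element set; and (c) choosing an arbitrary action on the complementary $d-i$ points, giving $((d-i)!)^k$ possibilities.

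The key bridge between transitive actions and subgroups is the following standard correspondence, which I would spell out: a subgroup $H$ of index $i$ of $A^\circ$ gives a transitive action of $A^\circ$ on the coset set $H\backslash A^\circ$ with $H$ as the stabilizer of the base coset; conversely, any transitive action of $A^\circ$ on a pointed $i$-element set arises this way, and the labellings of the $i-1$ non-base cosets contribute a factor $(i-1)!$. Hence the number of transitive pointed actions of $A^\circ$ on an $i$-element set is $(i-1)! \, N_{i,k}$.

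Assembling these counts gives
\begin{displaymath}
  (d!)^k \;=\; \sum_{i=1}^{d} \binom{d-1}{i-1} (i-1)!\, N_{i,k} \, ((d-i)!)^k
  \;=\; (d-1)! \sum_{i=1}^{d} ((d-i)!)^{k-1}\, N_{i,k},
\end{displaymath}
and dividing by $(d-1)!$ yields $d(d!)^{k-1} = \sum_{i=1}^{d}((d-i)!)^{k-1} N_{i,k}$. Isolating the $i=d$ term (which equals $N_{d,k}$) gives the claimed recurrence; the initial condition $N_{1,k}=1$ is immediate since the only $F$-maximal bifix code of $F$-degree $1$ is $F \cap A$. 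The main conceptual step, and the only one that is not a routine calculation, is the correspondence in the previous paragraph between pointed transitive actions and subgroups of given index, but this is entirely standard once the bijection of Corollary~\ref{newCorollary} has done the work of translating the combinatorial problem into a group-theoretic one.
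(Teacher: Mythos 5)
Your proof is correct and follows essentially the same route as the paper: Corollary~\ref{newCorollary} reduces the count to the number of subgroups of index $d$ in the free group of rank $k$, and the recurrence is then exactly M.~Hall's formula. The only difference is that the paper simply cites Hall (1949) for that formula, whereas you supply the standard double count of homomorphisms into $S_d$ sorted by the orbit of the base point; your computation (including the correspondence between pointed transitive actions and index-$i$ subgroups with the $(i-1)!$ labelling factor) is carried out correctly.
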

The formula results directly from the formula, due to
Hall~\cite{Hall1949}, for the number of subgroups of index $d$ in a
free group of rank $k$.

The values for $k=2$ are given by the recurrence
\begin{displaymath}
N_{d,2}=d \ d!-\sum_{i=1}^{d-1}(d-i)!N_{i,2}.
\end{displaymath}
The first values are
\begin{displaymath}
\begin{array}{r|cccccccccc}
d&1 &2&3&4&5&6&7 &8&9&10\\ \hline
N_{d,2} &1 &3&13&71&461&3447&29093&273343& 2829325& 31998903 
\end{array}
\end{displaymath}
The values for $d=2,3$ are consistent with
Examples~\ref{exampleFiboDegree2} and \ref{exampleDegree3}.

The formula is known to enumerate also the indecomposable permutations
on $d+1$ elements (see~\cite{DressFranz1985},
\cite{OssonaRosenstiehl2004} and \cite{Cori2009}).

\subsection{Incidence graph}\label{sectionPreliminaries}

Let $X$ be a set, let $P$ be the set of its proper prefixes and
$S$ be the set of its proper suffixes. Set $P'=P\setminus 1$ and
$S'=S\setminus 1$. The \emph{incidence graph}\index{incidence
  graph}\index{graph!incidence} of $X$ is the undirected graph $G$
defined as follows. The set of vertices is $V=1\otimes P'\cup
S'\otimes 1$. Similarly to the proof of
Proposition~\ref{propositionChristophe}, the tensor product notation
is used to emphasize that $V$ is made of two disjoint copies of $P'$
and $S'$.  The edges of $G$ are the pairs $\{1\otimes p,s\otimes 1\}$,
for $p\in P'$ and $s\in S'$, such that $ps\in X$.

Let $C$ be a connected component of $G$, that is a maximal set of
vertices connected by paths. The \emph{trace}\index{trace of a
  component} of $C$ on $P'$ is the set of $p\in P'$ such that
$1\otimes p\in C$. Similarly, the trace of $C$ on $S'$ is the set of
$s\in S'$ such that $s\otimes 1\in C$.

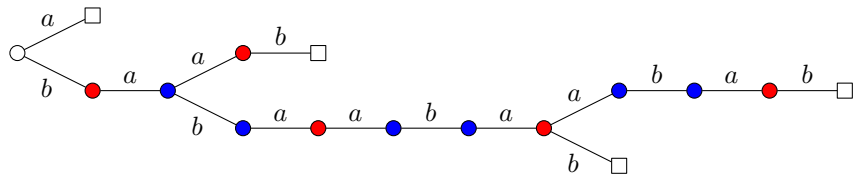
\begin{figure}[hbt]
  \gasset{Nadjust=wh,AHnb=0}
  \centering
  \begin{picture}(110,20)(0,-10)
    \node(1)(0,5){}\node[Nmr=0](a)(10,10){}\node[fillcolor=red](b)(10,0){}
    \node[fillcolor=blue](ba)(20,0){}\node[fillcolor=red](baa)(30,5){}\node[Nmr=0](baab)(40,5){}
    \node[fillcolor=blue](bab)(30,-5){}
    \node[fillcolor=red](baba)(40,-5){}
    \node[fillcolor=blue](babaa)(50,-5){}
    \node[fillcolor=blue](babaab)(60,-5){}
    \node[fillcolor=red](babaaba)(70,-5){}
    \node[fillcolor=blue](babaabaa)(80,0){}
    \node[fillcolor=blue](babaabaab)(90,0){}
    \node[fillcolor=red](babaabaaba)(100,0){}
    \node[Nmr=0](babaabaabab)(110,0){}
    \node[Nmr=0](babaabab)(80,-10){}
    \drawedge(1,a){$a$}\drawedge[ELside=r](1,b){$b$}
    \drawedge(b,ba){$a$}\drawedge(ba,baa){$a$}
    \drawedge[ELside=r](ba,bab){$b$}\drawedge(baa,baab){$b$}
    \drawedge(bab,baba){$a$}
    \drawedge(baba,babaa){$a$}
    \drawedge(babaa,babaab){$b$}
    \drawedge(babaab,babaaba){$a$}
    \drawedge(babaaba,babaabaa){$a$}
    \drawedge[ELside=r](babaaba,babaabab){$b$}
    \drawedge(babaabaa,babaabaab){$b$}
    \drawedge(babaabaab,babaabaaba){$a$}
    \drawedge(babaabaaba,babaabaabab){$b$}%
  \end{picture}
  \caption{The $F$-maximal bifix code of $F$-degree 3 with kernel
    $\{a,baab\}$.}
  \label{copie de figureExd+1}
\end{figure}

\begin{example}\label{exampleIncidenceGraph}
  Consider the $F$-maximal bifix code of $F$-degree $3$ in the
  Fibonacci set $F$ given in Figure~\ref{copie de figureExd+1}. It is
  a colored copy of Figure~\ref{figureExd+1}.  The incidence graph of
  $X$ is given in Figure~\ref{figureGrapheD3}.  It has two connected
  components colored red and blue.  The vertices on the left side are
  the $1\otimes p$ (written simply $p$ for convenience). The vertices
  on the right side are the $s\otimes 1$ with the same convention.

  The color on the node in Figure~\ref{copie de figureExd+1}
  corresponds to the color of the corresponding prefix in
  Figure~\ref{figureGrapheD3}.
\begin{figure}[hbt]
  \gasset{Nw=0,Nh=0,Nframe=n,AHnb=0,ExtNL=y,NLangle=0,NLdist=1}
  \centering
  \begin{picture}(40,66)(0,0)
    \node[NLangle=180](g0)(0,0){\textcolor{red}{$\vphantom{g}babaabaaba$}}
    \node(d0)(40,0){\textcolor{red}{$\vphantom{g}b$}}
    \node[NLangle=180](g1)(0,6){\textcolor{blue}{$\vphantom{g}babaabaab$}}
    \node(d1)(40,6){\textcolor{blue}{$\vphantom{g}ab$}}
    \node[NLangle=180](g2)(0,12){\textcolor{blue}{$\vphantom{g}babaabaa$}}
    \node(d2)(40,12){\textcolor{blue}{$\vphantom{g}bab$}}      
    \node[NLangle=180](g3)(0,18){\textcolor{red}{$\vphantom{g}babaaba$}}
    \node(d3)(40,18){\textcolor{red}{$\vphantom{g}aab$}}
    \node[NLangle=180](g4)(0,24){\textcolor{blue}{$\vphantom{g}babaab$}}
    \node(d4)(40,24){\textcolor{red}{$\vphantom{g}abab$}}
    \node[NLangle=180](g5)(0,30){\textcolor{blue}{$\vphantom{g}babaa$}}
    \node(d5)(40,30){\textcolor{blue}{$\vphantom{g}aabab$}}
    \node[NLangle=180](g6)(0,36){\textcolor{red}{$\vphantom{g}baba$}}
    \node(d6)(40,36){\textcolor{blue}{$\vphantom{g}baabab$}}
    \node[NLangle=180](g7)(0,42){\textcolor{blue}{$\vphantom{g}bab$}}
    \node(d7)(40,42){\textcolor{red}{$\vphantom{g}abaabab$}}
    \node[NLangle=180](g8)(0,48){\textcolor{red}{$\vphantom{g}baa$}}
    \node(d8)(40,48){\textcolor{blue}{$\vphantom{g}aabaabab$}}
    \node[NLangle=180](g9)(0,54){\textcolor{blue}{$\vphantom{g}ba$}}
    \node(d9)(40,54){\textcolor{blue}{$\vphantom{g}baabaabab$}}
    \node[NLangle=180](g10)(0,60){\textcolor{red}{$\vphantom{g}b$}}
    \node(d10)(40,60){\textcolor{red}{$\vphantom{g}abaabaabab$}}
    \drawedge[linecolor=red](g10,d10){} 
    \drawedge[linecolor=red](g10,d7){} 
    \drawedge[linecolor=red](g10,d3){}
    \drawedge[linecolor=blue](g9,d9){}
    \drawedge[linecolor=blue](g9,d6){}\drawedge[linecolor=blue](g9,d1){}
    \drawedge[linecolor=red](g8,d0){}
    \drawedge[linecolor=blue](g7,d8){}\drawedge[linecolor=blue](g7,d5){}
    \drawedge[linecolor=red](g6,d7){}\drawedge[linecolor=red](g6,d4){}
    \drawedge[linecolor=blue](g5,d6){}\drawedge[linecolor=blue](g5,d2){}
    \drawedge[linecolor=blue](g4,d5){}\drawedge[linecolor=blue](g4,d1){}
    \drawedge[linecolor=red](g3,d4){}\drawedge[linecolor=red](g3,d0){}
    \drawedge[linecolor=blue](g2,d2){}
    \drawedge[linecolor=blue](g1,d1){}
    \drawedge[linecolor=red](g0,d0){}

  \end{picture}
  \caption{The incidence graph of $X$.}
  \label{figureGrapheD3}
\end{figure}
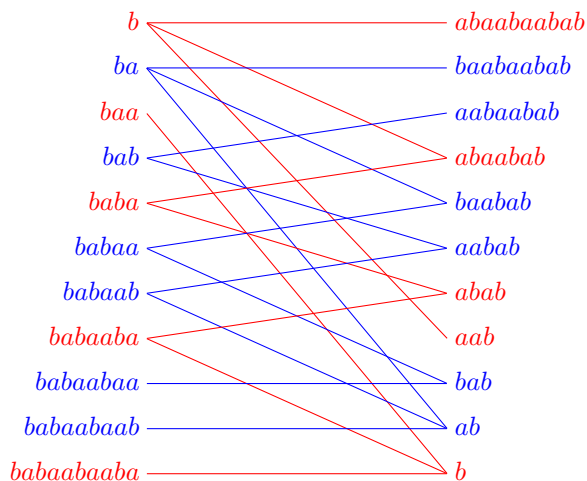
\end{example}
  The following lemma uses an argument similar to
Lemma~\ref{lemmaTree}.

\begin{lemma}\label{lemmaLCP}
  Let $v_1,v_2,\ldots,v_{n+1}$ be words such that $v_i,v_{i+1}$ are
  not prefix-comparable for $1\le i\le n$. Let $p_i$ be the longest
  common prefix of $v_i,v_{i+1}$, for $1\le i\le n$. If two of the
  $v_i$ are prefix-comparable, then two of the $p_i$ are equal.
\end{lemma}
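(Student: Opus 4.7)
I will prove this by strong induction on $n$, reducing to a pigeonhole argument that exploits the three-point (ultrametric) property of longest common prefixes: for any three words, two of their pairwise longest common prefixes coincide and are (weakly) prefixes of the third.

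The base cases $n=1$ (vacuously true, since $v_1$ and $v_2$ are assumed non-comparable) and $n=2$ are direct. For $n=2$, if say $v_1$ is a prefix of $v_3$, then $p_2=\lcp(v_2,v_3)$ must be a \emph{strict} prefix of $v_1$ (otherwise $v_1 \le v_2$, contradicting the consecutive hypothesis); applying the three-point property to $\{v_1,v_2,v_3\}$ — whose pairwise meets are $p_1, v_1, p_2$ — then forces $p_1=p_2$, since $v_1$ cannot be the minimum.

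For the inductive step, if there exist prefix-comparable $v_i, v_j$ with $(i,j)\ne(1,n+1)$, I would apply the inductive hypothesis to the proper subsequence $v_i, v_{i+1}, \ldots, v_j$ and conclude. Otherwise the only comparable pair is $(v_1, v_{n+1})$, and by reversing the sequence I may assume $v_1$ is a prefix of $v_{n+1}$. For each $k = 2, \ldots, n$ set $w_k = \lcp(v_k, v_{n+1})$. Applying the three-point property to $\{v_1, v_k, v_{n+1}\}$ — using that $v_k$ is comparable with neither $v_1$ nor $v_{n+1}$ — will force $w_k = \lcp(v_1, v_k)$ and $w_k$ to be a strict prefix of $v_1$; consequently all the $w_k$ are prefixes of $v_1$ and hence pairwise comparable.

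Applying the three-point property once more, this time to $\{v_k, v_{k+1}, v_{n+1}\}$ (meets $p_k, w_k, w_{k+1}$), will yield: if $w_k\ne w_{k+1}$ then $p_k$ equals the shorter of $w_k,w_{k+1}$; if $w_k=w_{k+1}$ then $p_k$ extends $w_k$ (possibly strictly). The same analysis handles the endpoints, giving $p_1 = w_2$ and $p_n = w_n$. Let $r$ denote the number of indices $k\in\{2,\ldots,n-1\}$ with $w_k = w_{k+1}$. Then there are $n-r$ \emph{non-degenerate} $p_k$'s — namely $p_1$, $p_n$, and the $p_k$ with $w_k\ne w_{k+1}$ — all lying in $\{w_2,\ldots,w_n\}$; and this set has at most $n-1-r$ distinct elements (its number of runs). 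Since $n-r > n-1-r$, pigeonhole forces two of the non-degenerate $p_k$'s to coincide, proving the lemma.

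The main obstacle will be the three-point case analysis. Specifically, I must verify (using non-comparability of $v_k$ with both $v_1$ and $v_{n+1}$) that $w_k$ is always a \emph{strict} prefix of $v_1$ — otherwise $v_1 \le v_k$ — and check that in the case $w_k\ne w_{k+1}$ the only consistent alternative is $p_k = \min(w_k,w_{k+1})$, ruling out $p_k$ strictly exceeding $\max(w_k,w_{k+1})$ or lying strictly between them; both rejected alternatives would yield a longer common prefix of $v_k$ and $v_{n+1}$ than $w_k$, contradicting the definition of $w_k$.
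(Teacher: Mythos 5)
Your argument is correct, but it takes a genuinely different route from the paper's. The paper argues globally and without induction: each $p_i$ is a fork node of the tree of prefixes of $V=\{v_1,\dots,v_{n+1}\}$ (since $v_i$ and $v_{i+1}$ diverge immediately after $p_i$), the leaves of that tree are the prefix-maximal elements $W$ of $V$, and the number of fork nodes of a tree is strictly smaller than its number of leaves; if two of the $v_i$ are prefix-comparable then $\Card(W)\le n$, so the $n$ words $p_1,\dots,p_n$ take at most $n-1$ distinct values and two must coincide --- the same counting device as in Lemma~\ref{lemmaTree}. You instead induct on $n$ and, in the critical case where only $v_1$ and $v_{n+1}$ are comparable, run everything through the ultrametric three-point identity for $\lcp$ (among the three pairwise longest common prefixes of any three words, the minimum is attained at least twice), reducing to a pigeonhole on the runs of the sequence $w_2,\dots,w_n$. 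The steps you flag as needing verification do all check out: $w_k=\lcp(v_1,v_k)$ is a proper prefix of $v_1$ because $\lcp(v_1,v_{n+1})=v_1$ strictly dominates $\lcp(v_1,v_k)$, the case analysis for $p_k$ against $w_k,w_{k+1}$ is exactly the three-point identity applied to $(v_k,v_{k+1},v_{n+1})$ (with comparability of the relevant pairs guaranteed because they are prefixes of a common word), and $n-r$ values drawn from a set of at most $n-1-r$ elements forces a collision between $p$'s with distinct indices. What the paper's approach buys is brevity and the absence of induction; what yours buys is that it only ever reasons about triples of words, at the cost of noticeably heavier bookkeeping.
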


\begin{proof}
  Let $V=\{v_1,\ldots,v_{n+1}\}$, let $P$ be the set of proper
  prefixes of $V$ and let $W=V\setminus P$. The set $W$ is the set of
  words of $V$ which have no proper prefix in $V$. The set $W$ is a
  prefix code. If two distinct words in $V$ are prefix-comparable,
  then $\Card(W)<\Card(V)\le n+1$.

  Let $m$ be the number of distinct $p_i$.  Since $v_i,v_{i+1}$ are
  not prefix-comparable for $1\le i\le n$, for each $p_i$ there are at
  least two distinct letters $a,b$ such that $p_ia,p_ib\in P\cup
  W$. This implies $m<\Card(W)$.  Indeed, the set $W$ can be seen as
  the set of leaves in a tree, and each $p_i$ is a fork node
  (i. e. a node with at least two children) in this tree. It is well-known
  that the number of fork nodes is strictly less than the number of
  leaves. If two of the $v_i$ are prefix-comparable, the inequality
  $\Card(W)<n+1$ implies $m<\Card(W)\le n$, and consequently two of
  the $p_i$ are equal.
\end{proof}

\begin{lemma}\label{lemmaSuite}
  Let $F$ be a Sturmian set and let $X\subset F$ be a bifix code. Let
  $P'$ (resp. $S'$) be the set of nonempty proper prefixes
  (resp. suffixes) of $X$ and let $G$ be the incidence graph of~$X$.
  \begin{itemize}
  \item[\upshape{(i)}] The graph $G$ is acyclic, that is a union of trees.
  \item[\upshape{(ii)}] The trace on $P'$ (resp. on $S'$) of a
    connected component $C$ of $G$ is a suffix (resp. prefix) code.
  \item[\upshape{(iii)}] In particular, this trace on $P'$ (resp. on
    $S'$) contains at most one right-special (resp. left-special)
    word.
  \end{itemize}
\end{lemma}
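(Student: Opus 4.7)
My plan for proving Lemma~\ref{lemmaSuite} is to derive all three parts from a single LCP-style argument applied to paths (for (ii)) and cycles (for (i)) in $G$, after which (iii) is almost immediate.

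I would first dispose of (iii) using (ii). In the Sturmian set $F$, for each length $n$ there is exactly one right-special word, and any suffix of a right-special word is again right-special (this last fact holds in any factorial set). Hence the right-special words of $F$ are totally ordered by the suffix relation. By (ii), the trace on $P'$ of any component $C$ is a suffix code, so it contains no two suffix-comparable words, hence at most one right-special. The dual argument, using that left-special words of $F$ are totally ordered by the prefix relation, handles the trace on $S'$.

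For (i) and (ii), the key preliminary observation is: whenever two vertices $1\otimes p,\,1\otimes p'$ are joined by a length-two path through $s\otimes 1$, the words $ps,\,p's$ both lie in $X$; the suffix-code property forces $p,p'$ to not be suffix-comparable. If $q$ is their longest common suffix, then $p=uq$ and $p'=vq$ with $u,v$ nonempty and ending in different letters, so $qs$ is preceded in $F$ by two distinct letters, i.e.\ $qs$ is left-special. Now consider a simple cycle $1\otimes p_1-s_1\otimes 1-1\otimes p_2-\cdots-1\otimes p_n-s_n\otimes 1-1\otimes p_1$ in $G$, and let $q_i$ be the longest common suffix of $p_i$ and $p_{i+1}$ (indices mod $n$). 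Apply Lemma~\ref{lemmaLCP} to the reversed sequence $\tilde p_1,\tilde p_2,\ldots,\tilde p_n,\tilde p_1$: consecutive terms are pairwise not prefix-comparable, while the first and last terms are equal; hence two of the longest common prefixes $\tilde q_i$ coincide, i.e.\ $q_i=q_j=q$ for some $i\neq j$. Then $qs_i$ and $qs_j$ are both left-special in $F$, so by the Sturmian total order on left-special words they are prefix-comparable, which means $s_i,s_j$ themselves are prefix-comparable. If $s_i=s_j$ this contradicts the distinctness of the vertices in the cycle. The remaining case $s_i$ a proper prefix of $s_j$ (or vice versa) must be ruled out using the bifix structure of $X$: since $q$ is a common suffix of $p_i,p_{i+1},p_j,p_{j+1}$ and $qs_i$ is left-special, the prescribed last letters of $u_i,v_i,u_j,v_j$ coincide, and combining this with the fact that $p_j s_j\in X$ while $p_j s_i\in P$ (because $s_i$ is a proper prefix of $s_j$), together with the dual LCP analysis applied to $s_1,\ldots,s_n,s_1$ (yielding right-special constraints on words of the form $p_k r$), should force either $s_i=s_j$ or $p_i=p_j$, contradicting the cycle. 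Part (ii) is then obtained by exactly the same LCP argument applied to a path $1\otimes p-s_1-1\otimes p_2-\cdots-1\otimes p'$ whose endpoints are suffix-comparable; the hypothesis of Lemma~\ref{lemmaLCP} is still satisfied, and the same contradiction closes the case.

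The main obstacle will be completing the case analysis when $s_i$ is a \emph{proper} prefix of $s_j$ after the LCP step, since the direct inclusion $p_i s_i\in X\subset p_j s_j\cdot(\text{factor})$ does not by itself violate the bifix property (it merely makes $p_i s_i$ an internal factor of a larger element of $X$). Overcoming this likely requires combining the left-special constraint on $qs_i,qs_j$ with the dual right-special constraints coming from applying Lemma~\ref{lemmaLCP} to the sequence of $s_k$'s, together with the uniqueness of left-special (and of right-special) words of a given length in $F$, so as to force the coincidence $s_i=s_j$ (or $p_i=p_j$) that contradicts the simplicity of the cycle or path. Once (i) and (ii) are established, (iii) follows as described.
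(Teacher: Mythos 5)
There is a genuine gap, and it is exactly the one you flag yourself: after the application of Lemma~\ref{lemmaLCP} you obtain two equal longest common suffixes $q_i=q_j=q$, deduce that $qs_i$ and $qs_j$ are both left-special and hence prefix-comparable, and are then left with the case where $s_i$ is a \emph{proper} prefix of $s_j$. Your proposed way out (combining left-special and right-special constraints with a dual LCP analysis to ``force'' $s_i=s_j$ or $p_i=p_j$) is not carried out and, as you note, the bifix property alone does not exclude this case: $p_is_i\in X$ with $s_i$ a proper prefix of $s_j$ contradicts nothing directly. So as written the argument for (i) and (ii) does not close.

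The paper's proof closes this case by a different organization of the same ingredients: it proves, \emph{by induction on $n$}, the two-sided statement that the endpoints of any reduced path of length $2n$ between two suffix-vertices are not prefix-comparable, and between two prefix-vertices are not suffix-comparable. In the inductive step, the equality $p_i=p_j$ produced by Lemma~\ref{lemmaLCP} yields two right-special words $u_ip_i$ and $u_jp_j$, hence (by the chain structure of right-special words in a Sturmian set) $u_i$ and $u_j$ are suffix-comparable; but $1\otimes u_i$ and $1\otimes u_j$ are joined by the reduced sub-path of length $2(j-i)\le 2(n-1)$, so the induction hypothesis says they are \emph{not} suffix-comparable --- contradiction, with no need to distinguish equality from proper comparability. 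This is precisely the device your proposal is missing: you work with a single minimal cycle (or path) rather than a graded induction on path length, so you have no ``shorter path'' hypothesis to invoke once you land on two properly comparable words. Your reduction of (iii) to (ii) via the chain of right-special (resp.\ left-special) words is correct and matches the paper's use of Proposition~\ref{propPrefixSpecial}, but parts (i) and (ii) need the inductive formulation to go through.
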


\begin{proof} The last assertion follows from the second by
  Proposition~\ref{propPrefixSpecial}.  We call a path
  \emph{reduced}\index{reduced path}\index{path, reduced}
  if it does not use equal consecutive edges.

  We prove by induction on $n\ge 1$ that if $s\otimes 1$ and $t\otimes
  1$ (resp. $1\otimes p$ and $1\otimes q$) are connected by a reduced
  path of length $2n$ in $G$, then $s,t$ are not prefix-comparable
  (resp. $p,q$ are not suffix-comparable). This shows that $G$ is
  acyclic. Indeed, if there were a cycle from $s$ to $t=s$ in
  $G$, then $s$ and $t$ would be prefix-comparable. This shows also
  that two words in the same trace on $P'$ (resp. on $S'$) are not
  suffix-comparable (resp. are not prefix-comparable).

\begin{figure}[hbt]
  \gasset{Nw=0,Nh=0,Nframe=n,AHnb=0,ExtNL=y,NLangle=0,NLdist=1}
  \centering
  \begin{picture}(20,8)(0,-10)
    \node(s)(20,6){$\vphantom{gd}s$}
    \node[NLangle=180](q)(0,3){$\vphantom{gd}q$}
    \node(t)(20,0){$\vphantom{gd}t$}
    \drawedge[linecolor=red](q,s){}
    \drawedge[linecolor=red](q,t){}
  \end{picture}
\qquad\qquad\qquad\qquad
  \begin{picture}(20,24)(0,0)
    \node(s)(20,24){$\vphantom{gd}s$}
    \node[NLangle=180](u1)(0,21){$\vphantom{gd}u_1$}
    \node(v2)(20,18){$\vphantom{gd}v_2$}
    \node(vdot)(20,12){$\vdots$}
    \node(vn)(20,6){$\vphantom{gd}v_n$}
    \node[NLangle=180](un)(0,3){$\vphantom{gd}u_n$}
    \node(t)(20,0){$\vphantom{gd}t$}
    \drawedge[linecolor=red](u1,s){}
    \drawedge[linecolor=red](u1,v2){}
    \drawedge[linecolor=red](un,vn){}
    \drawedge[linecolor=red](un,t){}
  \end{picture}
  \caption{A path $(s\otimes 1,1\otimes q, t\otimes 1)$ on the left,
    and a path of length $2n$ on the right.}
  \label{figurelemmaSuite1}
\end{figure}
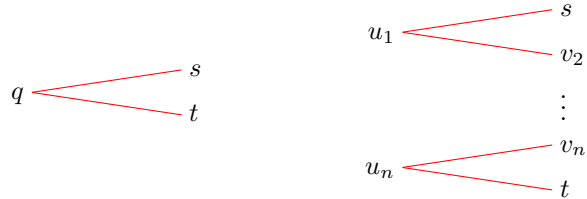

The property holds for $n=1$. Indeed, a reduced path of length $2$
from $s\otimes 1$ to $t\otimes 1$ is of the form $(s\otimes 1,1\otimes
q, t\otimes 1)$ with $qs,qt\in X$. Since the path is reduced, $s\ne
t$, and since $X$ is prefix, $s$ and $t$ are not prefix-comparable,
see Figure~\ref{figurelemmaSuite1}. The proof for prefixes is similar.

Let $n\ge 2$. A path of length $2n$ from $s\otimes 1$ to $t\otimes 1$
is a sequence $(v_1\otimes 1,1\otimes u_1,v_2\otimes 1,\ldots,1\otimes
u_n,v_{n+1}\otimes 1)$ with $s=v_1$ and $t=v_{n+1}$ such that the $2n$
words defined for $1\le i\le n$ by
\begin{displaymath}
  x_{2i-1}=u_{i}v_{i},\quad x_{2i}=u_{i}v_{i+1}.
\end{displaymath}
are in $X$. Moreover, since the path is reduced, one has $x_j\ne
x_{j+1}$ for $1\le j< 2n$.

For $1\le i\le n$, let $p_i$ be the longest common prefix of
$v_i,v_{i+1}$.  Since $x_{2i-1}\ne x_{2i}$ and since the code $X$ is
prefix, the words $v_i$ and $v_{i+1}$ are not prefix-comparable.

Arguing by contradiction, assume that $v_1$ and $v_{n+1}$ are
prefix-comparable. By Lemma~\ref{lemmaLCP}, we have $p_i=p_j$ for some
indices $i,j$ with $1\le i<j\le n$.

Set $v_i=p_iv'_i$ and $v_{i+1}=p_iv''_i$. Since $v_i,v_{i+1}$ are not
prefix-comparable, the words $v'_i,v''_i$ are nonempty. Since their
longest common prefix is empty, their initial letters are
distinct. Thus $u_ip_i$ is right-special.  Similarly $u_jp_j$ is
right-special. Thus $u_ip_i$ and $u_jp_j$ are suffix-comparable.
Since $p_i=p_j$, $u_i$ and $u_j$ are suffix-comparable.

But $1\otimes u_i$ and $1\otimes u_j$ are connected by the path
$(1\otimes u_i, v_{i+1}\otimes 1,\ldots,v_j\otimes 1,1\otimes u_j)$
of length $2(j-i)\le2(n-1)$. By the induction hypothesis, $u_i$ and $u_j$ are
not suffix-comparable, a contradiction.

The proof that if $1\otimes p$ and $1\otimes q$ are connected by a
path of length $2n$ in $G$, then $p,q$ are not suffix-comparable is
similar.
\end{proof}

Let $X$ be a bifix code and let $P$ be the set of proper prefixes of
$X$. Consider the equivalence $\theta_X$ on $P$ which is the
transitive closure of the relation formed by the pairs $p,q\in P$ such
that $ps,qs\in X$ for some $s\in A^+$. Such a pair corresponds, when
$p,q\ne1$, to a path $(1\otimes p,s\otimes1,1\otimes q)$ in the
incidence graph of $X$.  Thus a class of $\theta_X$ is either reduced
to the empty word or it is the trace on $P\setminus1$ of a connected
component of the incidence graph of $X$.


\begin{example}\label{exampleTheta}
  Consider the code $X$ of Example~\ref{exampleIncidenceGraph}
  above. The three classes of $\theta_X$ are the class $\{1\}$ of the
  empty word, and the two suffix codes which are the traces of
  connected components of the incidence graph on the set of nonempty
  proper prefixes of $X$. These codes are 
$\{babaabaaba,babaaba,baba,baa,b\}$
and
$\{babaabaab,babaabaa,babaab,babaa,bab,ba\}$.
They are shown in
  Figure~\ref{figureSuffixMax}.
\end{example}

\begin{figure}[hbt]
  \gasset{Nadjust=wh,AHnb=0,linecolor=red}
  \centering
  \begin{picture}(110,20)(0,-8)
    \node[Nmr=0](babaabaaba)(0,5){}
     \node(abaabaaba)(10,5){}
    \node(baabaaba)(20,5){}
    \node(aabaaba)(30,5){}
    \node[Nmr=0](babaaba)(30,-5){}
    \node(abaaba)(40,0){}
    \node(baaba)(50,0){}
    \node(aaba)(60,0){}
    \node[Nmr=0](baba)(60,-10){}
    \node(aba)(70,-5){}
    \node[Nmr=0](baa)(70,5){}
    \node(ba)(80,-5){}
    \node(aa)(80,5){}
    \node(a)(90,0){}
    \node[Nmr=0](b)(90,-10){}
    \node(1)(100,-5){}
\drawedge(babaabaaba,abaabaaba){$b$}
\drawedge(abaabaaba,baabaaba){$a$}
\drawedge(baabaaba,aabaaba){$b$}
\drawedge(aabaaba,abaaba){$a$}
\drawedge(abaaba,baaba){$a$}
\drawedge(baaba,aaba){$b$}
\drawedge(aaba,aba){$a$}
\drawedge(aba,ba){$a$}
\drawedge[ELside=r](ba,a){$b$}
\drawedge(a,1){$a$}
\drawedge[ELside=r](babaaba,abaaba){$b$}
\drawedge[ELside=r](baba,aba){$b$}
\drawedge(baa,aa){$b$}
\drawedge(aa,a){$a$}
\drawedge[ELside=r](b,1){$b$}
  \end{picture}
\gasset{linecolor=blue}
  \begin{picture}(90,35)(-10,-18)
   \node[Nmr=0](babaabaab)(-10,-5){}
           \node(abaabaab)(0,-5){}
   \node[Nmr=0](babaabaa)(0,5){}
   \node(baabaab)(10,-5){}
   \node(abaabaa)(10,5){}
   \node(aabaab)(20,-5){}
   \node[Nmr=0](babaab)(20,-15){}
     \node(baabaa)(20,5){}
    \node(aabaa)(30,5){}
    \node[Nmr=0](babaa)(30,-3){}
    \node(abaab)(30,-10){}
    \node(abaa)(40,0){}
    \node(baab)(40,-10){}
    \node(baa)(50,0){}
    \node(aab)(50,-10){}
    \node[Nmr=0](bab)(50,-20){}
    \node(aa)(60,0){}
    \node[Nmr=0](ba)(60,-8){}
    \node(ab)(60,-15){}
    \node(a)(70,-5){}
    \node(b)(70,-15){}
    \node(1)(80,-10){}
\drawedge(babaabaab,abaabaab){$b$}
\drawedge(babaabaa,abaabaa){$b$}
\drawedge(abaabaab,baabaab){$a$}
\drawedge(abaabaa,baabaa){$a$}
\drawedge(baabaab,aabaab){$b$}
\drawedge(baabaa,aabaa){$b$}
\drawedge(aabaab,abaab){$a$}
\drawedge(aabaa,abaa){$a$}
\drawedge(abaa,baa){$a$}
\drawedge(baa,aa){$b$}
\drawedge(aa,a){$a$}
\drawedge(a,1){$a$}
\drawedge[ELside=r](ba,a){$b$}

\drawedge[ELside=r](babaab,abaab){$b$}
\drawedge(abaab,baab){$a$}
\drawedge(baab,aab){$b$}
\drawedge(aab,ab){$a$}
\drawedge(ab,b){$a$}
\drawedge[ELside=r](b,1){$b$}

\drawedge[ELside=r](babaa,abaa){$b$}
\drawedge[ELside=r](bab,ab){$b$}
\drawedge[ELside=r](ba,a){$b$}
  \end{picture}
  \caption{The two suffix codes which are classes of the equivalence
    $\theta_X$.}
  \label{figureSuffixMax}
\end{figure}
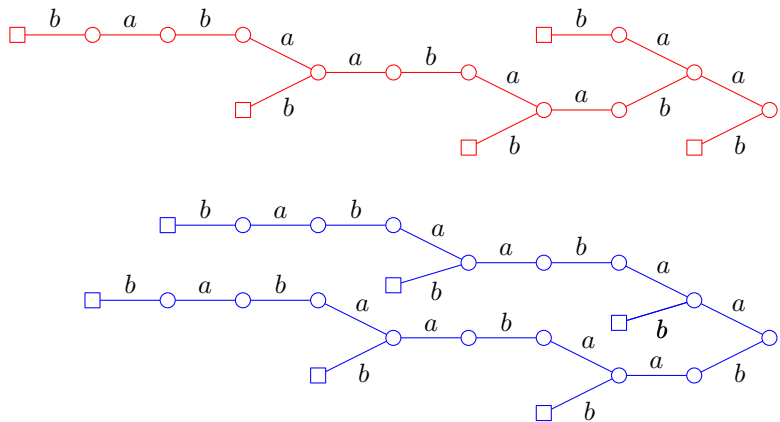
The following property relates the equivalence $\theta_X$ with the
right cosets of $H=\langle X\rangle$.

\begin{proposition}\label{propTheta}
  Let $X$ be a bifix code, let $P$ be the set of proper prefixes of
  $X$ and let $H$ be the subgroup generated by $X$. For any $p,q\in
  P$, $p\equiv q\bmod\theta_X$ implies $Hp=Hq$.
\end{proposition}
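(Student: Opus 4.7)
My plan is to reduce the claim to the defining (non-transitive) generating relation of $\theta_X$ and then to a one-line computation in the free group.

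First I would observe that the relation ``$Hp = Hq$'' on $P$ is itself an equivalence relation on $P$ (it is the restriction to $P$ of the equality of right cosets of $H$ in $A^\circ$). Consequently, since $\theta_X$ is by definition the transitive closure of the relation
\begin{displaymath}
  R = \{(p,q)\in P\times P \mid \exists\, s\in A^+\ \text{with}\ ps,\, qs\in X\},
\end{displaymath}
it suffices to verify the implication $Hp=Hq$ for every pair $(p,q)\in R$, and then extend by transitivity.

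So fix $p,q\in P$ and $s\in A^+$ with $ps,qs\in X$. Since $X\subset H$, both $ps$ and $qs$ lie in $H$. In the free group $A^\circ$ we can compute
\begin{displaymath}
  (ps)(qs)^{-1} \;=\; p s s^{-1} q^{-1} \;=\; p q^{-1}\,.
\end{displaymath}
Because $H$ is a subgroup, $pq^{-1}=(ps)(qs)^{-1}\in H$, and hence $Hp = Hq$. This handles the base case.

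Finally, suppose $p\equiv q\pmod{\theta_X}$. By definition of transitive closure, there exist $p=p_0,p_1,\ldots,p_n=q$ in $P$ with $(p_{i-1},p_i)\in R$ for each $i$. Applying the base case to each consecutive pair gives $Hp_{i-1}=Hp_i$, and chaining these equalities yields $Hp=Hq$. There is no real obstacle here: the only subtle point is the harmless observation that the empty-word class of $\theta_X$ (which by the paper's convention is $\{1\}$) is automatically compatible with the conclusion since $H\cdot 1 = H$.
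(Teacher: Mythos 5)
Your proof is correct and follows essentially the same route as the paper's: reduce by transitivity to the generating pairs $ps,qs\in X$, then conclude by a one-line computation in the free group (the paper phrases the base case as $p,q\in Hs^{-1}$, which is the same observation as your $pq^{-1}=(ps)(qs)^{-1}\in H$). Nothing further is needed.
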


\begin{proof}
  Since $p\equiv q\bmod\theta_X$, there is a path from $1\otimes p$ to
  $1\otimes q$ in the incidence graph of $X$ of length $2n$, for some
  $n\ge0$.  If $n=1$, There is a word $s\in A^+$ such that $ps,qs\in
  X$. Then $p,q\in Hs^{-1}$ and thus $Hp=Hq$. The general case follows
  by induction.
\end{proof}

Let $\A=(P,1,1)$ be the literal automaton of $X^*$ (see
Section~\ref{sectionAutomata}).  We show that the equivalence
$\theta_X$ is compatible with the transitions of the automaton $\A$ in
the following sense.

\begin{lemma}\label{lemmaCompatible}
  Let $F$ be a Sturmian set. Let $X\subset F$ be a bifix code and let $P$
  be the set of proper prefixes of $X$.  Let $p,q\in P$ and $a\in
  A$. If $p\equiv q\bmod\theta_X$ and if $p\cdot a,q\cdot
  a\ne\emptyset$ in the literal automaton of $X^*$, then $p\cdot
  a\equiv q\cdot a\bmod\theta_X$.
\end{lemma}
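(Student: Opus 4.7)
The plan is to prove compatibility of $\theta_X$ with the transitions of the literal automaton by first establishing the statement for chains of length one in the definition of $\theta_X$, then propagating via transitivity along a shortest path in the incidence graph. The technical heart will rely on Lemma~\ref{lemmaSuite}(iii), which limits each $\theta_X$-class to at most one right-special word, combined with the fact that in a Sturmian set every non-right-special factor has a unique right-extension letter in $F$.

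First I would handle the direct case: $ps, qs \in X$ for some $s \in A^+$, assuming both $p\cdot a$ and $q\cdot a$ are defined. If $s$ begins with $a$, write $s = av$; when $v = 1$ both $pa = ps$ and $qa = qs$ lie in $X$, so $p\cdot a = q\cdot a = 1$; when $v \neq 1$ the words $pa, qa$ lie in $P\setminus\{1\}$ and $(pa)v, (qa)v \in X$, giving $pa \equiv qa \bmod \theta_X$ directly from the definition. If instead $s$ begins with $b \neq a$, then $pa$ and $pb$ both lie in $P\cup X \subset F$ (the former by hypothesis, the latter because $pb$ is a prefix of $ps \in X$), so $p$ is right-special; the same reasoning shows $q$ is right-special; by Lemma~\ref{lemmaSuite}(iii) this forces $p = q$ and the conclusion is trivial.

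For the general case, since the $\theta_X$-class of $1$ is $\{1\}$ (as $X$ is a suffix code), we may assume $p, q \in P\setminus\{1\}$ and pick a shortest path $p = p_0, p_1, \ldots, p_n = q$ in the incidence graph, with connecting words $s_i$ satisfying $p_i s_i, p_{i+1} s_i \in X$. The key claim is that $p_i \cdot a$ is defined for every $i$: granted this, applying the direct case to each consecutive pair and using transitivity yields $p\cdot a \equiv q\cdot a \bmod \theta_X$. To establish the claim, let $U$ be the common $\theta_X$-class. By Lemma~\ref{lemmaSuite}(iii), $U$ contains at most one right-special word $u^*$. Every other element $x \in U$ admits a unique letter $c_x \in A$ with $xc_x \in F$, since $F$ is Sturmian. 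Directly-related non-right-special pairs $p' \sim p''$ via $s$ must have the first letter of $s$ equal to both $c_{p'}$ and $c_{p''}$, so propagating along the path shows that all non-right-special elements of $U$ share a common extension letter $c$. The hypothesis that $p\cdot a$ and $q\cdot a$ are both defined then forces $c = a$ — otherwise at most one of the endpoints (namely $u^*$) could admit $a$ as an extension, and that case collapses to $p = q$ by Lemma~\ref{lemmaSuite}(iii). With $c = a$, all $p_i$ admit $a$ as an extension (including $u^*$, whose neighbors in the path force the existence of a connecting $s_i$ starting with $a$). Moreover, each $s_i$ in the shortest path must itself start with $a$: if some $s_i$ began with $b \neq a$, the direct case above would force $p_i = p_{i+1}$, contradicting distinctness of vertices in a shortest path.

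The main obstacle is the verification, in the general case, that every intermediate vertex of the shortest path admits $a$ as a transition. This is not immediate from the hypothesis on the endpoints and requires combining the Sturmian uniqueness-of-extension property with the at-most-one-right-special bound of Lemma~\ref{lemmaSuite}(iii). Once this is in place, chaining the direct case along the path and applying transitivity of $\theta_X$ completes the proof.
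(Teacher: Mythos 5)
Your overall route is in fact the paper's: reduce the lemma to showing that every connecting word along a path in the incidence graph begins with $a$, using Lemma~\ref{lemmaSuite}(iii) together with the unique right extension of non-right-special words of $F$. Your direct case is correct. But the pivotal step of your general case rests on a claim that is false: it is \emph{not} true that all non-right-special elements of a $\theta_X$-class share a common extension letter. The unique right-special word $u^*$ of the class is precisely a branch point of the tree: its neighbours $s\otimes 1$, $s'\otimes 1$ in the incidence graph may begin with different letters, and the non-right-special prefixes attached to these two neighbours then carry different extension letters. This already happens in Example~\ref{exampleTheta}: in the class $\{babaabaaba,\allowbreak babaaba,\allowbreak baba,\allowbreak baa,\allowbreak b\}$, the word $baa$ has unique extension $b$ (since $baab\in F$ and $baaa\notin F$) while $baba$ has unique extension $a$ (since $babaa\in F$ and $babab\notin F$); only $babaaba$ is right-special. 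So "the common letter $c$" is not well defined, and the inference "$p\cdot a$ and $q\cdot a$ defined, hence $c=a$" does not go through as stated.

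The repair is local and is exactly what the paper does: propagate from the two endpoints separately rather than asserting a global property of the class. Since $u^*$ occurs at most once on the shortest path, the path splits into at most two maximal runs of non-right-special vertices; every connecting word incident to a non-right-special vertex begins with that vertex's unique extension letter, so all connecting words incident to $p$'s run share one first letter and all those incident to $q$'s run share another. The hypothesis $p\cdot a\ne\emptyset$ forces the first letter to be $a$ on $p$'s side, $q\cdot a\ne\emptyset$ forces it to be $a$ on $q$'s side, and together the two runs cover every $s_i$; the degenerate cases where $p$ or $q$ equals $u^*$ are absorbed because the other endpoint's run then covers the whole path. From there your conclusion is fine: each $p_ia$ is a prefix of $p_is_i\in X$, so every $p_i\cdot a$ is defined, and chaining your direct case gives the result. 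The idea and the use of both endpoint hypotheses are right, but as written the argument passes through a false intermediate statement and needs this two-sided rephrasing to become a proof.
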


\begin{proof}
Let $G$ be the incidence graph of $X$. 

Let $p,q\in P$ and $a\in A$ be such that $p\equiv q\bmod\theta_X$ and
$p\cdot a,q\cdot a\ne\emptyset$. If $p=1$, then $q=1$ and the
conclusion holds. Thus we may assume that $p\ne1, q\ne1$, and that
$p\ne q$.  Let $(1\otimes u_0,v_1\otimes 1,1\otimes
u_1,\ldots,v_n\otimes 1,1\otimes u_n)$ be a path in $G$ with $p=u_0$,
$u_n=q$.  The corresponding words in $X$ are $u_0v_1,
u_1v_1,u_1v_2,\dots,u_nv_n$. 
We may assume that the words $u_i$ are pairwise distinct,
and that the $v_i$ are pairwise distinct.  Moreover, since $p\cdot
a,q\cdot a\ne\emptyset$ there exist words $v,w$ such that $pav,qaw\in
X$.

The proof is in two steps. In the first step, we assume that  $v_1$
and $v_n$ both start with $a$.  In the second step, we show that this
condition is always fulfilled.

Assume that $v_1$ and $v_n$ begin with $a$. There are two cases.  

Case 1: Assume first that $pa,qa\in P$. Then $p\cdot a=pa$ and $q\cdot
a=qa$.  If all words $v_i$ begin with $a$, then clearly the
equivalence $pa\equiv qa\bmod\theta_X$ holds. Thus assume the
contrary, and let $i>1$ be minimal such that $v_i$ begins with a
letter distinct of $a$ and let $i\le j<n$ be maximal such that $v_j$
begins with a letter distinct of $a$. Then both words $u_{i-1}$ and
$u_j$ are right-special (since $u_{i-1}v_{i-1}$ starts with $u_{i-1}a$
and $u_{i-1}v_i$ starts with $u_{i-1}b$ for some letter $b\ne a$ and
similarly for $u_j$). But since $u_{i-1}$ and $u_j$ are in the same
trace on $P'$ of a connected component of $G$, Lemma~\ref{lemmaSuite} implies that $u_{i-1}=u_j$, that is
$i-1=j$. But this contradicts the inequality $i\le j$.

Case 2: Suppose now that $pa\in X$. This implies that $v_1=a$, since
$pv_1=u_0v_1$ is in $X$ and begins with $pa$.  If $v_n=aw$, then since
$v_1$ and $v_n$ , if they are distinct, are not prefix-comparable by
Lemma~\ref{lemmaSuite}, one $n=1$ and $w=1$.  If $v_n\ne aw$, then
$(v_1\otimes 1,1\otimes u_1,\ldots,v_n\otimes 1,1\otimes u_n,aw\otimes
1)$ is a path from $v_1\otimes 1$ to $aw\otimes 1$ (recall that
$u_naw=qaw\in X$). Lemma~\ref{lemmaSuite} implies that $v_1=a$ and
$aw$, if they are distinct, are not prefix-comparable.  Thus, one has
again $w=1$. In both cases, $qa\in X$ and therefore $p\cdot a=1=q\cdot
a$.

We now show that the assumption that $v_1$ begins with a letter
distinct of $a$ leads to a contradiction (the case where $v_n$ starts
with a letter distinct from $a$ is handled symmetrically). In this
case since $u_0v_1$ is in $X$ and $u_0av=pav\in X$, the word $u_0$ is
right-special.  Let $i$ be the largest integer such that $v_i$ begins
with a letter distinct of $a$ for $1\le i\le n$. If $i<n$, then $u_i$
is right-special. This contradicts Lemma~\ref{lemmaSuite}(iii), since
$u_0$ and $u_i$ are distinct (because $i\ge1$) elements of the trace
on $P'$ of a connected component of $G$.  If $i=n$, then $u_0$ and
$u_n$ are right-special since $u_nv_n\in X$ and $u_naw=qaw\in X$. We
obtain again a contradiction since $u_0$ and $u_n$ are distinct.
\end{proof}

\subsection{Coset automaton}

Let $F$ be a Sturmian set and
let $X\subset F$ be a bifix code.
We introduce a new automaton denoted $\B_X$ or $\B$
for short, and called the \emph{coset automaton}\index{coset
  automaton}\index{automaton!coset} of $X$. 
Let $R$ be the set of classes of $\theta_X$ with the class of $1$
still denoted $1$. 
The coset automaton of $X$  is the automaton
$\B_X=(R,1,1)$ with set of states $R$ and transitions induced by the
transitions of the literal automaton $\A=(P,1,1)$ of $X^*$. Formally,
for $r,s\in R$ and $a\in A$, one has $r\cdot a=s$ in the automaton
$\B$ if there exist $p$ in the class $r$ and $q$ in the class $s$ such
that $p\cdot a=q$ in the automaton $\A$.

Observe first that the definition is consistent since, by
Lemma~\ref{lemmaCompatible}, if $p\cdot a$ and $p'\cdot a$ are
nonempty and $p, p'$ are in the same class $r$, then $p\cdot a$ and
$p'\cdot a$ are in the same class. Since the class $p\cdot
a$ is uniquely defined, the automaton is indeed
deterministic. 

Observe next that if there is a path from $p$ to $p'$ in  the
automaton $\A$ labeled $w$ , then there is a path from the class $r$
of $p$ to the class $r'$ of $p'$ labeled $w$ in $\B_X$.

\begin{figure}[hbt]\label{figureBX}
  \centering
  \begin{picture}(40,10)(0,-5)
    \node(1)(0,0){$1$}
    \node[linecolor=red](2)(20,0){\textcolor{red}{$2$}}
    \node[linecolor=blue](3)(40,0){\textcolor{blue}{$3$}}
    \drawedge[curvedepth=3](1,2){$b$}\drawedge[curvedepth=3](2,1){$b$}
    \drawedge[curvedepth=3](2,3){$a$}\drawedge[curvedepth=3](3,2){$a$}
    \drawloop[loopangle=180](1){$a$}\drawloop[loopangle=0](3){$b$}
  \end{picture}
  \caption{The automaton $\B_X$.}
\end{figure}

\begin{example} For the code $X$ of
  Example~\ref{exampleIncidenceGraph}, the automaton $\B_X$ has three
  states. State $2$ is the red class, that is the class containing
  $b$, and state $3$ is the blue class containing $ba$. The bifix code 
  generating the submonoid  recognized by this automaton is
  $Z=a\cup b(ab^*a)^*b$. Observe that the word $bb$ is in $Z^*$ but it
  is not in $X^*$.
\end{example}

The following result shows that the coset automaton of $X$ is the
Stallings automaton of the subgroup generated by $X$.

\begin{lemma}\label{lemmaBidet} Let $F$ be a Sturmian set, and let 
  $X\subset F$ be a bifix code. The coset automaton $\B_X$ is
  reversible and describes the subgroup generated by $X$.
Moreover $X\subset Z$, where $Z$ is the bifix code
  generating the submonoid recognized by $\B_X$.
\end{lemma}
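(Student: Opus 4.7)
The plan is to prove, in order, (a) reversibility of $\B_X$, (b) $X\subset Z$, and (c) that $\B_X$ describes $\langle X\rangle$. Throughout I use the observation that the $\theta_X$-class of $1$ is $\{1\}$: if $p\in P'$ were $\theta_X$-equivalent to $1$, then some $s\in A^+$ would satisfy $s,ps\in X$, making $s$ a proper suffix of $ps$, contradicting $X$ being a suffix code.

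For reversibility (the main obstacle), suppose $r\cdot a=r'\cdot a=s$ with witnesses $p\in r$, $p'\in r'$ such that $p\cdot a,p'\cdot a$ lie in the class $s$ in $\A$. If $s$ is the class of $1$, then $pa,p'a\in X$: if $p=1$ then $a\in X$, forcing $p'=1$ (else $a$ would be a proper suffix of $p'a\in X$), while if $p,p'\in P'$ both vertices $1\otimes p,1\otimes p'$ are adjacent to $a\otimes 1$ in the incidence graph and hence $p\equiv p'\bmod\theta_X$. The substantive case is $s\ne[1]$: then $p\cdot a=pa$ and $p'\cdot a=p'a$ are nonempty proper prefixes with $pa\equiv p'a\bmod\theta_X$, and we must infer $p\equiv p'\bmod\theta_X$. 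To do so, take a reduced path $1\otimes u_0,v_1\otimes 1,1\otimes u_1,\ldots,v_n\otimes 1,1\otimes u_n$ in the incidence graph with $u_0=pa$, $u_n=p'a$. Whenever $u_{i-1}$ and $u_i$ end in distinct letters, $v_i$ admits two distinct left-extensions in $F$ and so is left-special; by Lemma~\ref{lemmaSuite}(iii) applied to the $v_i$'s (which lie in the trace on $S'$ of a single connected component), at most one such index occurs, and since the sequence of last letters of the $u_i$ starts and ends with $a$, all $u_i$ must end with $a$. Writing $u_i=w_ia$, the pairs $(w_{i-1},w_i)$ are connected in the incidence graph by $av_i$, yielding a path from $w_0=p$ to $w_n=p'$. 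The only edge case is $w_i=1$ for some $i$, which would make $av_i\in X$ a proper suffix of $w_{i-1}av_i\in X$, again contradicting the suffix property of $X$; this argument also rules out $p=1$, $p'\ne 1$ (and symmetrically).

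For $X\subset Z$: given $x\in X$, reading $x$ in $\A$ from $1$ passes through the nonempty proper prefixes of $x$ (all in $P'$, hence in classes distinct from $[1]=\{1\}$) before returning to $1$, so $x$ is the first return to $[1]$ in $\B_X$, and thus $x\in Z$. For the subgroup description, reversibility together with Proposition~\ref{propGeneratedGroup} identifies the subgroup described by $\B_X$ as $\langle Z\rangle$; from $X\subset Z$ we get $\langle X\rangle\subset\langle Z\rangle$. For the reverse inclusion, set $H=\langle X\rangle$: by Proposition~\ref{propTheta}, $p\mapsto Hp$ factors through $\theta_X$ into a map $\bar c\colon R\to A^\circ/H$, and $\bar c(r\cdot a)=\bar c(r)\,a$ follows directly from the definition of the transitions of $\B_X$. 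A word $z\in Z$ reads in $\B_X$ as a loop at $[1]$, hence $Hz=H$ and $z\in H$, giving $\langle Z\rangle\subset\langle X\rangle$.
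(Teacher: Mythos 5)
Your proof is correct and follows essentially the same route as the paper's: the reversibility argument via the last letters of the $u_i$ and Lemma~\ref{lemmaSuite}(iii) ruling out two distinct left-special words in one trace, the observation that the class of $1$ is a singleton to get $X\subset Z$, and Propositions~\ref{propGeneratedGroup} and~\ref{propTheta} for the subgroup identification. Your packaging of the last step as a well-defined map $R\to A^\circ/H$ is just a cosmetic variant of the paper's induction on word length, and your explicit treatment of the edge cases $w_i=1$ and $p=1$ fills in details the paper leaves implicit.
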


\begin{proof} Let $\A=(P,1,1)$ be the literal automaton of $X$ and Set
  $\B_X=(R,1,1)$.  Let $r,s\in R$ and $a\in A$ be such that $r\cdot
  a=s\cdot a$ is nonempty. Let $p,q\in P$ be elements of the classes
  $r$ and $s$ respectively, such that $p\cdot a,q\cdot a$ are
  nonempty. Then $pa,qa\in P\cup X$. To show that $\B_X$ is
  reversible, it is enough to show that $p\equiv q\bmod \theta_X$.

  Suppose first that $pa\in X$. Then $r\cdot a=s\cdot a=1$ and thus
  $qa\in X$ since $1$ is isolated mod $\theta_X$. Thus $p\equiv q\bmod
  \theta_X$.

  Suppose next that $pa,qa\in P$. Then there is a path $(1\otimes
  u_0,v_1\otimes 1,\ldots,v_n\otimes 1,1\otimes u_n)$ in the incidence
  graph $G$ of $X$, with $pa=u_0$ and $qa=u_n$. We may assume that the
  nodes of the path are pairwise distinct, except for a possible
  equality $u_0=u_n$.

If all the words $u_i$ end with $a$, then $p\equiv q\bmod\theta_X$.

Otherwise, let $i$ be minimal such that $u_i$ ends with a letter
distinct of $a$ and $j$, with $1\le i\le j<n$ be maximal such that
$u_j$ ends with a letter distinct of $a$. Then $v_i$ and $v_{j+1}$ are
left-special and they are distinct since $j+1>i$. This contradicts
Lemma~\ref{lemmaSuite}(iii) since $v_i$ and $v_{j+1}$ are distinct
elements of the same trace on the set $S'$ of proper nonempty suffixes
of $X$.

Thus the coset automaton is reversible.

Let $Z$ be the bifix code generating the submonoid recognized by
$\B_X$.
To show the inclusion $X\subset Z$, consider a word $x\in X$. There is
a path from $1$ to $1$ labeled $x$ in $\A$, hence also in $\B_X$.  Since
the class of $1$ modulo $\theta_X$ is reduced to $1$, this path in
$\B_X$ does not pass by $1$ except at its ends. Thus $x$ is in $Z$.

Let us finally show that the coset automaton describes the group
$H=\langle X\rangle$. By Proposition~\ref{propGeneratedGroup}, the subgroup 
described
by $\B_X$ is equal to $\langle Z\rangle$. Set $K=\langle Z\rangle$.
Since $X\subset Z$, we have $H\subset K$. To show the converse
inclusion, let us show by induction on the length of $w\in A^*$
that, for $p,q\in P$, there is a path from the class of $p$ to the
class of $q$ in $\B_X$ with label $w$ then $Hpw=Hq$. By
Proposition~\ref{propTheta}, this holds for $w=1$. Next,
assume
that it is true for $w$ and consider $wa$ with $a\in A$. Assume
that there are states $p,q,r\in P$ such that there
there is a path from the class of $p$ to the class of $q$ in $\B_X$ with
label $w$,
and an edge from the class of $q$ to the class of $r$ in $\B_X$ with
the label $a$. By induction
hypothesis, we have $Hpw=Hq$. Next, by definition of $\B_X$, there
is an $s\equiv q\bmod \theta_X$ such that $s\cdot a\equiv
r\bmod\theta_X$.
If $sa\in P$, then $s\cdot a=sa$, and
by Proposition~\ref{propTheta}, we have $Hs=Hq$ and $Hsa=Hr$.
Thus $Hpwa=Hqa=Hsa=Hr$. Otherwise, $sa\in X$ and $s\cdot a=r=1$
because the class of $1$ is a singleton. In this case, $Hsa=H=Hr$.
This property shows that if $z\in Z$, then
$Hz=H$, that is $z\in H$. Thus $Z\subset H$ and finally $H=K$.
\end{proof}

\subsection{Return words}

Let $F$ be a factorial set. For $u\in F$, define
\begin{displaymath}
  \Gamma_F(u)=\{z\in F\mid uz\in A^+u\cap F\}\,,\qquad
  \Gamma'_F(u)=\{z\in F\mid zu\in uA^+\cap F\}
\end{displaymath}
and
\begin{displaymath}
  R_F(u)=\Gamma_F(u)\setminus \Gamma_F(u)A^+\,,
\qquad
  R'_F(u)=\Gamma'_F(u)\setminus A^+\Gamma'_F(u)
\,.
\end{displaymath}
When $F=F(x)$ for an infinite word $x$, the sets $\Gamma_F(u)$ and
$R_F(u)$ are respectively the set of \emph{right return
  words}\index{right return word} to $u$ and \emph{first right return
  words}\index{return word!first right}\index{first right return word}
to $u$ in $x$, and $\Gamma'_F(u)$ and $R'_F(u)$ are respectively the
set of \emph{left return words}\index{left return word} to $u$ and
\emph{first left return words}\index{return word!first
  left}\index{first left return word} to $u$ in $x$.  The relation between $R_F(u)$
and $R'_F(u)$ is simply
\begin{equation}
  uR_F(u)=R'_F(u)u\,.\label{eqAutomo}
\end{equation}
Words in the set $uR_F(u)=R'_F(u)u$ are called \emph{complete return
  words}\index{complete return words}
in~\cite{JustinVuillon2000}. When there is no ambiguity, we will call
the (first) right return words simply the (first) return words,
omitting the `right' specification.

\begin{example}
  Let $F$ be the Fibonacci set. The sets
  $R_F(u)$ and $R'_F(u)$ are given below for the first small words of
  $F$.
  \begin{displaymath}
    \begin{array}{c|c|c|c|c|c|c|c|c|c|c}
      u     &1 &a &b  &aa   &ab &ba &aab  &aba&baa&bab\\ \hline
      \multirow{2}*{$R_F(u)$}%
       &a &a &ab &baa  &ab &ba &aab  &ba &baa &aabab\\ 
       &b &ba&aab&babaa&aab&aba&abaab&aba&babaa&aabaabab\\ \hline
      \multirow{2}*{$R'_F(u)$}%
       &a &a &ba &aab  &ab &ba &aab  &ab &baa&babaa\\ 
       &b &ab&baa&aabab&aba&baa&aabab&aba&baaba&babaabaa\\ 
    \end{array}
  \end{displaymath}
\end{example}

Vuillon has shown in~\cite{Vuillon2001} that $x$ is a Sturmian word if
and only if $R'_F(u)$ has exactly two elements for every factor $u$ of
$x$. Another proof of this result is given by Justin and Vuillon
in~\cite{JustinVuillon2000}. 

In fact, they show in~\cite{JustinVuillon2000} the following
theorem. Since this result is not exactly formulated in
\cite{JustinVuillon2000} as stated here, we show how it follows easily
from their article.

\begin{theorem}\label{propositionGamma}
  Let $F$ be a Sturmian set. For any word $u\in F$, the set $R_F(u)$
  (and the set $R'_F(u)$) is a basis of the free group $A^\circ$.
\end{theorem}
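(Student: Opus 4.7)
The plan is to invoke the explicit description of first return words in strict standard episturmian words due to Justin and Vuillon, and combine it with the group-theoretic fact that the elementary episturmian morphisms $\psi_a$ are automorphisms of the free group $A^\circ$.

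First, by Proposition~\ref{leftInfinite} I would reduce to the case $F = F(s)$ for the unique strict standard episturmian word $s = \Pal(\Delta)$ with directive word $\Delta = a_0 a_1 \cdots$ in which every letter of $A$ appears infinitely often; denote the palindromic prefixes by $u_n = \Pal(a_0 \cdots a_{n-1})$. Next I would observe the key group-theoretic fact: each $\psi_a$ extends to an automorphism of $A^\circ$, because $\psi_a(A) = \{a\} \cup \{ab : b \in A\setminus\{a\}\}$ is obtained from the basis $A$ by elementary Nielsen transformations and is therefore itself a basis of $A^\circ$. Consequently any finite composition $\psi_w$ lies in $\mathrm{Aut}(A^\circ)$, and $\psi_w(A)$ is a basis of $A^\circ$ for every $w \in A^*$.

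The heart of the proof is then the identification $R_F(u) = \psi_w(A)$ for some $w \in A^*$ depending on $u$. For palindromic prefixes this is essentially immediate: iterating Equation~\eqref{EquationMagique} gives $u_{n+1} = \psi_{a_0\cdots a_{n-1}}(a_n) u_n$, and Justin's formula together with a direct check of the occurrences of $u_n$ inside the prefixes of $s$ yields the equality $R_F(u_n) = \psi_{a_0 \cdots a_{n-1}}(A)$; by the previous paragraph this is a basis of $A^\circ$. For a general factor $u$, by uniform recurrence (Proposition~\ref{propositionSturmianMinimal}) one can choose $n$ so that $u$ is a factor of $u_n$, and the structural results of~\cite{JustinVuillon2000} provide an analogous substitutive description $R_F(u) = \psi_{w}(A)$ via palindromic closure and a desubstitution step. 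This description, together with the automorphism argument above, gives that $R_F(u)$ is a basis of $A^\circ$. The statement for $R'_F(u)$ then follows immediately from Equation~\eqref{eqAutomo}, since $R'_F(u) = u R_F(u) u^{-1}$ inside the free group and inner automorphisms map bases to bases.

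The main obstacle I expect is the extension from palindromic prefixes to arbitrary factors. For $u = u_n$ everything is transparent from Justin's formula; for a general $u$, the passage to a palindromic prefix $u_n$ containing $u$ must carry the return-word structure through a desubstitution, and one has to argue that the resulting set $R_F(u)$ is again of the form $\psi_w(A)$. This is precisely the content of~\cite{JustinVuillon2000}, and it is the step that does not follow from the combinatorics developed so far in the present paper. Once this is granted, the conclusion is a clean consequence of the automorphism property of $\psi_a$ and of Schreier-type counting: $R_F(u)$ has $k = \Card(A)$ elements and generates $A^\circ$, hence is a basis of $A^\circ$.
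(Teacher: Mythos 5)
Your proposal follows essentially the same route as the paper: both reduce to the Justin--Vuillon description of the first return words to the palindromic prefixes $u_n$ as $\psi_{a_0\cdots a_{n-1}}(A)$, observe that the elementary morphisms are automorphisms of $A^\circ$ so that this image is a basis, extend to arbitrary factors via the Justin--Vuillon conjugation result, and pass between $R_F(u)$ and $R'_F(u)$ by conjugacy in the free group. The only slip is that the formula $\psi_{a_0\cdots a_{n-1}}(A)$ describes the \emph{left} return words $R'_F(u_n)$ rather than $R_F(u_n)$ (compare the Fibonacci table, where $R'_F(aba)=\{ab,aba\}=\psi_{ab}(A)$ while $R_F(aba)=\{ba,aba\}$), but since the two sets are conjugate this does not affect the conclusion.
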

By Equation~\eqref{eqAutomo}, the sets  $R_F(u)$ and $R'_F(u)$ are
conjugate in the free group. Conjugacy by an element $u$ is an automorphism of the
free group. It follows that $R_F(u)$ is a basis if and only if
$R'_F(u)$ is a basis. Thus, it suffices to prove the claim for
$R'_F(u)$. We quote the following result
of~\cite[Theorem 4.4, Corollaries~4.1 and~4.5]{JustinVuillon2000},
with the notations of Section~\ref{subsection-episturmian}.

\begin{proposition}\label{propJustinVuillon}
  Let $s$ be a standard strict episturmian word over $A$, let
  $\Delta=a_0a_1\cdots$ be its directive word, and let $(u_n)$ be its
  sequence of palindrome prefixes. 
  \begin{enumerate}
  \item[\upshape{(i)}] The first left return words to $u_n$ are the
    words $\psi_{a_0\cdots a_{n-1}}(a)$ for $a\in A$.
  \item[\upshape{(ii)}] For each factor $u$ of $s$, there exist a word
    $z$ and an integer $n$ such that the first left return words to
    $u$ are the words $zyz^{-1}$, where $y$ ranges over the first left
    return words to $u_n$.
  \end{enumerate}
\end{proposition}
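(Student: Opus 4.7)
The plan is to prove both parts using Justin's formula~\eqref{JustinInfini} together with the characterization of occurrences of palindrome prefixes as images of positions of a ``shifted'' episturmian word.

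For part (i), set $v_n = a_0 a_1 \cdots a_{n-1}$ and let $s^{(n)} = \Pal(a_n a_{n+1}\cdots)$ be the standard strict episturmian word directed by the tail of $\Delta$. By Justin's Formula~\eqref{JustinInfini}, $s = \psi_{v_n}(s^{(n)})$. Applying Justin's Formula to a finite prefix $a_n\cdots a_{n+k-1}$ of the tail gives $u_{n+k} = \psi_{v_n}(\Pal(a_n\cdots a_{n+k-1}))\cdot u_n$. Passing to the limit, every prefix of $s^{(n)}$ of the form $\Pal(a_n\cdots a_{n+k-1})$ produces an occurrence of $u_n$ in $s$ at position $|\psi_{v_n}(\Pal(a_n\cdots a_{n+k-1}))|$. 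I would then show, using the fact that $\psi_{v_n}(A^*) u_n \subset u_n A^*$ (which is the ``marking'' property of the elementary morphisms and will follow from an induction on the length of $v_n$ using $\psi_a(w)a = aw'$ for an appropriate $w'$), that \emph{every} occurrence of $u_n$ in $s$ arises this way. Thus occurrences of $u_n$ in $s$ are in bijection (shifted by zero) with positions in $s^{(n)}$, and first left return words to $u_n$ in $s$ are obtained as $\psi_{v_n}$-images of first left return words to the empty word in $s^{(n)}$. Since $s^{(n)}$ is strict on $A$, every letter $a\in A$ appears, and the first left return words to the empty word are exactly the letters of $A$. Hence the first left return words to $u_n$ are $\psi_{v_n}(a) = \psi_{a_0\cdots a_{n-1}}(a)$ for $a\in A$, proving~(i).

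For part (ii), given a factor $u$ of $s$, choose the least $n$ such that $u$ is a suffix of $u_n$ (such $n$ exists since $u$ is a factor of some $u_m$ by uniform recurrence and the fact that $u_m$ are bispecial, and the reversal of any left-special word of appropriate length ending in $u$ is a prefix of $s$). Set $u_n = zu$. The key claim is that every occurrence of $u$ in $s$, except possibly the initial one, extends uniquely to the left into an occurrence of $u_n$. Indeed, by minimality of $n$, for any proper suffix $u'$ of $u_n$ that strictly contains $u$, the word $u'$ is not left-special in $F$ (otherwise $u'$ would be a left-special prefix of $s$ shorter than $u_n$ still ending in $u$). Hence the left extension of $u$ in $s$ is forced letter by letter, from $u$ up to $zu = u_n$. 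This yields a bijection between occurrences of $u$ and of $u_n$ (past the initial segment), shifted by $|z|$. Consequently, if $r$ is a first left return word to $u$ and $y$ is the corresponding first left return word to $u_n$ (between the corresponding pair of occurrences), reading the block between the two occurrences of $u_n$ in two ways gives $zr = yz$ in $A^*$, i.e., $r$ and $y$ are conjugate by $z$ in the free group; this is the content of the statement (up to the orientation of conjugation in the stated formula).

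The main obstacle is the bijection of occurrences in part (ii), for which one must verify that no ``spurious'' occurrence of $u$ exists without being a suffix of an occurrence of $u_n$. This rests crucially on the uniqueness of the left-special factor of each length in a strict standard episturmian word (Proposition~\ref{leftInfinite}), which forces the left extensions to be deterministic below the threshold $|u_n|$. A secondary but nontrivial point is the converse in part (i): that every occurrence of $u_n$ in $\psi_{v_n}(s^{(n)})$ is aligned with the image of a prefix of $s^{(n)}$. This will require the mentioned marker property of the morphisms $\psi_a$, together with a non-overlapping analysis for $u_n$. Once (i) and (ii) are in hand, Theorem~\ref{propositionGamma} follows immediately, since each $\psi_{v_n}$ is a positive automorphism of $A^\circ$ and conjugation by $z$ is an inner automorphism, so the return words form a basis of $A^\circ$.
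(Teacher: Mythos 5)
First, note that the paper itself does not prove this proposition: it is quoted from Justin and Vuillon (their Theorem~4.4 and Corollaries~4.1 and~4.5), so there is no internal proof to compare against and you are reconstructing an external argument. Your part~(i) follows the standard route: Justin's Formula gives $s=\psi_{v_n}(s^{(n)})$ and $u_{n+k}=\psi_{v_n}(\Pal(a_n\cdots a_{n+k-1}))u_n$, and everything reduces to the synchronization lemma that the occurrences of $u_n$ in $s$ are \emph{exactly} the positions $|\psi_{v_n}(p)|$ for $p$ a prefix of $s^{(n)}$. You correctly isolate this as the crux, but you only prove the easy inclusion (images of positions are occurrences, via the marker property $\psi_{v_n}(w)u_n\in u_nA^*$); the converse, which is the real content of~(i), is deferred to an unspecified ``non-overlapping analysis''. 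Incomplete, but the plan is viable.

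Part~(ii) contains a genuine gap. You anchor the argument on ``the least $n$ such that $u$ is a suffix of $u_n$'', but such an $n$ need not exist: in the Fibonacci set every palindrome prefix $u_n$ is a palindrome beginning with $a$, hence ends with $a$, so the factor $u=ab$ is a suffix of no $u_n$ at all. The mechanism you invoke does not repair this: extending $u$ to the left letter by letter (using that a non-left-special factor has a forced left extension) terminates at the shortest left-special factor ending in $u$, which is a prefix of $s$ but in general not a palindrome prefix --- for $u=b$ in the Fibonacci set the process stops at $ab$, which is left-special and a prefix of $s$ but equals no $u_n$. Your minimality argument suffers from the same conflation of ``prefix of $s$'' with ``palindrome prefix'': a left-special proper suffix $u'$ of $u_n$ ending in $u$ would be a prefix of $s$, but that does not contradict minimality of $n$ taken over the $u_n$ only. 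The correct anchor is the least $n$ such that $u$ is a \emph{factor} of $u_n$; one must then show that $u$ occurs exactly once in $u_n$, say $u_n=zuz'$, and establish the shifted bijection between occurrences of $u$ and occurrences of $u_n$ by forcing the extension on \emph{both} sides (rightward by $z'$ as well as leftward by $z$), using the uniqueness of the right-special and of the left-special factor of each length. Only then does the computation $yz=zr$, hence $r=z^{-1}yz$ with $z$ a genuine word, go through (your remark about the orientation of the conjugation is correct, but it presupposes this bijection). As written, part~(ii) fails on concrete inputs.
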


\begin{proofof}{of Theorem~\ref{propositionGamma}}
  We may assume that $F = F(s)$ for some standard and strict
  episturmian word $s$. By Proposition~\ref{propJustinVuillon}(i), the
  set of first left return words to $u_n$ is the image of the alphabet
  by the endomorphism $\psi_{a_0\cdots a_{n-1}}$. It is easily seen
  that these endomorphisms define automorphisms of the free group. We
  deduce that the set of first left return words to $u_n$ is a basis
  of the free group on $A$.  By
  Proposition~\ref{propJustinVuillon}(ii), the set of first left
  return words to $u$ is a basis, too.  This ends the proof.
\end{proofof}

\subsection{Proof of the main result}
Some preliminary results are needed for the proof of
Theorem~\ref{theoremGroups}.

\begin{proposition}\label{propositionHcapF}
  Let $F$ be a Sturmian set and let $X\subset F$ be a finite
  $F$-maximal bifix code.  Then $\langle X\rangle\cap F=X^*\cap F$.
\end{proposition}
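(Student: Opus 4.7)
The inclusion $X^*\cap F\subset \langle X\rangle\cap F$ is immediate since $X^*\subset\langle X\rangle$. The plan is to establish the reverse inclusion by routing through the coset automaton $\B_X$ introduced in the previous subsection, whose key properties were proved in Lemma~\ref{lemmaBidet}.

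First I would invoke Lemma~\ref{lemmaBidet}: the coset automaton $\B_X$ is reversible (it is moreover finite because $X$ is finite, so the set $P$ of proper prefixes is finite) and describes the subgroup $\langle X\rangle$. Let $Z$ be the bifix code generating the submonoid recognized by $\B_X$. Lemma~\ref{lemmaBidet} also gives the inclusion $X\subset Z$. Applying the second part of Proposition~\ref{propGeneratedGroup} to the reversible simple automaton $\B_X$, we obtain
\begin{displaymath}
  Z^*=\langle Z\rangle\cap A^*=\langle X\rangle\cap A^*,
\end{displaymath}
the last equality because the subgroup described by $\B_X$ is on the one hand $\langle X\rangle$ and on the other hand generated by $Z$.

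The next step is to identify $Z\cap F$ with $X$. The intersection $Z\cap F$ is a bifix code contained in $F$, and by the previous step it contains $X$. Since $X$ is an $F$-maximal bifix code, no bifix code in $F$ can strictly contain it, so $X=Z\cap F$.

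Finally I would conclude as follows. Let $w\in\langle X\rangle\cap F$. By the displayed equality, $w\in Z^*$, so there exist $z_1,\dots,z_n\in Z$ with $w=z_1\cdots z_n$. Each $z_i$ is a factor of $w$, and since $F$ is a Sturmian set it is factorial, hence $z_i\in F$. Thus $z_i\in Z\cap F=X$, and $w\in X^*$. This gives $\langle X\rangle\cap F\subset X^*\cap F$, completing the proof. The substantive content is all absorbed in Lemma~\ref{lemmaBidet}, which is the only place the Sturmian hypothesis enters (through the structural results on the incidence graph); here the main step is just recognizing that the $F$-maximality of $X$ forces $Z\cap F=X$.
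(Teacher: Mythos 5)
Your proof is correct and follows essentially the same route as the paper's: both pass through the coset automaton $\B_X$, use Lemma~\ref{lemmaBidet} to get reversibility, the description of $\langle X\rangle$ and the inclusion $X\subset Z$, deduce $Z\cap F=X$ from $F$-maximality, identify $Z^*$ with $\langle X\rangle\cap A^*$ via the reversible-automaton characterization, and conclude by factoring a word of $\langle X\rangle\cap F$ over $Z$ and using factoriality of $F$. The only cosmetic difference is that you cite the second part of Proposition~\ref{propGeneratedGroup} where the paper invokes minimality and Proposition~\ref{lemmaExercise612} directly, which amounts to the same thing.
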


\begin{proof}
  We have $X^*\cap F\subset \langle X\rangle\cap F$.
  To show the converse inclusion, consider the bifix code $Z$
  generating the submonoid re\-cog\-ni\-zed by the coset automaton
  $\B_X$ associated to $X$.

  Let us show that $Z\cap F=X$.  By Lemma~\ref{lemmaBidet}, we have
  $X\subset Z$ and thus $X\subset Z\cap F$. Since $X$ is an
  $F$-maximal bifix code, this implies that $X=Z\cap F$.

  Since any reversible automaton is minimal and since the automaton
  $\B_X$ is reversible by Lemma~\ref{lemmaBidet}, it is equal to the
  minimal automaton of $Z^*$.  Let $K$ be the subgroup generated by
  $Z$. By Proposition~\ref{lemmaExercise612}, we have $K\cap A^*=Z^*$.

  This shows that
  \begin{displaymath}
    \langle X\rangle\cap F\subset K\cap F= K\cap A^*\cap F=Z^*\cap F=
    X^*\cap F\,. 
  \end{displaymath}
  The first inclusion holds because $X\subset Z$ implies $\langle
  X\rangle\subset K$.  The last equality follows from the fact that if
  $z_1\cdots z_n\in F$ with $z_1,\ldots ,z_n\in Z$, then each $z_i$ is
  in $F$ hence in $Z\cap F=X$.  Thus $\langle X\rangle\cap F\subset
  X^*\cap F$, which was to be proved.
\end{proof}

We will use the following consequence of
Proposition~\ref{propositionHcapF}. 

\begin{corollary}\label{corollaryCosets}
  Let $F$ be a Sturmian set and let $X\subset F$ be a finite
  $F$-maximal bifix code. 
  Each right coset of the subgroup $\langle X\rangle$ generated by $X$
  contains at most one right-special proper prefix of $X$.
\end{corollary}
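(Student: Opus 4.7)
The plan is to convert the group-theoretic statement ``$p$ and $q$ lie in the same right coset'' into the combinatorial statement ``$qp^{-1}\in X^*$'' via Proposition~\ref{propositionHcapF}, and then derive a contradiction from the prefix-code property of $X$.

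First, I would take two distinct right-special proper prefixes $p,q$ of $X$ lying in the same right coset of $H=\langle X\rangle$, and assume without loss of generality that $|p|<|q|$. The key combinatorial observation is that in a Sturmian set there is exactly one right-special word of each length, and a suffix of a right-special word is right-special. This forces the suffix of $q$ of length $|p|$ to equal $p$, so I can write $q=wp$ with $w\ne 1$, and $w$ is a prefix of $q$, hence $w\in F$. The equality $Hp=Hq=Hwp$ then gives $w\in H$.

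Next, I would apply Proposition~\ref{propositionHcapF}: $w\in \langle X\rangle\cap F=X^*\cap F$, so $w=x_1\cdots x_n$ for some $n\ge 1$ and $x_1,\ldots,x_n\in X$. Then $q=x_1\cdots x_n p$ begins with $x_1\in X$. But since $q$ is a proper prefix of some $x\in X$, and $x_1$ is a prefix of $q$, the word $x_1$ is a proper prefix of $x$; as $X$ is a prefix code this forces $x_1=x$, which is impossible since $|x_1|\le|q|<|x|$.

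The main ``obstacle'' is really just recognizing which two ingredients do all the work: the uniqueness (and strictness) of the right-special word of each length in a Sturmian set, which gives a concrete factorization $q=wp$ inside $F$, and Proposition~\ref{propositionHcapF}, which turns membership of $w$ in $H$ into membership in $X^*$. Once these are in hand, the prefix-code property of $X$ closes the argument in one line. No induction on coset length or analysis of the coset automaton $\mathcal{B}_X$ is needed; everything follows directly from the already-established identification $\langle X\rangle\cap F=X^*\cap F$.
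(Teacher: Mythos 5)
Your proposal is correct and follows essentially the same route as the paper: both arguments use the uniqueness of the right-special word of each length to write the longer prefix as $wp$ with $w\in F$, deduce $w\in H$ from the coset equality, invoke Proposition~\ref{propositionHcapF} to get $w\in X^*$, and conclude $w=1$ from the fact that a proper prefix of $X$ cannot begin with a word of $X$. The only differences are notational (the paper writes $p=uq$ with $p$ the longer word) and that you spell out the final prefix-code contradiction in slightly more detail.
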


\begin{proof} Set $H=\langle X\rangle$. Let $Q$ be the set of those
  proper prefixes of the words of $X$ which are right-special.

  Let us show that if $p,q\in Q$ belong to the same right coset, then
  $p=q$. We may assume that $p=uq$. Since $Hp=Hq$, one has
  $Huq=Hq$. Consequently, $Hu=H$ and thus $u\in H$.  By
  Proposition~\ref{propositionHcapF}, since $u\in F$, this implies
  that $u\in X^*$ and thus $u=1$ since $p$ is a proper prefix of $X$.
\end{proof}

\begin{proofof}{of Theorem~\ref{theoremGroups}}
  
  Assume first that $X$ is an $F$-maximal bifix code of $F$-degree
  $d$. Let $P$ be the set of proper prefixes of $X$.  Let
  $Q$ be the set of words in $P$ which are right-special. Let $H$ be
  the subgroup generated by $X$.

By Lemma~\ref{lemmadSpecial} there is a right-special
word $u$ such that
$\pars_X(u)=d$. The $d$ suffixes of $u$ which are in $P$ are the
elements of $Q$. By Theorem~\ref{theoremDegree}, the word $u$ is not
an internal factor of $X$.

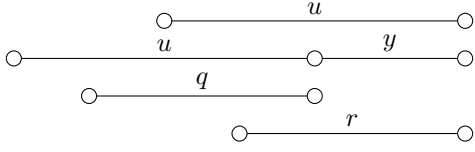
\begin{figure}[hbt]
\centering
\gasset{Nadjust=wh,AHnb=0}
\begin{picture}(50,15)(0,-5)
\node(uh)(20,10){}\node(endh)(60,10){}
\node(u)(0,5){}\node(v)(40,5){}\node(end)(60,5){}
\node(q)(10,0){}\node(qend)(40,0){}\node(r)(30,-5){}\node(endr)(60,-5){}
\drawedge(uh,endh){$u$}
\drawedge(u,v){$u$}\drawedge(v,end){$y$}\drawedge(q,qend){$q$}
\drawedge(r,endr){$r$}
\end{picture}
\caption{A word $y\in R_F(u)$.}\label{figGamma}
\end{figure}

Let
\begin{displaymath}
  V=\{v\in A^\circ\mid Qv\subset HQ\}\,.
\end{displaymath}
Any $v\in V$ defines a permutation of $Q$. Indeed, suppose that for
$p,q\in Q$, one has $pv,qv\in Hr$ for some $r\in Q$. Then $rv^{-1}$ is
in $Hp\cap Hq$.  This forces $Hp=Hq$ and thus $p=q$ by
Corollary~\ref{corollaryCosets}.

The set $V$ is a subgroup of $A^\circ$. Indeed, $1\in V$. Next, let $v\in V$. Then
for any $q\in Q$, since $v$ defines a permutation of $Q$, there
is a $p\in Q$ such that $pv\in
Hq$. Then $q v^{-1} \in Hp$. This shows that $v^{-1}\in V$.
Next, if $v,w\in V$, then $Qvw\subset HQw\subset HQ$ and thus $vw\in
V$.

We show that the set $R_F(u)$ is contained in $V$. Indeed, let $q\in
Q$ and $y\in R_F(u)$. Since $q$ is a suffix of $u$, $qy$ is a suffix
of $uy$, and since $uy$ is in $F$ (by definition of $R_F(u)$),
also $qy$ is in $F$.  The fact that $X$ is $F$-maximal 
implies that
there is a word $r\in P$ such that $qy\in X^*r$. We verify that the
word $r$ is a suffix of $u$.  Since $y\in R_F(u)$, there is a word
$y'$ such that $uy=y'u$. Consequently, $r$ is a suffix of $y'u$, and
in fact the word $r$ is a suffix of $u$. Indeed, one has $|r|\le |u|$
since otherwise $u$ is in $I(X)$ and this is not the case. Thus we
have $r\in Q$ (see Figure~\ref{figGamma}). Since $X^*\subset H$ and
$r\in Q$, we have $qy\in HQ$. Thus $y\in V$.

By Theorem~\ref{propositionGamma}, the group generated by $R_F(u)$ is
$A^\circ$.  Since $R_F(u)\subset V$, and since $V$ is a subgroup of
$A^\circ$, we have $V=A^\circ$. Thus $Qw\subset HQ$ for any $w\in
A^\circ$.  Since $1\in Q$, we have in particular $w\in HQ$.  Thus
$A^\circ=HQ$. Since $\Card(Q)=d$, and since the right cosets $Hq$ for
$q\in Q$ are pairwise disjoint, this shows that $H$ is a subgroup of
index $d$. By Theorem~\ref{theoremBifixd+1} and in view of Schreier's
Formula, $X$ is a basis of $H$.


Assume conversely that the bifix code $X\subset F$ is a basis of the
group $H=\langle X\rangle$ and that $\langle X\rangle$ has index $d$. Since $X$ is a
basis, by Schreier's Formula, we have $\Card(X)= (k-1)d+1$, where
$k=\Card(A)$. The case $k=1$ is straightforward; thus we assume $k\ge2$. By
Theorem~\ref{theoremCompletion}, there is a finite $F$-maximal bifix
code $Y$ containing $X$. Let $e$ be the $F$-degree of $Y$. By the
first part of the proof, $Y$ is a basis of a subgroup $K$ of index $e$
of $A^\circ$.  In particular, it has $(k-1)e+1$ elements. Since
$X\subset Y$, we have $(k-1)d+1\le (k-1)e+1$ and thus $d\le e$. On the
other hand, since $H$ is included in $K$, $d$ is a multiple of $e$ and
thus $e\le d$. We conclude that $d=e$ and thus that $X=Y$.
\end{proofof}

\begin{example}\label{exampleCodeGiuseppina}
  Let $F$ be the Fibonacci set. Let $X\subset F$ be the
  bifix code shown on
  Figure~\ref{figureCodeGiuseppina}. The right-special proper prefixes
  of the words of $X$ are the four suffixes of $aba$ and are indicated in
  black on the figure.
\begin{figure}[hbt]
\centering
\gasset{AHnb=0,Nadjust=wh}
\begin{picture}(60,35)(0,3)
\node[fillgray=0](1)(0,20){}
\node[fillgray=0](a)(10,30){}\node(b)(10,10){}
\node[Nmr=0](aa)(20,35){}\node(ab)(20,25){}\node[fillgray=0](ba)(20,10){}
\node[fillgray=0](aba)(30,25){}\node(baa)(30,15){}\node(bab)(30,5){}
\node(abaa)(40,30){}\node[Nmr=0](abab)(40,20){}
\node[Nmr=0](baab)(40,15){}\node[Nmr=0](baba)(40,5){}
\node(abaab)(50,30){}\node[Nmr=0](abaaba)(60,30){}
\drawedge(1,a){$a$}\drawedge[ELside=r](1,b){$b$}
\drawedge(a,aa){$a$}\drawedge(a,ab){$b$}\drawedge(b,ba){$a$}
\drawedge(ab,aba){$a$}\drawedge(ba,baa){$a$}\drawedge[ELside=r](ba,bab){$b$}
\drawedge(aba,abaa){$a$}\drawedge[ELside=r](aba,abab){$b$}
\drawedge[ELside=r](baa,baab){$b$}\drawedge(bab,baba){$a$}
\drawedge(abaa,abaab){$b$}\drawedge(abaab,abaaba){$a$}
\end{picture}
\caption{An $F$-maximal bifix code of $F$-degree 4.}\label{figureCodeGiuseppina}
\end{figure}
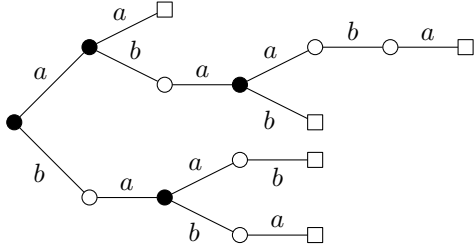
The states of the coset automaton are the sets $\{1\}$, $\{a, bab,
abaab\}$, $\{aba, b, baa\}$ and $\{ba,ab,abaa\}$. The code $X$ has
$F$-degree~$4$. Each state is represented by its right-special factor in
Figure~\ref{figureGroup}.
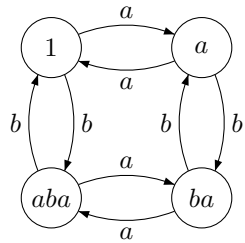
\begin{figure}[hbt]
\centering
\begin{picture}(30,30)(0,-3)
\node(1)(0,20){$1$}\node(a)(20,20){$a$}
\node(aba)(0,0){$aba$}\node(ba)(20,0){$ba$}
\drawedge[curvedepth=3](1,a){$a$}\drawedge[curvedepth=3](a,1){$a$}
\drawedge[curvedepth=3](1,aba){$b$}\drawedge[curvedepth=3](aba,1){$b$}
\drawedge[curvedepth=3](a,ba){$b$}\drawedge[curvedepth=3](ba,a){$b$}
\drawedge[curvedepth=3](aba,ba){$a$}\drawedge[curvedepth=3](ba,aba){$a$}
\end{picture}
\caption{The associated coset automaton.}\label{figureGroup}
\end{figure}
\end{example}

We end this section with a combinatorial consequence of
Theorem~\ref{theoremGroups}.

\begin{proposition}\label{theoremSigmaL}
  Let $F$ be a Sturmian set on an alphabet with $k$ letters and let
  $X\subset F$ be a finite $F$-maximal bifix code of $F$-degree
  $d$. Let $P$ (resp. $S$) be the set of proper prefixes
  (resp. suffixes) of  $X$. Then
\begin{displaymath}
\sum_{x\in X}|x|=\Card(P)+\Card(S)+(k-2)d.
\end{displaymath}
\end{proposition}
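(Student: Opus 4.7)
The plan is to use the incidence graph $G$ of $X$ introduced in Section~\ref{sectionPreliminaries} and count its connected components in two different ways. Recall that $G$ is bipartite with vertex set $(1\otimes P')\cup(S'\otimes 1)$, where $P' = P\setminus\{1\}$ and $S' = S\setminus\{1\}$, with an edge $\{1\otimes p,\,s\otimes 1\}$ for each pair $(p,s)$ such that $ps\in X$. By Lemma~\ref{lemmaSuite}(i), $G$ is acyclic, hence a forest.

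First I would count vertices and edges directly. Since $1\in P\cap S$, the number of vertices equals $\Card(P)+\Card(S)-2$. For each $x\in X$, the factorizations $x=ps$ with $p,s$ both nonempty are in bijection with $|x|-1$ distinct edges of $G$ (distinctness because $X$ is a prefix code), so the total number of edges is $\sum_{x\in X}|x|-\Card(X)$. Every vertex is moreover incident to at least one edge: for $p\in P'$, any $x\in X$ having $p$ as a proper prefix supplies an $s\in S'$ with $ps=x$, and the dual argument handles $S'$-vertices. Consequently every connected component of $G$ meets both sides of the bipartition.

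The crucial step is to identify the number of components of $G$ with an invariant of the subgroup $H=\langle X\rangle$. By the very definition of $\theta_X$, two $P'$-vertices $1\otimes p$ and $1\otimes q$ lie in the same component of $G$ if and only if $p\equiv q\pmod{\theta_X}$. The class of $1$ is the singleton $\{1\}$: if some nontrivial element were equivalent to $1$, following the equivalence chain would produce $s,qs\in X$ with $q\ne 1$, contradicting the suffix property of $X$. Hence the number of components of $G$ equals the number of $\theta_X$-classes minus one, i.e.\ the number of states of the coset automaton $\B_X$ minus one. By Lemma~\ref{lemmaBidet}, $\B_X$ is reversible and describes $H$; by Theorem~\ref{theoremGroups}, $H$ has index $d$ in $A^\circ$; and combining the uniqueness of the Stallings automaton (Proposition~\ref{propStallings}) with the fact that the minimal automaton of $H\cap A^*$ is a group automaton with $d$ states (Proposition~\ref{propGroupCode} together with Proposition~\ref{propositionGroupAutomaton}), one concludes that $\B_X$ has exactly $d$ states. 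Therefore $G$ has $d-1$ connected components.

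Finally, applying the forest identity $\text{components}=\text{vertices}-\text{edges}$ yields
\[
d-1=\bigl(\Card(P)+\Card(S)-2\bigr)-\Bigl(\sum_{x\in X}|x|-\Card(X)\Bigr),
\]
which together with $\Card(X)=(k-1)d+1$ from Theorem~\ref{theoremBifixd+1} rearranges at once to the desired identity. The hardest step is establishing that $\B_X$ has exactly $d$ states, which rests on the full strength of Theorem~\ref{theoremGroups}; the remaining steps are a straightforward accounting exercise.
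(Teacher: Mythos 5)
Your proof is correct and follows essentially the same route as the paper: both count the edges of the incidence graph as $\sum_{x\in X}|x|-\Card(X)$, use Lemma~\ref{lemmaSuite} to see that the graph is a forest, and identify its number of connected components with the $d-1$ nontrivial classes of $\theta_X$ via the coset automaton and Theorem~\ref{theoremGroups} (the paper packages this last step as Proposition~\ref{lemmaCosets}). The only cosmetic difference is that you apply the global forest identity while the paper sums the tree identity over the components.
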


We will use the following proposition, of independent interest. 

\begin{proposition}\label{lemmaCosets}
  Let $F$ be a Sturmian set and let $X\subset F$ be a finite
  $F$-maximal bifix code of $F$-degree $d$. The coset automaton $\B_X$
  is a group automaton with $d$ states. Each state of $\B_X$ other
  than $1$ is an $F$-maximal suffix code.
\end{proposition}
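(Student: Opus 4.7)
The first assertion that $\B_X$ is a group automaton with $d$ states follows by combining three earlier ingredients. By Theorem~\ref{theoremGroups}, the subgroup $H=\langle X\rangle$ has index $d$ in $A^\circ$. By Lemma~\ref{lemmaBidet}, $\B_X$ is a reversible automaton which describes $H$, and its recognized submonoid is $Z^*$ where $Z$ is the bifix code generating it; applying Proposition~\ref{propGeneratedGroup} then gives $Z^*=H\cap A^*$. By Proposition~\ref{propositionGroupAutomaton}, the minimal automaton of $H\cap A^*$ is a group automaton with exactly $d$ states. Since every reversible automaton is minimal, $\B_X$ coincides with this group automaton.

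For the second assertion, fix a state $r\ne 1$ of $\B_X$, viewed as a class of $\theta_X$ contained in $P'$. That $r$ is a suffix code is given by Lemma~\ref{lemmaSuite}(ii). To prove $F$-maximality I shall show, given any $u\in F$, that $u$ is suffix-comparable with some $p\in r$. The plan is to construct a word $w\in F$ that ends in $u$ and satisfies $1\cdot w=r$ in $\B_X$. Once such $w$ is found, write $w=x_1\cdots x_n p$ with $x_i\in X$ and $p\in P$, so that $p=\tau(w)$ is the state reached in the literal automaton $\A$; by the definition of $\B_X$ the class of $p$ is $r$, hence $p\in r$. Both $p$ and $u$ are then suffixes of $w$, and so are suffix-comparable, which is what we want.

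To construct $w$, fix an infinite word $\mathbf{x}$ with $F(\mathbf{x})=F$ (Proposition~\ref{propExists}) and consider the increasing sequence $n_1<n_2<\cdots$ of positions at which $u$ ends in $\mathbf{x}$; this sequence is infinite by uniform recurrence. Let $\sigma_k=1\cdot(x_0\cdots x_{n_k-1})\in R$ and let $\varphi\colon A^*\to G$ be the canonical morphism onto the transition group of $\B_X$. Then $\sigma_{k+1}=\sigma_k\cdot\varphi(z_k)$, where $z_k\in R_F(u)$ is the first right return word to $u$ read at position $n_k$. By Theorem~\ref{propositionGamma}, the set $R_F(u)$ is a basis of the free group $A^\circ$, and since $\varphi$ is surjective, $\varphi(R_F(u))$ generates $G$.

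It remains to show that some $k$ satisfies $\sigma_k=r$; the prefix $w=x_0\cdots x_{n_k-1}$ then has the required properties. The derived sequence $D_u(\mathbf{x})=z_1z_2\cdots$ is itself uniformly recurrent (in fact strict episturmian on the alphabet $R_F(u)$), so each element of $R_F(u)$ appears infinitely often. An orbit-closure argument in the finite space $R$ then shows that $S=\{\sigma_k:k\ge 1\}$ is stable under the right action of $\varphi(R_F(u))$, and hence under the group this set generates, which is all of $G$. Because $G$ acts transitively on the set $R$ of states of the group automaton $\B_X$ and $S$ is non-empty, $S=R$, so the state $r$ is reached as required. The main obstacle is making the stability step precise: one must argue that for every $\sigma\in S$ and every $z\in R_F(u)$ the pair $(\sigma,z)$ actually appears as some $(\sigma_k,z_k)$, which amounts to a minimality statement for the skew-product system $(S(\mathbf{x})\times R,T)$ with $T(\mathbf{y},r')=(\mathrm{shift}\,\mathbf{y},\,r'\cdot y_0)$; the uniform recurrence of $D_u(\mathbf{x})$ combined with the fact that $\varphi(R_F(u))$ generates $G$ is exactly what is needed to carry this out.
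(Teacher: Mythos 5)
Your proof of the first assertion is fine and follows essentially the same route as the paper (reversibility of $\B_X$ from Lemma~\ref{lemmaBidet}, index $d$ from Theorem~\ref{theoremGroups}, minimality of reversible automata, and identification with the group automaton of $H\cap A^*$).

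The second assertion is where there is a genuine gap, and you have flagged it yourself: the ``stability step'' for the set $S=\{\sigma_k\}$ is not proved, and it is not a routine consequence of uniform recurrence. To get $\sigma\varphi(z)\in S$ for every $\sigma\in S$ and every $z\in R_F(u)$ you need every pair $(\sigma,z)$ to occur as some $(\sigma_k,z_k)$, i.e.\ minimality of the group skew product over the derived system. Such skew products over minimal bases are \emph{not} automatically minimal (the cocycle $k\mapsto\varphi(z_k)$ could be cohomologous to one taking values in a proper subgroup, confining the orbit to a union of cosets), so ``$\varphi(R_F(u))$ generates $G$ plus uniform recurrence'' does not by itself close the argument. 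Note also that the naive repair --- replacing the orbit $S$ by $\{1\cdot w: w\in F \text{ ends in } u\}$ and appending $z$ --- fails because $wz$ need not lie in $F$ (in the Fibonacci set, $w=aa$ ends in $u=a$ and $a\in R_F(a)$, yet $aaa\notin F$). This is precisely why the paper's proof of Theorem~\ref{theoremGroups} works with the \emph{subgroup} $V=\{v\in A^\circ\mid Qv\subset HQ\}$, which is closed under products and inverses in the free group without any requirement that concatenations stay in $F$. In any case your route re-derives a piece of Theorem~\ref{theoremGroups} instead of exploiting what has already been established: once you know $\B_X$ is a group automaton, the paper's argument for $F$-maximality of a class $r$ is immediate. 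Take $w\in F$ longer than every word of $X$; since $\varphi_{\B_X}(w)$ is a permutation of $R$, some state $q$ satisfies $q\cdot w=r$; because $w$ is long, this path passes through the state $1$, giving a parse $(s,x,p)$ of $w$ with $1\cdot p=r$; hence $w$ has a suffix in the class $r$, and the dual of Proposition~\ref{propositionPrefixGlobal} concludes. You should replace the dynamical argument by this direct one, or else actually prove the minimality statement you invoke.
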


\begin{proof}
  Let $\A=(P,1,1)$ be the literal automaton recognizing $X^*$ and let
  $\B=(R,1,1)$ be the coset automaton of $X$.  By
  Lemma~\ref{lemmaBidet}, the automaton $\B$ is reversible and
  describes the subgroup $H$ generated by $X$.

  By Theorem~\ref{theoremGroups}, the subgroup $H$ has index $d$ in
  $A^\circ$. Since $\B$ is reversible, it is minimal.
  Propositions~\ref{propGroupCode} and~\ref{propStallings} show that
  $\B$ is a group automaton. Its number of states is $d$ since a group
  automaton which describes a subgroup of index $d$  has $d$ states by
  Proposition~\ref{propositionGroupAutomaton}.

Finally, consider $r\in R$ and let $X_r=\{p\in P\mid 1\cdot p=r\}$.
Let us show that any $w\in F$ is suffix-comparable with an element of
$X_r$. We may assume that $w$ is longer than any word of $X$. Since
$\B_X$ is a group automaton, there is an $u\in R$ such that $u\cdot
w=r$. Since $w$ is longer than any word of $X$, the path from $u$ to
$r$ labeled $w$ passes through state~$1$.  Thus $w$ has a parse
$(s,x,p)$ such that $1\cdot p=r$ and thus $w$ has a suffix in
$X_r$. This shows that $X_r$ is an $F$-maximal suffix code.
\end{proof}

\noindent
Note that the fact that the set $P$ of nonempty proper prefixes of $X$
is a disjoint union of $d-1$ $F$-maximal suffix codes is also a
consequence of the dual statement of
Theorem~\ref{theoremDisjointUnion}.\medskip

\begin{proofof}{of Proposition~\ref{theoremSigmaL}}
  Let $H$ be the subgroup generated by $X$.  By
  Theorem~\ref{theoremGroups}, the set $X$ is a basis of $H$ and the
  index of $H$ is equal to $d=d_F(X)$.  Let $G$ be the incidence graph
  of $X$.   Let $E$ be
  the set of edges of $G$. One has
  \begin{displaymath}
    \Card(E)=\sum_{x\in X}(|x|-1)=\sum_{x\in X}|x|-\Card(X)=\sum_{x\in
      X}|x|-(k-1)d-1\,. 
  \end{displaymath}
  By Proposition~\ref{lemmaCosets}, the classes of $\theta_X$ are the
  set $\{1\}$ and $d-1$ $F$-maximal suffix codes denoted $P_i$, for
  $i=1,\ldots,d-1$.  Each of the latter is the trace on $P\setminus1$
  of a connected component $C_i$ of $G$. Let $G_i$ be the subgraph of $G$
  induced by its connected component $C_i$.  By
  Lemma~\ref{lemmaSuite}, $G_i$ is a tree.

  Similarly, let $S_i$ be the trace on $S\setminus1$ of the connected
  component $C_i$.  Let $E_i$ be the set of edges of $G_i$. Since
  $G_i$ is a tree, we have $\Card(E_i)=\Card(P_i)+\Card(S_i)-1$ for
  $i=1,\ldots, d-1$.  Finally
  \begin{align*}
    \Card(E)&=\sum_{i=1}^{d-1}\Card(E_i)=\sum_{i=1}^{d-1}(\Card(P_i)+\Card(S_i)-1)\\
    &=\Card(P\setminus1)+\Card(S\setminus1)-(d-1)\,,
  \end{align*}
  whence the result. 
\end{proofof}


\section{Syntactic groups}\label{sectionSyntacticGroups}

Let $F$ be a recurrent subset of $A^*$.
In this section, we introduce the notion of $F$-group of a bifix code
$X\subset F$ of finite $F$-degree. It is a permutation group of degree
$d_F(X)$.  We investigate the relation between this group and the
notion of group of a maximal bifix code
(Theorem~\ref{theoremGroupCodes}).  We use Theorem~\ref{theoremGroups}
to prove a new result on the syntactic groups of bifix codes: any
transitive permutation group $G$ of degree $d$ and with $k$ generators
is a syntactic group of a bifix code with $(k-1)d+1$ elements
(Theorem~\ref{newTheorem}).

\subsection{Preliminaries}

We first recall the basic terminology on groups in monoids
(see~\cite{BerstelPerrinReutenauer2009} for a more detailed
exposition). We are mainly concerned with monoids of maps from a set
into itself. The maps considered in this section are partial maps.

Let $M$ be a monoid.
A \emph{group in}\index{group!in a monoid} $M$ is
a subsemigroup of $M$ which is isomorphic to a group. Note that the
neutral element of a group contained in $M$ needs not be equal to the
neutral element of $M$. 

A group in $M$ is \emph{maximal}\index{group!maximal}\index{maximal
  group} if it not included in another group in~$M$.

\begin{proposition}
  Let $G$ be a group in a monoid $M$ of partial maps from a set $Q$
  into itself. All elements of $G$ have the same image $I$. The
  restriction of the elements of $G$ to $I$ is a faithful
  representation of $G$ as a permutation group on $I$.
\end{proposition}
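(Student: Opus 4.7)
The plan is to fix the neutral element $e$ of $G$ (an idempotent in $M$ distinct, in general, from the identity of $M$) and set $I = \operatorname{Im}(e)$. Because $e$ is idempotent, $e$ acts as the identity on $I$. All three assertions of the proposition will then follow from formal manipulations with $e$ and with inverses in $G$, keeping in mind that we work with partial maps, so I must track both domain and image at each step.

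First I would show that every $g \in G$ has image exactly $I$. Using right-action conventions, the identity $g = ge$ yields $\operatorname{Im}(g) \subseteq \operatorname{Im}(e) = I$. For the reverse inclusion, let $g^{-1}$ denote the inverse of $g$ in $G$. Then $g g^{-1} = e$, so $I = \operatorname{Im}(e) = \operatorname{Im}(g g^{-1}) \subseteq \operatorname{Im}(g^{-1})$; applying the same argument with $g^{-1}$ in place of $g$ gives $I \subseteq \operatorname{Im}(g)$, hence equality.

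Next I would check that $g|_I$ is a bijection of $I$ and that the restriction map $\rho : G \to \mathfrak{S}(I)$, $g \mapsto g|_I$, is a group homomorphism. For $q \in I$ we have $q \cdot e = q$; writing $e = g g^{-1}$ shows $q$ lies in the domain of $g$, and by the first step $q \cdot g \in I$. So $g$ restricts to a map $I \to I$. The identities $(g|_I)(g^{-1}|_I) = (gg^{-1})|_I = e|_I = \mathrm{id}_I$ and its symmetric counterpart prove that $g|_I$ is a bijection. Since restriction is compatible with composition, $\rho$ is a homomorphism into the symmetric group of $I$.

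Finally, for faithfulness, suppose $g|_I = h|_I$ with $g, h \in G$. Using $g = eg$ and $h = eh$, for every $q \in Q$ the value $q \cdot g$ is defined if and only if $q \cdot e$ is defined, if and only if $q \cdot h$ is defined; and in that case, setting $q' = q \cdot e \in I$, one has $q \cdot g = q' \cdot g = g|_I(q') = h|_I(q') = q' \cdot h = q \cdot h$. Hence $g = h$ as partial maps, so $\rho$ is injective. The only delicate point is the careful bookkeeping of domains of partial maps; once one uses $e = gg^{-1}$ to force $I$ into the domain of every $g \in G$, everything else is a routine manipulation.
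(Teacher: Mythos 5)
Your proposal is correct and follows essentially the same route as the paper's proof: both use the neutral element $e$ of $G$ (an idempotent of $M$) to identify the common image $I$, show $e$ acts as the identity on $I$ so that each $g\in G$ restricts to a permutation of $I$, and derive faithfulness from $g=eg$. Your version is merely a bit more explicit about domains of partial maps, which does not change the argument.
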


\begin{proof}
  Two elements $g,h\in G$ have the same image. Indeed, let $k$ be the
  inverse of $g$ in $G$. Then $h=hkg$ and thus the image of $h$ is
  contained in the image of $g$. The converse inclusion is shown
  analogously.  Then $G$ is a permutation group on the common image
  $I$ of its elements.  Indeed, let $e$ be the neutral element of
  $G$. Then for any $p\in I$, let $q\in Q$ be such that $qe=p$. Then
  $pe=qe^2=qe=p$. This shows that $e$ is the identity on $I$. Next,
  for any $g\in G$ the inverse $k$ of $g$ is such that $gk=kg=e$. Thus
  $g$ is a permutation on $I$.

  Let $g,g'\in G$ be such that they have the same restriction to
  $I$. Then for each $p\in Q$, $p(eg)=(pe)g=(pe)g'=p(eg')$ since
  $pe\in I$.  Since $eg=g$ and $eg'=g'$, we obtain $g=g'$. This shows
  that the representation of $G$ by permutations on $I$ is faithful.
\end{proof}

Let $G$ be a group in a monoid of maps from $Q$ into itself as above.
The \emph{canonical}\index{canonical
  representation}\index{representation!canonical} representation of
$G$ by permutations is the restriction of the maps in $G$ to their common
image.

A \emph{syntactic group}\index{syntactic group}\index{group!syntactic}
of a prefix code $X$ is the canonical representation by permutations
of a maximal group in the monoid of transitions of the minimal
automaton $\A(X^*)$ of $X^*$.

Let $X$ be a prefix code and let $\A=\A(X^*)$. A syntactic group $G$
of $X$ is called \emph{special}\index{special syntactic group} if
$\varphi_\A^{-1}(G)$ is a cyclic submonoid of $A^*$.  In particular a
special syntactic group is cyclic.

The \emph{degree}\index{degree of a permutation
  group}\index{permutation group!degree} of a permutation group $G$ on
a set $R$ is the cardinality of $R$. Recall that the group $G$ is
\emph{transitive}\index{transitive permutation
  group}\index{permutation group!transitive} if for any $r,s\in R$
there is some $g\in G$ such that $rg=s$.

A permutation group $G$ on a set $R$ and a permutation group $H$ on a
set $S$ are \emph{equivalent}\index{equivalent permutation
  group}\index{permutation group!equivalent} if there exists a
bijection $\beta:R\to S$ and an isomorphism $\sigma:G\to H$ such that,
for all $g\in G$ and $r\in R$, one has
\begin{displaymath}
  \beta(rg)=\beta(r)\sigma(g)\,,
\end{displaymath}
in other terms, if the diagram of Figure~\ref{commdiagram} is commutative
for all $g\in G$.

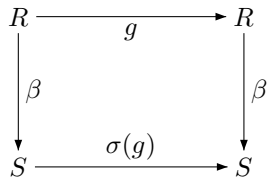
\begin{figure}[hbt]
\centering
\gasset{Nframe=n,Nadjust=wh}
\begin{picture}(30,25)
\node(1)(0,20){$R$}\node(2)(30,20){$R$}
\node(3)(0,0){$S$}\node(4)(30,0){$S$}
\drawedge[ELside=r](1,2){$g$}\drawedge(3,4){$\sigma(g)$}
\drawedge(1,3){$\beta$}\drawedge(2,4){$\beta$}
\end{picture}
\caption{Equivalent permutation groups.}\label{commdiagram}
\end{figure}

Let us recall the notation concerning Green relations in a monoid $M$
(see~\cite{BerstelPerrinReutenauer2009}).
We denote by $\RR$ the equivalence in $M$ defined by $m\RR n$ if
$m,n$ generate the same right ideal, i.e. if $mM=nM$. We denote
by $R(m)$ the $\RR$-class of $m$. 

Symmetrically,
we denote by $\LL$ the equivalence defined by $m\LL n$ if $m,n$
generate the same left ideal, i.e. if $Mm=Mn$. We denote by $L(m)$
the $\LL$-class of $m$.

It is well known that the equivalences $\LL$ and $\RR$ commute.
We denote by $\D$ the equivalence $\LL\RR=\RR\LL$. Finally, we denote
by $\H$ the equivalence $\LL\cap \RR$.

A $\D$-class $D$ is \emph{regular}\index{D-class,
  regular@$\D$-class!regular} if it contains an idempotent. In this
case, there is at least an idempotent in each $\LL$-class of $D$ and
in each
$\RR$-class of $D$.  The following statement is known as \emph{Clifford and
  Miller's Lemma}\index{Clifford and Miller's Lemma}.  For $m,n\in M$,
one has $mn\in R(m)\cap L(n)$ if and only if $R(n)\cap L(m)$ contains
an idempotent.

Assume that $M$ is a monoid of maps from a finite set $Q$ into itself.

If $m,n\in M$ are $\LL$-equivalent, then
they have the same image. If they are $\RR$-equivalent, then they have the same
nuclear equivalence (the nuclear equivalence of a partial map $m$ from
$Q$ into itself is the partial equivalence, for which $p,q\in Q$ are
equivalent if $m$ is defined on $p$ and $q$ and $pm=qm$).

If $m,n\in M$ are $\H$-equivalent, they have the
same
image and the same nuclear equivalence. The converse is not true
but it holds in the following important particular case.

\begin{proposition}\label{propIdempotent}
  Let $M$ be a monoid of maps from a finite set $Q$ into itself.  Let
  $e\in M$ be an idempotent. An element $m$ of $M$ is in the
  $\H$-class of $e$ if and only if it has the same nuclear equivalence
  and the same image as $e$.
\end{proposition}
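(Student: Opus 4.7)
The forward implication is free: the paragraph immediately preceding the statement already notes that $\H$-equivalent elements of any transformation monoid share image and nuclear equivalence, and this holds with no idempotency hypothesis.

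For the converse, I would let $m\in M$ have the same image $I$ and the same nuclear equivalence $\sim$ as $e$, and denote the $\sim$-classes by $C_1,\ldots,C_r$. Since $e$ is idempotent, it fixes every point of $I$, so each $C_j$ contains a unique element $e_j$ of $I$, and $e$ collapses $C_j$ onto $e_j$. My first step would be a short computation: for $q\in C_j$, one has $q(em)=(qe)m=e_jm=qm$ because $e_j$ lies in the $\sim$-class $C_j$ of $q$, and $q(me)=(qm)e=qm$ because $qm\in I$ and $e$ fixes $I$ pointwise. This shows $em=me=m$, placing $m$ in $eM\cap Me$.

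The decisive step will be to show that $e$ itself lies in $mM\cap Mm$. The key point is that $m$ restricts to a permutation of $I$: each $\sim$-class contains exactly one point of $I$, so $I=\{e_1,\ldots,e_r\}$, while the values $e_1m,\ldots,e_rm$ are the $r$ distinct elements of the image of $m$, which by hypothesis equals $I$; hence $e_j\mapsto e_jm$ is a bijection of $I$ onto itself. Because $Q$ is finite, the cyclic submonoid of $M$ generated by $m$ is finite and will contain an idempotent $m^k$ for some $k\geq 1$. Since $m$ permutes $I$, iterating $m$ neither shrinks the image nor merges any nuclear class, so $m^k$ has image $I$ and nuclear equivalence $\sim$. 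I would then check that any idempotent in $M$ with these invariants must act as the identity on $I$ and send each $C_j$ to the unique point of $I\cap C_j$, namely $e_j$, and so coincides with $e$; this gives $m^k=e$. From $e=m\cdot m^{k-1}=m^{k-1}\cdot m$ one obtains $e\in mM\cap Mm$, which combined with $m\in eM\cap Me$ from the first step yields $mM=eM$ and $Mm=Me$, i.e.\ $m\,\RR\,e$ and $m\,\LL\,e$, so $m\,\H\,e$. The one subtlety I foresee is the partial-map bookkeeping: the equality of nuclear equivalences forces $\mathrm{dom}(m)=\mathrm{dom}(e)$, and since $m$ sends this common domain into $I\subseteq\mathrm{dom}(m)$, the iterates $m^k$ and the products $em$, $me$ remain defined on it throughout the argument.
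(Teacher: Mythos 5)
Your argument is correct and follows essentially the same route as the paper's proof: establish $em=me=m$, observe that $m$ restricts to a permutation of $I$, identify an idempotent power $m^k$ with the same image and nuclear equivalence as $e$ (hence equal to $e$), and conclude $\H$-equivalence. The only differences are cosmetic — you justify the permutation claim by counting rather than by injectivity via the nuclear equivalence, and you choose $k$ so that $m^k$ is idempotent rather than so that $m^k$ restricts to the identity on $I$ — and your explicit handling of the partial-map domains is a welcome extra detail.
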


\begin{proof}
  If $m$ and $e$ are $\H$-equivalent, they have the same nuclear
  equivalence $\rho$ and the same image $I$. Conversely, we have
  $me=m$ since $e$ is the identity on its image $I$. For any $p\in Q$,
  $pe^2=pe$ implies that $p$ and $pe$ are in the same class of $\rho$.
  This implies that $pem=pm$.  Thus $em=m$.

Finally, the restriction of $m$ to $I$ is a permutation. Indeed,
$pm=qm$ for $p,q\in I$ implies $pe=qe$ which forces $p=q$.
Let $k>0$ be such that the restriction of $m^k$ to $I$ is the
identity.
Then $m^k$ and $e$ are two idempotents with the same nuclear
equivalence and the same image. This implies that they are equal.
Thus $m$ and $e$ are in the same $\H$-class.
\end{proof}

Let $F$ be a recurrent set and let $X\subset F$ be a bifix code of
finite $F$-degree $d$. Let $\A=(Q,1,1)$ be a simple automaton
recognizing $X^*$. We set $\varphi=\varphi_\A$ and we denote by
$M$ the transition monoid $\varphi(A^*)$ of $\A$.


For a word $w$, we denote by $\Im(w)$ the \emph{image}\index{image of
  a word}\index{word!image} of $w$ with respect to $\A$, that is the
set $\Im(w)=\{p\cdot w \mid p\in Q\}$. The
\emph{rank}\index{rank!word}\index{word!rank} of $w$ (with respect to
the automaton $\A$) is the number $\rank(w)=\Card(\Im(w))$.  Then
$\Im(w)$ is also the image of the map $\varphi(w)$ (recall that the
action of $M$ is on the right of the elements of $Q$), and the rank of
$w$ is also the rank of $\varphi(w)$. Clearly $\rank(uwv)\le\rank(w)$
for all $u,w,v$.

\begin{proposition}\label{propDclass}
  The set of elements of $\varphi(F)$ of rank $d$ is included in a
  regular $\D$-class of $M$.
\end{proposition}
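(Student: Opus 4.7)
My plan is to prove the proposition in three steps, which together yield both the $\D$-equivalence of all rank-$d$ elements of $\varphi(F)$ and the regularity of their common $\D$-class.

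First, I would establish a rank characterization: for every $w \in F$, $\rank(\varphi(w)) \geq d$, with equality when $w \in F \setminus I(X)$. For $w \in F \setminus I(X)$, Theorem~\ref{theoremDegree} gives $\pars_X(w) = d$. For each parse $(v, x, u)$ of $w$, the prefix $v \in F$ lies in $A^* \setminus A^*X$; by $F$-maximality of $X$ as a suffix code (Theorem~\ref{theoremEquivMax}), $v$ is a proper suffix of some $y \in X$. Writing $y = y'v$ with $y' \in P$, one checks that $y' \cdot w = u$, placing $u$ in $\Im(\varphi(w))$; distinct parses yield distinct $u$'s, so $\rank(\varphi(w)) \geq d$. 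Conversely, if $q \cdot w = r$ in the literal automaton, the decomposition $qw = z_1 \cdots z_n r$ with $z_j \in X$ places $|q|$ either at a boundary (forcing $q = 1$ and giving the parse $(1, z_1\cdots z_n, r)$) or strictly inside some $z_j$ (yielding a parse whose left-over is a proper suffix of $z_j$ and thus avoids $A^*X$ by the suffix-code property). Hence $\rank(\varphi(w)) = d$. For arbitrary $w \in F$, extend $w$ to some $wr \in F \setminus I(X)$ (possible since $\pars_X$ is bounded by $d$ and nondecreasing under extension), giving $\rank(\varphi(w)) \geq \rank(\varphi(wr)) = d$.

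Second, I would establish regularity. Fix $m = \varphi(w)$ with $\rank(m) = d$, and recursively define $w_0 = w$ and $w_{k+1} = w_k v_k w_k \in F$ with $v_k \in F$ supplied by recurrence. Every $m_k = \varphi(w_k)$ has rank $d$ by the first step, and since $w_k$ begins and ends with $w_i$ for each $i \le k$, we have $m_k = m_i z_{i,k} m_i$ for some $z_{i,k} \in M$. Pigeonhole in the finite monoid $M$ produces $i < j$ with $m_i = m_j$, so $m_i = m_i z_{i,j} m_i$: the element $m_i$ is regular. Since $m_i \in M m M$ with $\rank(m_i) = \rank(m)$, the standard principle for finite transformation monoids---that rank preservation inside the two-sided ideal forces $\D$-equivalence---yields $m \D m_i$; consequently $m$ lies in a regular $\D$-class of $M$ containing an idempotent of rank $d$. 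Now for any two rank-$d$ elements $m = \varphi(w), m' = \varphi(w')$ of $\varphi(F)$, use recurrence to find $v, v' \in F$ with $wvw'v'w \in F$; then $n = \varphi(wvw'v'w) = m\varphi(v)m'\varphi(v')m$ lies in $\varphi(F)$ with rank $d$. Since $n \in MmM \cap Mm'M$ and $m, m'$ are both regular, the same principle forces $m \D n \D m'$, so all rank-$d$ elements of $\varphi(F)$ belong to a single regular $\D$-class.

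The main obstacle is the precise verification of the rank-preservation principle in our sub-monoid $M$ of partial transformations: given $a = xby$ in $M$ with $\rank(a) = \rank(b)$, rank preservation makes $y$ injective on $\Im(b)$ and $b|_{\Im(x)}$ surjective onto $\Im(b)$; iterating within the finite subsemigroup generated by the relevant pieces yields permutations of $\Im(b)$ whose inverses are realized in $M$, producing elements witnessing $b \in MaM$. The careful handling of partial-function domains in this step, together with the use of regularity to guarantee that the required pseudo-inverses actually live inside $M$, is where the argument is most delicate.
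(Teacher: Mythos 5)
Your overall architecture---rank lemmas, a pigeonhole argument producing a regular element $m_i=m_iz_{i,j}m_i$, then transporting $\D$-equivalence along products---is reasonable, and the pigeonhole route to regularity is a genuine alternative to the paper's use of Clifford and Miller's Lemma. But the argument rests on a ``standard principle for finite transformation monoids'' that is not a theorem: in a submonoid $M$ of (partial) maps of $Q$, the conditions $b\in MaM$ and $\rank(a)=\rank(b)$ do \emph{not} force $a\D b$. Take $Q=\{1,2,3,4\}$ and $M$ generated by the total maps (acting on the right) $e\colon 1\mapsto1,\ 2\mapsto2,\ 3\mapsto1,\ 4\mapsto2$ and $g\colon 1\mapsto3,\ 2\mapsto3,\ 3\mapsto4,\ 4\mapsto4$. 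Then $ge$ has rank $2=\rank(g)$ and lies in $MgM$, yet $g\notin M(ge)M$: every nonidentity element of $M$ either identifies $1$ and $2$ or maps $\{1,2\}$ into itself, so no $y\in M$ carries $\Im(ge)=\{1,2\}$ onto $\Im(g)=\{3,4\}$, hence no product $x(ge)y$ equals $g$. Your closing sketch (invert the induced permutations of $\Im(b)$ ``within $M$'') fails for exactly this reason: the required inverses need not belong to $M$. Even in the regular--regular case the implication, while provable, requires real finite-semigroup machinery (choosing an idempotent $\D$-equivalent to the lower element beneath an idempotent of the upper class, and showing two comparable idempotents of equal rank coincide), which you neither state nor prove; and in your first invocation $m$ is not yet known to be regular at all.

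What actually makes the argument work---and what the paper exploits---is that most of your products are \emph{framed by the same element}: $m_j=m_iz m_i$. There, $\Im(m_izm_i)\subset\Im(m_i)$ with equal cardinality forces $zm_i$ to permute $\Im(m_i)$, some power $(zm_i)^\ell$ restricts to the identity on $\Im(m_i)$, and one reads off $m_i\RR m_j$ and $m_i\LL m_j$ directly, with no appeal to any general principle. This rescues $m\D m_i$ and $m\D n$ for $n=msm$, but it does \emph{not} apply to your final step $n\D m'$, because $n=(m\varphi(v))\,m'\,(\varphi(v')m)$ is framed by two different elements, neither equal to $m'$. This is precisely where the paper does extra work: using recurrence it produces a word of $F$ that begins \emph{and} ends with the word $v$ representing $m'$ and contains the relevant product, deduces that the corresponding element permutes $\Im(\varphi(v))$, and thereby builds an idempotent $e'$ with $\varphi(v)e'=\varphi(v)$ and $e'\in M\varphi(uwv)$, yielding $\varphi(v)\LL\varphi(uwv)$; regularity then follows from Clifford and Miller. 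Without this construction (or a complete proof of the regular--regular version of your principle), the claim that all rank-$d$ elements of $\varphi(F)$ lie in a single $\D$-class is not established.
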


We use the following lemmas.



\begin{lemma}\label{lemmaRank1}
  A word $w\in F$ which has $d$ parses with respect to $X$ has rank
  $d$ with respect to $\A$. Moreover, $\Im(w)$ is the set of
  states $1\cdot p$ for all $p$ such that there is a parse $(s,x,p)$
  of $w$. For all $q\in\Im(w)$, there is a unique proper prefix $p$ of
  $P$ which is a suffix of $w$, and such that $q=1\cdot p$.
\end{lemma}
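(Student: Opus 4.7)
My plan is organized around three claims. Let me write $P$ for the set of proper prefixes of $X$ as in the paper, and enumerate the $d$ parses of $w$ as $(s_i, y_i, p_i)$ for $i = 1, \ldots, d$. Since $\pars_X(w) = d = d_F(X)$, Theorem~\ref{theoremDegree} gives that $X$ is an $F$-thin and $F$-maximal bifix code, that $w \in F \setminus I(X)$, and hence by Theorem~\ref{theoremEquivMax} that $X$ is left and right $F$-complete. The dual of Proposition~\ref{propInterpretations} identifies the $p_i$ as the $d$ distinct suffixes of $w$ with no prefix in $X$. I need to show (a) each $p_i$ lies in $P$ so $1 \cdot p_i$ is defined in $\A$, (b) the values $1 \cdot p_1, \ldots, 1 \cdot p_d$ are pairwise distinct elements of $\Im(w)$, and (c) every element of $\Im(w)$ equals some $1 \cdot p_i$.

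For (a), $p_i$ is a suffix of $w \in F$, hence $p_i \in F$. Right $F$-completeness makes $p_i$ a prefix of $X^*$, and since $p_i$ has no prefix in $X$, it must be a proper prefix of some $x \in X$, so $p_i \in P$ and $1 \cdot p_i$ is defined (the path in $\A$ reading $x$ from $1$ visits $1 \cdot p_i$ after reading $p_i$). For the membership part of (b), left $F$-completeness supplies $u_i$ with $u_i s_i \in X^*$ (with $u_i = 1$ if $s_i = 1$); setting $q_i = 1 \cdot u_i$ yields $q_i \cdot s_i = 1$, whence $q_i \cdot w = 1 \cdot (y_i p_i) = 1 \cdot p_i$, using $y_i \in X^*$.

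For the distinctness in (b), suppose $1 \cdot p_i = 1 \cdot p_j$ with $i \ne j$. Choose $v$ with $p_j v \in X$ (possible since $p_j \in P$); then $(1 \cdot p_j) \cdot v = 1$, and by the hypothesis $(1 \cdot p_i) \cdot v = 1$, i.e., $p_i v \in X^*$. Now $(s_i,\, y_i p_i v,\, 1)$ and $(s_j,\, y_j p_j v,\, 1)$ are two distinct parses of $wv$ with identical third component $1$, contradicting the bijection between parses and third components (the argument proving the dual of Proposition~\ref{propInterpretations} works verbatim for any word of $A^*$, not only for words of $F$).

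For (c), let $q \in \Im(w)$, write $q = q' \cdot w$, and pick $u$ with $1 \cdot u = q'$ by trim-ness. Factor $uw = vp$ with $v \in X^*$ of maximal length, so $p \in A^* \setminus XA^*$ and $q = 1 \cdot p$; as in (a), $p \in P$. The decisive step is to rule out $|p| > |w|$: otherwise $p = u'' w$ with $u'' \ne 1$ a suffix of $u$, and since $p \in P$ is a proper prefix of some $x \in X$ we may write $x = u'' w y$ with $y \ne 1$, forcing $w \in I(X)$ and contradicting the hypothesis. Hence $p$ is a suffix of $w$ with no prefix in $X$, so $p = p_i$ for some $i$ and $q = 1 \cdot p_i$. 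Combining (b) and (c) gives $\Im(w) = \{1 \cdot p_1, \ldots, 1 \cdot p_d\}$ with $d$ distinct elements, so $\rank(w) = d$, and the uniqueness of $p$ in the final sentence of the statement follows. The main obstacle is the distinctness step in (b), whose non-routine idea is the construction of two parses of the auxiliary word $wv$ with the same third component out of the supposed coincidence of states; the case analysis in (c), invoking $w \notin I(X)$, is a close second.
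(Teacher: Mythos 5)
Your proof is correct and follows essentially the same route as the paper's, resting on the same ingredients: $w\notin I(X)$ via Theorem~\ref{theoremDegree}, the bijection between parses and suffixes of $w$ without prefix in $X$ (dual of Proposition~\ref{propInterpretations}), $F$-completeness from Theorem~\ref{theoremEquivMax}, and trimness of $\A$. One small repair: in step (c) the word $p$ is a suffix of $uw$, which need not lie in $F$, so the justification of $p\in P$ should come from trimness (some $v'$ gives $pv'\in X^*$, and $p$ has no prefix in $X$, hence is a proper prefix of the first $X$-factor) rather than from the right $F$-completeness argument of step (a).
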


\begin{proof}
  Consider first two states $q,r\in Q$ and suppose that $q\cdot
  w=r$. Since $\A$ is simple, it is trim. Consequently there exist two
  words $u,v$ such that $1\cdot u=q$ and $r\cdot v=1$. It follows that
  $uwv\in X^*$. Since $w$ has $d$ parses, by
  Theorem~\ref{theoremDegree} it is not an internal factor of a word
  in $X$. Thus there is a parse $(s,x,p)$ of $w$ such that $us,pv\in
  X^*$. Then $r=1\cdot p$. The relation $r\to (s,x,p)$ is a
  function. Indeed, let us  show that if $(s,x,p)$ and $(s',x',p')$ are two
  distinct parses of $w$, then $1\cdot p\ne 1\cdot p'$. Assume the
  contrary. Then we have $pv,p'v\in X^*$ for the same word $v$.  Since
  $p,p'$ are suffixes of $w$, they are suffix-comparable and thus
  $p=p'$ since $X$ is bifix. This is impossible if the parses are
  distinct. Of course, the function $r\mapsto (s,x,p)$ is injective
  since $\A$ is deterministic.

  Conversely, let $(s,x,p)$ be a parse of $w$. Since $X$ is an
  $F$-maximal bifix code, there exist by Theorem~\ref{theoremEquivMax}
  words $u,v$ such that $us,pv\in X^*$. Thus we have $1\cdot us=1\cdot
  x=1\cdot pv=1$.  Consequently $(1\cdot u)\cdot w=1\cdot usxp=1\cdot
  xp=1\cdot p$.  This shows that $1\cdot p\in \Im(w)$.

\end{proof}

\begin{lemma}\label{lemmaRank2}
  Let $u\in F$ be a word. If $\rank(u)=d$, then $\rank(uv)= d$ for all
  $v$ such that $uv\in F$.
\end{lemma}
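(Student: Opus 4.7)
The plan is to split the desired equality $\rank(uv) = d$ into the two inequalities $\rank(uv) \le d$ and $\rank(uv) \ge d$. The first is immediate from the fact that right multiplication cannot increase the rank: for any words $w, w'$, one has $\Im(ww') = \Im(w) \cdot \varphi(w')$, and therefore $\rank(ww') \le \rank(w)$. Applied to $u$ and $v$, together with the hypothesis $\rank(u) = d$, this gives $\rank(uv) \le d$.

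The substantive part is the lower bound $\rank(uv) \ge d$. My strategy is to extend $uv$ on the right inside $F$ to a word whose number of parses equals $d$, and then apply Lemma~\ref{lemmaRank1}. Since $X$ has finite $F$-degree $d$, Theorem~\ref{theoremDegree} guarantees that $X$ is $F$-maximal and $F$-thin; in particular, there exists $w \in F \setminus I(X)$. Using the recurrence of $F$ applied to the pair $uv,\ w \in F$, I obtain a word $z$ with $uvzw \in F$, and set $y = zw$. The word $uvy$ then contains $w$ as a factor, so it cannot itself be an internal factor of a word in $X$: indeed, if $x = \alpha (uvy)\beta \in X$ with $\alpha,\beta \in A^+$, then writing $uvy = \gamma w \delta$ gives $x = (\alpha\gamma)\, w\, (\delta\beta)$ with $\alpha\gamma, \delta\beta \in A^+$, forcing $w \in I(X)$, a contradiction. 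Hence $uvy \in F \setminus I(X)$, so $\pars_X(uvy) = d$ by Theorem~\ref{theoremDegree}, and then $\rank(uvy) = d$ by Lemma~\ref{lemmaRank1}. Since rank is non-increasing under right multiplication, $\rank(uv) \ge \rank(uvy) = d$, completing the proof.

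I do not foresee any real obstacle. The argument uses only the recurrence of $F$, the finiteness of the $F$-degree of $X$, Theorem~\ref{theoremDegree}, and the just-proved Lemma~\ref{lemmaRank1}. It is worth noting that the hypothesis $\rank(u) = d$ is used solely to obtain the upper bound; the lower-bound argument in fact shows that every word in $F$ has rank at least $d$, which is consistent with $d$ being the minimal rank attained on $\varphi(F)$.
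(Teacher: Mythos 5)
Your proof is correct and follows essentially the same route as the paper's: extend $uv$ on the right inside $F$ to a word containing a factor outside $I(X)$, deduce that this extension has $d$ parses and hence rank $d$ by Lemma~\ref{lemmaRank1}, and sandwich $\rank(uv)$ between $d$ and $\rank(u)=d$. Your write-up merely makes explicit a step the paper leaves terse (why the extended word cannot be an internal factor of $X$), which is a welcome clarification but not a different argument.
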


\begin{proof}
  Since $X$ is $F$-thin, there exists $w\in F$ which is not a
  factor of a word in $X$. This word $w$ has $d$ parses.  Assume
  $uv\in F$. Since $F$ is recurrent, there exists a word $t$ such that
  $uvtw\in F$. Then $uvtw$ also has $d$ parses. By
  Lemma~\ref{lemmaRank1}, this implies that the rank of $uvtw$ is
  $d$. Since $d=\rank(uvtw)\le\rank(uv)\le\rank(u)=d$, one has
  $\rank(uv)=d$.
\end{proof}

\medskip

\begin{proofof}{of Proposition~\ref{propDclass}}
  Let $u,v\in F$ be two words of rank $d$. Set $m=\varphi(u)$ and
  $n=\varphi(v)$.  Let $w$ be such that $uwv\in F$.  We show first
  that $m\RR \varphi(uwv)$ and $n\LL \varphi(uwv)$.

  For this, let $t$ be such that $uwvtu\in F$.  Set $z=wvtu$.  By
  Lemma~\ref{lemmaRank2}, the rank of $uz$ is $d$. Since
  $\Im(uz)\subset\Im(z)\subset \Im(u)$, this implies that the images
  are equal. Consequently, the restriction of $\varphi(z)$ to $\Im(u)$
  is a permutation. Since $\Im(u)$ is finite, there is an integer
  $\ell\ge 1$ such that $\varphi(z)^\ell$ is the identity on
  $\Im(u)$. Set $e=\varphi(z)^\ell$ and $s=tuz^{\ell-1}$. Then,
  since $e$ is the identity on $\Im(u)$, one has $m=me$. Thus
  $m=\varphi(uwv)\varphi(s)$, and since $\varphi(uwv)=m\varphi(wv)$,
  it follows that $m$ and $\varphi(uwv)$ are
  $\RR$-equivalent. 

  Similarly $n$ and $\varphi(uwv)$ are $\LL$-equivalent. Indeed, set
  $z'=tuwv$. Then $\Im(vz')\subset\Im(z')\subset\Im(v)$.  Since $vz'$
  is a factor of $z^2$ and $z$ has rank $d$, it follows that
$d=\rank(z^2)\le\rank(vz')\le\rank(v)=d$. Therefore, $vz'$  has rank $d$ and
  consequently the images 
 $\Im(vz')$, $\Im(z')$ and $\Im(v)$
are equal. There is an integer $\ell'\ge 1$
  such that $\varphi(z')^{\ell'}$ is the identity on $\Im(v)$. Set
  $e'=\varphi(z')^{\ell'}$.  Then
  $n=ne'=n\varphi(z')^{\ell'-1}\varphi(tuwv)=nq\varphi(uwv)$, with
  $q=\varphi(z')^{\ell'-1}\varphi(t)$. Since
  $\varphi(uwv)=\varphi(uw)n$, one has $n\LL\varphi(uwv)$.  Thus $m,n$
  are $\D$-equivalent, and $\varphi(uwv)\in R(m)\cap L(n)$.

  Set $p=\varphi(wv)$. Then $p=\varphi(w)n$ and, with the previous
  notation, $n=ne'=nq\varphi(u)p$, so $L(n)=L(p)$.  Thus
  $mp=\varphi(uwv)\in R(m)\cap L(p)$, and by Clifford and Miller's
  Lemma, $R(p)\cap L(m)$ contains an idempotent. Thus the $\D$-class
  of $m$, $p$ and $n$ is regular.
\end{proofof}

\subsection{Group of a bifix code}

Let $M$ be a monoid.
The $\H$-class of an idempotent $e$ is denoted $H(e)$.
It is the maximal group contained
in $M$ and containing $e$.

All groups $H(e)$ for $e$ idempotent in a regular $\D$-class $D$ are
isomorphic.  The \emph{structure
  group}\index{group!structure}\index{structure group} (or
Sch\"utzenberger group) of $D$ is any one of them.  When $M$ is a
monoid of maps from a set $Q$ into itself, the canonical
representations of the groups $H(e)$ are equivalent permutation
groups. See \cite[Proposition~9.1.9]{BerstelPerrinReutenauer2009}.  We
then also consider the structure group as a permutation group.

Let $F$ be a recurrent set and let $X\subset F$ be a bifix code of
finite $F$-degree~$d$.  Let $\A=(Q,1,1)$ be a simple automaton
recognizing $X^*$. Set $\varphi=\varphi_A$.  The
structure group of the $\D$-class of elements of rank $d$ of
$\varphi(F)$ is a permutation group of degree $d$.  By Proposition
9.5.1 in~\cite{BerstelPerrinReutenauer2009}, this group does not
depend on the choice of the simple automaton $\A$ recognizing
$X^*$. It is called the $F$-\emph{group}\index{F-group@$F$-group} of
the code $X$ and denoted $G_F(X)$.

When $F=A^*$, the group $G_F(X)$ is the group $G(X)$ of the code $X$
defined in~\cite{BerstelPerrinReutenauer2009}. Indeed, in this case,
the $\D$-class of elements of rank $d$ coincides with the minimal
ideal of the monoid $\varphi(A^*)$.

The following example shows that the $F$-group of an $F$-maximal
bifix code is not always transitive.

\begin{example}
  Let $X=\{ab,ba\}$ and let $F=F((ab)^*)$. Then $X$ is an $F$-maximal
  bifix code of $F$-degree $2$. It can be verified easily that the
  syntactic monoid of $X^*$ contains only trivial subgroups (see also
  Exercises~7.1.1, 7.2.1 in~\cite{BerstelPerrinReutenauer2009}). Thus
  $G_F(X)$ is reduced to the identity.
\end{example}

\begin{figure}[hbt]
  \centering
  \gasset{AHnb=0,Nadjust=wh}
  \begin{picture}(60,35)(0,3)
    \node(1)(0,20){$1$}\node(a)(10,30){$2$}\node(b)(10,10){$7$}
    \node(aa)(20,35){$1$}\node(ab)(20,25){$3$}\node(ba)(20,10){$8$}
    \node(aba)(30,25){$4$}\node(baa)(30,15){$9$}\node(bab)(30,5){$6$}
    \node(abaa)(40,30){$5$}\node(abab)(40,20){$1$}
    \node(baab)(40,15){$1$}\node(baba)(40,5){$1$}
    \node(abaab)(50,30){$6$}\node(abaaba)(60,30){$1$}
    \drawedge(1,a){$a$}\drawedge[ELside=r](1,b){$b$}
    \drawedge(a,aa){$a$}\drawedge[ELside=r](a,ab){$b$}\drawedge(b,ba){$a$}
    \drawedge(ab,aba){$a$}\drawedge(ba,baa){$a$}\drawedge[ELside=r](ba,bab){$b$}
    \drawedge(aba,abaa){$a$}\drawedge[ELside=r](aba,abab){$b$}
    \drawedge[ELside=r](baa,baab){$b$}\drawedge(bab,baba){$a$}
    \drawedge(abaa,abaab){$b$}\drawedge(abaab,abaaba){$a$}
  \end{picture}
  \caption{An $F$-maximal bifix code of $F$-degree
    4.}\label{figureCodeGiuseppina2}
\end{figure}
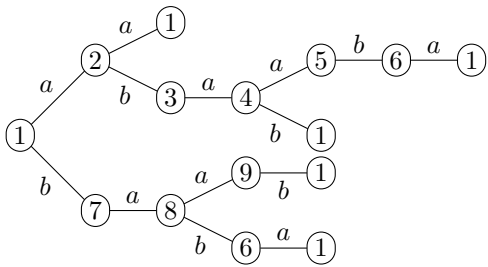

\begin{example} We consider again the code of
  Example~\ref{exampleCodeGiuseppina}.  The minimal automaton of $X^*$
  is represented on Figure~\ref{figureCodeGiuseppina2}. 

\begin{figure}
\begin{displaymath}
\def\rb{\hspace{2pt}\raisebox{0.8ex}{*}}\def\vh{\vphantom{\biggl(}}
    \begin{array}%
    {r|@{}l@{}c|@{}l@{}c|@{}l@{}c|}%
    \multicolumn{1}{r}{}&\multicolumn{2}{c}{1,2,4,8}&\multicolumn{2}{c}{1,2,5,9}
    &\multicolumn{2}{c}{1,3,6,7}\\
    \cline{2-7}
    1|2,6|3|7&\vh  & & &a^2&\vh\rb&  \\
    \cline{2-7}
    1|2|4,9|5,8&\vh\rb&ba&\vh\rb& &\vh& b\\
    \cline{2-7}
    1|2,6|3,8|4,7&\vh\rb&aba &\vh& &\vh\rb&ab\\
    \cline{2-7}
    \end{array}
\end{displaymath}
\caption{The $\D$-class of rank $4$.}\label{figDclass}
\end{figure}

We have represented on Figure~\ref{figDclass} the $\D$-class of
elements of rank $4$ meeting $\varphi(F)$. It is composed of three
$\LL$-classes and three $\RR$-classes. Each $\LL$-class is represented
by a column and each $\RR$-class by a row. On top of each column, we
have indicated the common image of the its elements. On the left of
each row, we have indicated the common nuclear equivalence of its
elements (recall that two elements are equivalent for the nuclear
equivalence if and only if they have the same image). The $\H$-classes
containing an idempotent are indicated by a star. Each $\H$-class has
four elements, and five of them are groups (this happens when the
image is a system of representatives of the nuclear equivalence). For
instance, the five classes in the nuclear equivalence of $\varphi(ba)$
are $\{1\}$, $\{2\}$, $\{4,9\}$, $\{5\}$ and $\{8\}$, and 
the $\H$-class of (the image of) $ba$ is composed of the
following elements:
\begin{displaymath}
  \begin{array}{ll} 
\text{\upshape{Word}}&\text{\upshape{Permutation}}\\\hline
ba& (18)(24)\\
baaba&(12)(48)\\
baba&(1)\\
babaaba&(14)(28)
  \end{array}
\end{displaymath}
The structure group of this $\D$-class is the Abelian group
$\Z/2\Z\times\Z/2\Z$. It is the $F$-group of the code.
\end{example}


The aim of this section is to prove the following theorem.

\begin{theorem}\label{newTheorem}
  Any transitive permutation group of degree $d$ which can be
  generated by $k$ elements is a syntactic group of a bifix code with
  $(k-1)d+1$ elements.
\end{theorem}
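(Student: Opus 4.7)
The plan is to realize $G$, via Theorem~\ref{theoremGroups} and Corollary~\ref{newCorollary}, as the permutation group induced on the cosets of a suitable subgroup of a free group. Let $A=\{a_1,\ldots,a_k\}$ and let $\psi:A^\circ\to G$ be the morphism defined by $\psi(a_i)=g_i$, where $g_1,\ldots,g_k$ are generators of $G$. Fix a point stabilizer $K\le G$; by transitivity $[G:K]=d$ and the right action of $G$ on $K\backslash G$ is equivalent to the given action of $G$ on $d$ points. Set $H=\psi^{-1}(K)$, a subgroup of index $d$ of $A^\circ$. The case $k=1$ reduces to $G$ being cyclic of order $d$ and is handled by $X=\{a^d\}$, so we may assume $k\ge 2$.

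Let $F$ be a Sturmian set on $A$, for instance the set of factors of a standard strict episturmian word whose directive word uses each letter of $A$ infinitely often. By Corollary~\ref{newCorollary} there is a unique finite $F$-maximal bifix code $X\subset F$ of $F$-degree $d$ with $\langle X\rangle=H$, and Theorem~\ref{theoremBifixd+1} gives $\Card(X)=(k-1)d+1$. Since $X$ is bifix, the minimal automaton $\A$ of $X^*$ is simple, so $G_F(X)$ may be computed from $\A$; by definition it is the canonical representation of the $\H$-class $H(e)$ of an idempotent $e$ in the $\D$-class of rank-$d$ elements of $\varphi_\A(F)$, which is a maximal group in the syntactic monoid of $X^*$. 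Hence $G_F(X)$ is a syntactic group of $X$.

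It remains to identify $G_F(X)$ with $G$ as a permutation group. Pick a right-special word $u\in F$ with $\pars_X(u)=d$ via Lemma~\ref{lemmadSpecial} and choose $e$ in the $\H$-class of $\varphi_\A(u)$. By Lemma~\ref{lemmaRank1}, $\Im(e)=\Im_\A(u)$ is indexed by the $d$ right-special proper prefixes $q_1,\ldots,q_d$ of $X$; by Corollary~\ref{corollaryCosets}, these lie in pairwise distinct right cosets of $H$, so $\beta:1\cdot q_i\mapsto Hq_i$ is a bijection $\Im(e)\to H\backslash A^\circ$. For any $w\in A^*$ with $\varphi_\A(w)\in H(e)$, if $\varphi_\A(w)$ sends $1\cdot q_i$ to $1\cdot q_{\sigma(i)}$, then $q_iw\in X^*q_{\sigma(i)}$, and hence $Hq_iw=Hq_{\sigma(i)}$ because $X^*\subset H$. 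Thus $\beta$ intertwines the canonical action of $G_F(X)$ with the right-multiplication action of $\psi(w)$ on $H\backslash A^\circ$, which is equivalent to the action of $G$ on $K\backslash G$. The main obstacle is showing that \emph{every} element of $G$ is realised by some $\varphi_\A(w')\in H(e)$; for this I would use Proposition~\ref{lemmaCosets}, which says that the coset automaton $\B_X$ is a group automaton on $d$ states with transition group $G$, together with Proposition~\ref{propIdempotent}, which allows one to adjust any representative $w\in A^*$ of a given $g\in G$ into an element of $H(e)$ via well-chosen insertions preserving the image and nuclear equivalence of $e$.
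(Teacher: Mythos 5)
Your overall architecture matches the paper's: realize $G$ on the cosets of a point stabilizer, pull back to a subgroup $H$ of index $d$ of $A^\circ$, take the associated $F$-maximal bifix code $X=Z\cap F$ of $F$-degree $d$ via Corollary~\ref{newCorollary}, and identify $G_F(X)$ with $G$. The paper packages exactly this identification as Theorem~\ref{theoremGroupCodes} ($G_F(Z\cap F)=G(Z)$ for a group code $Z$), so what you are really doing is re-proving that theorem, and that is where your argument has a genuine gap.

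The gap is the surjectivity step that you yourself flag as ``the main obstacle.'' Knowing from Proposition~\ref{lemmaCosets} that the coset automaton is a group automaton with transition group $G$ gives you, for each $g\in G$, \emph{some} word $w$ acting as $g$ on the cosets; but to land in the $\H$-class $H(e)$ you need an element of the transition monoid of $\A(X^*)$ with the \emph{same image and nuclear equivalence} as $e$, i.e.\ of rank $d$. The natural candidate $e\varphi_\A(w)e$ only has image contained in $\Im(e)$ and can drop rank: the rank-preservation arguments of Lemma~\ref{lemmaRank2} and Proposition~\ref{propDclass} apply only to concatenations that stay inside $F$, and an arbitrary representative $w$ of $g$ need not satisfy $uwu\in F$. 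Proposition~\ref{propIdempotent} does not repair this; it only recognizes membership in $H(e)$ once equality of image and kernel is already known. This is not a removable technicality: Example~\ref{exNonStrictGroup} exhibits a non-strict episturmian $F$ where $G_F(X)$ is a \emph{proper} subgroup of $G(Z)$, so any correct proof must use the Sturmian hypothesis at precisely this point. The paper does so through the first return words $Y=R_F(u)$: each $y\in Y$ satisfies $uy\in A^+u\cap F$, hence $\varphi_\A(y)$ permutes $\Im(u)$ (no rank drop), and by Theorem~\ref{propositionGamma} the set $R_F(u)$ is a basis of the free group $A^\circ$, so the permutations $\psi(y)$, $y\in Y$, generate all of $G$. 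Your sketch never invokes the return words, and without them (or an equivalent device) the inclusion of the realized subgroup into $G$ cannot be upgraded to equality.
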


Theorem~\ref{newTheorem} was known before in particular cases.
In~\cite{Perrin1981} it is shown that any transitive permutation
group is a syntactic group of a finite bifix code. The bound
$d+1$ on the cardinality of the bifix code is proved
for the case of a group generated by
a $d$-cycle and another permutation. In \cite{Rindone1987}, it is
proved that for an Abelian group of rank $2$ and order $d$ there
exists a bifix code $X$ such that $\Card(X)-1=d$ . The proof is based
on the fact that the Cayley graph of an Abelian group contains a
Hamiltonian cycle.  

Let us call \emph{minimal rank} of a group $G$ the minimal cardinality
of a generating set for $G$.  Theorem~\ref{newTheorem} is related to
the following conjecture~\cite{Perrin1981}.

{\sl Let $X$ be a finite bifix code and let $G$ be a transitive
  permutation group of degree $d$ and minimal rank $k$. If $G$ is a
  syntactic group of $X$, then $\Card(X)\ge (k-1)d+1$.}

Theorem~\ref{newTheorem} shows that the lower bound is sharp.

The following result, which is  from~\cite{PerrinRindone2003}, shows
that the conjecture holds for $k=2$.

\begin{theorem}\label{theoremPR}
  Let $G$ be a permutation group of degree $d$. If $G$ is a nonspecial
  syntactic group of a finite prefix code $X$, then $\Card(X)\ge d+1$.
\end{theorem}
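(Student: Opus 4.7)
The plan is to prove the contrapositive: assume $\Card(X) \le d$ and show that $G$ is special, i.e., that $\varphi_\A^{-1}(G)$ is a cyclic submonoid of $A^*$.

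First, I would set up the framework. Let $\A = (Q, 1, 1)$ be the minimal automaton of $X^*$, which is simple since $X$ is a prefix code, and let $M = \varphi_\A(A^*)$ be its transition monoid. Let $e \in M$ be an idempotent such that $G = H(e)$ under the canonical representation of $H(e)$ on its common image $I = \Im(e) \cap Q$, so $\Card(I) = d$. The nonspecial hypothesis asserts that $\varphi_\A^{-1}(G)$ cannot be generated by a single word.

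The core of the argument is to associate to each state $q \in I$ a distinct element of $X$. For each $q \in I$, since $q$ lies in the image of the idempotent $e$ and $\A$ is trim, there is a word reaching $q$ from $1$; by truncating at the last visit to $1$ before reaching $q$, we obtain $u_q \in A^* \setminus A^*X$ with $1 \cdot u_q = q$. Symmetrically one constructs $v_q \notin XA^*$ with $q \cdot v_q = 1$, giving a word $x_q = u_q v_q \in X$ whose path in $\A$ passes through $q$. I would then argue that the map $q \mapsto x_q$ can be arranged to be injective by appealing to the fact that distinct states $q, q' \in I$ admit no common ``minimal passage'' word in $X$ (since the factorization $x = u \cdot v$ through an intermediate state is determined by the path). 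This produces $d$ distinct elements of $X$, and the assumption $\Card(X) \le d$ forces equality, so that $X$ is in bijection with $I$ via $q \mapsto x_q$, and every $x \in X$ has its path visiting exactly one state of $I$.

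The final step is to extract cyclicity of $\varphi_\A^{-1}(G)$ from this rigid bijection. Given any $w \in \varphi_\A^{-1}(G)$, the word $w$ induces a permutation of $I$ and the path of $w$ from $1$ can be decomposed into a sequence of passages, each labeled by a unique $x_q \in X$; this in turn corresponds to a sequence of transitions among the states of $I$. The tightness $\Card(X) = d$ combined with the fact that $G$ permutes $I$ should force these passages to form a single closed orbit, so that a shortest nontrivial element of $\varphi_\A^{-1}(G)$ generates the whole submonoid. This contradicts the nonspeciality assumption, completing the contrapositive.

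The main obstacle I expect is precisely this last step: while the counting argument gives a clean bijection between $X$ and $I$, translating it into the algebraic statement that $\varphi_\A^{-1}(G)$ is generated by a single word requires a delicate Green's-relations analysis in $M$. Specifically, one must show that the uniqueness of the passage $x_q$ through each $q \in I$ forces all elements mapping into the $\H$-class of $e$ to factor through iterates of a single loop in $\A$, and I suspect this uses the regularity of the $\D$-class of $e$ together with the fact that the stabilizer structure of the action of $G$ on $I$ is constrained by the uniqueness of $x_q$.
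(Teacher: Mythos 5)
The paper does not prove this statement: Theorem~\ref{theoremPR} is imported verbatim from~\cite{PerrinRindone2003}, so there is no internal proof to compare yours against. Judged on its own, your proposal has a genuine gap at its central step. You claim that the map $q\mapsto x_q$ from the image $I$ of the idempotent into $X$ can be made injective because ``distinct states $q,q'\in I$ admit no common minimal passage word in $X$.'' This is false, and the paper's own remark immediately after the theorem supplies the counterexample: for $X=\{a^n\}$ the minimal automaton of $X^*$ is a cycle of length $n$, the group is $\Z/n\Z$ acting on $I=Q$ with $d=n$, and the single word $a^n$ passes through every state of $I$, so all $d$ states share the same ``passage'' word. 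Nothing in the determinism of the automaton prevents one word of $X$ from visiting several states of $I$; ruling that out (except in the cyclic case) is essentially the content of the theorem, so your injectivity claim begs the question. Since the unconditional inequality $\Card(X)\ge d$ that step (1) would yield is simply not true for special groups, the nonspeciality hypothesis must enter \emph{before} the counting argument, not only at the end as a target for contradiction.

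The final step is also not a proof: ``the tightness should force these passages to form a single closed orbit'' is the assertion that $\Card(X)=d$ implies speciality, which is again a nontrivial instance of the theorem, and you yourself flag it as the expected obstacle without resolving it. To repair the argument you would need a mechanism that converts ``two words $u,v\in\varphi_\A^{-1}(G)$ that are not powers of a common word'' (the negation of speciality) into the existence of at least $d+1$ distinct elements of $X$ crossing the states of $I$; this typically requires a periodicity argument (of Fine--Wilf or critical-factorization type) on the interpretations of a word of rank $d$, which is absent from your sketch.
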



Theorem~\ref{theoremPR} is clearly not true for special syntactic groups
since $\Z/n\Z$ is a syntactic group of $X=a^n$ for any $n\ge 1$.

Theorem~\ref{newTheorem} is a consequence of the following theorem
which can be viewed as a complement to Theorem~\ref{proposition1}.
The proof itself makes use of  Theorem~\ref{theoremGroups}.

\begin{theorem}\label{theoremGroupCodes}
  Let $Z\subset A^*$ be a group code of degree $d$. Let $F$ be a
  Sturmian set. The set $X=Z\cap F$ is an $F$-maximal bifix code of
  $F$-degree $d$ and $G_F(X)=G(Z)$.
\end{theorem}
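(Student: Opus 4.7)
The first assertion follows directly from Corollary~\ref{newCorollary} applied to the index-$d$ subgroup $\langle Z\rangle$ of $A^\circ$, whose associated group code (the minimal generating set of the submonoid $\langle Z\rangle\cap A^*$) is exactly $Z$; alternatively, I would combine Theorem~\ref{proposition1} (giving that $X$ is $F$-thin and $F$-maximal with $d_F(X)\le d$) with Theorem~\ref{theoremGroups} and the inclusion $\langle X\rangle\subseteq\langle Z\rangle$ of finite-index subgroups to force $d_F(X)=d$.

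For the equality $G_F(X)=G(Z)$, the plan is to realize both as the same transitive permutation group on $d$ elements. On the $Z$ side, Propositions~\ref{propositionGroupAutomaton} and~\ref{propGroupCode} identify the minimal automaton $\mathcal{C}$ of $Z^*$ as a group automaton with $d$ states describing the subgroup $\langle Z\rangle$; on the $X$ side, Lemma~\ref{lemmaBidet} and Proposition~\ref{lemmaCosets} show that the coset automaton $\mathcal{B}_X$ is a reversible (even group) automaton describing the same subgroup $\langle X\rangle=\langle Z\rangle$. Uniqueness of Stallings automata (Proposition~\ref{propStallings}) then forces $\mathcal{C}=\mathcal{B}_X$, so $G(Z)$ is precisely the transition group of $\mathcal{B}_X$ acting on its state set $R$.

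It remains to match the canonical representation of $G_F(X)=H(e)$ with this action. Let $\mathcal{A}$ be a simple automaton for $X^*$, $\varphi=\varphi_\mathcal{A}$, and pick a right-special $u\in F$ longer than all words of $X$; by Lemma~\ref{lemmaRank1}, $\Im(\varphi(u))=\{p_1,\dots,p_d\}$ where the $p_i$ are the right-special proper prefixes of $X$, and by Corollary~\ref{corollaryCosets} together with Proposition~\ref{lemmaCosets} these $p_i$ form a transversal of the $\theta_X$-classes, yielding a bijection $\beta:\Im(\varphi(u))\to R$ via the quotient map $\pi$. Choose $N$ such that $e=\varphi(u^N)$ is idempotent and such that the action of $u^N$ on $R$ in $\mathcal{B}_X$ is the identity; then $e\in D$, the $\mathcal D$-class of rank-$d$ elements, so $G_F(X)$ identifies with $H(e)$ acting on $\Im(e)=\{p_1,\dots,p_d\}$. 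Using the compatibility of $\pi$ with the partial transitions of $\mathcal{A}$ (Lemma~\ref{lemmaCompatible}), one checks directly that whenever $\varphi(w)\in H(e)$ sends $p_i$ to $p_j$, the action of $w$ in $\mathcal{B}_X$ sends $\pi(p_i)$ to $\pi(p_j)$, giving an injective homomorphism $\Lambda:H(e)\hookrightarrow G(Z)$.

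For surjectivity, given $g\in G(Z)$ write $g=\varphi_{\mathcal{B}_X}(w)$ and consider $w'=u^Nwu^N$, so that $\varphi_{\mathcal{B}_X}(w')=g$ and $\varphi(w')=e\varphi(w)e\in eMe$. The key technical step, and the main obstacle I foresee, is showing that $e\varphi(w)e$ actually has rank $d$ (hence lies in $H(e)$): this requires that for every $p_i$ the path $p_i\,w\,u^N$ stay defined in the partial automaton $\mathcal{A}$ and terminate inside $\Im(e)$. I would handle this by choosing $w$ as a product of first return words of $u$, available through Theorem~\ref{propositionGamma} (which says $R_F(u)$ generates $A^\circ$): for a return word $y$, one has $p_iy\in F$ because $p_i$ is a suffix of $u$ and $uy\in F$, whence $p_iy$ is a prefix of $X^*$ by right $F$-completeness of $X$; iterating this gives the required definedness, and the $\mathcal{B}_X$-bijectivity of $\varphi_{\mathcal{B}_X}(w)$ forces the restriction to $\Im(e)$ to be bijective. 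Once this is established, $\Lambda(e\varphi(w)e)=g$, so $\Lambda$ is an isomorphism of permutation groups $H(e)\cong G(Z)$, completing the proof.
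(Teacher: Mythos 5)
Your overall architecture is sound and in part genuinely different from the paper's: instead of building an explicit bijection from $\Im(\varphi(u))$ into the state set of the minimal automaton of $Z^*$ and verifying the intertwining relation by hand on return words (which is what the paper does), you identify the minimal automaton of $Z^*$ with the coset automaton $\B_X$ via uniqueness of the reversible automaton describing $\langle X\rangle=\langle Z\rangle$ (Proposition~\ref{propStallings}), and then transfer the action of $H(e)$ on $\{p_1,\dots,p_d\}$ to $R$ through the quotient map $\pi$, using Lemma~\ref{lemmaCompatible} together with the fact that the right-special proper prefixes form a transversal of the $\theta_X$-classes (Corollary~\ref{corollaryCosets} combined with Propositions~\ref{propTheta} and~\ref{lemmaCosets}). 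That part is a clean alternative, and the first assertion is handled exactly as in the paper.

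There is, however, one genuine gap: the choice $e=\varphi(u^N)$. For $e$ to lie in the $\D$-class of rank-$d$ elements with $\Im(e)=\{p_1,\dots,p_d\}$, you need $\varphi(u)$ to restrict to a permutation of $I=\Im(\varphi(u))$, i.e.\ each $p_i\cdot u$ must be defined and the map $p_i\mapsto p_i\cdot u$ must be injective. Nothing guarantees this: $u^N$ is not in $F$ for large $N$ (a Sturmian set is aperiodic), the automaton of $X^*$ is only a partial one, and Lemma~\ref{lemmaRank2} --- the only tool available to prevent loss of rank --- applies only to words of $F$. So $\varphi(u^N)$ may have rank strictly less than $d$, in which case $H(e)$ is not the $F$-group at all. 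The paper circumvents this precisely by taking $e$ to be an idempotent in $\varphi(Y^+)$ with $Y=R_F(u)$: for $y\in Y$ one has $uy\in F\cap A^+u$, so $\Im(uy)\subseteq I$ and $\rank(uy)=d$ by Lemma~\ref{lemmaRank2}, whence $\varphi(y)$ permutes $I$; an idempotent power of a product of such elements then restricts to the identity on $I$, has image exactly $I$, and is $\LL$-equivalent to $\varphi(u)$, hence lies in the right $\D$-class. You already invoke return words for the surjectivity step, so the repair is within reach, but as written the construction of the idempotent is the step that would fail.
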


\begin{proof}
The fact that $X$ is an $F$-maximal bifix code of $F$-degree $d$
results from Corollary~\ref{newCorollary}.

Let us show that $G_F(X)=G(Z)$.
Let $\B=(R,1,1)$ be the minimal automaton of $Z^*$. Set
$\psi=\varphi_\B$ and $G=\psi(A^*)$.
Thus
$G$ is a permutation group equivalent to $G(Z)$.

Let $\A=(Q,1,1)$ be the minimal automaton of $X^*$. Set
$\varphi=\varphi_\A$. Denote by $\Im(w)$ the image of $\varphi(w)$
with respect to $\A$.
Thus $\Im(w)=\{t\in Q\mid s\cdot w =t\text{ for some } s\in Q\}$.

 Let $u\in F$ be a word with $d$ parses with respect to $X$. Let
 $I=\Im(u)$.
By Lemma~\ref{lemmaRank1}, the word $u$ has rank $d$ and thus
$\Card(I)=d$.

Let $Y=R_F(u)$ be the set of first return words to $u$.  By
Theorem~\ref{propositionGamma}, the set $Y$ is a basis of the free
group $A^\circ$.  For any $y\in Y$, the restriction of $\varphi(y)$ to
$I$ is a permutation of $I$. Indeed, $uy\in A^+u$ implies
$\Im(uy)\subset I$. Since $uy\in F$, the set $\Im(uy)$ has $d$
elements by Lemma~\ref{lemmaRank2}.  Thus $\Im(uy)= I$. Since
$\Im(u)=I$, this proves the claim.

Let $e$ be an idempotent in $\varphi(Y^+)$.  The restriction of $e$ to
$I$ is the identity.  Any long enough element of $\varphi^{-1}(e)\cap
Y^*$ has $u$ as a suffix. Thus the image of $e$ is $I$. Moreover,
since $\varphi(u)e=\varphi(u)$ and $e\in\varphi(A^*u)$, $e$
and $\varphi(u)$ belong to the same $\LL$-class and thus to the
$\D$-class. Thus $e$ belongs to the $\D$-class of $\varphi(A^*)$ 
which contains the elements of rank $d$ in $\varphi(F)$.

Let $G'$ be the maximal group contained in $\varphi(A^*)$
which contains $e$. It is a permutation group on $I$ which is
equivalent to $G_F(X)$.

For $y\in Y^*$, let $\chi(y)$ be the restriction of $\varphi(y)$
to the set $I$.

For any $y\in Y^*$, $e\varphi(y)e$ has the same nuclear equivalence
and the same image as $e$. By Proposition~\ref{propIdempotent}
it implies that they are in the same $\H$-class. Thus $e\varphi(y)e$
is in $G'$.

Since $e\varphi(y)e$ and $\varphi(y)$
have the same restriction to $I$ and since $e\varphi(y)e$ belongs
to the $\H$-class of $e$, $\chi$ is a morphism from $Y^*$ into 
the permutation group $G'$.
Since $Y$ generates $A^\circ$, this morphism is surjective. 
Indeed, if $\varphi(w)\in G'$,
let
$y_1,\ldots,y_n\in Y$ be such that $w=y_1^{\varepsilon_1}\cdots
y_n^{\varepsilon_n}$
with $\varepsilon_i=\pm 1$. Then $\chi(w)=\chi(y_1)^{\varepsilon_1}\cdots
\chi(y_n)^{\varepsilon_n}$. Since $G'$ is a finite group
$\chi(y)^{-1}\in\chi(Y^*)$.
Thus $\chi(w)\in\chi(Y^*)$.

Let us show that $G$ and $G'$ are equivalent as permutation
groups. 





For this, let us define a bijection $\beta:I\to R$ as follows. Let $P$
be the set of proper prefixes of the words of $X$ and let $S$ be the
set of elements of $P$ which are suffixes of $u$.  For $i\in I$, there
is a unique $q\in S$ such that $i=1\cdot q$ by
Lemma~\ref{lemmaRank1}. Set $\beta(i)=1\psi(q)$. We show that $\beta$
is injective. Let $q,t\in S$ be such that $1\psi(q)=1\psi(t)$. Assume
that $|q|\le |t|$.  Since $q,t$ are suffix-comparable, we have
$t=vq$. Since $1\psi(t)=1\psi(v)\psi(q)=1\psi(q)$ and since $\psi(q)$
is a permutation, we have $1\psi(v)=1$ and thus $v\in Z^*$.  Since $v$
is in $F$ and since $Z^*\cap F\subset X^*$, this implies $v\in X^*$ and thus $v=1$.  This shows that
$q=t$ and thus that $\beta$ is injective.  Since
$\Card(R)=\Card(I)=d$, we have shown that $\beta$ is a bijection.

Let us verify that for any $i,j\in I$ and $y\in Y^*$, we have
\begin{equation}
  i\varphi(y)=j \quad \iff \quad\beta(i)\psi(y)=\beta(j).\label{eqdiagram}
\end{equation}

Let us first prove~\eqref{eqdiagram} for $y\in Y$.  For this, let
$q,t\in S$ be such that $i=1\cdot q$, $j=1\cdot t$.  The states
$q,t$ exist by Lemma~\ref{lemmaRank1}.  Then
\begin{displaymath}
  i\varphi(y)=j \ \iff
  \ 1\varphi(qy)=1\varphi(t)\ \iff \
  qy\in X^*t.
\end{displaymath}
The last equivalence holds because $1\cdot qy=1\cdot v$ for the word
$v\in P$ such that $qy\in X^*v$.  But since $uy\in A^*u$, $v$ is a
suffix of $u$ and thus $v\in S$. This forces $t=v$.

Since $qy\in F$, we have 
\begin{displaymath}
qy\in X^*t \ \iff \ qy\in Z^*t
\end{displaymath}
and thus, we obtain
\begin{displaymath}
i\varphi(y)=j \ \iff \ qy\in Z^*t \ \iff \ \beta(i)\psi(y)=\beta(j).
\end{displaymath}
This proves \eqref{eqdiagram} for $y\in Y$. Next, let us show that if
$y,z\in Y^*$ satisfy~\eqref{eqdiagram} for all $i,j\in I$, the same is
true for $yz$. Assume first that for $i,j\in I$, one has
$i\varphi(yz)=j$.  Since the restrictions of $\varphi(y),\varphi(z)$
to $I$ are permutations, there is a unique $k\in I$ such that
$i\varphi(y)=k$ and $k\varphi(z)=j$.  Then, since $y,z$ satisfy
\eqref{eqdiagram}, we have $\beta(i)\psi(y)=\beta(k)$ and
$\beta(k)\psi(z)=\beta(j)$. Thus $\beta(i)\psi(yz)=\beta(j)$.
Conversely, assume that $\beta(i)\psi(yz)=\beta(j)$. Since $\beta$ is
a bijection from $I$ onto $R$, there is a unique $k\in I$ such that
$\beta(k)=\beta(i)\psi(y)$. Then $\beta(k)\psi(z)=\beta(j)$. By
\eqref{eqdiagram}, we have $i\varphi(y)=k$ and $k\varphi(z)=j$ whence
$i\varphi(yz)=j$.  This proves that $yz$ satisfies \eqref{eqdiagram}.

Equation~\eqref{eqdiagram} shows that we may define a morphism $\alpha$
from $G'$ to $G$ by $\alpha(g)=\psi(y)$ for $y\in Y^*$ such that
$\chi(y)=g$. This map is injective. Indeed, if $\alpha(g)=\alpha(g')$,
let
$y,y'\in Y^*$ be such that $\chi(y)=g$ and $\chi(y')=g'$. Then, 
$\alpha(g)=\psi(y)$ and $\alpha(g')=\psi(y')$ imply that $\psi(y)=\psi(y')$.
By \eqref{eqdiagram}, $\psi(y)=\psi(y')$ implies that
$\chi(y)=\chi(y')$
and thus $g=g'$. Since $Y$ generates the free group $A^\circ$, the map
is surjective. Indeed, for any $a\in A$ we have
$a=y_1^{\epsilon_1}\cdots y_n^{\epsilon_n}$ with $y_i\in Y$ and
$\epsilon_i=\pm 1$.
Thus 
$\psi(a)=\psi(y_1)^{\epsilon_1}\cdots\psi(y_n)^{\epsilon_n}=\alpha(g_1^{\epsilon_1}\cdots
g_n^{\epsilon_n})$ with $\chi(y_i)=g_i$.

Finally, the commutative diagrams of Figure~\ref{diagram} show that
the pair $(\alpha,\beta)$ is an equivalence of permutation groups.

\begin{figure}[hbt]
\centering
\gasset{Nframe=n,Nadjust=wh}
\begin{picture}(80,25)
\node(1)(0,20){$1\cdot q$}\node(2)(30,20){$1\cdot qy$}
\node(3)(0,0){$1\psi(q)$}\node(4)(30,0){$1\psi(qy)$}
\drawedge(1,2){$y$}\drawedge(3,4){$\psi(y)$}
\drawedge(1,3){}\drawedge(2,4){}

\node(I1)(50,20){$I$}\node(I2)(80,20){$I$}
\node(R1)(50,0){$R$}\node(R2)(80,0){$R$}
\drawedge(I1,I2){$g$}\drawedge(R1,R2){$\alpha(g)$}
\drawedge(I1,R1){$\beta$}\drawedge(I2,R2){$\beta$}
\end{picture}
\caption{The equivalence of $G$ and $G'$.}\label{diagram}
\end{figure}
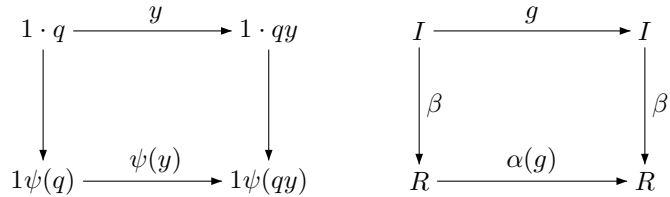

\end{proof}


\begin{example} We illustrate the proof of
  Theorem~\ref{theoremGroupCodes}. Let $Z$ be the group code of
  degree~$4$ recognized by the automaton of Figure~\ref{figureGroupX}.
  It is the automaton of Figure~\ref{figureGroup} with more convenient
  labels for the states.
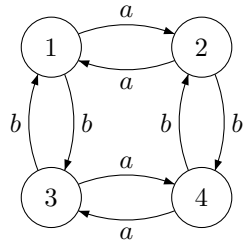
\begin{figure}[hbt]
\centering
\begin{picture}(30,30)(0,-3)
\node(1)(0,20){$1$}\node(a)(20,20){$2$}
\node(aba)(0,0){$3$}\node(ba)(20,0){$4$}
\drawedge[curvedepth=3](1,a){$a$}\drawedge[curvedepth=3](a,1){$a$}
\drawedge[curvedepth=3](1,aba){$b$}\drawedge[curvedepth=3](aba,1){$b$}
\drawedge[curvedepth=3](a,ba){$b$}\drawedge[curvedepth=3](ba,a){$b$}
\drawedge[curvedepth=3](aba,ba){$a$}\drawedge[curvedepth=3](ba,aba){$a$}
\end{picture}
\caption{A group automaton.}\label{figureGroupX}
\end{figure}
  It is clear the $G(Z)$ is $\Z/2\Z\times \Z/2\Z$.  Let $F$ be the
  Fibonacci set.  The code $X=Z\cap F$ is the code of 
  Example~\ref{exampleCodeGiuseppina}.  The minimal automaton of $X^*$
  is represented on Figure~\ref{figureCodeGiuseppina2}.  Let us chose
  $u=aba$. It has rank~$4$, and $\Im(u)=\{1,2,4,8\}$. One gets
  $Y=\{ba,aba\}$. Next $\chi(ba)=(18)(24)$ and $\chi(aba)=(14)(28)$.
  The function $\beta$ maps $1,2,4,8$ to $1,2,3,4$ respectively.
\end{example}



The following example shows that Theorem~\ref{theoremGroupCodes}
does not hold for the set of factors of an episturmian word which is
not strict.

\begin{example}\label{exNonStrictGroup}
Let $F$ and $X$ be as in Example~\ref{exampleNotStrict}.
The bifix code $X$ has $F$-degree $8$.
Let $\A$ be the minimal automaton of $X^*$ represented on
Figure~\ref{figureExNonStrict}. The image of $\varphi(bc)$ is the
set $I=\{1,4,6,13,14,21,25,32\}$. The submonoid $U=\{u\in A^*\mid
I\cdot u=I\}$ is generated by $acbc$ and $acacbc$. The restrictions to
$I$ of $\varphi(acbc)$ and $\varphi(acacbc)$ are
\begin{displaymath}
(1\ 14)(25\ 6)(4\ 21)(32\ 13),\quad (1\ 6)(14\ 25)(4\ 13)(21\ 32).
\end{displaymath}
These permutations generate a group which has two orbits:
$\{1,6,14,25\}$ and $\{4,13,21,32\}$. The restriction 
to each orbit is isomorphic to $(\Z/2\Z)^2$.
 Thus 
the $F$-group of $X$ is $(\Z/2\Z)^2$. However $X=Z\cap F$ where
$Z$ is a group code such that $G(Z)=(\Z/2\Z)^3$.
\end{example}


\begin{proofof}{of Theorem~\ref{newTheorem}}
  Let $G$ be a transitive permutation group of degree $d$ and let $Z$
  be a group code on an alphabet $A$ with $k$ letters such that
  $G(Z)=G$. Let $F$ be a Sturmian set on the alphabet $A$ and let
  $X=Z\cap F$. Then, by Theorem~\ref{theoremGroupCodes}, $G_F(X)= G$
  and, by Theorem~\ref{theoremBifixd+1}, $X$ has $(k-1)d+1$ elements.
\end{proofof}


\paragraph{Acknowledgments} We wish to thank Mike Boyle, Aldo De Luca, 
Thierry Monteil, Patrice S\'e\'ebold,
Martine Queff\'elec and Gw\'ena\"el Richomme for their help in the preparation of this manuscript.

\bibliography{bifixesSturm}
\bibliographystyle{plain}
\printindex

\end{document}